\numberwithin{equation}{section}
\begin{document}

\title{Regular holonomic $\shd\forl$-modules}

\author{Andrea D'Agnolo, St{\'e}phane Guillermou and Pierre Schapira}

\date{}

\maketitle

\begin{abstract}
We describe the category of regular holonomic modules over the ring
$\shd\forl$ of linear differential operators with a formal parameter
$\h$.  In particular, we establish the Riemann-Hilbert
correspondence and discuss the additional $t$-structure related to
$\h$-torsion.
\end{abstract}

\section*{Introduction}
On a complex manifold $X$, we will be interested in the study of
holonomic modules over the ring $\Dhh$ of 
differential operators with a formal parameter $\h$.
Such modules naturally appear when studying deformation quantization
modules ($\DQ$-modules) along a smooth Lagrangian submanifold of a
complex symplectic manifold (see \cite[Chapter~7]{KS08}).

In this paper, after recalling the tools from loc.\ cit.\ that we
shall use, we explain some basic notions of $\Dhh$-modules
theory. For example, it follows easily from general results on modules
over
$\C\forl$-algebras that given two holonomic $\Dhh$-modules
$\shm$ and $\shn$, the complex $\rhom[\Dhh](\shm,\shn)$ is
constructible over $\C\forl$ and the microsupport of
the solution complex $\rhom[\Dhh](\shm,\Ohh)$ coincides with the
characteristic variety of $\shm$.

Then we establish our main result, the Riemann-Hilbert correspondence
for regular holonomic $\Dhh$-modules, an $\h$-variant of
Kashiwara's classical theorem. 
In other words, we show that the solution functor with values in
$\Ohh$ induces an equivalence between the derived category of
regular holonomic $\Dhh$-modules and that of constructible sheaves
over $\C\forl$. A quasi-inverse is obtained by 
constructing the ``sheaf'' of holomorphic functions with temperate
growth and a formal parameter $\h$ in the subanalytic site. This
needs some care since the
literature on this subject is written in the framework of sheaves
over a field and does not immediately apply to the ring $\C\forl$. 

We also discuss the $t$-structure related to $\h$-torsion. Indeed,
as we work over the ring $\C\forl$ and not over a field, 
the derived category of holonomic $\Dhh$-modules (or,
equivalently, that of constructible sheaves over $\C\forl$)
has an additional $t$-structure related to $\h$-torsion.
We will show how the duality functor interchanges it with the natural
$t$-structure. 

Finally, we describe some natural links between the ring
$\Dhh$ and deformation quantization algebras, as mentioned above.

\section*{Notations and conventions}

We shall mainly follow the notations of \cite{KS06}.
In particular, if $\shc$ 
is an abelian category, we denote by 
$\Der(\shc)$ the derived category of $\shc$ 
and by $\Der[*](\shc)$ ($*=+,-,\rmb$) the full triangulated subcategory
consisting of
objects with  bounded from below (resp.\ bounded from above, resp.\
bounded)
cohomology.

For a sheaf of rings $\shr$ on a topological space, or more generally a
site, we denote by
$\md[\shr]$ the category of left $\shr$-modules and we write 
$\Der[*](\shr)$ instead of $\Der[*](\md[\shr])$ ($*=\emptyset,+,-,\rmb$). 
We denote by $\mdc[\shr]$ 
the full abelian subcategory of $\md[\shr]$ of coherent objects,
and by $\Derb_\coh(\shr)$ the full triangulated subcategory 
of $\Derb(\shr)$ of objects with coherent cohomology groups.

If $R$ is a ring (a sheaf of rings over a point), we write for short $\Derb_f(R)$ instead of
$\Derb_\coh(R)$.  

\section{Formal deformations (after~\cite{KS08})}
We review here some definitions and results from \cite{KS08}
that we shall use in this paper.

\subsubsection*{Modules over $\Z[\hbar]$-algebras}

One says that a $\Z[\hbar]$-module $\shm$ has no $\h$-torsion if 
$\h\cl\shm\to\shm$ 
is injective and one says that $\shm$ is $\h$-complete if
$\shm\to\sprolim[n] \shm/\h^n\shm$ is an isomorphism.

\medskip
Let $\shr$ be a $\Z[\hbar]$-algebra, and assume that $\shr$ has no $\h$-torsion.
One sets 
\[
\shr^\loc\eqdot \Z[\hbar,\hbar^{-1}]\tens[{\Z[\hbar]}]\shr,
\qquad\shro\eqdot \shr/\hbar\shr,
\]
and considers the functors
\begin{align*}
(\scbul)^\loc&\cl\md[\shr]\to\md[\shr^\loc],\quad
\shm\mapsto\shm^\loc\eqdot\shr^\loc\tens[{\shr}]\shm,\\
\gr&\cl \Der(\shr)\to\Der(\shro),\quad
\shm\mapsto\gr(\shm)\eqdot \shro\lltens[\shr]\shm.
\end{align*}
Note that $(\scbul)^\loc$ is exact and that for $\shm,\shn\in\Derb(\shr)$ and $\shp\in\Derb(\shr^\op)$ one
has isomorphisms:
\eq\label{eq:grFtens}
\gr(\shp\ltens[\shr]\shm) &\simeq& \gr\shp\ltens[\shro]\gr\shm, \\
\label{eq:grF}
\gr(\rhom[\shr](\shm,\shn)) &\simeq& \rhom[\shro](\gr(\shm),\gr(\shn)).
\eneq

\subsubsection*{Cohomologically $\h$-complete sheaves}

\begin{definition}\label{def:cohco}
One says that  an object $\shm$ of $\Der(\shr)$ 
is cohomologically $\h$-complete if 
$\rhom[\shr](\shr^\loc,\shm)= 0$.
\end{definition}
Hence, the full subcategory of cohomologically $\h$-complete objects is
triangulated. In fact, it is the right orthogonal to the full
subcategory $\Der(\shr^\loc)$ of $\Der(\shr)$.

Remark that $\shm\in\Der(\shr)$ is cohomologically $\h$-complete if and only
if its image in
$\Der(\Z_X[\hbar])$ is cohomologically $\h$-complete.
\begin{proposition}\label{pro:cohco1} 
Let $\shm\in\Der(\shr)$. Then $\shm$ is  cohomologically $\h$-complete if
and only if
\eqn
&&\indlim[U\ni
x]\Ext[{\Z[\hbar]}]{j}\bl\Z[\hbar,\hbar^{-1}],H^i(U;\shm)\br= 0,
\eneqn
for any $x\in X$, any integer $i\in\Z$ and any $j=0,1$.
Here, $U$ ranges over an open neighborhood system of $x$.
\end{proposition}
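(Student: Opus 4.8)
The plan is to reduce to the constant sheaf of rings $\Z_X[\hbar]$, then to analyze the cohomology sheaves of $\shf\eqdot\rhom[{\Z_X[\hbar]}](\Z_X[\hbar,\hbar^{-1}],\shm)$ one stalk at a time, rewriting each stalk as a colimit over shrinking open neighborhoods so that the homological algebra can be carried out over the ordinary ring $\Z[\hbar]$ with coefficients in honest abelian groups.

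I would first note that, by the remark made before the statement of the proposition, $\shm$ is cohomologically $\h$-complete in $\Der(\shr)$ if and only if $\shf=0$: applying the definition of $(\scbul)^\loc$ to $\shr=\Z_X[\hbar]$ gives $(\Z_X[\hbar])^\loc\simeq\Z_X[\hbar,\hbar^{-1}]$, and $\rhom[\shr](\shr^\loc,\shm)\simeq\shf$ follows from the extension-of-scalars adjunction together with the flatness of $\Z_X[\hbar,\hbar^{-1}]$ over $\Z_X[\hbar]$. Now $\shf=0$ if and only if $H^n(\shf)_x=0$ for every $x\in X$ and every $n\in\Z$, and, $U$ ranging over an open neighborhood system of $x$, one has the standard identification $H^n(\shf)_x\simeq\indlim[U\ni x]H^n(U;\shf)$.

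Next I would introduce the telescope resolution of the constant sheaf of $\Z_X[\hbar]$-modules $\Z_X[\hbar,\hbar^{-1}]$ attached to the presentation $\Z_X[\hbar,\hbar^{-1}]\simeq\indlim\bigl(\Z_X[\hbar]\xrightarrow{\,\hbar\,}\Z_X[\hbar]\xrightarrow{\,\hbar\,}\cdots\bigr)$, namely the exact sequence
\[
0\longrightarrow\bigoplus_{n\geq0}\Z_X[\hbar]\xrightarrow{\;1-\hbar\sigma\;}\bigoplus_{n\geq0}\Z_X[\hbar]\longrightarrow\Z_X[\hbar,\hbar^{-1}]\longrightarrow0,
\]
$\sigma$ denoting the shift. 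In particular $\Z_X[\hbar,\hbar^{-1}]$ has projective dimension $\leq1$ over $\Z_X[\hbar]$, and likewise $\Z[\hbar,\hbar^{-1}]$ over $\Z[\hbar]$, so that $\Ext[{\Z[\hbar]}]{j}(\Z[\hbar,\hbar^{-1}],\scbul)=0$ for $j\geq2$; this is why only $j=0,1$ occur in the statement. Applying $\rhom[{\Z_X[\hbar]}](\scbul,\shm)$ and then $R\Gamma(U;\scbul)$ to this resolution, and using that $\rhom[{\Z_X[\hbar]}](\scbul,\shm)$ preserves quasi-isomorphisms, that $\rhom[{\Z_X[\hbar]}]\bigl(\bigoplus_n\Z_X[\hbar],\shm\bigr)\simeq\prod_n\shm$, and that $R\Gamma(U;\scbul)$ commutes with products, one identifies $R\Gamma(U;\shf)$ with the total complex of the two-column double complex $\bigl[\prod_{n\geq0}R\Gamma(U;\shm)\xrightarrow{\,1-\hbar\sigma\,}\prod_{n\geq0}R\Gamma(U;\shm)\bigr]$. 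Since this double complex has only two columns, its spectral sequence (for the column filtration) degenerates and yields, for each fixed $U$, a short exact sequence
\[
0\to\Ext[{\Z[\hbar]}]{1}\bigl(\Z[\hbar,\hbar^{-1}],H^{n-1}(U;\shm)\bigr)\to H^n(U;\shf)\to\Ext[{\Z[\hbar]}]{0}\bigl(\Z[\hbar,\hbar^{-1}],H^n(U;\shm)\bigr)\to0,
\]
the identification of the cokernel and kernel of $1-\hbar\sigma$ on $\prod_n H^\bullet(U;\shm)$ with these $\Ext^1$ and $\Ext^0$ groups being nothing but the computation of the latter by the same resolution applied to the $\Z[\hbar]$-module $H^\bullet(U;\shm)$.

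Finally I would apply the exact functor $\indlim[U\ni x]$ to this short exact sequence, getting for every $x$ and $n$
\[
0\to\indlim[U\ni x]\Ext[{\Z[\hbar]}]{1}\bigl(\Z[\hbar,\hbar^{-1}],H^{n-1}(U;\shm)\bigr)\to H^n(\shf)_x\to\indlim[U\ni x]\Ext[{\Z[\hbar]}]{0}\bigl(\Z[\hbar,\hbar^{-1}],H^n(U;\shm)\bigr)\to0.
\]
Hence $\shf=0$, equivalently $H^n(\shf)_x=0$ for all $x$ and $n$, if and only if both outer terms vanish for all $x$ and $n$, which, as $n$ runs over $\Z$, is precisely the asserted condition for $j=0$ and $j=1$. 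I expect the only genuinely delicate point to be the one that motivates the statement itself: $\rhom[{\Z_X[\hbar]}](\Z_X[\hbar,\hbar^{-1}],\shm)$ does not commute with the stalk functor (the sheaf $\Z_X[\hbar,\hbar^{-1}]$ is not coherent over $\Z_X[\hbar]$), and the inverse limit over $n$ implicit in $\Ext[{\Z[\hbar]}]{j}(\Z[\hbar,\hbar^{-1}],\scbul)$ commutes neither with that stalk functor nor with $\indlim[U\ni x]$; the argument sidesteps both obstructions by doing all the homological algebra over $\Z[\hbar]$ for a fixed $U$ and passing to the (exact) colimit over $U$ only at the very end.
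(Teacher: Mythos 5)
The paper imports this proposition from~\cite{KS08} without reproducing a proof, so there is nothing to compare against directly; but your argument is correct and it is essentially the standard one. The key ingredients are exactly right: the reduction to $\Z_X[\hbar]$ via the remark preceding the statement together with the extension-of-scalars adjunction (using the flatness of $\Z_X[\hbar,\hbar^{-1}]$ over $\Z_X[\hbar]$); the telescope presentation of $\Z_X[\hbar,\hbar^{-1}]$, which shows it has projective dimension $\leq 1$ and turns $\rhom[{\Z_X[\hbar]}](\Z_X[\hbar,\hbar^{-1}],\shm)$ into a two-term product complex; the two-column spectral sequence giving, for each fixed $U$, the short exact sequence
\[
0\to\Ext[{\Z[\hbar]}]{1}\bigl(\Z[\hbar,\hbar^{-1}],H^{n-1}(U;\shm)\bigr)\to H^n(U;\shf)\to\Hom[{\Z[\hbar]}]\bigl(\Z[\hbar,\hbar^{-1}],H^n(U;\shm)\bigr)\to 0;
\]
and, finally, passage to the exact filtered colimit $\indlim[U\ni x]$. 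You also correctly flag and sidestep the two genuine traps: neither $\rhom(\Z_X[\hbar,\hbar^{-1}],\scbul)$ nor the implicit $\prolim$ commutes with the stalk functor, so all the homological algebra must be done over $\Z[\hbar]$ at the level of a fixed $U$, with the colimit taken last. Two small points you leave implicit but which are standard and cause no gap: the identification $H^n(\shf)_x\simeq\indlim[U\ni x]H^n(U;\shf)$ requires choosing a K-injective representative (sections of which compute both stalks of cohomology sheaves and hypercohomology); and $R\Gamma(U;\scbul)$ commuting with the countable product uses that products of injective sheaves are injective, so the product in your bicomplex is the derived product throughout.
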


\begin{corollary}\label{cor:cohcosheaves}
Let $\shm\in\md[\shr]$. Assume that $\shm$ has no $\h$-torsion, is
$\h$-complete and there exists a base $\BB$ of open subsets such
that $H^i(U;\shm)= 0$ for any $i>0$ and any $U\in\BB$. Then $\shm$
is cohomologically $\h$-complete.
\end{corollary}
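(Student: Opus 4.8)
The plan is to verify the vanishing criterion of Proposition~\ref{pro:cohco1}. Fix $x\in X$ and integers $i\in\Z$, $j\in\{0,1\}$. As $\shm$ is concentrated in degree $0$ we have $H^i(U;\shm)=0$ for $i<0$, and since $\BB$ is a base of the topology, the members of $\BB$ containing $x$ form a cofinal system of open neighborhoods of $x$, so the inductive limit in Proposition~\ref{pro:cohco1} may be computed over that subsystem. For $i>0$ the hypothesis gives $H^i(U;\shm)=0$ for every such $U$, whence the corresponding limit is $0$. We are thus reduced to showing, for every $U\in\BB$ and $j\in\{0,1\}$, that $\Ext[{\Z[\hbar]}]{j}\bl\Z[\hbar,\hbar^{-1}],\Gamma(U;\shm)\br=0$.

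First I would check that, for $U\in\BB$, the $\Z[\hbar]$-module $M\eqdot\Gamma(U;\shm)$ still has no $\h$-torsion and is $\h$-complete. The absence of $\h$-torsion is clear, $\Gamma(U;\scbul)$ being left exact. For $\h$-completeness, apply $\Gamma(U;\scbul)$ to the short exact sequences $0\to\shm\xrightarrow{\h^n}\shm\to\shm/\h^n\shm\to0$ (exact because $\shm$ has no $\h$-torsion): as $H^1(U;\shm)=0$, the long exact cohomology sequence yields isomorphisms $M/\h^n M\simeq\Gamma(U;\shm/\h^n\shm)$, compatible with the transition maps. Since $\Gamma(U;\scbul)$ commutes with inverse limits, applying it to the isomorphism $\shm\simeq\sprolim[n]\shm/\h^n\shm$ gives $M\simeq\sprolim[n]\Gamma(U;\shm/\h^n\shm)\simeq\sprolim[n]M/\h^n M$, i.e.\ $M$ is $\h$-complete.

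Next I would compute $\rhom[{\Z[\hbar]}](\Z[\hbar,\hbar^{-1}],M)$ for an arbitrary $\h$-complete $\Z[\hbar]$-module $M$. Writing $\Z[\hbar,\hbar^{-1}]$ as the filtered colimit of $\Z[\hbar]\xrightarrow{\h}\Z[\hbar]\xrightarrow{\h}\cdots$ yields the length-one free resolution
\[
0\to\bigoplus_{n\ge0}\Z[\hbar]\xrightarrow{\ e_n\,\mapsto\,e_n-\h e_{n+1}\ }\bigoplus_{n\ge0}\Z[\hbar]\to\Z[\hbar,\hbar^{-1}]\to0 ,
\]
so that $\rhom[{\Z[\hbar]}](\Z[\hbar,\hbar^{-1}],M)$ is represented, in degrees $0$ and $1$, by the complex $\prod_{n\ge0}M\to\prod_{n\ge0}M$, $(m_n)_n\mapsto(m_n-\h m_{n+1})_n$. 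An element of its kernel satisfies $m_n=\h^k m_{n+k}$ for all $k\ge0$, hence lies in $\bigcap_k\h^k M$, which is $0$ because $\h$-completeness makes $M\to\sprolim[n]M/\h^n M$ injective; so $\Ext[{\Z[\hbar]}]{0}=0$. For surjectivity, given $(a_n)_n$ one solves $m_n-\h m_{n+1}=a_n$ by $m_n\eqdot\sum_{k\ge0}\h^k a_{n+k}$, a series that converges because its partial sums form a compatible system in $\sprolim[n]M/\h^n M$, which lifts to $M$ by $\h$-completeness; so $\Ext[{\Z[\hbar]}]{1}=0$.

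Combining the last two paragraphs, $\Ext[{\Z[\hbar]}]{j}\bl\Z[\hbar,\hbar^{-1}],\Gamma(U;\shm)\br=0$ for every $U\in\BB$ and $j=0,1$, so by the first paragraph the criterion of Proposition~\ref{pro:cohco1} holds and $\shm$ is cohomologically $\h$-complete. I expect the single genuinely delicate point to be the passage, in the second paragraph, from $\h$-completeness of the \emph{sheaf} $\shm$ to $\h$-completeness of the \emph{module} $\Gamma(U;\shm)$: this is exactly where the acyclicity hypothesis $H^i(U;\shm)=0$ ($i>0$) enters, since without $H^1(U;\shm)=0$ one obtains only an injection $M/\h^n M\hookrightarrow\Gamma(U;\shm/\h^n\shm)$ that need not be surjective, and the remaining argument collapses.
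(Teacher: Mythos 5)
Your proof is correct. The paper states this corollary as part of a recollection of results from \cite{KS08} without supplying a proof, so there is no in-paper argument to compare against; but as a derivation from Proposition~\ref{pro:cohco1} your verification is complete and the route is the natural one. You reduce to showing $\operatorname{Ext}^j_{\Z[\hbar]}\bigl(\Z[\hbar,\hbar^{-1}],\Gamma(U;\shm)\bigr)=0$ for $U\in\BB$ and $j=0,1$; you correctly use the acyclicity hypothesis $H^1(U;\shm)=0$ to get $\Gamma(U;\shm)/\h^n\Gamma(U;\shm)\simeq\Gamma(U;\shm/\h^n\shm)$, and the commutation of $\Gamma(U;\scbul)$ with projective limits, to transfer $\h$-completeness from the sheaf $\shm$ to the module $M=\Gamma(U;\shm)$; and the computation of the two Ext groups from the length-one telescope resolution of $\Z[\hbar,\hbar^{-1}]$ is standard, with $\operatorname{Ext}^0=0$ coming from $\bigcap_n\h^nM=0$ (injectivity of $M\to\prolim[n]M/\h^nM$) and $\operatorname{Ext}^1=0$ from the geometric-series lift (surjectivity of that map). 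The step you single out as delicate is indeed the crux, and you handle it correctly; the rest is routine.
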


The functor $\gr$ is conservative on the category of cohomologically
$\h$-complete objects:
\begin{proposition}\label{pro:conserv}
Let $\shm\in\Der(\shr)$ be a cohomologically $\h$-complete object. If
$\gr(\shm)= 0$, then $\shm= 0$.
\end{proposition}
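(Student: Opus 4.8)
The plan is to argue by contradiction: suppose $\shm \neq 0$, and derive a contradiction from cohomological $\h$-completeness together with $\gr(\shm) = 0$. First I would reduce to a statement about the cohomology sheaves $H^i(\shm)$, or better, stalkwise, about the cohomology groups $H^i(U;\shm)$ for $U$ in a neighborhood basis of a point. The key identity is the distinguished triangle coming from multiplication by $\h$. Since $\gr(\shm) = \shro \lltens[\shr]\shm$ is computed by the two-term complex $\shr \xrightarrow{\;\h\;} \shr$ tensored with $\shm$ (using that $\shr$ has no $\h$-torsion, so $\h\cl\shr\to\shr$ is injective and $\shro$ has the free resolution $0\to\shr\xrightarrow{\h}\shr\to\shro\to 0$), the hypothesis $\gr(\shm)=0$ says precisely that $\h\cl\shm\to\shm$ is an isomorphism in $\Der(\shr)$.

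The heart of the argument is then: an object $\shm \in \Der(\shr)$ on which $\h$ acts invertibly lies in (the essential image of) $\Der(\shr^\loc)$, because $\shm \simeq \shr^\loc \ltens[\shr]\shm$; indeed $\shr^\loc = \Z[\hbar,\hbar^{-1}]\tens[\Z[\hbar]]\shr$ is the localization, and $\shr^\loc\ltens[\shr]\shm$ is the homotopy colimit of $\shm\xrightarrow{\h}\shm\xrightarrow{\h}\cdots$, which equals $\shm$ when $\h$ is invertible. On the other hand, $\shm$ is cohomologically $\h$-complete, i.e.\ $\rhom[\shr](\shr^\loc,\shm)=0$, which by the remark following Definition \ref{def:cohco} means $\shm$ is right-orthogonal to $\Der(\shr^\loc)$. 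An object that is simultaneously in $\Der(\shr^\loc)$ and right-orthogonal to $\Der(\shr^\loc)$ must be zero, since then $\shm \simeq \rhom[\shr](\shr,\shm)$ receives the identity map which factors through $\rhom[\shr](\shr^\loc,\shm)=0$ — more precisely, $\id_{\shm}\in\Hom(\shm,\shm)=\Hom(\shr^\loc\ltens[\shr]\shm,\shm)\simeq\Hom(\shm,\rhom[\shr](\shr^\loc,\shm))=0$, forcing $\shm=0$.

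Alternatively, and perhaps more in the spirit of the preceding propositions, one can argue stalkwise using Proposition \ref{pro:cohco1}: if $\h$ acts invertibly on $\shm$ then it acts invertibly on each $H^i(U;\shm)$, so each such group is a $\Z[\hbar,\hbar^{-1}]$-module, whence $\Ext[{\Z[\hbar]}]{0}(\Z[\hbar,\hbar^{-1}],H^i(U;\shm)) = H^i(U;\shm)$ (and the $\Ext^1$ vanishes). Cohomological $\h$-completeness then gives $\indlim[U\ni x] H^i(U;\shm) = 0$, i.e.\ $H^i(\shm)_x = 0$ for all $x$ and all $i$, hence $\shm = 0$. I expect the main subtlety to be the careful handling of the derived tensor/localization: one must be sure that $\shr^\loc\lltens[\shr]\shm$ computes the $\h$-inverted object correctly (flatness of $\shr^\loc$ over $\shr$, which holds since $\shr^\loc$ is a localization), and that the hypothesis $\gr(\shm)=0$ genuinely yields invertibility of $\h$ rather than merely something weaker — this uses the no-$\h$-torsion assumption on $\shr$ in an essential way, via the two-term free resolution of $\shro$.
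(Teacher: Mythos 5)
The paper cites this result from~\cite{KS08} without giving a proof, so there is no in-paper argument to compare against. Your proposal is correct, and is the standard argument: since $\shr$ has no $\h$-torsion, $\gr\shm$ sits in a triangle with $\h\colon\shm\to\shm$, so $\gr\shm=0$ forces $\h$ to be an isomorphism on $\shm$, hence $\shm\simeq\shr^\loc\ltens[\shr]\shm$ lies in $\Der(\shr^\loc)$; the hypothesis $\rhom[\shr](\shr^\loc,\shm)=0$ then kills $\id_\shm$ via the tensor--hom adjunction, and your stalkwise alternative through Proposition~\ref{pro:cohco1} is equally sound.
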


\begin{proposition}\label{pro:homcc}
Assume that  $\shm\in\Der(\shr)$ is cohomologically $\h$-complete. Then
$\rhom[\shr](\shn,\shm)\in\Der(\Z_X[\hbar])$ is cohomologically $\h$-complete
for any $\shn\in\Der(\shr)$.
\end{proposition}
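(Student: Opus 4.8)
The plan is to reduce the claim to a single commutation of $\rhom$ functors. By the very definition of cohomological $\h$-completeness, to say that the object $\rhom[\shr](\shn,\shm)$ of $\Der(\Z_X[\hbar])$ is cohomologically $\h$-complete means exactly that $\rhom[{\Z_X[\hbar]}]\bl\Z_X[\hbar,\hbar^{-1}],\rhom[\shr](\shn,\shm)\br= 0$, where $\Z_X[\hbar,\hbar^{-1}]=(\Z_X[\hbar])^\loc$; and, by the remark following Definition~\ref{def:cohco}, the hypothesis that $\shm$ is cohomologically $\h$-complete in $\Der(\shr)$ gives $\rhom[{\Z_X[\hbar]}]\bl\Z_X[\hbar,\hbar^{-1}],\shm\br= 0$. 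Hence the proof will be finished once we produce a natural isomorphism
\[
\rhom[{\Z_X[\hbar]}]\bl\Z_X[\hbar,\hbar^{-1}],\rhom[\shr](\shn,\shm)\br
\;\simeq\;
\rhom[\shr]\bl\shn,\rhom[{\Z_X[\hbar]}](\Z_X[\hbar,\hbar^{-1}],\shm)\br,
\]
whose right-hand side is then $\rhom[\shr](\shn,0)= 0$.

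The only real work is this commutation isomorphism. At the underived level it is the standard commutation of the internal $\mathrm{Hom}$ functor over $\Z_X[\hbar]$ with the internal $\mathrm{Hom}$ functor over $\shr$, and it holds precisely because $\h$ is central in $\shr$: an internal $\mathrm{Hom}$ taken over one of the two rings inherits a natural module structure over the other, so that the two ways of iterating the functors agree. One derives the identity in the usual way — say, replacing $\shm$ by a $K$-injective resolution over $\shr$ and $\Z_X[\hbar,\hbar^{-1}]$ by a $K$-flat resolution over $\Z_X[\hbar]$ — the required acyclicities being part of the standard homological formalism over sheaves of rings (see \cite{KS06}). I expect the only delicate point to be the careful bookkeeping of the two module structures through the derived functors; the assertion itself is formal and uses nothing specific to holonomic or coherent modules.

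A variant makes this transparent. Since filtered inductive limits are exact in categories of sheaves of modules, one has $\Z_X[\hbar,\hbar^{-1}]\simeq\indlim[n]\Z_X[\hbar]$ with transition maps given by multiplication by $\h$; hence $\rhom[{\Z_X[\hbar]}]\bl\Z_X[\hbar,\hbar^{-1}],\scbul\br$ is the derived projective limit of the tower $\scbul\xleftarrow{\,\h\,}\scbul\xleftarrow{\,\h\,}\cdots$, and cohomological $\h$-completeness is nothing but the vanishing of this derived limit. Since $\rhom[\shr](\shn,\scbul)$ commutes with derived projective limits and carries multiplication by $\h$ to multiplication by $\h$, the vanishing for $\shm$ forces the vanishing for $\rhom[\shr](\shn,\shm)$, which is once more the assertion. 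Alternatively one could verify cohomological $\h$-completeness of $\rhom[\shr](\shn,\shm)$ directly from the criterion of Proposition~\ref{pro:cohco1}, but that route is more cumbersome.
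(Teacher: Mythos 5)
Your argument is correct, and it is essentially the one in \cite{KS08}, where this proposition is proved (the present paper merely recalls it). The key isomorphism you write down is true, and the most economical way to see it, which also bypasses the bookkeeping you worry about, is to note that both sides are the tensor--hom adjoint $\rhom[\shr]\bl\shn\ltens[{\Z_X[\hbar]}]\Z_X[\hbar,\hbar^{-1}],\shm\br\simeq\rhom[\shr](\shn^\loc,\shm)$. From there the vanishing is immediate: $\shn^\loc$ lies in $\Der(\shr^\loc)$ and $\rhom[\shr](\shn^\loc,\shm)\simeq\rhom[{\shr^\loc}]\bl\shn^\loc,\rhom[\shr](\shr^\loc,\shm)\br= 0$ by the hypothesis on $\shm$, which is exactly the statement that cohomologically $\h$-complete objects form the right orthogonal to $\Der(\shr^\loc)$. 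This is also precisely the content of your ``variant,'' since $\shr^\loc\simeq\indlim_n\h^{-n}\shr$ identifies $\rhom[\shr](\shr^\loc,\scbul)$ with the derived limit of the tower under multiplication by $\h$.

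One small imprecision to flag: to derive the commutation you suggest taking a $K$-flat resolution of $\Z_X[\hbar,\hbar^{-1}]$ over $\Z_X[\hbar]$, but $K$-flatness is the right condition for computing $\ltens$, not for computing $\rhom$ in the first variable. A direct computation of $\rhom[{\Z_X[\hbar]}](\Z_X[\hbar,\hbar^{-1}],\scbul)$ would require either a $K$-injective resolution of the target over $\Z_X[\hbar]$ or the two-term free (telescope) resolution of $\Z_X[\hbar,\hbar^{-1}]$, and the latter needs some care because infinite products of sheaves are not exact. Passing through the tensor--hom adjunction as above makes this a non-issue and is, I think, what you should say; everything else in your proposal, including the observation that centrality of $\h$ is what makes the bimodule structures match up, is on target.
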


\begin{proposition}\label{pro:cohcodirim}
Let $f\cl X\to Y$ be a continuous map, and $\shm\in\Der(\Z_X[\hbar])$.
If $\shm$ is cohomologically $\h$-complete, then so is $\roim{f}\shm$.
\end{proposition}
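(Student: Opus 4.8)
The plan is to unwind the definition of cohomological $\h$-completeness and then transport it through the adjunction $(f^{-1},\roim f)$ by means of the standard commutation of $\roim f$ with internal $\rhom$. By Definition~\ref{def:cohco}, applied to the $\Z[\hbar]$-algebras $\Z_X[\hbar]$ and $\Z_Y[\hbar]$ (which have no $\h$-torsion), the hypothesis reads $\rhom[{\Z_X[\hbar]}](\Z_X[\hbar,\hbar^{-1}],\shm)\simeq0$, and what we must prove is $\rhom[{\Z_Y[\hbar]}](\Z_Y[\hbar,\hbar^{-1}],\roim f\shm)\simeq0$. First I would record the elementary identities $f^{-1}\Z_Y[\hbar]\simeq\Z_X[\hbar]$ and $f^{-1}\Z_Y[\hbar,\hbar^{-1}]\simeq\Z_X[\hbar,\hbar^{-1}]$: the first because $\Z_Y[\hbar]$ is a constant sheaf, the second because $f^{-1}$ commutes with inductive limits and $\Z_Y[\hbar,\hbar^{-1}]\simeq\indlim[n]\hbar^{-n}\Z_Y[\hbar]$. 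In particular $f^{-1}$ restricts to a functor $\Der(\Z_Y[\hbar])\to\Der(\Z_X[\hbar])$ left adjoint to $\roim f$.

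The key step is the isomorphism, for $\shn\in\Der(\Z_Y[\hbar])$ and $\shm\in\Der(\Z_X[\hbar])$,
\[
\roim f\,\rhom[{\Z_X[\hbar]}](f^{-1}\shn,\shm)\;\simeq\;\rhom[{\Z_Y[\hbar]}](\shn,\roim f\shm).
\]
Granting this and taking $\shn=\Z_Y[\hbar,\hbar^{-1}]$ (so that $f^{-1}\shn\simeq\Z_X[\hbar,\hbar^{-1}]$), its left-hand side equals $\roim f\,\rhom[{\Z_X[\hbar]}](\Z_X[\hbar,\hbar^{-1}],\shm)\simeq\roim f(0)\simeq0$ by the hypothesis, which is exactly the assertion. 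To prove the displayed isomorphism I would test both sides against an arbitrary $\shp\in\Der(\Z_Y[\hbar])$ and chain natural bijections on morphism groups: the $(f^{-1},\roim f)$-adjunction; the adjunction between $\ltens[{\Z_X[\hbar]}]$ and $\rhom[{\Z_X[\hbar]}]$; the commutation $f^{-1}(\shp\ltens[{\Z_Y[\hbar]}]\shn)\simeq f^{-1}\shp\ltens[{\Z_X[\hbar]}]f^{-1}\shn$, which holds because $f^{-1}$ is exact and hence preserves flatness; and then once more the $(f^{-1},\roim f)$-adjunction and the adjunction between $\ltens[{\Z_Y[\hbar]}]$ and $\rhom[{\Z_Y[\hbar]}]$. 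This identifies $\operatorname{Hom}(\shp,\roim f\,\rhom[{\Z_X[\hbar]}](f^{-1}\shn,\shm))$ with $\operatorname{Hom}(\shp,\rhom[{\Z_Y[\hbar]}](\shn,\roim f\shm))$ functorially in $\shp$, so the Yoneda lemma yields the isomorphism in $\Der(\Z_Y[\hbar])$. Note that no properness of $f$ is needed, as only the pair $(f^{-1},\roim f)$ enters.

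The obstacle I anticipate is technical rather than conceptual: $\shm$ is an arbitrary, possibly unbounded, object of $\Der(\Z_X[\hbar])$, so each of the adjunctions above must be used in the \emph{unbounded} derived categories; this is legitimate through K-injective and K-flat resolutions, but one cannot simply invoke the bounded statements. A less efficient alternative would be to apply Proposition~\ref{pro:cohco1} to $\roim f\shm$ using $H^i(U;\roim f\shm)\simeq H^i(f^{-1}(U);\shm)$; however, for $U$ ranging over the neighborhoods of a point of $Y$, the open sets $f^{-1}(U)$ need not form a neighborhood basis of any point of $X$, so this route does not reduce cleanly to the hypothesis on $\shm$, and I would not pursue it.
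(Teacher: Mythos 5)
Your proof is correct. The paper states this proposition without proof, citing [KS08], so there is no in-paper argument to compare against; but your reduction to the standard commutation $\roim f\,\rhom[{\Z_X[\hbar]}](f^{-1}\shn,\shm)\simeq\rhom[{\Z_Y[\hbar]}](\shn,\roim f\shm)$ (cf.\ \cite[Prop.~2.6.6]{KS90} and its extension to unbounded derived categories), applied to $\shn=\Z_Y[\hbar,\hbar^{-1}]$ with $f^{-1}\shn\simeq\Z_X[\hbar,\hbar^{-1}]$, is exactly the expected route, and your remark on why the alternative via Proposition~\ref{pro:cohco1} does not reduce cleanly is also sound.
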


\subsubsection*{Reductions to $\h=0$}
Now we assume that $X$ is a Hausdorff locally compact topological space.

By a basis $\BB$ of compact subsets 
of $X$, we mean a family of compact subsets such that for any $x\in X$ and
any open neighborhood $U$ of $x$, there exists $K\in\BB$ such that 
$x\in\Int(K)\subset U$.

Let $\sha$ be a $\Z[\hbar]$-algebra, and recall that we set $\shao = \sha/\h\sha$.
Consider the following conditions:
\begin{itemize}
\item[(i)] $\sha$ has no $\h$-torsion and is $\h$-complete,
\item[(ii)] $\shao$ is a left Noetherian ring,
\item[(iii)]
there exists a basis $\BB$ of compact subsets of $X$ and a prestack 
$U\mapsto\mdgd[\shao\vert_U]$ ($U$ open in $X$) such that 
\banum
\item for any $K\in \BB$ and an open subset $U$ such that $K\subset U$,
there exists $K'\in\BB$ such that
$K\subset\Int(K')\subset K'\subset U$,
\item $U\mapsto \mdgd[\shao\vert_U]$ is a full subprestack of 
$U\mapsto \mdcoh[\shao\vert_U]$,
\item 
for any $K\in\BB$, any open set $U$ containing $K$, any 
$\shm\in\mdgd[\shao\vert_U]$
and any $j>0$, one has $H^j(K;\shm)=0$,
\item for an open subset $U$ and $\shm\in\mdcoh[\shao\vert_U]$,
if $\shm\vert_V$ belongs to $\mdgd[\shao\vert_V]$ for any 
relatively compact open subset $V$ of $U$,
then $\shm$ belongs to $\mdgd[\shao\vert_U]$,\label{cond:exh}
\item
for any $U$ open in $X$, 
$\mdgd[\shao\vert_U]$ is stable by subobjects, quotients and extensions
in $\mdcoh[\shao\vert_U]$,
\item for any $\shm\in\mdcoh[\shao\vert_U]$, there exists an open
covering 
$U=\bigcup_iU_i$ such that
$\shm\vert_{U_i}\in\mdgd[\shao\vert_{U_i}]$,
\label{goodlocal}
\item $\shao\in\mdgd[\shao]$,\label{cond:good}
\eanum 
\item[(iii)']
there exists a basis $\BB$ of open subsets of $X$ such that 
for any $U\in\BB$, any 
$\shm\in\Mod_\coh(\shao\vert_U)$
and any $j>0$, one has $H^j(U;\shm)=0$.
\end{itemize}

We will suppose that $\sha$ and $\shao$ satisfy either Assumption~\ref{as:FDring} or Assumption~\ref{as:DQring}.

\begin{assumption}\label{as:FDring}
$\sha$ and $\shao$ satisfy conditions (i), (ii) and (iii) above.
\end{assumption}

\begin{assumption}\label{as:DQring}
$\sha$ and $\shao$ satisfy conditions (i), (ii) and (iii)' above.
\end{assumption}

\begin{theorem}\label{th:formalfini1}
\bnum
\item
$\sha$ is a left Noetherian ring.
\item 
Any coherent $\sha$-module $\shm$ is $\h$-complete.
\item 
Let $\shm\in \Derb_\coh(\sha)$. Then $\shm$ is cohomologically $\h$-complete.
\enum
\end{theorem}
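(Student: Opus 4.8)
The plan is to prove the three statements in order, the first feeding the second and the second the third; apart from standard facts on complete filtered and Noetherian rings, the real work is sheaf-theoretic, and conditions~(iii) and~(iii)$'$ are what make these sheaf-theoretic passages go through. For the first statement, one equips $\sha$ with its $\h$-adic filtration: since $\sha$ has no $\h$-torsion, multiplication by $\h^n$ is an isomorphism $\sha\simeq\h^n\sha$, so $\h^n\sha/\h^{n+1}\sha\simeq\shao$ and the associated graded ring is $\bigoplus_{n\ge0}\h^n\sha/\h^{n+1}\sha\simeq\shao[\h]$, the polynomial ring over $\shao$ in the central variable $\h$, which is left Noetherian by condition~(ii) and the Hilbert basis theorem. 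One then runs the classical argument: a graded left ideal of $\shao[\h]$ is finitely generated, one lifts finitely many homogeneous generators to elements of a given left ideal of $\sha$, and a successive-approximation argument---convergent because $\sha$ is $\h$-complete (condition~(i))---shows they generate the ideal. As $\sha$ is a sheaf of rings, this is run sheaf-theoretically, using the basis of condition~(iii)/(iii)$'$.

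For the second statement, $\sha$ is now known to be Noetherian and is $\h$-complete; it is a standard fact, proved via the Artin--Rees lemma, that $\h$-adic completion is exact on finitely generated modules over an $\h$-adically complete Noetherian ring, so that such modules are $\h$-complete. Applying this locally---coherent modules are locally finitely presented and finite free $\sha$-modules are $\h$-complete since $\sha$ is---gives $\shm\simeq\sprolim[n]\shm/\h^n\shm$ for every coherent $\sha$-module $\shm$; once more the passage from local to global over the sheaf $\sha$ uses the basis of condition~(iii)/(iii)$'$.

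For the third statement, recall that the cohomologically $\h$-complete objects form a triangulated subcategory of $\Der(\sha)$, so by the truncation triangles it suffices to treat a single coherent $\sha$-module $\shm$. One splits $\shm$ by the exact sequence $0\to\shm'\to\shm\to\shm''\to0$ with $\shm'$ the $\h$-torsion submodule---coherent because $\sha$ is Noetherian, and locally annihilated by a power of $\h$, again by Noetherianity---and $\shm''=\shm/\shm'$ coherent with no $\h$-torsion. For $\shm'$: writing $\sha^\loc=\sha[\h^{-1}]$ as the direct limit of the system $\sha\xrightarrow{\h}\sha\xrightarrow{\h}\cdots$ exhibits $\rhom[\sha](\sha^\loc,\shm')$ as the homotopy limit of a tower of copies of $\shm'$ whose transition map is $\h$, and this vanishes since that map is locally nilpotent, so $\shm'$ is cohomologically $\h$-complete. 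For $\shm''$: it is $\h$-complete (second statement) and has no $\h$-torsion, so by Corollary~\ref{cor:cohcosheaves} it is enough to show $H^j(U;\shm'')=0$ for all $j>0$ and all $U$ in the basis $\BB$. For this, one writes $\shm''\simeq\sprolim[n]\shm''/\h^n\shm''$; each $\shm''/\h^n\shm''$ carries the finite filtration by the $\h^k\shm''/\h^n\shm''$, whose successive quotients $\h^k\shm''/\h^{k+1}\shm''$ are coherent $\shao$-modules, and the latter have vanishing positive cohomology on $\BB$---directly by condition~(iii)$'$, or by a short cohomological argument from condition~(iii). Hence $H^j(U;\shm''/\h^n\shm'')=0$ for $j>0$; the transition maps on sections being then surjective, the Mittag-Leffler condition lets one pass to the projective limit and obtain $H^j(U;\shm'')=0$. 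Combining the two cases shows $\shm$ cohomologically $\h$-complete, hence so is every object of $\Derb_\coh(\sha)$.

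The step I expect to be the main obstacle is the cohomology-vanishing assertion for $\shm''$ in the third statement: this is exactly where the rather elaborate hypotheses (iii)/(iii)$'$ are genuinely used, and one has to transfer the vanishing carefully from coherent $\shao$-modules, where it is postulated, up to $\h$-complete $\sha$-modules through the projective limit, controlling the relevant $\varprojlim^1$-terms (and, under Assumption~\ref{as:FDring}, passing between compact subsets and open neighborhoods). A secondary but pervasive difficulty is that everything takes place over a sheaf of rings, so each ``complete Noetherian ring'' statement has to be upgraded to its sheaf-theoretic counterpart using the chosen basis.
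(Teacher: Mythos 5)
The paper itself gives no proof of this theorem: Section~1 is an explicit review of material from \cite{KS08}, and Theorem~\ref{th:formalfini1} is quoted there as a known result. So there is nothing in the present paper to compare your argument against; one has to judge your proposal on its own.

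Your overall architecture is sound: the order $(1)\Rightarrow(2)\Rightarrow(3)$ is forced, the $\h$-adic filtration with $\gr^\h\sha\simeq\shao[\h]$ is the right tool for $(1)$, and in $(3)$ splitting $\shm$ into the $\h$-torsion piece (handled by a direct computation of $\rhom[\sha](\sha^\loc,\shm_\htor)$ as a homotopy limit over $\h$, which vanishes by local nilpotence) and the $\h$-torsion-free piece (handled via Corollary~\ref{cor:cohcosheaves}) is a clean route. The genuine gap is in part~(1), and it is more than a routine ``upgrade to sheaves.'' The successive-approximation argument you describe proves that each stalk $\sha_x$ is a left Noetherian ring. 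But the assertion to be proved is that the \emph{sheaf} $\sha$ is left Noetherian, which in this context means that $\sha$ is coherent and that ascending chains of coherent left ideals are locally stationary; for this one must be able to lift generators and relations of symbol ideals not stalkwise but over members of the basis $\BB$ (compacts under Assumption~\ref{as:FDring}, opens under Assumption~\ref{as:DQring}), and the convergence of the successive approximation must be established on $\Gamma(K;\scbul)$ rather than on stalks. This is exactly what conditions (iii)(c)/(iii)$'$ are designed to permit, via vanishing of higher cohomology of coherent $\shao$-modules on $\BB$ and a Mittag--Leffler control of the projective system $\Gamma(K;\shi/\h^n\shi)$; you flag this, but your write-up treats it as a formal transfer rather than as the core of the matter. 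A secondary, smaller issue is the analogous projective-limit step in part~(3): going from $H^j(U;\shm''/\h^n\shm'')=0$ to $H^j(U;\shm'')=0$ requires checking that $R^1\prolim$ vanishes at the sheaf level and that $R\sect(U;\scbul)$ commutes with $R\prolim$ — it does work, but it must be said. Your concluding paragraph correctly identifies where the difficulties lie; the proposal is a good sketch but stops short of the finiteness argument that is the real content of the theorem.
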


\begin{corollary}\label{cor:conservative1}
The functor $\grh\cl\Derb_\coh(\sha)\to\Derb_\coh(\shao)$ is
conservative.
\end{corollary}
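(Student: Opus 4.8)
The plan is to deduce the corollary directly from Theorem~\ref{th:formalfini1} and Proposition~\ref{pro:conserv}. Since $\grh$ is a triangulated functor between triangulated categories, in order to prove that it is conservative it suffices to prove that it detects the zero object. Indeed, a morphism $f\cl\shm\to\shn$ in $\Derb_\coh(\sha)$ is an isomorphism if and only if the third term $\shp$ of a distinguished triangle with base $f$ vanishes; and, $\grh$ being triangulated, $\grh(\shp)$ is (isomorphic to) the third term of the corresponding triangle with base $\grh(f)$, so that $\grh(f)$ is an isomorphism if and only if $\grh(\shp)=0$. Here we use that $\Derb_\coh(\sha)$ is triangulated, i.e.\ that coherent cohomology is stable under the formation of third terms of triangles, which holds because $\sha$ is Noetherian by Theorem~\ref{th:formalfini1}(1) (this is also what guarantees, together with the Noetherianity of $\shao$ from condition~(ii), that $\grh$ really lands in $\Derb_\coh(\shao)$).

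It then remains to show: if $\shp\in\Derb_\coh(\sha)$ satisfies $\grh(\shp)=0$, then $\shp=0$. By Theorem~\ref{th:formalfini1}(3), the object $\shp$ is cohomologically $\h$-complete. Since $\grh(\shp)=\gr(\shp)=0$, Proposition~\ref{pro:conserv} applies verbatim and yields $\shp=0$, which finishes the argument.

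The substantive work is not in this corollary but in Theorem~\ref{th:formalfini1}(3) --- itself resting on Assumption~\ref{as:FDring} or Assumption~\ref{as:DQring} through Corollary~\ref{cor:cohcosheaves} --- and in Proposition~\ref{pro:conserv}. Consequently the only points to be careful about here are the purely formal reduction from ``conservative'' to ``reflects~$0$'' for a triangulated functor, and checking that the hypotheses of Proposition~\ref{pro:conserv}, namely cohomological $\h$-completeness and the vanishing of $\gr$, are indeed satisfied by $\shp$ --- which is precisely what the two cited results provide. I do not anticipate any genuine obstacle.
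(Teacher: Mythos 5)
Your proof is correct and is exactly the argument the paper has in mind: the corollary is stated immediately after Theorem~\ref{th:formalfini1} with no separate proof, the intended deduction being precisely that Theorem~\ref{th:formalfini1}(3) places every object of $\Derb_\coh(\sha)$ in the cohomologically $\h$-complete subcategory, on which $\gr$ reflects zero by Proposition~\ref{pro:conserv}, and the passage from ``reflects zero'' to ``conservative'' is the standard reduction for a triangulated functor via the cone. Nothing to add.
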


\begin{theorem}\label{th:formalfini2}
Let $\shm\in\Der[+](\sha)$ and assume:
\banum
\item[{\rm(a)}]
$\shm$ is cohomologically $\h$-complete,
\item[{\rm(b)}]
$\gr(\shm)\in\Der[+]_\coh(\shao)$.
\eanum
Then,  $\shm\in\Der[+]_\coh(\sha)$ and for all $i\in\Z$ we have the
isomorphism
\eqn
&&H^i(\shm)\isoto\prolim[n]H^i(\sha/\h^n\sha\ltens[\sha]\shm).
\eneqn
\end{theorem}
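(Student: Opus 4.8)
The plan is to bootstrap from the graded information $\gr(\shm)$ up to $\shm$ itself, using cohomological $\h$-completeness as the "passage to the limit" tool, exactly as the displayed formula $H^i(\shm)\isoto\prolim[n]H^i(\sha/\h^n\sha\ltens[\sha]\shm)$ suggests. First I would record the standard consequence of cohomological $\h$-completeness: for $\shm\in\Der[+](\sha)$ cohomologically $\h$-complete, one has $\shm\isoto\rlim[n]\bl\sha/\h^n\sha\lltens[\sha]\shm\br$, the derived projective limit of the truncations. This is essentially the defining property reexpressed via the triangle relating $\shm$, $\shm^\loc$ and the completion; since $\rhom[\sha](\sha^\loc,\shm)=0$, the natural morphism to the derived limit is an isomorphism. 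Then the whole problem reduces to controlling each term $\sha/\h^n\sha\lltens[\sha]\shm$ and the $\rlim$.

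The key step is an induction on $n$ showing that $\sha/\h^n\sha\lltens[\sha]\shm$ lies in $\Der[+]_\coh(\shao)$ — more precisely in $\Der[+]_\coh(\sha/\h^n\sha)$, viewed in $\Der[+]_\coh(\shao)$ by restriction of scalars, which is harmless for coherence since $\shao$ is Noetherian by Theorem~\ref{th:formalfini1}. For $n=1$ this is hypothesis (b). For the inductive step I would use the short exact sequence of rings $0\to\h^n\sha/\h^{n+1}\sha\to\sha/\h^{n+1}\sha\to\sha/\h^n\sha\to0$, noting that since $\sha$ has no $\h$-torsion the left term is isomorphic to $\shao$ as a left $\sha$-module (multiplication by $\h^n$). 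Tensoring with $\shm$ gives a distinguished triangle
\eqn
\gr(\shm)\to \sha/\h^{n+1}\sha\lltens[\sha]\shm \to \sha/\h^n\sha\lltens[\sha]\shm \to[+1]
\eneqn
whose outer terms lie in $\Der[+]_\coh(\shao)$ by hypothesis (b) and the induction hypothesis; hence so does the middle term. This also shows each $\sha/\h^n\sha\lltens[\sha]\shm$ is bounded below by a bound independent of $n$ (the cohomology is concentrated in a fixed range determined by that of $\gr(\shm)$), which is what makes the limit behave well.

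Finally I would pass to the limit. The transition maps in the pro-system $\{H^i(\sha/\h^n\sha\lltens[\sha]\shm)\}_n$ are maps of coherent $\shao$-modules, and from the triangle above their kernels and cokernels are subquotients of $H^{i}(\gr\shm)$ and $H^{i+1}(\gr\shm)$, hence coherent; an argument with the long exact sequences shows the system is essentially constant-modulo-bounded, so $\Rlim^1$ vanishes in each degree and $\prolim[n]H^i$ is coherent over $\shao$, hence over $\sha$ (it is annihilated by some power of $\h$? — no: rather one checks it is $\h$-complete and its $\gr$ is coherent, then invokes Theorem~\ref{th:formalfini2} in the already-established module case, or more simply Theorem~\ref{th:formalfini1}(1) plus the fact that a finitely generated module over a Noetherian ring is coherent, the generators being lifted from $\gr\shm$ by $\h$-completeness). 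Combining the Milnor sequence for $\rlim$ with $\shm\isoto\rlim[n](\sha/\h^n\sha\lltens[\sha]\shm)$ then yields simultaneously that $H^i(\shm)\simeq\prolim[n]H^i(\sha/\h^n\sha\lltens[\sha]\shm)$ and that this group is coherent, giving $\shm\in\Der[+]_\coh(\sha)$. The main obstacle is the bookkeeping in this last step: one must show the $\Rlim^1$ terms vanish and that the limit is genuinely coherent rather than merely a countable inverse limit of coherent modules — this is where $\h$-completeness of coherent $\sha$-modules (Theorem~\ref{th:formalfini1}(2)) and the uniform boundedness from the induction are both essential, and where care is needed because $\sha$ is only a sheaf of rings, so "coherent" must be checked via the prestack structure in condition (iii)/(iii)' rather than by a naive generators-and-relations argument.
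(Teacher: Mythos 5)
Note first that the paper does not prove this theorem: it is stated in the section ``Formal deformations (after~\cite{KS08})'' and cited from there, so there is no proof in the present text to compare against. On the merits of your sketch: the first two steps are sound. Re-expressing cohomological $\h$-completeness as $\shm\isoto\rlim_n(\sha/\h^n\sha\lltens[\sha]\shm)$ is a correct translation of the defining condition $\rhom[\sha](\sha^\loc,\shm)=0$ (via the triangle $\sha\to\sha^\loc\to\indlim_n\sha/\h^n\sha\to[+1]$ and the identification $\rhom[\sha](\sha/\h^n\sha,\shm)[1]\simeq\sha/\h^n\sha\lltens[\sha]\shm$), and the inductive triangle $\gr\shm\to\sha/\h^{n+1}\sha\lltens[\sha]\shm\to\sha/\h^n\sha\lltens[\sha]\shm\to[+1]$ correctly shows each truncation lies in $\Der[+]_\coh(\shao)$ with a uniform lower cohomological bound.

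The genuine gap is the passage to the limit, which you yourself flag as ``the main obstacle'' without resolving it, and the resolution you gesture at is circular. Knowing that the kernels and cokernels of the transition maps $H^i_{n+1}\to H^i_n$ are subquotients of $H^i(\gr\shm)$ and $H^{i+1}(\gr\shm)$ does \emph{not} imply that the images stabilize: a system with all transition maps equal to $\h\colon\coro\to\coro$ has zero kernels and coherent cokernels yet fails Mittag--Leffler. So ``essentially constant-modulo-bounded, hence $\Rlim^1=0$'' is not a proof. Worse, the only justification offered for coherence of $\prolim_n H^i_n$ is either ``Theorem~\ref{th:formalfini2} in the already-established module case'' (the theorem being proved) or a Nakayama-style lifting of generators that itself presupposes $\h$-completeness of the limit sheaf, which is what needs proof. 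The actual argument of~\cite{KS08} proceeds in the opposite order: it first establishes coherence of $\shm$ directly, by lifting local free resolutions of $\gr\shm$ to $\sha$-level and using conditions~(iii)/(iii)' (the prestack $\mdgd$, Stein compacts) to control sections and glue finiteness; the displayed $\prolim$ isomorphism is then a \emph{consequence} of coherence together with Theorem~\ref{th:formalfini1}~(ii), not a tool for proving it. You acknowledge that coherence ``must be checked via the prestack structure in condition~(iii)'' but never actually invoke it, and without it the sheaf-theoretic finiteness does not follow from the pointwise or sectionwise estimates.
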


\begin{theorem}\label{th:flat2}
Assume that $\sha_0^\rop = \sha^\rop/\hbar\sha^\rop$ is a Noetherian ring
and the flabby dimension of $X$ is finite.
Let $\shm$ be an $\sha$-module.
Assume the following conditions:
\banum
\item $\shm$ has no $\h$-torsion,
\item $\shm$ is cohomologically $\h$-complete, 
\item $\shm/\hbar\shm$ is a flat $\shao$-module.
\eanum
Then $\shm$ is a flat $\sha$-module.

If moreover $\shm/\h\shm$ is a faithfully flat $\shao$-module, 
then $\shm$ is a faithfully flat $\sha$-module.
\end{theorem}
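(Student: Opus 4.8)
The plan is to prove that $\scbul \ltens[\sha] \shm$ is exact, i.e.\ that $\shn \ltens[\sha] \shm$ is concentrated in degree $0$ for every right $\sha$-module $\shn$; this exactly says that $\shm$ is flat. First, $\sha$ is right Noetherian: the opposite ring $\sha^\rop$ satisfies Assumption~\ref{as:FDring} or Assumption~\ref{as:DQring}---its condition~(ii) being the hypothesis that $\sha_0^\rop$ is Noetherian, while (i) and (iii) (or~(iii)') do not change under passage to the opposite ring---so $\sha^\rop$ is Noetherian by Theorem~\ref{th:formalfini1}. It therefore suffices to treat a \emph{coherent} right $\sha$-module $\shn$, since flatness of $\shm$ is a stalkwise condition tested by the vanishing of $H^{-j}\bigl((\sha^\rop/\shi) \ltens[\sha] \shm\bigr)$ for $j>0$ and $\shi$ a coherent right ideal of $\sha$. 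For such $\shn$, the increasing sequence of coherent subsheaves $\ker(\h^k \cl \shn \to \shn)$, $k \geq 1$, is stationary ($\sha$ being Noetherian), so the subsheaf $\shn'$ of $\h$-torsion sections of $\shn$ is coherent and annihilated by a power of $\h$. Applying $\scbul \ltens[\sha] \shm$ to $0 \to \shn' \to \shn \to \shn/\shn' \to 0$ and taking the long exact sequence, we reduce to the two cases: (A) $\shn$ is annihilated by $\h^n$ for some $n \geq 1$; (B) $\shn$ is coherent and has no $\h$-torsion.

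In case (A) we induct on $n$. If $n = 1$, $\shn$ is an $\shao$-module, so $\shn \ltens[\sha] \shm \simeq \shn \ltens[\shao] \gr(\shm)$; by hypothesis~(a) the object $\gr(\shm)$ is isomorphic to $\shm/\h\shm$, concentrated in degree $0$, and by hypothesis~(c) it is flat over $\shao$, so $\shn \ltens[\shao] \gr(\shm) \simeq \shn \tens[\shao] (\shm/\h\shm)$ is concentrated in degree $0$. If $n > 1$, apply the case $n = 1$ and the induction hypothesis to $0 \to \h^{n-1}\shn \to \shn \to \shn/\h^{n-1}\shn \to 0$, whose outer terms are annihilated by $\h$ and by $\h^{n-1}$ respectively.

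Case (B) is the heart of the matter. Set $\shp \eqdot \shn \ltens[\sha] \shm \in \Der[{\le 0}](\Z_X[\hbar])$. The crucial point is that $\shp$ is cohomologically $\h$-complete. Since $\sha$ is right Noetherian, $\shn$ has a resolution by free right $\sha$-modules of finite rank; hence $\shp$ is represented by a bounded-above complex whose terms are finite direct sums of copies of $\shm$, hence cohomologically $\h$-complete. Because $X$ has finite flabby dimension, such a complex is a cohomologically $\h$-complete object---it is the homotopy limit of its finite stupid truncations, which are cohomologically $\h$-complete, and cohomological $\h$-completeness is stable under homotopy limits. On the other hand, since $\shn$ has no $\h$-torsion, the reduction $\shp \ltens[{\Z_X[\hbar]}] (\Z_X[\hbar]/\h^n\Z_X[\hbar])$ is isomorphic to $(\shn/\h^n\shn) \ltens[\sha] \shm$, which is concentrated in degree $0$ by case~(A) (as $\shn/\h^n\shn$ is annihilated by $\h^n$). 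As $\shp$ is cohomologically $\h$-complete it is isomorphic to the homotopy limit $\mathrm{holim}_n\, \bigl(\shp \ltens[{\Z_X[\hbar]}] (\Z_X[\hbar]/\h^n\Z_X[\hbar])\bigr)$ in $\Der(\Z_X[\hbar])$; being the homotopy limit of objects concentrated in degree $0$, and the product functor on sheaves being left exact, it lies in $\Der[{\ge 0}]$. Therefore $\shp$ is concentrated in degree $0$, and $\shm$ is flat. The main obstacle is exactly the cohomological $\h$-completeness of $\shp$; this is where hypothesis~(b) and the finiteness of the flabby dimension of $X$ are used.

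Finally, assume $\shm/\h\shm$ faithfully flat over $\shao$. As $\shm$ is flat, it is enough to show $\shn \tens[\sha] \shm \neq 0$ for $\shn \neq 0$; and, $\shm$ being flat, any nonzero $\shn$ locally contains a nonzero coherent submodule, so we may take $\shn$ coherent and nonzero. Then $\shn \in \Derb_\coh(\sha^\rop)$ is cohomologically $\h$-complete by Theorem~\ref{th:formalfini1}, so $\gr(\shn) \neq 0$ by Proposition~\ref{pro:conserv}. Since $\shm$ is flat, $\shn \tens[\sha] \shm = \shn \ltens[\sha] \shm$, and $\gr(\shn \tens[\sha] \shm) \simeq \gr(\shn) \ltens[\shao] \gr(\shm)$ by~\eqref{eq:grFtens}, the latter being $\simeq \gr(\shn) \tens[\shao] (\shm/\h\shm)$, which is nonzero because $\shm/\h\shm$ is faithfully flat over $\shao$; hence $\shn \tens[\sha] \shm \neq 0$.
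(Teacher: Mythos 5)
The paper does not supply a proof of Theorem~\ref{th:flat2}: it is quoted from~\cite{KS08} in the review section, so there is no written argument here to compare against. That said, your reconstruction follows the classical plan (the local criterion of flatness adapted to sheaves via derived $\h$-completion), which is almost certainly what~\cite{KS08} does, and the structure of your proof is sound. Two points deserve more care.

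First, the very first step (``$\sha$ is right Noetherian'') is stated too casually. You claim that conditions~(iii) and~(iii)' ``do not change under passage to the opposite ring,'' but this is not true: both conditions are assertions about categories of left $\sha_0$-modules (good modules, or cohomology vanishing of coherent modules on a base), and the analogous assertions for right $\sha_0$-modules are genuinely separate hypotheses. In the applications (e.g.\ $\sha=\Dh$) they do hold on both sides, and the theorem statement implicitly assumes as much, but as written your argument does not deduce right Noetherianity of $\sha$ from the stated hypotheses alone.

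Second, the heart of Case~(B) — that a bounded-above complex whose terms are finite direct sums of $\shm$ is cohomologically $\h$-complete, and the subsequent identification $\shp\simeq\mathrm{holim}_n\,\bigl(\shp\ltens[{\Z_X[\h]}]\Z_X[\h]/\h^n\bigr)$ — both require the finite flabby dimension, but your justification is compressed. The cleaner way to state it is that with flabby dimension $\le d$, the functor $\rhom[\shr](\shr^\loc,\scdot)$ has bounded cohomological amplitude (using the two-term free resolution of $\shr^\loc$ and the bounded amplitude of derived products over $\Op(X)$), so the hyper-Ext spectral sequence for a bounded-above complex of cohomologically $\h$-complete terms converges and degenerates. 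The identification with the homotopy limit is the standard equivalence between ``cohomologically $\h$-complete'' (right orthogonal to $\Der(\shr^\loc)$) and ``derived $\h$-complete''; this equivalence is used but not verified, and while it is a known fact in derived completion theory, in the sheaf setting it relies on the same finiteness of amplitude and should be invoked explicitly (Theorem~\ref{th:formalfini2}, for example, gives the coherent case). With these two places tightened, the proof is complete, and the remaining steps — the reduction to coherent $\shn$, the torsion/torsion-free d\'evissage, Case~(A), the estimate $\mathrm{holim}$ of objects in $\Der[\ge0]$ lies in $\Der[\ge0]$, and the faithful flatness at the end — are all correct as written.
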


\begin{theorem}\label{th:hddim}
Let $d\in \N$. Assume that $\shao$ is $d$-syzygic, {\em i.e.,}~that
any coherent $\shao$-module locally admits 
a projective resolution of length $\leq d$ by free $\shao$-modules of
finite rank. 
Then
\banum
\item
$\sha$ is $(d+1)$-syzygic.
\item
Let $\shm^\scbul$ be a complex of $\sha$-modules concentrated in 
degrees $[a,b]$ and with coherent cohomology groups. 
Then, locally there exists a 
quasi-isomorphism
$\shl^\scbul\to \shm^\scbul$ where $\shl^\scbul$ is a complex of free
$\sha$-modules of finite rank concentrated in degrees $[a-d-1,b]$.
\eanum
\end{theorem}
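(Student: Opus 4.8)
The plan is to isolate one lifting statement, prove it by induction, and derive both parts from it; all assertions are local on $X$. Throughout I would use that $\sha$ is left Noetherian, that coherent $\sha$-modules are $\h$-complete, and that objects of $\Derb_\coh(\sha)$ are cohomologically $\h$-complete (Theorem~\ref{th:formalfini1}), together with the conservativity of $\gr$ on cohomologically $\h$-complete objects (Proposition~\ref{pro:conserv}). The statement to isolate, call it $(\star)$, is: \emph{if $\shm$ is a coherent $\sha$-module without $\h$-torsion and $\shm/\h\shm$ admits, locally, a resolution of length $\le e$ by free $\shao$-modules of finite rank, then $\shm$ admits, locally, a resolution of length $\le e$ by free $\sha$-modules of finite rank.} Both (a) and (b) then follow by applying $(\star)$ with $e=d$ to well-chosen $\h$-torsion-free coherent modules.

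To prove $(\star)$ I would induct on $e$. As $\shm$ has no $\h$-torsion, $\h\cl\shm\to\shm$ is injective, so $\gr(\shm)$ is the coherent $\shao$-module $\shm/\h\shm$ in degree $0$. Fix locally a free resolution of $\shm/\h\shm$ of length $\le e$, lift its degree-$0$ term to a free $\sha$-module $\shl$ of the same finite rank, and lift the augmentation to $\phi\cl\shl\to\shm$. Since the reduction $\shl/\h\shl\to\shm/\h\shm$ of $\phi$ is the (surjective) augmentation, the cokernel $C$ of $\phi$ satisfies $C=\h C$; being a quotient of $\shm$, hence coherent, hence $\h$-complete, $C$ vanishes, so $\phi$ is onto. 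Put $\shm'=\ker\phi$, a coherent $\sha$-module without $\h$-torsion (it lies in $\shl$). Applying $\gr$ to $0\to\shm'\to\shl\to\shm\to0$ and using $H^{-1}\gr(\shm)=0$ and $H^{-1}\gr(\shl)=0$ (absence of $\h$-torsion), one gets a short exact sequence $0\to\shm'/\h\shm'\to\shl/\h\shl\to\shm/\h\shm\to0$ whose right-hand map is the chosen augmentation; thus $\shm'/\h\shm'$ is its kernel, which admits a free $\shao$-resolution of length $\le e-1$ (truncate the fixed resolution). If $e=0$ this kernel is $0$, so $\gr(\shm')=0$, and $\shm'$ being cohomologically $\h$-complete, Proposition~\ref{pro:conserv} gives $\shm'=0$, i.e.\ $\shm\simeq\shl$ is free. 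If $e\ge1$, the induction hypothesis applied to $\shm'$ gives a free $\sha$-resolution of $\shm'$ of length $\le e-1$, and splicing it with $0\to\shm'\to\shl\to\shm\to0$ proves $(\star)$.

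For (a), given a coherent $\sha$-module $\shm$ I would pick locally an epimorphism $\sha^{r_0}\to\shm$; its kernel is coherent and $\h$-torsion-free, its reduction modulo $\h$ is coherent over $\shao$ hence (as $\shao$ is $d$-syzygic) admits locally a free resolution of length $\le d$, so $(\star)$ provides a free resolution of the kernel of length $\le d$, and splicing yields a free resolution of $\shm$ of length $\le d+1$. For (b), regard $\shm^\scbul$ as an object of $\Derb_\coh(\sha)$; since $\sha$ is Noetherian and $H^i(\shm^\scbul)=0$ for $i>b$, take locally a quasi-isomorphism $\shp^\scbul\to\shm^\scbul$ with $\shp^\scbul$ a complex of free $\sha$-modules of finite rank and $\shp^i=0$ for $i>b$ (a standard construction, by descending induction on the cohomological degree). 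As $H^i(\shm^\scbul)=0$ for $i<a$, the complex $\shp^\scbul$ is exact in degrees $<a$; let $B$ be the image of $\shp^{a-1}\to\shp^a$, a coherent $\h$-torsion-free $\sha$-module (a submodule of $\shp^a$), and apply $(\star)$ with $e=d$ to get locally a free resolution $0\to\shl^{a-d-1}\to\cdots\to\shl^{a-1}\to B\to0$ of finite rank. Splicing this, through $\shl^{a-1}\twoheadrightarrow B\hookrightarrow\shp^a$, with $\shp^a\to\cdots\to\shp^b$ produces a complex $\shl^\scbul$ of free $\sha$-modules of finite rank concentrated in degrees $[a-d-1,b]$; the usual comparison of the two resolutions of $B$ (lifting $\shl^{a-1}\to B$ along $\shp^{a-1}\twoheadrightarrow B$ and continuing downward) gives a chain map $\shl^\scbul\to\shp^\scbul$ that is a quasi-isomorphism, as one checks on cohomology degree by degree, and composing with $\shp^\scbul\to\shm^\scbul$ gives the required $\shl^\scbul\to\shm^\scbul$.

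The real content is $(\star)$, and within it the base case $e=0$: the classical input ``a finitely generated module over a local ring that becomes free modulo the maximal ideal is free'' is replaced here by the combination of the $\h$-adic Nakayama lemma (through $\h$-completeness of coherent $\sha$-modules), the vanishing of $H^{-1}\gr$ on $\h$-torsion-free modules, and the conservativity of $\gr$ on cohomologically $\h$-complete objects. The inductive step and the splicing and comparison arguments of (a) and (b) are routine; the one point to be careful about is to run the lifting in $(\star)$ starting from a fixed $\shao$-resolution, so that the reduction modulo $\h$ of the syzygy module is \emph{exactly} the expected $\shao$-syzygy and no Schanuel-type correction (which would force a detour through ``projective $=$ locally free'') is needed.
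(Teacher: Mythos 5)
The paper states Theorem~\ref{th:hddim} without proof, as one of the results recalled from~\cite{KS08}; there is therefore no in-paper argument to compare yours against, and I can only assess your proposal on its own merits.

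Your argument is correct, and it is the natural one. The reduction to the lifting statement $(\star)$ is sound, and your handling of the base case $e=0$ (torsion-free, coherent, $\gr=0$ forces the module to vanish via $\h$-completeness/conservativity) is exactly the right replacement for the classical ``free modulo the maximal ideal $\Rightarrow$ free'' step. The short exact sequence $0\to\shm'/\h\shm'\to\shl/\h\shl\to\shm/\h\shm\to0$ you extract from $\gr$ is justified precisely by $H^{-1}\gr(\shl)=H^{-1}\gr(\shm)=0$ for $\h$-torsion-free modules, and the point you flag explicitly --- that one must run the lifting against a \emph{fixed} $\shao$-resolution so that the reduction mod $\h$ of $\shm'=\ker\phi$ is literally the first syzygy of that chosen resolution --- is indeed the place where a sloppier argument (via Schanuel) would get stuck, since ``projective $\Rightarrow$ locally free'' is not available for free. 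Part~(a) follows by a single further epimorphism $\sha^{r_0}\twoheadrightarrow\shm$ with torsion-free kernel, as you say, and for part~(b) your choice of the torsion-free module $B=\im(\shp^{a-1}\to\shp^a)$ inside the free term $\shp^a$ is exactly what makes $(\star)$ applicable; the splicing and the comparison map $\shl^\scbul\to\shp^\scbul$ are routine and you check the quasi-isomorphism correctly degree by degree. The only (harmless) point worth making explicit is that each lifting step may shrink the open neighbourhood, but there are only finitely many such steps since everything is bounded, so the local conclusion is preserved.
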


\begin{proposition}\label{prop:cohom_and_grad}
Let $\shm\in\Derb_\coh(\sha)$ and let $a\in \Z$. The conditions below
are equivalent: 
\bnum
\item $H^a(\gr(\shm)) \simeq 0$,
\item $H^a(\shm) \simeq 0$ and $H^{a+1}(\shm)$ has no $\h$-torsion.
\enum
\end{proposition}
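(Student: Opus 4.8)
The plan is to exploit the distinguished triangle obtained from multiplication by $\h$, namely $\shm\xrightarrow{\,\h\,}\shm\to\gr(\shm)\xrightarrow{+1}$ in $\Derb_\coh(\sha)$, and read off the long exact sequence in cohomology. This gives, for every $a$, an exact sequence
\[
H^{a}(\shm)\xrightarrow{\,\h\,}H^{a}(\shm)\to H^{a}(\gr(\shm))\to H^{a+1}(\shm)\xrightarrow{\,\h\,}H^{a+1}(\shm),
\]
so that $H^{a}(\gr(\shm))$ is an extension of $\ker\bigl(\h\colon H^{a+1}(\shm)\to H^{a+1}(\shm)\bigr)$ by $H^{a}(\shm)/\h H^{a}(\shm)$. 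The implication (2)$\Rightarrow$(1) is then immediate: if $H^{a}(\shm)=0$ then the quotient term vanishes, and if $H^{a+1}(\shm)$ has no $\h$-torsion then the kernel term vanishes, whence $H^{a}(\gr(\shm))=0$.

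For the converse (1)$\Rightarrow$(2), assume $H^{a}(\gr(\shm))=0$. The exact sequence forces both $H^{a}(\shm)/\h H^{a}(\shm)=0$ and the $\h$-torsion of $H^{a+1}(\shm)$ to vanish, the latter giving directly the second half of (2). To conclude that $H^{a}(\shm)$ itself vanishes, I would invoke Theorem~\ref{th:formalfini1}: since $\shm\in\Derb_\coh(\sha)$, the module $N\eqdot H^{a}(\shm)$ is a coherent $\sha$-module, hence $\h$-complete, i.e.\ $N\isoto\prolim[n]N/\h^{n}N$. From $N=\h N$ one gets $N=\h^{n}N$ for all $n$ by induction, so $N/\h^{n}N=0$ for all $n$, and therefore $N=0$ by $\h$-completeness. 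This is the one place where coherence (not merely having coherent cohomology of a complex) is essential, and it is the step I expect to require the most care in the write-up—in particular, one must be sure that $H^a(\shm)$ is genuinely a coherent $\sha$-module, which follows since $\sha$ is Noetherian (Theorem~\ref{th:formalfini1}) and $\Derb_\coh(\sha)$ has coherent cohomology objects by definition.

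The only subtlety worth flagging is that the argument uses nothing about cohomological $\h$-completeness of $\shm$ as an object of the derived category—it uses $\h$-completeness of the individual coherent cohomology modules, which is exactly what Theorem~\ref{th:formalfini1}(2) supplies. Everything else is the formal manipulation of the multiplication-by-$\h$ triangle, so the proof is short once this input is in place.
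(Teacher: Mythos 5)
The paper itself does not prove Proposition~\ref{prop:cohom_and_grad}: it is one of the results recalled from~\cite{KS08} in Section~1, and in fact the short exact sequence you derive from the long exact cohomology sequence of $\shm\xrightarrow{\h}\shm\to\gr(\shm)\xrightarrow{+1}$ is cited later in the paper as~\cite[Lemma~1.4.2]{KS08}. So there is nothing to compare against; the question is simply whether your argument is correct, and it is. The implication (2)$\Rightarrow$(1) follows directly from the two end terms of the extension $0\to H^a(\shm)/\h H^a(\shm)\to H^a(\gr\shm)\to\ker\bigl(\h\colon H^{a+1}(\shm)\to H^{a+1}(\shm)\bigr)\to 0$. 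For (1)$\Rightarrow$(2), the vanishing of the right-hand term is precisely the statement that $H^{a+1}(\shm)$ has no $\h$-torsion, and you correctly isolate the one nontrivial point, namely passing from $H^a(\shm)=\h H^a(\shm)$ to $H^a(\shm)=0$: this needs $\h$-completeness of the coherent module $H^a(\shm)$, which Theorem~\ref{th:formalfini1}(2) supplies, and then the inverse-limit description kills $H^a(\shm)$ since $H^a(\shm)/\h^nH^a(\shm)=0$ for all $n$ by iteration. Your closing remark is also accurate: this uses $\h$-completeness of a single coherent module rather than cohomological $\h$-completeness of the complex, which is the natural level of generality here.
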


\subsubsection*{Cohomologically $\h$-complete sheaves on real manifolds}
Let now $X$ be a real analytic manifold.
Recall from~\cite{KS90} that the microsupport of $F\in\Derb(\Z_X)$ is
a closed involutive subset of the cotangent bundle $T^*X$ denoted by
$\SSi(F)$. 
The microsupport is additive on $\Derb(\Z_X)$
(cf~Definition~\ref{def:additivity}~(ii) below).
Considering the distinguished triangle
$F\to[\h] F\to\gr F\to[+1]$,
one gets the estimate
\begin{equation}\label{eq:SSCh}
\SSi(\gr(F)) \subset \SSi(F).
\end{equation}

Using Proposition~\ref{pro:conserv} and~\ref{pro:cohcodirim}, one easily
proves:
\begin{proposition}\label{pro:sscohco}
Let $F\in\Derb(\Z_X[\hbar])$ and assume that $F$ is cohomologically
$\h$-complete. Then 
\eq\label{eq:SSgr}
&&\SSi(F)=\SSi(\gr(F)).
\eneq
\end{proposition}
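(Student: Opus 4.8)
The plan is to prove the two inclusions separately. One inclusion is free: applying \eqref{eq:SSCh} to $F$ gives $\SSi(\gr(F)) \subset \SSi(F)$ without any hypothesis, so the whole content of the statement is the reverse inclusion $\SSi(F) \subset \SSi(\gr(F))$. I would attack this by its contrapositive, working with a point $p = (x_0;\xi_0) \in T^*X$ outside $\SSi(\gr(F))$ and showing $p \notin \SSi(F)$.

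The key idea is to reduce to a statement about vanishing of local cohomology with supports in a half-space, which is the standard characterization of the microsupport: $p \notin \SSi(G)$ means that for every $x$ near $x_0$ and every real $C^1$ function $\phi$ with $\phi(x) = 0$ and $d\phi(x)$ close to $\xi_0$, one has $(\roim{}{}\mathrm{R}\Gamma_{\{\phi \geq 0\}} G)_x \simeq 0$, or equivalently the relevant relative cohomology groups vanish in a neighborhood basis. So, fixing such a $\phi$, set $F' \eqdot \mathrm{R}\Gamma_{\{\phi\geq 0\}}F$; this is again an object of $\Der(\Z_X[\hbar])$, and the point is to show its stalk at $x_0$ vanishes. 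First I would observe that the functor $\mathrm{R}\Gamma_{Z}$ for a closed set $Z$ commutes with $\gr = \Z_X \lltens[{\Z_X[\hbar]}] (\scbul)$, since $\mathrm{R}\Gamma_Z(-) = \rhom(\Z_{X \setminus Z} \to \Z_X, -)$ and $\gr$ commutes with such $\rhom$ against a perfect complex of $\Z_X[\hbar]$-modules (use \eqref{eq:grF} with the bounded perfect complex $\Z_X[\hbar] \to[\h] \Z_X[\hbar]$ representing $\Z_X$ over $\Z_X[\hbar]$, or argue directly from the distinguished triangle $F' \to[\h] F' \to \gr F' \to[+1]$ and the exactness of $\mathrm{R}\Gamma_Z$ on triangles). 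Hence $\gr(F') \simeq \mathrm{R}\Gamma_{\{\phi\geq 0\}}(\gr F)$, and since $p \notin \SSi(\gr F)$ this object has vanishing stalk at $x_0$ (after possibly shrinking $\phi$'s domain and taking $d\phi(x_0)$ in a small enough cone around $\xi_0$). The remaining task is then to promote "$\gr(F')$ has zero stalk at $x_0$" to "$F'$ has zero stalk at $x_0$", and this is exactly where cohomological $\h$-completeness enters.

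To make that last step work I would take the stalk $F'_{x_0}$, which is an object of $\Der(\Z[\hbar])$ computed as $\indlim[U \ni x_0] \mathrm{R}\Gamma(U; F')$; its grading is $\gr(F'_{x_0}) \simeq (\gr F')_{x_0} \simeq 0$ by the previous paragraph (taking the filtered colimit through a neighborhood basis and using that $\gr$, being a tensor with a perfect complex, commutes with $\indlim$). If I knew $F'_{x_0}$ were cohomologically $\h$-complete, Proposition~\ref{pro:conserv} would finish it immediately. Now $F$ is cohomologically $\h$-complete by hypothesis; the operation $\mathrm{R}\Gamma_{\{\phi\geq 0\}}$ is of the form $\rhom[{\Z_X[\hbar]}](\shn, \scbul)$ with $\shn = (\Z_{X\setminus\{\phi\geq 0\}} \to \Z_X)$, so $F' = \mathrm{R}\Gamma_{\{\phi\geq 0\}}F$ is cohomologically $\h$-complete by Proposition~\ref{pro:homcc}. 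Cohomological $\h$-completeness is a local condition preserved under taking stalks — indeed this is precisely the content of Proposition~\ref{pro:cohco1}, whose criterion is phrased exactly in terms of $\indlim[U \ni x] \Ext[{\Z[\hbar]}]{j}(\Z[\hbar,\hbar^{-1}], H^i(U; \scbul))$ — so $F'_{x_0}$ is cohomologically $\h$-complete. Therefore Proposition~\ref{pro:conserv} applies to $F'_{x_0}$ and gives $F'_{x_0} \simeq 0$. Since this holds for all admissible $\phi$ in a neighborhood of $x_0$, we conclude $p \notin \SSi(F)$, completing the inclusion $\SSi(F) \subset \SSi(\gr F)$.

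The main obstacle I anticipate is the bookkeeping in the reduction step: being careful that the microsupport condition "$p \notin \SSi(\gr F)$" really supplies the vanishing of $\mathrm{R}\Gamma_{\{\phi\geq 0\}}(\gr F)$ uniformly enough (over a neighborhood of $x_0$ and an open cone of covectors) to feed into the completeness argument, and verifying cleanly that $\gr$ commutes with $\mathrm{R}\Gamma_Z$ and with the passage to stalks. Everything else is an assembly of the cited propositions — the conservativity of $\gr$ on $\h$-complete objects (Proposition~\ref{pro:conserv}), the stability of $\h$-completeness under $\rhom$ (Proposition~\ref{pro:homcc}), and the stalk-wise criterion (Proposition~\ref{pro:cohco1}) — and the inclusion \eqref{eq:SSCh} for the easy direction. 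In fact, since the excerpt advertises the proof as using Propositions~\ref{pro:conserv} and~\ref{pro:cohcodirim}, an alternative to the stalk computation is to replace "take the stalk at $x_0$" by "push forward along the constant map $X \to \mathrm{pt}$ after restricting to a small ball", invoking Proposition~\ref{pro:cohcodirim} to preserve $\h$-completeness under $\mathrm{R}\Gamma$; either route lands on Proposition~\ref{pro:conserv} to kill the object.
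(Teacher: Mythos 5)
Your overall strategy is sound and matches the paper's intent: the inclusion $\SSi(\gr F)\subset\SSi(F)$ is free from \eqref{eq:SSCh}, so the content is the reverse, and the core mechanism is indeed to kill an $\h$-complete object whose $\gr$ vanishes via Proposition~\ref{pro:conserv}. But there is a genuine gap at the transition step, and it is not merely ``bookkeeping'': the claim that $F'_{x_0}$ is cohomologically $\h$-complete because $F'$ is does not follow from Proposition~\ref{pro:cohco1}. The criterion there is of the form $\indlim_{U\ni x}\Ext[{\Z[\hbar]}]{j}\bigl(\Z[\hbar,\hbar^{-1}],H^i(U;\scbul)\bigr)=0$, i.e.\ the colimit sits \emph{outside} the $\Ext$. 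To deduce that $F'_{x_0}$ is $\h$-complete you would need $\Ext[{\Z[\hbar]}]{j}\bigl(\Z[\hbar,\hbar^{-1}],\indlim_U H^i(U;F')\bigr)=0$, i.e.\ the colimit moved \emph{inside}. But $\Ext[{\Z[\hbar]}]{j}\bigl(\Z[\hbar,\hbar^{-1}],M\bigr)$ is $\varprojlim_n M$ for $j=0$ and $\varprojlim^1_n M$ for $j=1$ along multiplication by $\h$ (as $\Z[\hbar,\hbar^{-1}]$ has the telescope resolution by non-finitely-generated free modules), and these do not commute with filtered colimits. For instance, take $M_n=\Z[\hbar]/\h^n$ with transitions $[a]\mapsto[\h a]$: each $M_n$ is $\h$-nilpotent so $\Ext{j}\bigl(\Z[\hbar,\hbar^{-1}],M_n\bigr)=0$ for $j=0,1$, yet $\indlim M_n\simeq\Z[\hbar,\hbar^{-1}]/\Z[\hbar]$ has $\Hom\bigl(\Z[\hbar,\hbar^{-1}],\indlim M_n\bigr)\neq0$. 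More conceptually, $\rhom(\Z_X[\hbar,\hbar^{-1}],\scbul)$ does not commute with stalks because $\Z_X[\hbar,\hbar^{-1}]$ is not locally finitely presented over $\Z_X[\hbar]$.

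The alternative route you sketch (push forward along $B\to\mathrm{pt}$ for a small ball $B$ and invoke Proposition~\ref{pro:cohcodirim}) has the same difficulty in mirrored form: the microsupport hypothesis $p\notin\SSi(\gr F)$ only supplies \emph{stalk}-wise vanishing of $\mathrm{R}\Gamma_{\{\phi\geq 0\}}(\gr F)$, not vanishing of $\rsect\bigl(B;\mathrm{R}\Gamma_{\{\phi\geq0\}}(\gr F)\bigr)$ on a fixed ball, so you would not have a fixed object whose $\gr$ vanishes. What is needed instead is a characterization of the microsupport phrased in terms of \emph{restriction morphisms between sections on fixed open sets}, and this is exactly what the propagation results of \cite[\S5.2]{KS90} (e.g.\ Proposition~5.2.1 and the non-characteristic deformation lemma) provide: $p\notin\SSi(G)$ can be detected by checking that, for a suitable family of nested opens $\{\Omega_t\}$ built from $\phi$ around $x_0$, the maps $\rsect(\Omega_{t_1};G)\to\rsect(\Omega_{t_0};G)$ are isomorphisms. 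Each $\rsect(\Omega_t;F)$ is cohomologically $\h$-complete by Proposition~\ref{pro:cohcodirim}, the cone of the restriction is therefore $\h$-complete, and its $\gr$ is the cone of $\rsect(\Omega_{t_1};\gr F)\to\rsect(\Omega_{t_0};\gr F)$, which vanishes by the hypothesis on $\SSi(\gr F)$. Proposition~\ref{pro:conserv} then kills the cone, and the fixed-open-set criterion gives $p\notin\SSi(F)$. That is the role of Proposition~\ref{pro:cohcodirim} in the paper's hint, and it is precisely the piece that a stalk-based argument cannot supply.
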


For $\cora$ a commutative unital Noetherian
ring, one denotes by $\mdrc[\cora_X]$ 
the full subcategory of $\md[\cora_X]$ consisting of  $\R$-constructible 
sheaves and by $\Derb_\Rc(\cora_X)$ the full triangulated subcategory of
$\Derb(\cora_X)$ consisting of objects with $\R$-constructible cohomology.
In this paper, we shall mainly be interested with the case where $\cora$ is
either $\C$ or the ring of formal power series in an 
indeterminate $\h$, that we denote by
\[
\coro \eqdot \C\forl.
\]

By Proposition~\ref{pro:cohco1} one has

\begin{proposition}\label{pro:RCcohco}
Let $F\in \Derb_\Rc(\coro_X)$. Then $F$ is cohomologically $\h$-complete.
\end{proposition}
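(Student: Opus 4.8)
The plan is to reduce the statement about $\coro_X$-constructible sheaves to a pointwise criterion via Proposition~\ref{pro:cohco1}. That proposition tells us that $F\in\Derb(\coro_X)$ is cohomologically $\h$-complete as an object of $\Der(\Z_X[\hbar])$ precisely when, for every $x\in X$, every $i\in\Z$ and $j=0,1$,
\[
\indlim[U\ni x]\Ext[{\Z[\hbar]}]{j}\bl\Z[\hbar,\hbar^{-1}],H^i(U;F)\br= 0,
\]
so the whole problem becomes one of understanding the $\Z[\hbar]$-module structure of the local cohomology groups $H^i(U;F)$ for small $U$. The first step is therefore to observe that, since $\coro=\C\forl$ has no $\h$-torsion and is $\h$-complete, and since for $F$ $\R$-constructible the groups $H^i(U;F)$ for $U$ in a suitable neighborhood basis of $x$ are finitely generated $\coro$-modules (this is where $\R$-constructibility is used: one takes $U$ ranging over a cofinal system of relatively compact subanalytic open neighborhoods over which the cohomology stabilizes, using the classical finiteness for $\R$-constructible sheaves over a Noetherian base ring), we are reduced to a purely algebraic computation of $\Ext[{\Z[\hbar]}]{j}(\Z[\hbar,\hbar^{-1}],M)$ for $M$ a finitely generated $\coro$-module.

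Next I would carry out that algebraic computation. Writing $\Z[\hbar,\hbar^{-1}]=\varinjlim(\Z[\hbar]\xrightarrow{\h}\Z[\hbar]\xrightarrow{\h}\cdots)$, one gets a short exact sequence (or the standard $\mathrm{colim}$/$\varprojlim^1$ description) expressing $\Ext^0$ and $\Ext^1$ over $\Z[\hbar]$ of $\Z[\hbar,\hbar^{-1}]$ with coefficients in $M$ in terms of $\varprojlim$ and $\varprojlim^1$ of the tower $\cdots\xrightarrow{\h}M\xrightarrow{\h}M$. Concretely, $\Ext[{\Z[\hbar]}]{0}(\Z[\hbar,\hbar^{-1}],M)$ is the $\h$-divisible part $\varprojlim(M\xleftarrow{\h}M\xleftarrow{\h}\cdots)$ and $\Ext[{\Z[\hbar]}]{1}(\Z[\hbar,\hbar^{-1}],M)$ is the corresponding $\varprojlim^1$. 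For $M$ a finitely generated $\C\forl$-module both vanish: by the structure theorem such an $M$ is a finite direct sum of copies of $\C\forl$ and of torsion modules $\C\forl/\h^k$, and on each summand the tower $(M\xleftarrow{\h}M\xleftarrow{\h}\cdots)$ has trivial limit and trivial $\varprojlim^1$ — on $\C\forl$ because an element divisible by arbitrarily high powers of $\h$ is zero and the transition maps are surjective onto no new information in the relevant sense, and on $\C\forl/\h^k$ because the tower is eventually zero. Equivalently, and more robustly, one notes that any finitely generated $\coro$-module is $\h$-complete with no need for a basis computation, and then invokes Corollary~\ref{cor:cohcosheaves}: $\coro$ itself satisfies (i) and (iii)' so that $\coro$-modules which are coherent have vanishing higher cohomology on a base of opens, hence the corollary applies to the constant sheaf and then to any $\R$-constructible sheaf by devissage.

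Assembling: the cleanest route is to apply Corollary~\ref{cor:cohcosheaves} directly. An object $F\in\Derb_\Rc(\coro_X)$ has cohomology sheaves $H^k(F)\in\mdrc[\coro_X]$, and it suffices to show each such sheaf is cohomologically $\h$-complete, since the cohomologically $\h$-complete objects form a triangulated subcategory closed under the truncation triangles. For a single $\R$-constructible sheaf $G$, one uses a subanalytic stratification adapted to $G$; on a base $\BB$ of open subsets subanalytically contractible to points of strata one has $H^i(U;G)=0$ for $i>0$ and $H^0(U;G)$ a finitely generated $\coro$-module, which is automatically $\h$-complete and $\h$-torsion considerations show the hypotheses of Corollary~\ref{cor:cohcosheaves} are met; that corollary then yields that $G$, hence $F$, is cohomologically $\h$-complete. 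The main obstacle is the verification that one indeed has a base $\BB$ with $H^i(U;G)=0$ for $i>0$ and with $H^0(U;G)$ finitely generated over $\coro$ — this is the point where subanalytic geometry (existence of a basis of subanalytic opens that are contractible, compatibly with a stratification subordinate to $G$) and the finiteness theorem for $\R$-constructible sheaves over a Noetherian coefficient ring both enter; granting those classical facts, the remainder is formal.
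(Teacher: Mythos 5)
Your first route — reducing via Proposition~\ref{pro:cohco1} to the vanishing of $\Ext^j_{\Z[\hbar]}(\Z[\hbar,\hbar^{-1}],H^i(F)_x)$ for $j=0,1$, using that for small subanalytic $U$ one has $R\Gamma(U;F)\simeq F_x$ with finitely generated $\coro$-module cohomology, and then computing these Ext groups by the structure theorem over $\coro$ — is correct and is precisely the argument the paper has in mind when it writes ``By Proposition~\ref{pro:cohco1} one has.''\ One small imprecision: for a $\coro$-summand the vanishing of $\varprojlim^1$ of the tower $\cdots\xrightarrow{\h}\coro\xrightarrow{\h}\coro$ is \emph{not} a Mittag--Leffler phenomenon (the transition maps are not surjective); it holds because $\coro$ is $\h$-adically complete, so the equation $m_n=a_n+\h m_{n+1}$ can be solved by $m_n=\sum_{k\ge0}\h^k a_{n+k}$.

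Your ``cleanest route'' in the final paragraph, however, has a genuine gap. Corollary~\ref{cor:cohcosheaves} requires the sheaf $\shm$ to have \emph{no $\h$-torsion}, and an $\R$-constructible sheaf of $\coro$-modules need not satisfy this (e.g.\ the constant sheaf with stalk $\coro/\h$). The phrase ``$\h$-torsion considerations show the hypotheses of Corollary~\ref{cor:cohcosheaves} are met'' is therefore not correct as written: the hypothesis fails outright in the torsion case, and no amount of ``consideration'' repairs it without an additional devissage. If you insist on this route you would need to split $G$ into $G_{\htor}$ and $G_{\htf}$, apply the Corollary only to $G_{\htf}$, and treat $G_{\htor}$ separately (for instance by noting that $\rhom_{\Z[\hbar]}(\Z[\hbar,\hbar^{-1}],M)=0$ whenever $\h^N M=0$). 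Since your first route already handles the torsion summands $\coro/\h^k$ cleanly, it is both the shorter and the correct argument; I would drop the Corollary~\ref{cor:cohcosheaves} detour.
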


\begin{corollary}\label{cor:conservativeRc}
The functor
$\grh\cl \Derb_\Rc(\coro_X)\to \Derb_\Rc(\C_X)$
is conservative.
\end{corollary}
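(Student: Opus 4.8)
The plan is to obtain this as a formal consequence of Proposition~\ref{pro:RCcohco} and Proposition~\ref{pro:conserv}, with essentially no computation. First I would reduce the assertion to the statement that $\grh$ detects the zero object. Indeed, $\grh$ is a triangulated functor between triangulated categories: given a morphism $f\colon F\to G$ in $\Derb_\Rc(\coro_X)$ with $\grh(f)$ an isomorphism, I embed $f$ in a distinguished triangle $F\to G\to C\to[+1]$; as $\Derb_\Rc(\coro_X)$ is a triangulated subcategory of $\Derb(\coro_X)$, the third vertex $C$ again lies in $\Derb_\Rc(\coro_X)$, and applying $\grh$ gives a distinguished triangle whose first arrow $\grh(F)\to\grh(G)$ is an isomorphism, so that $\grh(C)\simeq 0$. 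Hence it is enough to show that $C\simeq 0$ for every $C\in\Derb_\Rc(\coro_X)$ with $\grh(C)\simeq 0$.

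Second, I would note that $\coro=\C\forl$ is an integral domain containing $\Z[\hbar]$, hence a commutative Noetherian $\Z[\hbar]$-algebra with no $\h$-torsion, so that $\coro_X$ is an instance of the sheaf of rings $\shr$ of Section~1: the functor $\gr$ of that section coincides with $\grh$, and Definition~\ref{def:cohco} of cohomological $\h$-completeness makes sense for objects of $\Der(\coro_X)$. Then, for $C\in\Derb_\Rc(\coro_X)$, Proposition~\ref{pro:RCcohco} gives that $C$ is cohomologically $\h$-complete, and since moreover $\gr(C)=\grh(C)\simeq 0$, Proposition~\ref{pro:conserv}, applied with $\shr=\coro_X$, immediately yields $C\simeq 0$. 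This completes the argument.

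I do not expect a genuine obstacle: the corollary is just the conjunction of the two cited propositions, routed through the standard principle that a triangulated functor reflects isomorphisms if and only if it detects zero objects. The only points deserving a moment's attention are bookkeeping ones --- checking that $\coro_X$ fits the framework of Section~1 so that ``cohomologically $\h$-complete'' and $\gr$ have their intended meaning, and observing that Proposition~\ref{pro:conserv} is available for possibly unbounded objects of $\Der(\coro_X)$, which is precisely what lets us apply it to $C$ without imposing any boundedness hypothesis on $\grh(C)$.
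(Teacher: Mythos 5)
Your argument is correct and is exactly how the paper obtains this corollary: combining Proposition~\ref{pro:RCcohco} (which gives cohomological $\h$-completeness of $\R$-constructible objects) with Proposition~\ref{pro:conserv} (conservativity of $\gr$ on cohomologically $\h$-complete objects), via the standard reduction of conservativity to detection of zero objects for triangulated functors. The paper in fact omits the proof as immediate, and your bookkeeping observations about $\coro_X$ fitting the Section~1 framework are the right things to check.
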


\begin{corollary}\label{cor:SSF}
For $F\in\Derb_\Rc(\coro_X)$, one has the equality
\eqn
&&\SSi(\gr(F)) = \SSi(F).
\eneqn
\end{corollary}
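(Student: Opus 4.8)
The plan is to deduce the statement directly from Propositions~\ref{pro:RCcohco} and~\ref{pro:sscohco}; the only point requiring a word of justification is the passage along the forgetful functor associated with $\Z[\hbar]\to\coro$.

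First I would observe that, $\coro=\C\forl$ being a $\Z[\hbar]$-algebra, every $F\in\Derb_\Rc(\coro_X)$ has a natural image in $\Derb(\Z_X[\hbar])$, and that neither $\SSi$ nor $\gr$ is affected by this restriction of scalars. For the microsupport this is immediate, since $\SSi$ depends only on the underlying object of $\Derb(\Z_X)$. For $\gr$, the point is that both $0\to\coro_X\to[\h]\coro_X\to\C_X\to0$ and $0\to\Z_X[\hbar]\to[\h]\Z_X[\hbar]\to\Z_X\to0$ are exact, so that $\C_X$ (resp.\ $\Z_X$) is resolved by $\coro_X\to[\h]\coro_X$ (resp.\ $\Z_X[\hbar]\to[\h]\Z_X[\hbar]$); hence in either category $\gr(F)$ is the mapping cone of $\h\cl F\to F$, and the two constructions agree after restriction of scalars.

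Next, by Proposition~\ref{pro:RCcohco} the object $F$ is cohomologically $\h$-complete, and by the remark following Definition~\ref{def:cohco} so is its image in $\Derb(\Z_X[\hbar])$. Proposition~\ref{pro:sscohco} applied to that image then yields $\SSi(F)=\SSi(\gr(F))$, which is the assertion.

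There is essentially no obstacle here. Note that the inclusion $\SSi(\gr(F))\subset\SSi(F)$ holds for any $F\in\Derb(\Z_X[\hbar])$ by~\eqref{eq:SSCh}, i.e.\ by additivity of the microsupport applied to the distinguished triangle $F\to[\h] F\to\gr F\to[+1]$; the real content of the corollary is the reverse inclusion, and this is precisely where the cohomological $\h$-completeness of $\R$-constructible sheaves over $\coro$ enters, through Proposition~\ref{pro:sscohco}.
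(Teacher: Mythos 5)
Your proof is correct and follows the same route as the paper, which presents the corollary (without explicit proof) as an immediate consequence of Propositions~\ref{pro:RCcohco} and~\ref{pro:sscohco}. The extra care you take over restriction of scalars to $\Z_X[\hbar]$ and the compatibility of the two $\gr$ functors is a sound verification of a point the paper leaves tacit (partly anticipated in the remark following Definition~\ref{def:cohco}).
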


\begin{proposition}\label{prop:supp_and_grad}
For $F\in\Derb_\Rc(\coro_X)$ and $i\in \Z$ one has $\supp H^i(F)
\subset \supp H^i(\gr F)$. In particular if $H^i(\gr F) =0$ then
$H^i(F) =0$.  
\end{proposition}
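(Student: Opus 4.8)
The plan is to deduce the statement from Proposition~\ref{prop:cohom_and_grad} by passing to stalks. Since both the inclusion and the vanishing are local on $X$ and only involve supports of cohomology sheaves, it suffices to prove that for each $x\in X$ and each $i\in\Z$ the vanishing $H^i(\gr F)_x=0$ implies $H^i(F)_x=0$.

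First I would identify the stalk. The stalk functor at $x$ is exact and commutes with the derived tensor product, so $(\gr F)_x\simeq\C\lltens[\coro]F_x\simeq\gr(F_x)$, the latter functor being now taken over the ring $\coro$; hence $H^i(\gr F)_x\simeq H^i\bl\gr(F_x)\br$ and $H^i(F)_x\simeq H^i(F_x)$. Next, as $F$ is $\R$-constructible over $\coro_X$, its cohomology stalks $H^j(F)_x$ are finitely generated $\coro$-modules, all but finitely many of them zero, so that $F_x\in\Derb_f(\coro)$. Finally I would check that the pair $(\sha,\shao)=(\coro,\C)$ satisfies Assumption~\ref{as:DQring}: condition~(i) is clear since $\C\forl$ has no $\h$-torsion and is $\h$-complete, condition~(ii) holds because $\C$ is Noetherian, and condition~(iii)' is vacuous over a point as higher sheaf cohomology vanishes there.

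With these preliminaries in place, Proposition~\ref{prop:cohom_and_grad} applies to $F_x\in\Derb_f(\coro)$ with $a=i$: it says that $H^i(\gr(F_x))\simeq 0$ holds if and only if $H^i(F_x)\simeq 0$ and $H^{i+1}(F_x)$ has no $\h$-torsion. In particular $H^i(\gr F)_x=0$ forces $H^i(F)_x\simeq H^i(F_x)=0$, which gives $\supp H^i(F)\subset\supp H^i(\gr F)$; the last assertion is then the special case $\supp H^i(\gr F)=\emptyset$.

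I do not anticipate any serious obstacle: the only points needing a little care are the commutation of $\gr$ with the stalk functor and the finiteness of the cohomology stalks of an $\R$-constructible sheaf over $\coro_X$, both of which are routine. If one prefers not to invoke the framework of the previous subsection, there is a self-contained variant of the last step: from the distinguished triangle $F\to[\h]F\to\gr F\to[+1]$ and the associated long exact sequence of cohomology sheaves, $H^i(\gr F)_x=0$ forces $\h\cl H^i(F)_x\to H^i(F)_x$ to be surjective, and since $H^i(F)_x$ is finitely generated over the local ring $\coro$ with maximal ideal $\h\coro$, Nakayama's lemma gives $H^i(F)_x=0$.
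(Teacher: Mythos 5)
Your main argument is exactly the paper's proof: the paper's one-line proof reads ``We apply Proposition~\ref{prop:cohom_and_grad} to $F_x$ for any $x\in X$,'' which is precisely the stalk-wise reduction you spell out (commutation of $\gr$ with stalks, $F_x\in\Derb_f(\coro)$ by $\R$-constructibility, $\coro$ satisfying the running assumption). Your self-contained variant via the triangle $F\to[\h]F\to\gr F\to[+1]$ and Nakayama is a legitimate and arguably more elementary alternative, but it establishes the same stalk-level implication and is not a genuinely different route.
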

\begin{proof}
We apply Proposition~\ref{prop:cohom_and_grad} to $F_x$ for any
$x\in X$.
\end{proof}

\section{Formal extension}

Let $X$ be a topological space, or more generally a site,
and let $\shr_0$ be a sheaf of rings on $X$. In this section, we
let 
\[
\shr \seteq \shr_0\forl = \prod\limits_{n\ge0}\shr_0\h^n
\]
be the formal extension of $\shr_0$, whose sections on an open subset $U$
are formal series $r=\sum_{n=0}^\infty r_j\h^n$, with
$r_j\in\sect(U;\shr_0)$.
Consider the associated functor
\begin{align}\label{eq:subhbar}
(\scbul)^\h\cl\md[\shr_0] &\to \md[\shr],\\
\notag
\shn &\mapsto \shn\forl = \prolim[n](\shr_n\tens[\shr_0]\shn),
\end{align}
where $\shr_n \seteq \shr/\h^{n+1}\shr$ is regarded as an $(\shr,\shr_0)$-bimodule.
Since $\shr_n$ is free of finite rank over $\shr_0$, the functor $(\scbul)^\h$ is left exact.
We denote by $(\scbul)^\rhb$ its right derived functor.

\begin{proposition}\label{pro:Rh}
For $\shn\in\Derb(\shr_0)$ one has
\[
\shn^\rhb \simeq \rhom[\shr_0](\shr^\loc/\h\shr,\shn),
\]
where $\shr^\loc/\h\shr$ is regarded as an $(\shr_0,\shr)$-bimodule.
\end{proposition}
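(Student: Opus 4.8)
The plan is to produce an explicit isomorphism of functors on $\Derb(\shr_0)$ by first establishing it at the level of complexes concentrated in degree zero (in fact for a single $\shr_0$-module $\shn$, before deriving) and then passing to the derived functors via a universal property. Concretely, I would begin by identifying $\shr^\loc/\h\shr$ as a left $\shr_0$-module: since $\shr = \shr_0\forl$ has no $\h$-torsion, $\shr^\loc = \shr[\h^{-1}]$, and the quotient $\shr^\loc/\h\shr$ is the module of ``formal Laurent tails'', i.e.\ $\bigoplus_{n\ge 1}\shr_0\,\h^{-n}$ as a left $\shr_0$-module, with its right $\shr$-action coming from multiplication inside $\shr^\loc$. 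The key computational point is then the identification, for an $\shr_0$-module $\shn$, of $\hom[\shr_0](\shr^\loc/\h\shr,\shn)$ with $\shn\forl$: a left $\shr_0$-linear map $\phi$ from $\bigoplus_{n\ge1}\shr_0\h^{-n}$ to $\shn$ is determined by the elements $s_n \seteq \phi(\h^{-n})\in\shn$ with no constraint, and the compatibility with the right $\shr$-module structure (which we want in order to get an $\shr$-module, not just an $\shr_0$-module) forces exactly the shift relations that make $(s_n)_{n\ge1}$ the coefficients of a formal series, yielding $\shn\forl = \prolim[n]\shr_n\tens[\shr_0]\shn$. This gives a natural isomorphism of \emph{left exact} functors $(\scbul)^\h \simeq \hom[\shr_0](\shr^\loc/\h\shr,\scbul)$ from $\md[\shr_0]$ to $\md[\shr]$.

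Having matched the underived functors, I would deduce the statement for the derived functors by the uniqueness of derived functors: $(\scbul)^\rhb$ is by definition $R(\scbul)^\h$, and $\rhom[\shr_0](\shr^\loc/\h\shr,\scbul)$ is the right derived functor of $\hom[\shr_0](\shr^\loc/\h\shr,\scbul)$; two isomorphic left exact functors have isomorphic right derived functors. One subtlety is that the derived $\hom$ here should be computed with a resolution that also sees the right $\shr$-structure so that the answer genuinely lands in $\Der(\shr)$; since $\shr^\loc/\h\shr$ as an $(\shr_0,\shr)$-bimodule is, up to the shift bookkeeping, built from copies of $\shr_0$, this causes no trouble, but I would spell out that $\rhom[\shr_0](\shr^\loc/\h\shr,\shn)$ carries its $\shr$-module structure through the right action on the first argument, exactly as in the standard formalism of \cite{KS06} for $\rhom$ over a bimodule.

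The main obstacle is the bookkeeping of the bimodule structure on $\shr^\loc/\h\shr$ and the verification that the right $\shr$-action on $\shr^\loc/\h\shr$ transports, under the identification above, to the projective-limit structure defining $\shn\forl$; this is where one must be careful not to work over a field, since $\shr_0$ and $\shr$ are genuine sheaves of noncommutative rings and one cannot invoke flatness or other field-specific simplifications. Once this identification of $(\shr_0,\shr)$-bimodule data is pinned down, the rest is formal: left exactness of $(\scbul)^\h$ was already noted in the text (because each $\shr_n$ is free of finite rank over $\shr_0$), and the passage to derived functors is automatic. I would also remark that, as a sanity check, applying $\gr$ (i.e.\ $\shro\lltens[\shr]\scbul$) to both sides should recover a statement compatible with Proposition~\ref{pro:cohco1}-type computations, though this check is not needed for the proof itself.
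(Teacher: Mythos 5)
Your proof is correct and follows essentially the same route as the paper: both reduce to the underived hom and compute $\hom[\shr_0](\shr^\loc/\h\shr,\shn)\simeq\shn^\h$ using that $\shr^\loc/\h\shr$ is a direct limit of free finite-rank $\shr_0$-modules, the paper packaging this via the dual modules $\shr_n^*\simeq\h^{-n}\shr/\h\shr$ and an explicit residue pairing whereas you decompose $\shr^\loc/\h\shr$ as a direct sum and identify maps out of it with sequences. One small slip: the decomposition should be $\bigoplus_{n\ge 0}\shr_0\h^{-n}$, not $n\ge 1$, since the degree-zero part $\shr/\h\shr\simeq\shr_0$ survives in the quotient.
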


\begin{proof}
It is enough to prove that for $\shn\in\Mod(\shr_0)$  one has
\[
\shn^\h \simeq \hom[\shr_0](\shr^\loc/\h\shr,\shn).
\]
Let $\shr_n^* = \hom[\shr_0](\shr_n,\shr_0)$, regarded as an $(\shr_0,\shr)$-bimodule.
Then
\[
\shn^\h = \prolim[n](\shr_n\tens[\shr_0]\shn) \simeq \hom[\shr_0](\indlim[n]\shr_n^*,\shn).
\]
Since 
\[
\shr^\loc/\h\shr \simeq \indlim[n](\h^{-n}\shr/\h\shr),
\]
it is enough to prove that there is an isomorphism of $(\shr_0,\shr)$-bimodules
\[
\hom[\shr_0](\shr_n,\shr_0)\simeq\h^{-n}\shr/\h\shr.
\]
Recalling that $\shr_n = \shr/\h^{n+1}\shr$,
this follows from the pairing
\[
(\shr/\h^{n+1}\shr) \tens[\shr_0] (\h^{-n}\shr/\h\shr) \to \shr_0,\quad
f\otimes g \mapsto \operatorname{Res}_{\h=0}(fg\,d\h/\h).
\]
\end{proof}

Note that the isomorphism of $(\shr,\shr_0)$-bimodules
\[
\shr\simeq(\shr_0)^\h = \hom[\shr_0](\shr^\loc/\h\shr,\shr_0)
\]
induces a natural morphism
\begin{equation}\label{eq:RNtoNh}
\shr\ltens[\shr_0]\shn \to \shn^\rhb,\quad\text{for }\shn\in\Derb(\shr_0).
\end{equation}

\begin{proposition}\label{pro:Rhcomplete}
For $\shn\in\Derb(\shr_0)$, its formal extension $\shn^\rhb$ is cohomologically $\h$-complete.
\end{proposition}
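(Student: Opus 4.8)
The plan is to combine Proposition~\ref{pro:Rh} with the tensor--hom adjunction and a torsion computation. By Proposition~\ref{pro:Rh} we have $\shn^\rhb \simeq \rhom[\shr_0](\shr^\loc/\h\shr,\shn)$, with $\shr^\loc/\h\shr$ regarded as an $(\shr_0,\shr)$-bimodule. Hence, by Definition~\ref{def:cohco}, proving that $\shn^\rhb$ is cohomologically $\h$-complete amounts to showing
\[
\rhom[\shr]\bl\shr^\loc,\rhom[\shr_0](\shr^\loc/\h\shr,\shn)\br\simeq0.
\]

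The first step is to move $\shr^\loc$ across the adjunction. Since $\h$ is central in $\shr$, the sheaf $\shr^\loc = \shr[\h^{-1}]$ is the filtered colimit of the sequence $\shr\to[\h]\shr\to[\h]\cdots$, so the standard presentation of a sequential colimit yields a short exact sequence of left $\shr$-modules $0\to\bigoplus_{n\ge0}\shr\to[1-\h]\bigoplus_{n\ge0}\shr\to\shr^\loc\to0$, that is, a length-one resolution of $\shr^\loc$ by free (hence K-flat) left $\shr$-modules; in particular $\shr^\loc$ is flat over $\shr$. Plugging this resolution into the adjunction between $(\scbul)\ltens[\shr](\scbul)$ and $\rhom[\shr_0](\scbul,\scbul)$ — the bimodule bookkeeping being consistent since $\shr^\loc/\h\shr$ is an $(\shr_0,\shr)$-bimodule and $\shr^\loc$ a left $\shr$-module — gives
\[
\rhom[\shr]\bl\shr^\loc,\rhom[\shr_0](\shr^\loc/\h\shr,\shn)\br\simeq\rhom[\shr_0]\bl(\shr^\loc/\h\shr)\ltens[\shr]\shr^\loc,\shn\br.
\]

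It then remains to show that $(\shr^\loc/\h\shr)\ltens[\shr]\shr^\loc$ vanishes. By the flatness just noted this derived tensor product reduces to the ordinary one $(\shr^\loc/\h\shr)\tens[\shr]\shr^\loc$, which is the localization of $\shr^\loc/\h\shr$ obtained by inverting $\h$ (equivalently, it is computed by the two-term complex $\bigoplus_n(\shr^\loc/\h\shr)\to[1-\h]\bigoplus_n(\shr^\loc/\h\shr)$). But every local section of $\shr^\loc/\h\shr$ is of the form $\h^{-N}r\bmod\h\shr$ for some $N\ge0$ and $r\in\shr$, hence is annihilated by $\h^{N+1}$; so inverting $\h$ kills everything, $(\shr^\loc/\h\shr)\ltens[\shr]\shr^\loc\simeq0$, and therefore $\rhom[\shr](\shr^\loc,\shn^\rhb)\simeq0$.

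The computations are routine. The only points that deserve care — neither a genuine obstacle — are the exactness on stalks of the colimit presentation of $\shr^\loc$, which is what allows $\bigoplus_n\shr$ to serve as a K-flat left-$\shr$ resolution of $\shr^\loc$, and keeping track of which side each of the one-sided module structures acts on when invoking the adjunction.
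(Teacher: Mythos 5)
Your argument is correct and follows essentially the same route as the paper: both reduce the claim, via the presentation of Proposition~\ref{pro:Rh} and tensor--hom adjunction, to the vanishing $(\shr^\loc/\h\shr)\ltens[\shr]\shr^\loc\simeq 0$, which you then establish by observing that $\shr^\loc/\h\shr$ is $\h$-torsion while $\h$ is invertible on $\shr^\loc$. One small imprecision in the write-up: the phrase ``in particular $\shr^\loc$ is flat over $\shr$'' does not actually follow from the existence of a length-one free resolution (which only controls $\Tor_{\geq 2}$); flatness of $\shr^\loc$ is nevertheless true, being a filtered colimit of free modules (or, equivalently, a central localization), and either that fact or a direct check that the telescope complex tensored with $\shr^\loc/\h\shr$ has vanishing $H^{-1}$ (forced by $x_0=0$, $x_n=\h x_{n-1}$) closes the gap, so the conclusion stands.
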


\begin{proof}
The statement follows from $(\shr^\loc/\h\shr)\ltens[\shr]\shr^\loc \simeq 0$ and from the isomorphism
\[
\rhom[\shr](\shr^\loc,\shn^\rhb) \simeq \rhom[\shr_0]((\shr^\loc/\h\shr)\ltens[\shr]\shr^\loc,\shn).
\]
\end{proof}

\begin{lemma} \label{lem:Rh_hom}
Assume that $\shr_0$ is an $\shs_0$-algebra, for $\shs_0$ a commutative sheaf of rings,
and let $\shs = \shs_0\forl$.
For $\shm,\shn \in \Derb(\shr_0)$ we have an isomorphism in $\Derb(\shs)$
\eqn
&&\rhom[\shr_0](\shm,\shn)^\rhb \simeq \rhom[\shr_0](\shm,\shn^\rhb).
\eneqn
\end{lemma}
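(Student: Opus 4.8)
The plan is to use the explicit description of the formal extension functor from Proposition~\ref{pro:Rh}, namely $(\scbul)^\rhb \simeq \rhom[\shr_0](\shr^\loc/\h\shr,\scbul)$, and then reduce the claim to an associativity/adjunction juggling of derived $\rhom$'s. Explicitly, write $\shk \seteq \shr^\loc/\h\shr$, which is an $(\shr_0,\shr)$-bimodule (and, via $\shs_0\subset\shr_0$, an $(\shs_0,\shs)$-bimodule whose two $\shs_0$-actions coincide up to the central inclusion). Then the right-hand side is
\[
\rhom[\shr_0](\shm,\shn^\rhb) \simeq \rhom[\shr_0]\bl\shm,\rhom[\shr_0](\shk,\shn)\br,
\]
and by the standard adjunction (tensor–hom) together with the fact that $\shk$ is a complex of $\shr_0$-modules, this is isomorphic to
\[
\rhom[\shr_0]\bl\shk\ltens[\shr_0]\shm,\shn\br
\quad\text{or rather}\quad
\rhom[\shr_0]\bl\shk,\rhom[\shr_0](\shm,\shn)\br,
\]
the latter being exactly $\rhom[\shr_0](\shm,\shn)^\rhb$ by Proposition~\ref{pro:Rh} again. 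So the essential content is the commutation $\rhom[\shr_0](\shm,\rhom[\shr_0](\shk,\shn)) \simeq \rhom[\shr_0](\shk,\rhom[\shr_0](\shm,\shn))$, i.e.\ the symmetry of the internal hom bifunctor in the two ``coefficient'' slots when one of the modules $\shk$ is a bimodule supplying the relevant $\shs$-structure.

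The key steps, in order, are: (1) replace both occurrences of $(\scbul)^\rhb$ by $\rhom[\shr_0](\shk,\scbul)$ using Proposition~\ref{pro:Rh}; (2) observe that all the modules in sight are bounded with, after a flat/free resolution of $\shm$ over $\shr_0$, termwise reasonable behavior — here one should replace $\shm$ by a complex of free $\shr_0$-modules (possible locally, or use that $\shk$ is a filtered colimit $\indlim[n]\h^{-n}\shr/\h\shr$ of $\shr_0$-modules that are free of finite rank) so that the double $\rhom$ becomes a genuine bicomplex to which Fubini applies; (3) invoke the standard isomorphism $\rhom[\shr_0](A,\rhom[\shr_0](B,C)) \simeq \rhom[\shr_0](B,\rhom[\shr_0](A,C))$ valid in the derived category once $A$ is perfect (or $B$ is, or using the colimit presentation of $\shk$ and commuting $\rhom$ over $\shm$ with the colimit — which is legitimate since each $\h^{-n}\shr/\h\shr$ is finitely presented over $\shr_0$ and $\shm$ is bounded coherent-free); (4) track the $\shs$-module structures: the $\shs = \shs_0\forl$-action on both sides comes from the $\shr$-action on $\shk$ on the right, and one must check the identification from step (3) is $\shs$-linear, which it is because the $\shr$-action on $\shk$ is by central operators commuting with everything in the $\shr_0$-bimodule structure.

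The main obstacle I expect is step (4) combined with making step (3) precise at the derived level without a global free resolution: one cannot in general assume $\shm$ is a bounded complex of frees globally on $X$, so the cleanest route is probably to exhibit the desired isomorphism as a morphism of functors that is compatible with the colimit $\shk \simeq \indlim[n]\h^{-n}\shr/\h\shr$, reduce to checking it after tensoring with each finite-rank free piece (where it is the trivial reindexing of a finite product), and then verify it is an isomorphism by a local computation — replacing $\shm$ by a local free resolution — and finally check that the locally-defined isomorphisms glue, which they do by naturality. Alternatively, and perhaps more in the spirit of the paper, one can bypass resolutions entirely: rewrite $\rhom[\shr_0](\shm,\shn^\rhb) \simeq \rhom[\shr_0](\shm, \rhom[\shr](\shr,\shn^\rhb))$, use the projection-type formula $\rhom[\shr](\shr, \rhom[\shr_0](\shk,\shn))$ and the fact (from the proof of Proposition~\ref{pro:Rhcomplete}) that $\shk \ltens[\shr]\shr^\loc \simeq 0$, to get everything back to $\rhom$'s over $\shr_0$; but this still requires the same Fubini step, so the genuinely unavoidable ingredient is the commutation of the two internal $\rhom$'s, and I would present that as the single lemma-internal computation, leaving the bookkeeping of module structures as a routine verification.
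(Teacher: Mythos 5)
Your proof follows the same route as the paper's --- rewrite both formal extensions via Proposition~\ref{pro:Rh} and commute the two $\rhom$'s --- but you are imprecise on the one point that the paper treats explicitly, and it is precisely the point that makes your notation ill-formed. The object $\rhom[\shr_0](\shm,\shn)$ lives in $\Derb(\shs_0)$, not in $\Derb(\shr_0)$ (recall $\shr_0$ is noncommutative in the intended applications, e.g.\ $\shd_X$), so the expression $\rhom[\shr_0]\bl\shk,\rhom[\shr_0](\shm,\shn)\br$ with $\shk = \shr^\loc/\h\shr$ is not defined as stated, and Proposition~\ref{pro:Rh} applied to the sheaf of rings $\shs_0$ gives $\rhom[\shr_0](\shm,\shn)^\rhb = \rhom[\shs_0](\shs^\loc/\h\shs, \rhom[\shr_0](\shm,\shn))$, \emph{not} a $\rhom[\shr_0]$. (The alternative $\rhom[\shr_0]\bl\shk\ltens[\shr_0]\shm,\shn\br$ that you offer is also undefined, both $\shk$ and $\shm$ being left $\shr_0$-modules; the ``or rather'' should simply be deleted.) The missing observation is the $(\shr_0,\shs)$-bimodule isomorphism
\[
\shr^\loc/\h\shr \simeq \shr_0 \tens[\shs_0] (\shs^\loc/\h\shs),
\]
which is the opening line of the paper's proof. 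It reconciles the two incarnations of the formal extension, giving $\rhom[\shr_0](\shr^\loc/\h\shr,\shn)\simeq\rhom[\shs_0](\shs^\loc/\h\shs,\shn)$ by base change along $\shs_0\to\shr_0$, and with it the whole proof becomes the chain
\[
\rhom[\shs_0](\shs^\loc/\h\shs, \rhom[\shr_0](\shm,\shn))
\simeq
\rhom[\shr_0](\shm, \rhom[\shs_0](\shs^\loc/\h\shs,\shn))
\simeq
\rhom[\shr_0](\shm, \shn^\rhb),
\]
where the first isomorphism is the ``Fubini'' for $\rhom$ over the central subring $\shs_0\subset\shr_0$, i.e.\ your step~(3). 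So the idea is right, but the bimodule identity has to be stated: it is what makes the two uses of Proposition~\ref{pro:Rh} (over $\shs_0$ for the left-hand side, over $\shr_0$ for the right-hand side) compatible, and it is also what hands you the $\shs$-module structure on the final isomorphism for free, without the detour through resolutions of $\shm$ or the colimit presentation of $\shk$ that your ``main obstacle'' paragraph envisages.
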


\begin{proof}
Noticing that $\shr^\loc/\h\shr \simeq \shr_0 \tens[\shs_0] (\shs^\loc/\h\shs)$ as $(\shr_0,\shs)$-bimodules, one has
\begin{align*}
\rhom[\shr_0](\shm,\shn)^\rhb 
&= \rhom[\shs_0] (\shs^\loc/\h\shs, \rhom[\shr_0](\shm,\shn)) \\
&\simeq \rhom[\shr_0] (\shr^\loc/\h\shr, \rhom[\shr_0](\shm,\shn)) \\
&\simeq \rhom[\shr_0](\shm, \rhom[\shr_0] (\shr^\loc/\h\shr,\shn))\\
&= \rhom[\shr_0](\shm,\shn^\rhb).
\end{align*}
\end{proof}

\begin{lemma}\label{lem:rhbroim}
Let $f\cl Y\to X$ be a morphism of sites, and assume that $(\opb f\shr_0)^\h \simeq \opb f \shr$.
Then the functors $\roim f$ and $(\scbul)^\rhb$ commute, that is, for
$\shp\in\Derb(\opb f\shr_0)$ we have
$(\roim f \shp)^\rhb\simeq\roim f(\shp^\rhb)$ in $\Derb(\shr)$.
\end{lemma}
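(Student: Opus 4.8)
The plan is to reduce the statement to Proposition~\ref{pro:Rh}, which expresses the formal extension as an $\rhom$ out of the $(\shr_0,\shr)$-bimodule $\shr^\loc/\h\shr$, and then to use that $\roim f$ commutes with such an $\rhom$. First, I would note that the constructions of the present section apply with $X$ replaced by the site $Y$ and $\shr_0$ by $\opb f\shr_0$; by hypothesis the corresponding formal extension is $(\opb f\shr_0)\forl=(\opb f\shr_0)^\h\simeq\opb f\shr$, and since $\opb f$ is exact and commutes with $(\scbul)^\loc$ and with multiplication by $\h$, one gets an isomorphism of $(\opb f\shr_0,\opb f\shr)$-bimodules $(\opb f\shr)^\loc/\h\opb f\shr\simeq\opb f(\shr^\loc/\h\shr)$. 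Setting $M\eqdot\shr^\loc/\h\shr$ and applying Proposition~\ref{pro:Rh} on $Y$ and on $X$, one obtains, for $\shp\in\Derb(\opb f\shr_0)$,
\[
\shp^\rhb\simeq\rhom[{\opb f\shr_0}](\opb f M,\shp),\qquad
(\roim f\shp)^\rhb\simeq\rhom[\shr_0](M,\roim f\shp),
\]
the second isomorphism using that $\roim f\shp\in\Der[+](\shr_0)$ and that Proposition~\ref{pro:Rh} extends from $\Derb$ to $\Der[+]$ (both members being right derived functors, it is enough to compare them on a single module, which is what loc.\ cit.\ does). Hence the lemma becomes equivalent to the isomorphism, in $\Der[+](\shr)$,
\[
\roim f\,\rhom[{\opb f\shr_0}](\opb f M,\shp)\simeq\rhom[\shr_0](M,\roim f\shp).
\]

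I would prove this isomorphism by Yoneda, using only the adjunction $(\opb f,\roim f)$ (at the levels of $\shr_0$- and of $\shr$-modules), the tensor--$\rhom$ adjunctions attached to the bimodules $M$ and $\opb f M$, and the monoidality $\opb f(M\ltens[\shr]\shn)\simeq\opb f M\ltens[{\opb f\shr}]\opb f\shn$ of the exact functor $\opb f$. Explicitly, for $\shn\in\Der(\shr)$ there is a chain of isomorphisms, natural in $\shn$,
\begin{align*}
\operatorname{Hom}_{\Der(\shr)}(\shn,\,\roim f\,\rhom[{\opb f\shr_0}](\opb f M,\shp))
&\simeq\operatorname{Hom}_{\Der(\opb f\shr)}(\opb f\shn,\,\rhom[{\opb f\shr_0}](\opb f M,\shp))\\
&\simeq\operatorname{Hom}_{\Der(\opb f\shr_0)}(\opb f M\ltens[{\opb f\shr}]\opb f\shn,\,\shp)\\
&\simeq\operatorname{Hom}_{\Der(\opb f\shr_0)}(\opb f(M\ltens[\shr]\shn),\,\shp)\\
&\simeq\operatorname{Hom}_{\Der(\shr_0)}(M\ltens[\shr]\shn,\,\roim f\shp)\\
&\simeq\operatorname{Hom}_{\Der(\shr)}(\shn,\,\rhom[\shr_0](M,\roim f\shp)),
\end{align*}
and Yoneda concludes; combining with the two formulas above yields $(\roim f\shp)^\rhb\simeq\roim f(\shp^\rhb)$.

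The argument is essentially formal, and I expect the only genuinely delicate point to be the bookkeeping of left and right module structures, so that each isomorphism above is one of $\shr$- (resp.\ $\opb f\shr$-) modules and not merely of sheaves of $\Z_Y[\hbar]$-modules; this is precisely where the hypothesis $(\opb f\shr_0)^\h\simeq\opb f\shr$ enters, both to make sense of $\opb f\shn$ for an $\shr$-module $\shn$ and to transport the right $\shr$-action on $M$ along $\opb f$. As an alternative, one could bypass Proposition~\ref{pro:Rh} and argue directly from $M\simeq\indlim[n]\hom[\shr_0](\shr_n,\shr_0)$ with $\shr_n$ locally free of finite rank over $\shr_0$, reducing to the projection formula $\roim f(\opb f\shr_n\tens[{\opb f\shr_0}]\shp)\simeq\shr_n\tens[\shr_0]\roim f\shp$ and to the commutation of $\roim f$ with the countable homotopy limit over $n$.
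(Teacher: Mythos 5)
Your proposal is correct and takes essentially the same route as the paper: both identify $\shp^\rhb$ as $\rhom[{\opb f\shr_0}](\opb f(\shr^\loc/\h\shr),\shp)$ via Proposition~\ref{pro:Rh}, and then commute $\roim f$ past this internal $\rhom$, using the hypothesis $(\opb f\shr_0)^\h\simeq\opb f\shr$ to identify the bimodule on $Y$ with $\opb f(\shr^\loc/\h\shr)$. The paper simply asserts the commutation $\roim f\,\rhom[{\opb f\shr_0}](\opb f(\shr^\loc/\h\shr),\shp)\simeq\rhom[\shr_0](\shr^\loc/\h\shr,\roim f\shp)$ as a standard adjunction formula, whereas you unwind it via Yoneda using the $(\opb f,\roim f)$ adjunction, the tensor--$\rhom$ adjunctions for the bimodules, and monoidality of the exact functor $\opb f$; this fills in the step the paper leaves implicit, and your remarks about where the hypothesis enters and about the module structures are a useful supplement to the paper's very terse argument.
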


\begin{proof}
One has the isomorphism
\begin{align*}
\roim f(\shp^\rhb)
&= \roim f \rhom[\opb f \shr_0] (\opb f(\shr^\loc/\h\shr),\shp) \\
&\simeq \rhom[\shr_0] (\shr^\loc/\h\shr,\roim f \shp) \\
&= \roim f(\shp^\rhb).
\end{align*}
\end{proof}

\begin{proposition}\label{pro:crit-h-acyclic}
Let $\sht$ be either a basis of open subsets of the site $X$ or, 
assuming that $X$ is a locally compact topological space, a basis of
compact subsets. Denote by
$J_\sht$ the full subcategory of $\md[\shr_0]$ consisting of
$\sht$-acyclic objects, {\em i.e.,} sheaves $\shn$ for which 
$H^k(S;\shn)= 0$ for all $k>0$ and all $S\in\sht$.
Then $J_\sht$ is injective with respect to the functor
$(\scbul)^\h$. In particular, for $\shn\in J_\sht$, we have
$\shn^\h\simeq \shn^\rhb$.
\end{proposition}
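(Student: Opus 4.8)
The plan is to show that the class $J_\sht$ is injective with respect to the left exact functor $(\scbul)^\h$, which by definition means two things: that $J_\sht$ is cogenerating (every $\shr_0$-module embeds into an object of $J_\sht$) and that $J_\sht$ is stable under the operations needed for computing derived functors, most crucially that $(\scbul)^\h$ transforms short exact sequences with all three terms in $J_\sht$ into short exact sequences, and that $J_\sht$ is closed under finite direct sums and cokernels of admissible monomorphisms. Once $J_\sht$ is injective with respect to $(\scbul)^\h$, the derived functor $(\scbul)^\rhb$ can be computed using resolutions by objects of $J_\sht$, and for $\shn \in J_\sht$ the value $\shn^\h$ already computes $\shn^\rhb$; this is the last assertion.

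First I would recall that $(\scbul)^\h$ is a projective limit of the functors $\shn \mapsto \shr_n \tens[\shr_0] \shn$, and that each $\shr_n = \shr/\h^{n+1}\shr$ is free of finite rank over $\shr_0$. Hence each functor $\shn \mapsto \shr_n \tens[\shr_0] \shn$ is exact. The issue is therefore entirely about the projective limit: we need the $\sprolim[n]$ of the system $\{\shr_n \tens[\shr_0] \shn\}_n$ to be exact on short exact sequences of objects in $J_\sht$, and for this the standard tool is the Mittag-Leffler condition together with vanishing of $\varprojlim^1$. The key point is that for each fixed $n$ the transition maps $\shr_{n+1} \tens[\shr_0] \shn \to \shr_n \tens[\shr_0] \shn$ are surjective (they come from the surjection $\shr_{n+1} \twoheadrightarrow \shr_n$), so the system is already Mittag-Leffler; moreover, since $\shr_n$ is $\shr_0$-free of finite rank, $H^k(S; \shr_n \tens[\shr_0] \shn) \simeq \shr_n \tens[\shr_0] H^k(S;\shn) = 0$ for $k > 0$ and $S \in \sht$ whenever $\shn \in J_\sht$. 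Combining these two facts one controls the sheaf-cohomology of the projective limit and obtains that $\shn^\h = \sprolim[n](\shr_n \tens[\shr_0] \shn)$ stays in $J_\sht$ (its sections over members of $\sht$ are projective limits of a surjective system with vanishing higher cohomology), and that an exact sequence $0 \to \shn' \to \shn \to \shn'' \to 0$ in $J_\sht$ gives, after applying $\shr_n \tens[\shr_0] \scbul$, a compatible system of short exact sequences with the left-hand system surjective, whence $0 \to (\shn')^\h \to \shn^\h \to (\shn'')^\h \to 0$ is exact.

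For the cogenerating property I would use that $J_\sht$ contains all flabby sheaves (flabby sheaves are $\sht$-acyclic for any $\sht$, whether a basis of opens or, in the locally compact case, a basis of compacts — here one invokes the soft/flabby comparison and the fact that $\sht$-acyclicity is tested on the base), and every $\shr_0$-module embeds into a flabby $\shr_0$-module. Together with the stability of $J_\sht$ under finite direct sums (clear, since cohomology commutes with finite sums) and under cokernels of monomorphisms between its objects (which follows from the long exact cohomology sequence and $\sht$-acyclicity of the two given terms), this shows $J_\sht$ is injective with respect to $(\scbul)^\h$ in the sense of \cite{KS06}. The derived functor is then computed by $J_\sht$-resolutions, and for $\shn\in J_\sht$ we get $\shn^\rhb \simeq \shn^\h$.

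The main obstacle I anticipate is the exactness of the projective limit on the relevant subcategory: one must be careful that it is not $\varprojlim$ on abelian groups alone that matters but $\varprojlim$ of a system of \emph{sheaves}, and that the higher derived limits $R^j\varprojlim$ vanish after taking sections over the members of $\sht$. The saving grace is the surjectivity of all transition maps in the system $\{\shr_n\tens[\shr_0]\shn\}$, which makes the system Mittag-Leffler \emph{levelwise on sections}, so that $R^1\varprojlim$ of sections vanishes; combined with the higher-cohomology vanishing coming from $\shr_0$-freeness of $\shr_n$, this is exactly enough. A secondary subtlety is the locally compact case with $\sht$ a basis of compacts, where one uses that on such spaces cohomology of a compact set is the limit of cohomology of its open neighborhoods and that $\Gamma(K;\scbul)$ commutes with the relevant projective limits; this is where the hypothesis that $X$ is locally compact (implicit in that branch of the statement) is used.
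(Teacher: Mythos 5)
Your proposal is essentially correct, but it takes a heavier route than the paper does in the decisive step, and it has a couple of loose ends.

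The paper's key simplification, which you do not exploit, is that as $\shr_0$-modules $\shn^\h\simeq\prod_\N\shn$: since $\shr_n\tens[\shr_0]\shn\simeq\shn^{n+1}$ and the transition maps are the projections, the inverse limit is literally a countable product. This turns the surjectivity of $\shn^\h\to\shn''^\h$ into the surjectivity of $\prod_\N\shn\to\prod_\N\shn''$, a question one can settle directly by producing, section by section over members of $\sht$, preimages term by term. Your Mittag-Leffler/$\varprojlim^1$ machinery is the same observation in disguise (surjective transition maps $\Rightarrow$ ML $\Rightarrow$ $\varprojlim^1=0$), but it is more apparatus than is needed; the product identification bypasses any discussion of derived limits. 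Where you do use it, the ML argument is sound provided you (a) apply $\varprojlim^1$ to the systems of $\shr_0(S)$-modules $\{\Gamma(S;\shr_n\tens\shn')\}_n$, $S\in\sht$, and (b) remember that $\Gamma(S;\scbul)$ commutes with inverse limits (not colimits, as you write in the compact case).

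Two further remarks. First, you assert that $\shn^\h$ stays in $J_\sht$; this is an interesting side fact but is not one of the conditions in the definition of being injective with respect to a functor, so including it distracts from the actual criterion (cogenerating; closure of cokernels inside $J_\sht$; exactness of $(\scbul)^\h$ on sequences with left term in $J_\sht$). Second, for the cogenerating step you invoke flabby sheaves; the paper instead uses the fact that injective sheaves are $\sht$-acyclic, which is preferable because the statement allows $X$ to be an arbitrary site, where flabbiness is not the natural notion. Finally, your treatment of the compact-basis case is too brief: the paper needs a nontrivial extraction of a surjectivity statement on a basis of open subsets from surjectivity on compacts (choosing $V'\subset V$ so that $\shn(V')\to\shn''(V')$ captures the image of $\shn''(V)$), and the throwaway sentence in your proposal does not cover this.
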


\begin{proof}
(i) Since injective sheaves are $\sht$-acyclic, $J_\sht$ is cogenerating. 

\noindent
(ii) Consider an exact sequence $0\to \shn'\to \shn\to \shn''\to 0$ in
$\md[\shr_0]$.
Clearly, if both $\shn'$ and $\shn$ belong to $J_\sht$, then so does $\shn''$.

\noindent
(iii) Consider an exact sequence as in (ii) and assume that $\shn'\in
J_\sht$. 
We have to prove that $0\to \shn^{\prime,\h}\to \shn^\h\to
\shn^{\prime\prime,\h}\to 0$ is exact. Since $(\scbul)^\h$ is left exact,
it is enough to prove that $\shn^\h\to
\shn^{\prime\prime,\h}$ is surjective.
Noticing that $\shn^\h \simeq \prod_\N\shn$ as $\shr_0$-modules,
it is enough to prove that $\prod_\N\shn \to
\prod_\N\shn^{\prime\prime}$ is surjective.

\noindent
(iii)-(a) Assume that $\sht$ is a basis of open subsets. 
Any open subset $U\subset X$ has a cover $\{U_i\}_{i\in I}$ by elements
$U_i\in\sht$. For any $i\in I$, the morphism $\shn(U_i)\to \shn''(U_i)$ is
surjective.  The result follows taking
the product over $\N$.

\noindent
(iii)-(b) Assume that $\sht$ is a basis of compact subsets. 
For any $K\in\sht$, the morphism $\shn(K)\to \shn''(K)$ is surjective.
Hence, there exists a basis $\shv$ of open subsets such that for any
$x\in X$ and any $V\ni x$ in $\shv$, there exists $V'\in\shv$ with
$x\in V'\subset V$ and the image of $\shn(V')\to \shn''(V')$
contains the image of $\shn''(V)$ in $\shn''(V')$. The result follows as in (iii)-(a)
taking the product over $\N$.
\end{proof}

\begin{corollary}\label{cor:hac}
The following sheaves are acyclic for the functor $(\scbul)^\h$:
\bnum
\item $\R$-constructible sheaves of $\C$-vector spaces on a real analytic manifold $X$  
{\rm (see~\cite[\S8.4]{KS90})},

\item coherent modules over the ring $\OO[X]$ of holomorphic functions 
on a complex analytic manifold $X$, 

\item coherent modules 
over the ring $\D[X]$ of linear differential operators on a complex
analytic  manifold $X$.
\enum
\end{corollary}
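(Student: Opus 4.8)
The plan is to verify, in each of the three cases, the hypotheses of Proposition~\ref{pro:crit-h-acyclic}: namely to produce a basis $\sht$ (of open subsets, or of compact subsets when $X$ is locally compact) such that every sheaf $\shn$ of the given type satisfies $H^k(S;\shn)=0$ for all $k>0$ and all $S\in\sht$. Once such a basis is exhibited, $\shn\in J_\sht$ and the conclusion $\shn^\h\simeq\shn^\rhb$ is immediate from the proposition. So the entire argument reduces to citing the appropriate local vanishing (Cartan-type) theorems.

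For (i), take $X$ real analytic and let $\sht$ be the family of relatively compact subanalytic open subsets, or better a basis of compact subanalytic subsets. By the results of~\cite[\S8.4]{KS90}, an $\R$-constructible sheaf $F$ of $\C$-vector spaces is $\Gamma$-acyclic on such subsets — concretely, if $K$ is a compact subanalytic subset then $H^k(K;F)=0$ for $k>0$, and one can arrange such $K$ to form a basis of compact subsets satisfying condition~(a) of Proposition~\ref{pro:crit-h-acyclic} (the nesting $K\subset\Int(K')\subset K'\subset U$). Hence $F\in J_\sht$. For (ii), take $X$ complex analytic and let $\sht$ be a basis of Stein open subsets; by Cartan's Theorem~B, a coherent $\OO[X]$-module $\shn$ satisfies $H^k(U;\shn)=0$ for all $k>0$ and all Stein $U$, so $\shn\in J_\sht$. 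For (iii), one again uses a basis of Stein open subsets: a coherent $\D[X]$-module is in particular, by forgetting the $\D$-structure via a local good filtration, built from coherent $\OO[X]$-modules, and it is classical that coherent $\D[X]$-modules have vanishing higher cohomology on Stein open subsets (this follows from Theorem~B together with the coherence of $\D[X]$ over $\OO[X]$, or is recorded directly in the standard references on $\shd$-modules). Thus $\shn\in J_\sht$ and Proposition~\ref{pro:crit-h-acyclic} applies.

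In cases (ii) and (iii) it is the open-subset version of Proposition~\ref{pro:crit-h-acyclic} that is used, so one only needs that Stein opens form a basis of the topology, which is standard; no nesting condition on compacts is required. In case (i), if one prefers the open-subset version, one can instead invoke that every point of a real analytic manifold has a basis of subanalytic open neighborhoods $U$ with $H^k(U;F)=0$ for $k>0$ and $F$ $\R$-constructible — again from~\cite[\S8.4]{KS90}; either route works.

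The only genuine point requiring care — the "main obstacle," such as it is — is bookkeeping: checking that the chosen families actually are bases in the sense demanded by Proposition~\ref{pro:crit-h-acyclic}, and in the compact-subset case that condition~(a) (the existence of the intermediate $K'$) holds. For Stein opens and for compact subanalytic subsets these are routine and well documented, so the corollary follows with no substantial new work.
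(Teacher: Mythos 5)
Your overall strategy is the right one and is the same as the paper's: exhibit a basis $\sht$ of $\Gamma$-acyclic subsets and invoke Proposition~\ref{pro:crit-h-acyclic}. Cases (ii) agree exactly. But there are two real problems in (i) and (iii).

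For (i), your preferred choice — ``or better a basis of compact subanalytic subsets'' — does not work as stated. It is simply false that $H^k(K;F)=0$ for $k>0$ whenever $K$ is a compact subanalytic subset and $F$ is $\R$-constructible: take $F=\C_X$ on $X=\R^2$ and $K=S^1$, which gives $H^1(K;\C_X)\simeq\C\neq 0$. What one actually needs, and what the paper uses, is the finer fact that an $\R$-constructible $F$ has, at each $x$, a fundamental system of open neighborhoods $U_x$ with $F_x\isofrom\rsect(U_x;F)$; in particular $H^k(U_x;F)=0$ for $k>0$, and the union of these fundamental systems gives the basis $\sht$. Your fallback alternative in the last paragraph of part (i) is essentially this and is fine, but the ``compact subanalytic subsets'' version should be dropped, or replaced by the compact sets arising from these small neighborhoods.

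For (iii), your justification is incorrect: $\shd_X$ is \emph{not} coherent over $\OO[X]$ — it is only a filtered union of $\OO[X]$-coherent subsheaves $F_m\shd_X$. So ``Theorem~B together with the coherence of $\D[X]$ over $\OO[X]$'' is not a valid derivation. Acyclicity of arbitrary coherent $\shd_X$-modules on Stein \emph{opens} is a genuinely nontrivial statement, because a coherent $\shd_X$-module need not admit a global good filtration on an arbitrary Stein open, and that obstruction is exactly what must be controlled. The paper sidesteps this entirely: the statement to verify is local, so one may assume $\shm$ has a good filtration, and then one takes $\sht$ to be the family of \emph{compact} Stein subsets, on which the vanishing is the standard consequence of Cartan's Theorem~B for the good (hence $\OO[X]$-coherent, up to filtration) pieces. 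In short, the compact-Stein route plus localizing to obtain a good filtration is the correct and elementary argument; appealing to a global Stein-open vanishing theorem, even if such a theorem can be proved, is both a stronger import and incorrectly justified here.
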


\begin{proof}
The statements follow by applying Proposition~\ref{pro:crit-h-acyclic}
for the following choices of $\sht$. 

\noindent
(i) Let $F$ be an $\R$-constructible sheaf. Then for any $x\in X$ one has 
$F_x\isofrom \rsect(U_x;F)$ for $U_x$ in a fundamental system of open 
neighborhoods of $x$. Take for $\sht$ the union of these fundamental
systems.

\noindent
(ii) Take for $\sht$ the
family of open Stein subsets. 

\noindent
(iii) Let $\shm$ be a coherent $\D[X]$-module. 
The problem being local, we may assume that $\shm$ is endowed with a
good filtration. Then take for $\sht$ the family of
compact Stein subsets.
\end{proof}

\begin{example}
Let $X = \R$, $\shr_0 = \C_X$, $Z = \{1/n\colon n=1,2,\dots\} \cup \{0\}$ and $U = X
\setminus Z$. 
One has the isomorphisms $(\coro)_X \simeq (\C_X)^\h \simeq (\C_X)^\rhb$
and 
$(\coro)_U \simeq (\C_U)^\h$. Considering the exact sequences
\begin{align*}
&0 \to (\coro)_U \to (\coro)_X \to (\coro)_Z \to 0, \\
&0 \to (\C_U)^\h \to (\C_X)^\h \to (\C_Z)^\h \to H^1(\C_U)^\rhb \to 0,
\end{align*}
we get $H^1(\C_U)^\rhb \simeq (\C_Z)^\h / (\coro)_Z$, whose stalk at the
origin does not vanish.
Hence $\C_U$ is not acyclic for the functor $(\scbul)^\h$.
\end{example}

Assume now that 
\[
\sha_0=\shr_0\quad\text{and}\quad\sha=\shr_0\forl
\]
satisfy either
Assumption~\ref{as:FDring} or Assumption~\ref{as:DQring} (where
condition (i) is clear) and that $\sha_0$ is syzygic.
Note that by Proposition~\ref{pro:crit-h-acyclic} one has
$\sha\simeq(\sha_0)^\rhb$.

\begin{proposition}\label{pro:nrh}
For $\shn\in\Derb_\coh(\sha_0)$:
\begin{itemize}
	\item [(i)] there is an isomorphism $\shn^\rhb\isoto\sha\ltens[\sha_0]\shn$ induced
	by \eqref{eq:RNtoNh},
	\item[(ii)] there is an isomorphism $\gr(\shn^\h)\simeq\shn$.
\end{itemize}
\end{proposition}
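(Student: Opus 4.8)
The plan is to prove both statements by reducing to the already-established machinery on cohomologically $\h$-complete modules, in particular Proposition~\ref{pro:conserv} (conservativity of $\gr$ on cohomologically $\h$-complete objects) and Proposition~\ref{pro:Rhcomplete} (that $\shn^\rhb$ is cohomologically $\h$-complete). For (i), I start from the natural morphism $\sha\ltens[\sha_0]\shn\to\shn^\rhb$ of \eqref{eq:RNtoNh} and want to show it is an isomorphism. Both sides are cohomologically $\h$-complete: the target by Proposition~\ref{pro:Rhcomplete}, and the source because $\shn\in\Derb_\coh(\sha_0)$ (so locally a bounded complex of free finite-rank $\sha_0$-modules, using that $\sha_0$ is syzygic), hence $\sha\ltens[\sha_0]\shn$ is locally a bounded complex of free finite-rank $\sha$-modules, which lies in $\Derb_\coh(\sha)$ and is therefore cohomologically $\h$-complete by Theorem~\ref{th:formalfini1}(3). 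Since the cone of a morphism of cohomologically $\h$-complete objects is again cohomologically $\h$-complete, by Proposition~\ref{pro:conserv} it suffices to check that $\gr$ applied to \eqref{eq:RNtoNh} is an isomorphism.

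So the crux is the computation of $\gr$ on both sides. On the source, $\gr(\sha\ltens[\sha_0]\shn)\simeq\sha_0\lltens[\sha]\sha\ltens[\sha_0]\shn\simeq\sha_0\ltens[\sha_0]\shn\simeq\shn$, since $\sha$ has no $\h$-torsion so $\sha_0\lltens[\sha]\sha\simeq\sha_0$. On the target, using the description $\shn^\rhb\simeq\rhom[\sha_0](\sha^\loc/\h\sha,\shn)$ from Proposition~\ref{pro:Rh}, one has $\gr(\shn^\rhb)\simeq\rhom[\sha_0]((\sha^\loc/\h\sha)\lltens[\sha]\sha_0,\shn)$ (compatibility of $\gr$ with $\rhom$, using that the coefficient bimodule is free over $\sha_0$ on the relevant side). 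The key local computation is then $(\sha^\loc/\h\sha)\lltens[\sha]\sha_0$: writing $\sha^\loc/\h\sha\simeq\indlim[n](\h^{-n}\sha/\h\sha)$ as in the proof of Proposition~\ref{pro:Rh}, tensoring $\cdot\ltens[\sha]\sha_0=\cdot\ltens[\sha](\sha/\h\sha)$ with each $\h^{-n}\sha/\h\sha$ and taking the filtered colimit, the result should be concentrated in degree $0$ and equal to $\sha_0$. Plugging this in gives $\gr(\shn^\rhb)\simeq\rhom[\sha_0](\sha_0,\shn)\simeq\shn$, and one checks the resulting identification is compatible with $\gr$ of \eqref{eq:RNtoNh} via the bimodule isomorphism $\sha\simeq\hom[\sha_0](\sha^\loc/\h\sha,\sha_0)$ displayed just before \eqref{eq:RNtoNh}. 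That compatibility check — tracking that the two computed isomorphisms to $\shn$ agree rather than differing by an automorphism — is the step I expect to be the main obstacle, though it should reduce to the functoriality of the residue pairing.

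Statement (ii) is then a direct corollary. By Corollary~\ref{cor:hac}, coherent $\OO[X]$-modules, coherent $\D[X]$-modules, and $\R$-constructible sheaves are acyclic for $(\scbul)^\h$, and more relevantly the hypotheses here (Assumption~\ref{as:FDring} or~\ref{as:DQring}, $\sha_0$ syzygic) ensure via Proposition~\ref{pro:crit-h-acyclic} that coherent $\sha_0$-modules are $(\scbul)^\h$-acyclic, so for $\shn$ a coherent $\sha_0$-module one has $\shn^\h\simeq\shn^\rhb$; combined with part (i) this gives $\shn^\h\simeq\sha\ltens[\sha_0]\shn$ and hence $\gr(\shn^\h)\simeq\gr(\sha\ltens[\sha_0]\shn)\simeq\shn$ by the source computation above. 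Alternatively, and perhaps more cleanly for a module (not a complex), one argues directly: $\shn^\h\simeq\prod_\N\shn$ as $\sha_0$-modules carries the obvious $\h$-adic filtration with $\h^k\shn^\h/\h^{k+1}\shn^\h\simeq\shn$, so $\shn^\h$ has no $\h$-torsion and $\gr(\shn^\h)=\shn^\h/\h\shn^\h\simeq\shn$ directly — one only needs to know $\gr$ here is the naive quotient, which holds because $\shn^\h$ has no $\h$-torsion.
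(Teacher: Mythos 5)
Your overall strategy is genuinely different from the paper's. The paper localizes, uses syzygicity to replace $\shn$ by a bounded complex $\shl^\scbul$ of finite free $\sha_0$-modules, and then both (i) and (ii) become explicit term-by-term identifications (the key point is that on a free module both $\sha\tens[\sha_0]\shl$ and $\shl^\h$ are the same finite free $\sha$-module, so the morphism of \eqref{eq:RNtoNh} is literally an isomorphism, and $\gr(\shl^\h)$ is computed by the mapping cone of $\h$). You instead invoke the ``cohomologically $\h$-complete plus conservativity of $\gr$'' machinery (Propositions~\ref{pro:Rhcomplete} and~\ref{pro:conserv}, Theorem~\ref{th:formalfini1}) and then compute $\gr$ of each side abstractly. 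This works in principle, but is more circuitous: once you observe (as you do) that one can locally replace $\shn$ by a free complex to control the source, the whole reduction collapses and the conservativity detour is unnecessary. Moreover you correctly flag that to close the argument you must check that $\gr$ \emph{of the morphism} \eqref{eq:RNtoNh} is an isomorphism, not merely that $\gr$ of source and target are abstractly isomorphic to $\shn$; the paper's explicit free-complex computation settles exactly this point with no extra work, so your ``main obstacle'' is precisely what the paper's shorter route avoids.

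There is also a concrete computational slip in your analysis of $\gr(\shn^\rhb)$, with two errors that happen to cancel. First, the passage from $\gr\,\rhom[\sha_0](\sha^\loc/\h\sha,\shn)$ to a $\rhom$ with the coefficient bimodule modified is not a derived tensor on the bimodule but a derived $\hom$: with the finite free resolution $[\sha\xrightarrow{\h}\sha]$ of $\sha_0$ over $\sha$, one gets
\[
\gr(\shn^\rhb)\simeq\rhom[\sha_0]\bigl(\rhom[\sha^\op](\sha_0,\sha^\loc/\h\sha),\shn\bigr),
\]
and it is this inner object, $\rhom[\sha^\op](\sha_0,\sha^\loc/\h\sha)$, that is concentrated in degree $0$ and isomorphic to $\sha_0$ (the kernel of $\cdot\h$ on $\sha^\loc/\h\sha$ is $\sha/\h\sha$, the cokernel is $0$). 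Your formula writes instead $(\sha^\loc/\h\sha)\lltens[\sha]\sha_0$, which is \emph{not} concentrated in degree $0$: since $\cdot\h$ is surjective on $\sha^\loc/\h\sha$ with kernel $\sha_0$, one finds $(\sha^\loc/\h\sha)\lltens[\sha]\sha_0\simeq\sha_0[1]$. You have dropped a shift in the $\gr$-vs-$\rhom$ exchange and compensated it by mislocating the cohomology of the tensor, so the final $\gr(\shn^\rhb)\simeq\shn$ comes out right only by accident; as written, the colimit-of-kernels step would actually give a nonzero $H^{-1}$.

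Your alternative, elementary argument for (ii) --- $\shn^\h\simeq\prod_\N\shn$ with $\h$ acting by shift, hence no $\h$-torsion and $\gr(\shn^\h)=\shn^\h/\h\shn^\h\simeq\shn$ --- is clean and is essentially the degree-wise version of the paper's mapping-cone identification; that part is fine.
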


\begin{proof}
Since $\sha_0$ is syzygic, we may locally represent $\shn$
by a bounded complex $\shl^\scbul$ of free $\sha_0$-modules
of finite rank. Then (i) is obvious. As for (ii), both
complexes are isomorphic to the mapping cone of 
$\h\cl(\shl^\scbul)^\h \to (\shl^\scbul)^\h$.
\end{proof}

In particular, the functor $(\scbul)^\h$ is exact on $\Mod_\coh(\sha_0)$ and
preserves coherence. One thus get a functor
\[
(\scbul)^\rhb \colon \Derb_\coh(\sha_0) \to \Derb_\coh(\sha).
\]

\subsubsection*{The subanalytic site}

The subanalytic site associated to an analytic manifold $X$ has been
introduced and studied in~\cite[Chapter~7]{KS01} (see also \cite{Pr05}
for a detailed and systematic study as well as for complementary results).
Denote by $\Op_X$ the category of open subsets of $X$, the morphisms being the
inclusion morphisms, and by $\Op_\Xsa$ the full subcategory consisting of 
relatively compact subanalytic open subsets of $X$.
The site $\Xsa$ is the presite  $\Op_\Xsa$
endowed with the Grothendieck
topology for which the coverings are those admitting
a finite subcover.  One
calls $\Xsa$
the subanalytic site associated to $X$.
Denote by $\rho \cl X\to\Xsa$ the natural morphism of sites.
Recall that the inverse image functors $\opb\rho$, besides the
usual right adjoint given by the direct image functor $\oim\rho$,
admits a left adjoint denoted $\eim\rho$. Consider the diagram
\[
\xymatrix{
\Derb(\C_X) \ar@<.5ex>[r]^{\roim{\rho}} \ar[d]^{(\scbul)^\rhb} & 
\Derb(\C_{\Xsa}) \ar@<.5ex>[l]^{\opb\rho} \ar[d]_{(\scbul)^\rhb} \\
\Derb(\coro_X) \ar@<.5ex>[r]^{\roim{\rho}} & 
\Derb(\coro_{\Xsa}) \ar@<.5ex>[l]^{\opb\rho} .
}
\]

\begin{lemma}\label{lem:Rhandsa}
\bnum
\item
The functors $\opb{\rho}$ and $(\scbul)^\rhb$ commute, that is, for
$G\in\Derb(\C_\Xsa)$ we have
$(\opb{\rho}G)^\rhb\simeq\opb{\rho}(G^\rhb)$ in $\Derb(\coro_X)$.
\item
The functors $\roim{\rho}$ and $(\scbul)^\rhb$ commute, that is, for
$F\in\Derb(\C_X)$ we have
$(\roim{\rho}F)^\rhb\simeq\roim{\rho}(F^\rhb)$ in $\Derb(\coro_\Xsa)$.
\enum
\end{lemma}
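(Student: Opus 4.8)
The plan is to reduce both isomorphisms to Proposition~\ref{pro:Rh}, which presents the formal extension as an $\rhom$: one has $\shn^\rhb\simeq\rhom[\shr_0](\shr^\loc/\h\shr,\shn)$, that is, $\rhom[\C_X](\coro_X^\loc/\h\coro_X,\shn)$ on $X$ and the analogue with $\coro_\Xsa$ on $\Xsa$. I would first record the elementary facts that $\opb\rho\C_\Xsa\simeq\C_X$ and $(\C_X)^\h\simeq\coro_X$, that $\coro_\Xsa$ is the constant sheaf on $\Xsa$ with stalk $\C\forl$ (so that $\opb\rho\coro_\Xsa\simeq\coro_X$), and --- since $\coro_\Xsa^\loc/\h\coro_\Xsa$ is again a constant sheaf --- that $\opb\rho(\coro_\Xsa^\loc/\h\coro_\Xsa)\simeq\coro_X^\loc/\h\coro_X$. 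Granting this, statement~(ii) is immediate from Lemma~\ref{lem:rhbroim} applied to the morphism of sites $\rho\cl X\to\Xsa$, whose hypothesis $(\opb\rho\C_\Xsa)^\h\simeq\opb\rho\coro_\Xsa$ is precisely $(\C_X)^\h\simeq\coro_X$.

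For statement~(i), by Proposition~\ref{pro:Rh} and the identification of $\opb\rho(\coro_\Xsa^\loc/\h\coro_\Xsa)$ above, it suffices to prove the commutation
\[
\opb\rho\,\rhom[\C_\Xsa](\coro_\Xsa^\loc/\h\coro_\Xsa,\,G)\;\simeq\;\rhom[\C_X]\bigl(\opb\rho(\coro_\Xsa^\loc/\h\coro_\Xsa),\,\opb\rho G\bigr).
\]
The left argument is not a perfect complex, but it is a filtered colimit $\coro_\Xsa^\loc/\h\coro_\Xsa\simeq\indlim[n]K_n$, with $K_n\eqdot\h^{-n}\coro_\Xsa/\h\coro_\Xsa$ a $\C_\Xsa$-module free of finite rank $n+1$ and with transition morphisms that are split monomorphisms of $\C_\Xsa$-modules. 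Hence the left-hand $\rhom$ is the derived projective limit of the tower $n\mapsto\rhom[\C_\Xsa](K_n,G)$, whose transition morphisms are split epimorphisms; moreover $\opb\rho$ commutes with each term, $\rhom[\C_\Xsa](K_n,G)\simeq\rhom[\C_X](\opb\rho K_n,\opb\rho G)$, since $K_n$ is free of finite rank. So everything comes down to commuting $\opb\rho$ with this derived projective limit.

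That last commutation is the main obstacle, since $\opb\rho$, being a left adjoint, need not commute with infinite products. I would settle it by exploiting, on the one hand, the Mittag--Leffler property of the tower --- the transitions being split epimorphisms, its derived limit reduces to the ordinary sheaf limit once $G$ is replaced by a resolution by sheaves acyclic for the relevant functors --- and, on the other hand, the fact that on $\Xsa$ all coverings are finite, which is what makes $\opb\rho$ compatible with the limit at hand. As a consistency check one may note that both members of~(i) are cohomologically $\h$-complete --- the right one by Proposition~\ref{pro:Rhcomplete} --- and that the natural morphism between them becomes an isomorphism after applying $\gr$: computing the mapping cone of $\h$ on $\coro^\loc/\h\coro$, which is $\C[1]$, gives $\gr(\shn^\rhb)\simeq\shn$ for every $\shn\in\Derb(\shr_0)$, so $\gr$ carries both members of~(i) to $\opb\rho G$; since $\gr$ is conservative on cohomologically $\h$-complete objects (Proposition~\ref{pro:conserv}), this forces the isomorphism.
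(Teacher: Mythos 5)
Your handling of~(ii) is correct and essentially the same as the paper's: one checks that $(\opb\rho\C_\Xsa)^\h\simeq\coro_X\simeq\opb\rho\coro_\Xsa$ and applies Lemma~\ref{lem:rhbroim}.

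For~(i) you take a genuinely different route --- unwinding $(\scbul)^\rhb$ via the $\rhom$ formula of Proposition~\ref{pro:Rh} and the presentation $\coro_\Xsa^\loc/\h\coro_\Xsa\simeq\indlim_n K_n$ --- but the argument stalls precisely at the step you yourself flag as ``the main obstacle,'' and there it has a real gap. You write that $\opb\rho$, ``being a left adjoint, need not commute with infinite products,'' and then gesture at Mittag--Leffler plus finite coverings on $\Xsa$. The correct and much simpler point --- recalled in the paper just before this lemma, and the opening sentence of the paper's proof --- is that $\opb\rho$ \emph{also} admits a \emph{left} adjoint $\eim\rho$, hence is simultaneously a right adjoint and therefore commutes with \emph{all} projective limits and products. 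That single fact is what you need; the ``finite coverings'' remark does not substitute for it and is not a valid argument as stated. With it, the underived commutation $\opb\rho\circ(\scbul)^\h\simeq(\scbul)^\h\circ\opb\rho$ is immediate, and this is exactly the paper's starting point.

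There is a second, subtler point you also leave unaddressed. Even granting the commutation of $\opb\rho$ with $\prolim$, after replacing $G$ by an injective resolution $I^\bullet$ and commuting $\opb\rho$ across the tower $\hom(K_n,I^\bullet)$, you end with $\hom[\C_X]\bigl(\coro_X^\loc/\h\coro_X,\opb\rho I^\bullet\bigr)$, and you must still argue that this computes $\rhom[\C_X](\coro_X^\loc/\h\coro_X,\opb\rho G)=(\opb\rho G)^\rhb$. This requires that $\opb\rho I^\bullet$ be a resolution of $\opb\rho G$ by sheaves acyclic for $(\scbul)^\h$. The paper obtains this from Prelli's theorem that $\opb\rho$ of a quasi-injective sheaf is soft, combined with Proposition~\ref{pro:crit-h-acyclic}. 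Your phrase ``once $G$ is replaced by a resolution by sheaves acyclic for the relevant functors'' hints at this but does not establish it.

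Finally, the ``consistency check'' at the end does not close the gap either. Conservativity of $\gr$ (Proposition~\ref{pro:conserv}) applies only to cohomologically $\h$-complete objects, and while the right-hand side $(\opb\rho G)^\rhb$ is cohomologically $\h$-complete by Proposition~\ref{pro:Rhcomplete}, you give no reason why $\opb\rho(G^\rhb)$ should be: $\opb\rho$ does not obviously preserve cohomological $\h$-completeness, and showing that it does in this situation is essentially equivalent to the isomorphism you are trying to prove. In short, your reduction to a projective limit is a reasonable alternative framing, but the two facts that actually carry the proof --- that $\opb\rho$ has a left adjoint $\eim\rho$, and that $\opb\rho$ of (quasi-)injectives is soft --- are both missing from your sketch.
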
 

\begin{proof}
(i) Since it admits a 
left adjoint, the functor $\opb\rho$ commutes with projective limits. 
It follows that for $G\in\md[\C_\Xsa]$ one has an isomorphism
\[
\opb{\rho}(G^\h)\to(\opb{\rho}G)^\h.
\]
To conclude, it remains to show that $(\opb{\rho}(\scbul))^\rhb$
is the derived functor of $(\opb{\rho}(\scbul))^\h$. 
Recall that an object
$G$ of $\md[\C_\Xsa]$ is quasi-injective if the functor
$\Hom[{\C_\Xsa}](\scbul,G)$ is exact on the category
$\mdrc[\C_X]$.
By a result of \cite{Pr05}, if $G\in\md[\C_\Xsa]$ is quasi-injective,
then $\opb{\rho}G$ is soft. Hence, $\opb{\rho}G$ is injective for the
functor $(\scbul)^\h$ by Proposition~\ref{pro:crit-h-acyclic}.

\medskip
\noindent
(ii) By (i) we can apply Lemma~\ref{lem:rhbroim}.
\end{proof}

\section{$\D[]\forl$-modules and propagation}\label{section:Dh}

Let now $X$ be a complex analytic manifold of complex dimension $d_X$.
As usual, denote by $\C_X$ the constant sheaf with stalk $\C$, by
$\OO[X]$ the structure sheaf and by $\D[X]$ the ring of linear
differential operators on $X$. We will use the notations
\begin{align*}
\RD' &\cl \Derb(\C_X)^\rop \to \Derb(\C_X),& 
F &\mapsto\rhom[\C_X](F,\C_X), \\
\RDd&\cl \Derb_\coh(\D)^\rop \to \Derb_\coh(\D), &
\shm&\mapsto\rhom[\D](\shm,\D\tens[\sho_X]\Omega_X^{\otimes-1})\,[d_X],
\\
\Sol &\cl \Derb_\coh(\D)^{{\rm op}}\to\Derb(\C_X), &
\shm &\mapsto\rhom[{\D}](\shm,\OO),\\
\DR &\cl \Derb_\coh(\D)\to\Derb(\C_X), &
\shm &\mapsto \rhom[{\D}](\OO,\shm),
\end{align*}
where $\Omega_X$ denotes the line bundle of holomorphic forms of
maximal degree and $\Omega_X^{\otimes-1}$ the dual bundle.

As shown in Corollary~\ref{cor:hac}, the sheaves $\C_X$, $\OO[X]$ and
$\D[X]$ are all acyclic for the functor $(\scbul)^\h$.
We will be interested in the formal extensions
\[
\coro_X = \C_X\forl, \quad
\Oh = \OO[X]\forl, \quad
\Dh = \D[X]\forl.
\]
In the sequel, we shall treat left $\Dh$-modules, but all results
apply to right modules since the categories $\md[{\Dh}]$ and
$\md[{\D^{\h,{\rm op}}}]$ are equivalent.

\begin{proposition}
The $\coro$-algebras $\Dh$ and $\D^{\h,{\rm op}}$ satisfy
Assumptions~\ref{as:FDring}.
\end{proposition}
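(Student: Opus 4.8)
The plan is to verify, one at a time, the three (resp.\ four) conditions packaged in Assumption~\ref{as:FDring}, namely (i), (ii) and (iii), for the pair $\sha = \Dh$, $\shao = \D[X]$, and then remark that the argument is symmetric in left/right modules so that $\D^{\h,\rop}$ is covered as well.

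\begin{proof}
Condition~(i) is clear: $\Dh = \D[X]\forl = \prod_{n\ge 0}\D[X]\h^n$ has no $\h$-torsion (a formal series is killed by $\h$ only if all its coefficients vanish) and it is $\h$-complete by construction, since $\Dh/\h^{n+1}\Dh \simeq \bigoplus_{k=0}^n \D[X]\h^k$ and $\sprolim[n]\Dh/\h^{n+1}\Dh \isofrom \Dh$. Condition~(ii) asks that $\shao = \Dh/\h\Dh \simeq \D[X]$ be a left Noetherian ring; this is the classical fact that the ring of linear differential operators on a complex manifold is coherent and Noetherian on the level of sections over suitable opens, which is standard (it follows from the Noetherianity of the graded ring $\operatorname{gr}\D[X] \simeq \sho_{T^*X}$ together with the good filtration argument, exactly as in the usual $\D$-module theory). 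The same holds for $\D[X]^\rop$, since the opposite of $\D[X]$ is again a ring of differential operators.

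For condition~(iii) we must exhibit a basis $\BB$ of compact subsets of $X$ and a full subprestack $U\mapsto \mdgd[\D[X]\vert_U]$ of $U\mapsto\mdcoh[\D[X]\vert_U]$ satisfying (a)--(g). The natural choice, as in the proof of Corollary~\ref{cor:hac}(iii), is to take for $\BB$ the family of compact Stein subsets of $X$ (admissible in the sense that they admit a neighborhood basis of open Stein subsets), which gives (a) at once; and to let $\mdgd[\D[X]\vert_U]$ be the full subcategory of coherent $\D[X]\vert_U$-modules that admit a global good filtration by coherent $\sho_U$-modules. Then (b) is the definition; (g), that $\D[X]$ itself is good, is clear from the order filtration; (f), local existence of good filtrations, is the standard fact that any coherent $\D$-module locally admits a good filtration; (e), stability by subobjects, quotients and extensions, is the classical good behavior of good filtrations (a submodule or quotient of a module with a good filtration inherits one, and an extension of two such is again such). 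The crucial vanishing (c) — that $H^j(K;\shm) = 0$ for $j>0$, $K\in\BB$ and $\shm$ good — follows because a good $\D[X]$-module on a neighborhood of a compact Stein set $K$ is the union of an increasing sequence of coherent $\sho$-modules, each of which has vanishing higher cohomology on $K$ by Cartan's Theorem~B, and one passes to the limit using the fact that the cohomology of $K$ commutes with such filtered inductive limits (this is where the compactness and Stein-ness of $K$ is used, via the Mittag-Leffler / countable-cofinality argument for $\rsect(K;\scbul)$). Finally (d), the exhaustion condition \eqref{cond:exh}, and the sheaf-theoretic globalization all follow formally once (c) and (f) are in hand, since goodness is detected on relatively compact opens.

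I expect the main obstacle to be the careful verification of (c), the higher-cohomology vanishing for good $\D[X]$-modules on compact Stein subsets: one has to argue that $\rsect(K;\scbul)$ commutes with the filtered colimit defining a good module from the $\sho$-coherent pieces, which requires knowing that $K$ has a cofinal countable system of Stein neighborhoods and invoking the corresponding $\lim^1$-vanishing. Everything else is a transcription of the standard coherence theory of $\D[X]$-modules into the axiomatic framework of \cite{KS08}. Since $\D[X]^\rop$ is again (locally) a ring of differential operators and the order filtration is two-sided, the identical arguments apply to $\D^{\h,\rop}$, so $\D^{\h,\rop}$ satisfies Assumption~\ref{as:FDring} as well.
\end{proof}
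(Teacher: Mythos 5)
Your proof takes exactly the paper's approach: the paper's entire argument is the one-line observation that Assumption~\ref{as:FDring} holds with $\sha=\Dh$, $\shao=\D$, $\mdgd[\shao\vert_U]$ the good $\D[U]$-modules of \cite{Ka03}, and $\BB$ the Stein compacts; you have simply expanded the verification that the references support. One small internal inconsistency in your write-up: you define $\mdgd[\D[X]\vert_U]$ as the modules admitting a \emph{global} good filtration on $U$, but condition~(d) then fails with this definition (a coherent $\D$-module can be good on every relatively compact open of $U$ without admitting a global good filtration on all of $U$); the correct definition, which you evidently have in mind when you later write that ``goodness is detected on relatively compact opens,'' is Kashiwara's: $\shm$ is good on $U$ when $\shm\vert_V$ admits a good filtration for every relatively compact open $V\subset U$, which makes~(d) tautological.
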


\begin{proof}
Assumption~\ref{as:FDring} hold for $\sha=\Dh$, $\shao=\D$,
$\mdgd[\shao\vert_U]$ 
the category of good $\D[U]$-modules (see \cite{Ka03}) and for $\BB$
the family of Stein compact subsets of $X$.
\end{proof}

In particular, by Theorem~\ref{th:formalfini2} one has that $\Dh$ is
right and left
Noetherian (and thus coherent). Moreover, by Theorem~\ref{th:hddim} any
object of $\Derb_\coh(\Dh)$ can be locally represented by a bounded
complex of free $\Dh$-modules of finite rank.

We will use the notations
\begin{align*}
\RD'_\h &\cl \Derb(\coro_X)^\rop \to \Derb(\coro_X),&
F &\mapsto\rhom[\coro_X](F,\coro_X),\\
\RDdh&\cl \Derb_\coh(\Dh)^\rop \to \Derb_\coh(\Dh), &
\shm&\mapsto\rhom[\Dh](\shm,\Dh\tens[\sho_X]\Omega_X^{\otimes-1})\,[d_X]
, \\
\Solh&\cl \Derb_\coh(\Dh)^{{\rm op}}\to\Derb(\coro), &
\shm &\mapsto\rhom[{\Dh}](\shm,\Oh),\\
\DRh&\cl \Derb_\coh(\Dh)\to\Derb(\coro), &
\shm &\mapsto \rhom[{\Dh}](\Oh,\shm).
\end{align*}

By Proposition~\ref{pro:nrh} and Lemma~\ref{lem:Rh_hom}, for $\shn\in\Derb_\coh(\D)$
one has
\begin{align}
\shn^\rhb &\simeq \Dh\ltens[\D]\shn, \\
\label{eq:grhn}
\gr(\shn^\rhb) &\simeq \shn, \\
\Solh(\shn^\rhb) &\simeq \Sol(\shn)^\rhb.
\end{align}

\begin{definition}\label{def:htor}
For $\shm \in \md[\Dh]$, denote by $\shm_\htor$ its submodule
consisting of
sections locally annihilated by some power of $\h$
and set $\shm_\htf = \shm/\shm_\htor$.
We say that $\shm\in\md[\Dh]$ is an $\h$-torsion module if $\shm_\htor\isoto\shm$ 
and that $\shm$ has no $\h$-torsion (or is $\h$-torsion
free) if $\shm\isoto\shm_\htf$.
\end{definition}

Denote by ${}_n\shm$ the kernel of 
$\h^{n+1}\colon \shm \to \shm$. Then $\shm_\htor$ is the sheaf
associated
with the increasing union of the ${}_n\shm$'s.
Hence, if $\shm$ is coherent, the increasing family $\{{}_n\shm\}_n$ is
locally stationary and $\shm_\htor$ as well as 
$\shm_\htf$ are coherent.

\subsubsection*{Characteristic variety}

Recall the following definition

\begin{definition}\label{def:additivity}
(i) For $\shc$ an abelian category, a function $c\colon \Ob(\shc)\to\mathrm{Set}$ 
is called additive if $c(M) = c(M') \cup c(M'')$ for any short exact sequence 
$0\to M' \to M \to M''\to 0$.

\noindent(ii)
For $\sht$ a triangulated category, a function $c\colon \Ob(\sht)\to\mathrm{Set}$ 
is called additive if $c(M) = c(M[1])$ and $c(M) \subset c(M') \cup c(M'')$ for any distinguished 
triangle $M' \to M \to M''\to[+1]$. 
\end{definition}

Note that an additive function $c$ on $\shc$ naturally extend to the derived category $\Der(\shc)$ 
by setting $c(M) = \bigcup_i c(H^i(M))$.

\medskip
For $\shn$ a coherent $\D[X]$-module, denote by $\chv(\shn)$ its
characteristic variety, a closed involutive subvariety of the cotangent
bundle $T^*X$. 
The characteristic variety is additive on $\Mod_\coh(\D[X])$.
For $\shn\in\Derb_\coh(\D[X])$ one sets
$\chv(\shn) = \bigcup_i \chv(H^i(\shn))$.

\begin{definition}
The characteristic variety of $\shm\in\Derb_\coh(\Dh)$ is defined by 
\eqn 
&&\chv_\h(\shm) = \chv(\grh(\shm)). 
\eneqn
\end{definition}

To $\shm \in \mdcoh[\Dh]$ one associates the coherent $\D$-modules
\begin{align}
{}_0\shm &= \ker(\h\colon \shm \to \shm) = H^{-1}(\gr\shm), \\
\shm_0 &= \coker(\h\colon \shm \to \shm) = H^0(\gr\shm).
\end{align}

\begin{lemma} \label{lem:tormod_chvH-1=chvH0}
For $\shm \in \mdcoh[\Dh]$ an $\h$-torsion module, one has 
\[
\chv_\h(\shm) = \chv(\shm_0) = \chv({}_0\shm).
\]
\end{lemma}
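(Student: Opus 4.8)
The plan is to exploit the fact that $\shm$ is an $\h$-torsion module, so that multiplication by $\h$ is locally nilpotent, and reduce the equalities to a statement about the associated graded. First I would recall that $\chv_\h(\shm) = \chv(\gr\shm) = \chv({}_0\shm) \cup \chv(\shm_0)$, since $\gr\shm$ sits in a distinguished triangle with $H^{-1}(\gr\shm) = {}_0\shm$ and $H^0(\gr\shm) = \shm_0$ and $\chv$ is additive on $\Derb_\coh(\D)$. Thus both asserted equalities will follow once I show $\chv({}_0\shm) = \chv(\shm_0)$, and in fact it suffices to prove the single inclusion $\chv(\shm_0) \subset \chv({}_0\shm)$ together with its reverse. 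Actually, since $\chv({}_0\shm)\cup\chv(\shm_0)$ already equals $\chv_\h(\shm)$, it is enough to prove $\chv({}_0\shm) = \chv(\shm_0)$.

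Next I would set up the dévissage on the $\h$-torsion filtration. Since $\shm$ is coherent, the family $\{{}_n\shm\}_n$ is locally stationary, so locally on $X$ there is $N$ with $\shm_\htor = \shm = {}_N\shm$, i.e.\ $\h^{N+1}$ annihilates $\shm$. I would then induct on the smallest such $N$. For $N=0$ we have $\h\shm = 0$, so $\shm$ is a coherent $\D$-module viewed as a $\Dh$-module, and ${}_0\shm = \shm = \shm_0$, giving the equality trivially. For the inductive step, consider the short exact sequence of $\Dh$-modules
\[
0 \to \h\shm \to \shm \to \shm/\h\shm \to 0,
\]
where $\h\shm$ is annihilated by $\h^N$ and $\shm/\h\shm = \shm_0$ is annihilated by $\h$. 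The key point is to relate the characteristic varieties across this sequence: $\chv_\h$ is additive on $\Derb_\coh(\Dh)$ (via $\chv$ additive on $\Der(\D)$ after applying $\gr$), so $\chv_\h(\shm) = \chv_\h(\h\shm) \cup \chv_\h(\shm_0)$, and by induction $\chv_\h(\h\shm) = \chv({}_0(\h\shm)) = \chv((\h\shm)_0)$.

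The main obstacle — and the step requiring genuine care — is tracking how ${}_0\shm$ and $\shm_0$ individually behave along the sequence above, since $\ker(\h)$ and $\coker(\h)$ are not additive functors. I would analyze the long exact sequence obtained by applying $\gr$ (equivalently, the snake lemma for multiplication by $\h$) to the short exact sequence, which yields a six-term exact sequence of coherent $\D$-modules relating ${}_0(\h\shm)$, ${}_0\shm$, ${}_0\shm_0 = \shm_0$, $(\h\shm)_0$, $\shm_0$, and $(\shm_0)_0 = \shm_0$. From the surjection $\shm_0 \twoheadleftarrow \shm \twoheadrightarrow$ factoring appropriately and the fact that $\shm_0$ also surjects onto a subquotient of ${}_0\shm$, one extracts $\chv(\shm_0) \subset \chv({}_0\shm) \cup \chv((\h\shm)_0)$ and the reverse-type inclusions, and then the inductive hypothesis $\chv({}_0(\h\shm)) = \chv((\h\shm)_0)$ collapses everything. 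Assembling these inclusions with additivity of $\chv$ on $\Mod_\coh(\D)$ and the involutivity of characteristic varieties (which guarantees no spurious components appear), I obtain $\chv({}_0\shm) = \chv(\shm_0) = \chv_\h(\shm)$, completing the induction and hence the lemma.
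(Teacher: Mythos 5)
Your argument is correct and follows essentially the same route as the paper: reduce to $\chv({}_0\shm)=\chv(\shm_0)$, localize so that $\h^N\shm=0$, induct on $N$, and use the four-term exact sequence $0\to{}_0(\h\shm)\to{}_0\shm\to\shm_0\to(\h\shm)_0\to 0$ coming from the snake lemma applied to $0\to\h\shm\to\shm\to\shm_0\to 0$, together with additivity of $\chv$. The only superfluous element is your appeal to involutivity of characteristic varieties at the end — additivity alone closes the argument once you split the four-term sequence into two short exact sequences and insert the inductive hypothesis.
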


\begin{proof}
By definition, $\chv_\h(\shm) = \chv(\shm_0) \cup \chv({}_0\shm)$.
It is thus enough to prove the equality $\chv(\shm_0) = \chv({}_0\shm)$.

Since the statement is local we may assume that $\h^N\shm = 0$ 
for some $N\in\N$. We proceed by induction on $N$. 

For $N=1$ we have $\shm \simeq \shm_0 \simeq {}_0\shm$, and the
statement is obvious.

Assume that the statement has been proved for $N-1$. The
short exact sequence
\begin{equation}
\label{eq:ex_seq_tor}
0 \to \h\shm \to \shm \to \shm_0 \to 0
\end{equation}
induces the distinguished triangle
\[
\gr\h\shm \to \gr\shm \to \gr\shm_0 \to[+1].
\]
Noticing that $\shm_0 \simeq (\shm_0)_0 \simeq {}_0(\shm_0)$, the 
associated long exact cohomology sequence gives
\[
0 \to {}_0(\h\shm) \to {}_0\shm \to \shm_0 \to (\h\shm)_0 \to 0.
\]
By inductive hypothesis we have $\chv({}_0(\h\shm)) = \chv((\h\shm)_0)$, and we deduce
$\chv(\shm_0) = \chv(\shm_0)$ by additivity of $\chv(\scbul)$.
\end{proof}

\begin{proposition}\label{prop:add_var_car}
(i) For $\shm\in\Mod_\coh(\Dh)$ one has
$$
\chv_\h(\shm) = \chv(\shm_0).
$$
(ii) The characteristic variety $\chv_\h(\scbul)$ is additive both on
$\mdcoh[\Dh]$ and on $\Derb(\Dh)$.
\end{proposition}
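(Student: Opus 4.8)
The plan is to prove (i) first, and then derive (ii) from it together with Lemma~\ref{lem:tormod_chvH-1=chvH0} and the behavior of $\chv_\h$ under the $\h$-torsion filtration. For (i), given $\shm\in\Mod_\coh(\Dh)$, by definition $\chv_\h(\shm)=\chv(\grh(\shm))=\chv({}_0\shm)\cup\chv(\shm_0)$, so the content is the inclusion $\chv({}_0\shm)\subset\chv(\shm_0)$. The natural decomposition to exploit is the short exact sequence $0\to\shm_\htor\to\shm\to\shm_\htf\to0$. Since $\shm$ is coherent, $\shm_\htor$ and $\shm_\htf$ are coherent (as noted after Definition~\ref{def:htor}), and $\shm_\htf$ has no $\h$-torsion. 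For the torsion-free part, multiplication by $\h$ is injective, so ${}_0(\shm_\htf)=0$ and hence $\chv_\h(\shm_\htf)=\chv((\shm_\htf)_0)$ trivially. For the torsion part, Lemma~\ref{lem:tormod_chvH-1=chvH0} gives $\chv({}_0(\shm_\htor))=\chv((\shm_\htor)_0)$.

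\emph{First step:} apply $\gr$ to $0\to\shm_\htor\to\shm\to\shm_\htf\to0$ to get the distinguished triangle $\gr(\shm_\htor)\to\gr(\shm)\to\gr(\shm_\htf)\to[+1]$, and write out the long exact sequence in cohomology in degrees $-1,0$:
\[
0\to{}_0(\shm_\htor)\to{}_0\shm\to{}_0(\shm_\htf)\to(\shm_\htor)_0\to\shm_0\to(\shm_\htf)_0\to0.
\]
Since ${}_0(\shm_\htf)=0$, this splits as $0\to{}_0(\shm_\htor)\to{}_0\shm\to0$ and $0\to(\shm_\htor)_0\to\shm_0\to(\shm_\htf)_0\to0$. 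Thus ${}_0\shm\simeq{}_0(\shm_\htor)$, and by additivity of $\chv$ on $\Mod_\coh(\D)$ we get $\chv(\shm_0)=\chv((\shm_\htor)_0)\cup\chv((\shm_\htf)_0)$.

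\emph{Second step:} combine. We have $\chv({}_0\shm)=\chv({}_0(\shm_\htor))=\chv((\shm_\htor)_0)\subset\chv(\shm_0)$, where the middle equality is Lemma~\ref{lem:tormod_chvH-1=chvH0}. Therefore $\chv_\h(\shm)=\chv({}_0\shm)\cup\chv(\shm_0)=\chv(\shm_0)$, proving (i).

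\emph{Third step (part (ii)):} On $\Mod_\coh(\Dh)$, use (i) to identify $\chv_\h(\shm)$ with $\chv(\shm_0)=\chv(H^0(\gr\shm))$; but one must be careful, since $\shm\mapsto H^0(\gr\shm)$ is only right exact, not exact, so additivity is not immediate from additivity of $\chv$ on $\D$-modules. Instead, given a short exact sequence $0\to\shm'\to\shm\to\shm''\to0$ in $\Mod_\coh(\Dh)$, apply $\gr$ to get a distinguished triangle and the long exact sequence
\[
0\to{}_0\shm'\to{}_0\shm\to{}_0\shm''\to\shm'_0\to\shm_0\to\shm''_0\to0;
\]
additivity of $\chv$ on $\Mod_\coh(\D)$ applied to this six-term exact sequence gives $\chv({}_0\shm)\cup\chv(\shm_0)=\chv({}_0\shm')\cup\chv(\shm'_0)\cup\chv({}_0\shm'')\cup\chv(\shm''_0)$, which is exactly $\chv_\h(\shm)=\chv_\h(\shm')\cup\chv_\h(\shm'')$. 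Finally, additivity on $\Derb(\Dh)$ follows from the abelian-category case by the general remark (after Definition~\ref{def:additivity}) that an additive function on an abelian category extends to an additive function on its derived category via $\chv_\h(\shm)=\bigcup_i\chv_\h(H^i(\shm))$, together with the long exact cohomology sequence of a distinguished triangle.

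\emph{Main obstacle:} the delicate point is part~(ii) for short exact sequences in $\Mod_\coh(\Dh)$, precisely because passing to $\shm_0$ or to $\gr$ is not exact; the resolution is to use the full six-term exact sequence coming from the snake/long-exact sequence and to invoke additivity of $\chv$ on $\D$-modules for that long exact sequence rather than for a short one. Part~(i) itself is a routine bookkeeping once Lemma~\ref{lem:tormod_chvH-1=chvH0} is in hand.
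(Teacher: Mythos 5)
Your proof of part (i) is correct and matches the paper's argument. The problem is in part (ii): the step ``additivity of $\chv$ applied to this six-term exact sequence gives $\chv({}_0\shm)\cup\chv(\shm_0)=\chv({}_0\shm')\cup\chv(\shm'_0)\cup\chv({}_0\shm'')\cup\chv(\shm''_0)$'' is not a valid deduction. What additivity delivers for a long exact sequence $0\to A_1\to A_2\to\cdots\to A_6\to 0$ is the ``alternating'' relation $\chv(A_1)\cup\chv(A_3)\cup\chv(A_5)=\chv(A_2)\cup\chv(A_4)\cup\chv(A_6)$ (split the sequence into short exact pieces and apply additivity to each). Your claimed identity is the pattern $\chv(A_2)\cup\chv(A_5)=\chv(A_1)\cup\chv(A_3)\cup\chv(A_4)\cup\chv(A_6)$, which is a different statement; it is not a formal consequence of additivity. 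Indeed, splitting the six-term sequence at the intermediate kernels $B_1,B_2,B_3$, one finds that the right side of your identity equals the left side together with $\chv(B_2)$, where $B_2=\operatorname{coker}({}_0\shm\to{}_0\shm'')=\ker(\shm'_0\to\shm_0)$; the inclusion $\chv(B_2)\subset\chv({}_0\shm)\cup\chv(\shm_0)$ does not follow from the exact sequence alone.

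The missing ingredient is to use part (i) once more to absorb the term coming from $B_2$. Concretely, from the exact sequence ${}_0\shm''\to(\shm')_0\to\shm_0\to(\shm'')_0\to0$ you get, by additivity of $\chv$ on $\mdcoh[\D]$, the three inclusions
$\chv_\h(\shm'')\subset\chv_\h(\shm)$ (exactness at $(\shm'')_0$),
$\chv_\h(\shm)\subset\chv_\h(\shm')\cup\chv_\h(\shm'')$ (exactness at $\shm_0$), and
$\chv_\h(\shm')\subset\chv({}_0\shm'')\cup\chv_\h(\shm)$ (exactness at $(\shm')_0$),
where you have used part (i) to identify $\chv_\h(\scdot)=\chv((\scdot)_0)$. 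To close the last inclusion you still need $\chv({}_0\shm'')\subset\chv_\h(\shm)$; this follows because $\chv({}_0\shm'')\subset\chv_\h(\shm'')\subset\chv_\h(\shm)$, where the first containment is again part (i) (the inclusion \eqref{eq:0MM0}). This is precisely the paper's argument. So the conclusion you reach is true, but the single-sentence appeal to ``additivity on the six-term sequence'' hides the essential use of part (i) and, taken literally, asserts something false.
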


\begin{proof}
(i) As $\chv(\gr\shm) = \chv(\shm_0) \cup \chv({}_0\shm)$,
it is enough to prove the inclusion
\begin{equation}
\label{eq:0MM0}
\chv({}_0\shm) \subset \chv(\shm_0).
\end{equation}
Consider the short exact sequence 
$0 \to \shm_\htor \to \shm \to \shm_\htf \to 0$.
Since $\shm_\htf$ has no $\h$-torsion, ${}_0(\shm_\htf)= 0$. 
The associated long exact cohomology sequence thus gives
$$
{}_0(\shm_\htor) \simeq {}_0\shm, \quad
0 \to (\shm_\htor)_0 \to \shm_0 \to (\shm_\htf)_0 \to 0.
$$
We deduce
$$
\chv({}_0\shm) = \chv({}_0(\shm_\htor))
= \chv((\shm_\htor)_0)  \subset \chv(\shm_0),
$$
where the second equality follows from Lemma~\ref{lem:tormod_chvH-1=chvH0}.

\smallskip\noindent
(ii) It is enough to prove the additivity on $\mdcoh[\Dh]$, i.e.\ the equality
\[
\chv_\h(\shm) = \chv_\h(\shm') \cup \chv_\h(\shm'')
\]
for $0 \to \shm' \to \shm \to \shm'' \to 0$ a short
exact sequence of coherent $\Dh$-modules.

The associated distinguished
triangle $\gr\shm' \to \gr\shm \to \gr\shm'' \to[+1]$
induces the long exact cohomology sequence
$$
{}_0(\shm'') \to (\shm')_0 \to \shm_0 
\to (\shm'')_0  \to 0.
$$
By additivity of $\chv(\scdot)$,
the exactness of this sequence at the first, second and
third term from the right, respectively, gives:
\begin{align*}
\chv_\h(\shm'') &\subset \chv_\h(\shm), \\
\chv_\h(\shm) &\subset \chv_\h(\shm') \cup \chv_\h(\shm''), \\
\chv_\h(\shm') &\subset \chv({}_0(\shm'')) \cup \chv_\h(\shm).
\end{align*}
Finally, note that $\chv({}_0(\shm'')) \subset \chv_\h(\shm'') \subset \chv_\h(\shm)$.
\end{proof}

\begin{remark}
In view of Proposition~\ref{prop:add_var_car}~(i), in order to define 
the characteristic variety of a coherent $\Dh$-module $\shm$ one
could avoid derived categories considering $\chv(\shm_0)$ instead of
$\chv(\gr\shm)$. 
It is then natural to ask if these definitions are still compatible
for $\shm\in\Derb_\coh(\Dh)$, i.e.~to ask
if the following equality holds
\[
\bigcup\nolimits_i \chv(H^i(\gr\shm)) = \bigcup\nolimits_i \chv((H^i\shm)_0).
\]
Let us prove it.
By additivity of $\chv(\scbul)$, the short exact sequence
\[
0 \to (H^i\shm)_0 \to H^i(\gr\shm) \to {}_0(H^{i+1}\shm) \to 0
\]
from \cite[Lemma~1.4.2]{KS08} induces the estimates
\begin{align*}
\chv((H^i\shm)_0) &\subset \chv(H^i(\gr\shm)), \\
\chv(H^i(\gr\shm)) &= \chv((H^i\shm)_0) \cup \chv({}_0(H^{i+1}\shm)).
\end{align*}
One concludes by noticing that \eqref{eq:0MM0} gives
\[
\chv({}_0(H^{i+1}\shm)) \subset \chv((H^{i+1}\shm)_0).
\]
\end{remark}

\begin{proposition}\label{pro:tormod_Dcoh}
Let $\shm \in \md[\Dh]$ be an $\h$-torsion module. Then $\shm$ is coherent
as a $\Dh$-module if and only if it is coherent as a $\D$-module, and in this case 
one has $\chv_\h(\shm) = \chv(\shm)$.
\end{proposition}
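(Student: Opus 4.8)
The plan is to prove both implications of the equivalence and then compute the characteristic variety. For the ``only if'' direction, suppose $\shm$ is a coherent $\Dh$-module which is $\h$-torsion. Since the question is local, I may assume $\h^N\shm = 0$ for some $N\in\N$ (as noted in the proof of Lemma~\ref{lem:tormod_chvH-1=chvH0}, the filtration $\{{}_n\shm\}_n$ is locally stationary). I would then argue by induction on $N$: for $N=1$ one has $\h\shm=0$, so $\shm$ is a $\D = \Dh/\h\Dh$-module, and a coherent $\Dh$-module annihilated by $\h$ is coherent over $\D$ (a local finite presentation over $\Dh$ descends to one over $\D$). For the inductive step, use the short exact sequence $0\to\h\shm\to\shm\to\shm_0\to 0$ of $\Dh$-modules: $\shm_0$ is annihilated by $\h$ hence $\D$-coherent by the base case, while $\h\shm$ is a coherent $\Dh$-module (sub of coherent over the Noetherian ring $\Dh$) annihilated by $\h^{N-1}$, hence $\D$-coherent by induction; coherence of $\shm$ over $\D$ follows since $\mdc[\D]$ is closed under extensions.

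For the ``if'' direction, suppose $\shm\in\md[\Dh]$ is $\h$-torsion and coherent over $\D$. Again the problem is local and I may assume $\h^N\shm=0$. Here the key point is that $\Dh/\h^N\Dh$ is coherent (indeed finite) as a $\D$-module, so a $\D$-coherent module with a $\Dh/\h^N\Dh$-action is automatically $\Dh/\h^N\Dh$-coherent, and $\Dh/\h^N\Dh$ is a coherent $\Dh$-module (it is $\Dh$ modulo a finitely generated two-sided ideal, and $\Dh$ is Noetherian); a coherent module over a coherent ring that is coherent over a coherent quotient is coherent over the ring itself. Alternatively, and perhaps more cleanly, I would invoke that $\Dh$ is Noetherian so coherence is equivalent to local finite generation: a $\D$-coherent module is locally finitely generated over $\D$, hence a fortiori over $\Dh$.

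For the computation $\chv_\h(\shm) = \chv(\shm)$, by definition $\chv_\h(\shm) = \chv(\gr\shm) = \chv({}_0\shm)\cup\chv(\shm_0)$, and by Lemma~\ref{lem:tormod_chvH-1=chvH0} this equals $\chv(\shm_0)$ (equivalently $\chv({}_0\shm)$). So it remains to identify $\chv(\shm_0)$ with the $\D$-module characteristic variety $\chv(\shm)$. I would do this by induction on $N$ with $\h^N\shm=0$: for $N=1$, $\shm\simeq\shm_0$ so this is trivial; for the inductive step, the short exact sequence $0\to\h\shm\to\shm\to\shm_0\to 0$ of coherent $\D$-modules gives, by additivity of $\chv$ on $\mdc[\D]$, that $\chv(\shm) = \chv(\h\shm)\cup\chv(\shm_0)$. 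By the inductive hypothesis applied to $\h\shm$ (which satisfies $\h^{N-1}(\h\shm)=0$), one has $\chv(\h\shm) = \chv((\h\shm)_0) = \chv({}_0(\h\shm))$; but ${}_0(\h\shm)$ injects into ${}_0\shm$ which surjects appropriately, and in fact from the long exact sequence in the proof of Lemma~\ref{lem:tormod_chvH-1=chvH0} one reads off $\chv({}_0(\h\shm))\subset\chv({}_0\shm) = \chv(\shm_0)$, giving $\chv(\shm)\subset\chv(\shm_0)$; the reverse inclusion $\chv(\shm_0)\subset\chv(\shm)$ is immediate since $\shm_0$ is a quotient of $\shm$.

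The main obstacle I anticipate is making the coherence transfer between $\Dh$ and $\D$ fully rigorous: the cleanest tool is that $\Dh$ is Noetherian (Theorem~\ref{th:formalfini2}) together with the fact that $\D = \Dh/\h\Dh$, so that an $\h$-torsion module killed by $\h^N$ is a module over the $\D$-coherent ring $\Dh/\h^N\Dh$; one must check that ``coherent over $\D$'' and ``coherent over $\Dh/\h^N\Dh$'' agree (because $\Dh/\h^N\Dh$ is a free $\D$-module of finite rank, being $\bigoplus_{j<N}\D\h^j$ as a left $\D$-module) and that coherence over $\Dh/\h^N\Dh$ implies coherence over $\Dh$ (because $\Dh/\h^N\Dh$ is itself $\Dh$-coherent and $\Dh$ is coherent). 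Everything else is a routine dévissage using the short exact sequence $0\to\h\shm\to\shm\to\shm_0\to 0$ and additivity of $\chv$, exactly in the spirit of Lemma~\ref{lem:tormod_chvH-1=chvH0}.
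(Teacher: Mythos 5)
Your proposal is correct and follows essentially the same strategy as the paper: reduce locally to $\h^N\shm=0$ and argue by induction on $N$ via the exact sequence $0\to\h\shm\to\shm\to\shm_0\to 0$, invoking closure of coherence under extensions and additivity of characteristic varieties. Two small remarks, though. First, for the ``if'' direction you switch to a separate argument through $\Dh/\h^N\Dh$; this is fine, but the same dévissage works symmetrically (if $\shm$ is $\D$-coherent then so are $\h\shm$ and $\shm_0$, and one applies the inductive hypothesis as in the ``only if'' direction), which is what the paper's brief phrase ``coherence is preserved by extension'' is meant to cover in both directions at once. Second, your computation of $\chv_\h(\shm)=\chv(\shm)$ is correct but more circuitous than necessary: rather than passing through $\chv({}_0(\h\shm))\subset\chv({}_0\shm)$, one can simply apply additivity of $\chv_\h$ on $\mdcoh[\Dh]$ (Proposition~\ref{prop:add_var_car}(ii)) and of $\chv$ on $\mdcoh[\D]$ to the same exact sequence, getting $\chv_\h(\shm)=\chv_\h(\h\shm)\cup\chv_\h(\shm_0)=\chv(\h\shm)\cup\chv(\shm_0)=\chv(\shm)$ directly by the inductive hypothesis and base case.
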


\begin{proof}
As in the proof of Lemma~\ref{lem:tormod_chvH-1=chvH0} we assume
that $\h^N\shm=0$ for some $N\in\N$.  Since
coherence is preserved by extension and since the characteristic
varieties of $\Dh$-modules and $\D$-modules are additive, we can argue by
induction on $N$ using the exact sequence~\eqref{eq:ex_seq_tor}.
We are thus reduced to the case $N=1$, where $\shm = \shm_0$
and the statement becomes obvious.
\end{proof}

It follows from \eqref{eq:grhn} that

\begin{proposition} \label{pro:formod_Dcoh}
For $\shn \in \Derb_\coh(\D)$ one has $\chv_\h(\shn^\h) = \chv(\shn)$.
\end{proposition}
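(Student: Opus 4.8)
The plan is to deduce the statement immediately from the definition of $\chv_\h$ together with the isomorphism $\gr(\shn^\rhb)\simeq\shn$ recorded in \eqref{eq:grhn}. First I would recall that, since $\shn$ is coherent over $\D$ and $\D[X]$ is syzygic (a fact checked in Section~\ref{section:Dh}), the discussion following Proposition~\ref{pro:nrh} guarantees that the functor $(\scbul)^\h$ is exact on $\Mod_\coh(\D)$ and preserves coherence, so that $\shn^\h=\shn^\rhb$ lies in $\Derb_\coh(\Dh)$ and the quantity $\chv_\h(\shn^\h)$ is well-defined. By the very definition of the characteristic variety of an object of $\Derb_\coh(\Dh)$, we have $\chv_\h(\shn^\h)=\chv(\grh(\shn^\h))$.

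Then I would invoke \eqref{eq:grhn}, which yields $\grh(\shn^\h)\simeq\shn$ in $\Derb_\coh(\D)$, hence $H^i(\grh(\shn^\h))\simeq H^i(\shn)$ for every $i$; since $\chv(\scbul)$ on $\Derb_\coh(\D)$ is defined as $\bigcup_i\chv(H^i(\scbul))$ and depends only on the isomorphism class, we conclude $\chv_\h(\shn^\h)=\chv(\shn)$. There is no genuine obstacle in this argument; the only point worth keeping in mind is that \eqref{eq:grhn} itself rests on Proposition~\ref{pro:nrh}~(ii) and Lemma~\ref{lem:Rh_hom}, which presuppose that $\sha_0=\D[X]$ is syzygic and that $\D[X]$-coherent modules are acyclic for $(\scbul)^\h$ (Corollary~\ref{cor:hac}~(iii)) — all of which hold in the present setting.
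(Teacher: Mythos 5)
Your argument is correct and is precisely the paper's proof (which is stated in a single line: ``It follows from \eqref{eq:grhn}''). You have simply unfolded the definition $\chv_\h(\shm)=\chv(\grh\shm)$ and substituted $\grh(\shn^\rhb)\simeq\shn$, with appropriate justification that $\shn^\h=\shn^\rhb$ is coherent; this is exactly what the paper intends.
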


\subsubsection*{Holonomic modules}

Recall that a coherent $\D$-module (or an object of the derived category) is called
holonomic if its characteristic variety is isotropic. We refer e.g.~to \cite[Chapter~5]{Ka03} for the 
notion of regularity.

\begin{definition}
We say that $\shm\in\Derb_\coh(\Dh)$ is holonomic, or regular holonomic,
if so is $\gr(\shm)$.
We denote by $\Derb_\hol(\Dh)$ the full triangulated subcategory of
$\Derb_\coh(\Dh)$ of holonomic objects and by $\Derb_\rh(\Dh)$ the
full triangulated subcategory of regular holonomic objects.
\end{definition}

Note that a coherent $\Dh$-module is holonomic
if and only if its characteristic variety is isotropic.

\begin{example}
Let $\shn$ be a regular holonomic $\D$-module. Then

\noindent
(i) $\shn$ itself, considered as a $\Dh$-module, is regular holonomic, as follows from the isomorphism $\gr\shn\simeq\shn\oplus\shn\,[1]$;

\noindent
(ii) $\shn^\h$ is a regular holonomic $\Dh$-module, as follows from the isomorphism $\gr\shn^\h\simeq\shn$.
In particular, $\Oh$ is regular holonomic.
\end{example}

\subsubsection*{Propagation}

Denote by $\Derb_\Cc(\coro_X)$ the full triangulated subcategory of
$\Derb(\coro_X)$ consisting of objects with $\C$-constructible
cohomology over the ring $\coro$.

\begin{theorem}\label{thm:SSDh}
Let $\shm,\shn\in\Derb_\coh(\Dh)$. Then 
\[
\SSi\bl\rhom[\Dh](\shm,\shn)\br = \SSi\bl
\rhom[\D](\gr(\shm),\gr(\shn))\br.
\]
If moreover $\shm$ and $\shn$ are holonomic, then
$\rhom[\Dh](\shm,\shn)$
is an object of $\Derb_\Cc(\coro_X)$.
\end{theorem}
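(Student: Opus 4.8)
The plan is to reduce everything to the graded pieces and then invoke the classical $\D$-module results together with the $\h$-complete machinery from Section~1. The first step is to establish the microsupport equality. By \eqref{eq:grF} applied to the $\Z_X[\hbar]$-algebra $\Dh$, one has
\[
\gr\bl\rhom[\Dh](\shm,\shn)\br \simeq \rhom[\D]\bl\gr(\shm),\gr(\shn)\br .
\]
Now $\shm,\shn\in\Derb_\coh(\Dh)$, so by Theorem~\ref{th:formalfini1}~(iii) both are cohomologically $\h$-complete; by Proposition~\ref{pro:homcc} the complex $\rhom[\Dh](\shm,\shn)$ is then cohomologically $\h$-complete as an object of $\Derb(\coro_X)$ (indeed of $\Derb(\Z_X[\hbar])$). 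Hence Proposition~\ref{pro:sscohco} applies and gives
\[
\SSi\bl\rhom[\Dh](\shm,\shn)\br = \SSi\bl\gr\rhom[\Dh](\shm,\shn)\br
= \SSi\bl\rhom[\D](\gr(\shm),\gr(\shn))\br ,
\]
which is the first assertion. (One should check that $\rhom[\Dh](\shm,\shn)$ actually lies in $\Derb$, not just $\Der$; this follows from local finiteness of cohomological dimension, i.e.\ from Theorem~\ref{th:hddim}, representing $\shm$ locally by a bounded complex of free $\Dh$-modules of finite rank.)

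For the second assertion, assume $\shm,\shn$ holonomic, so $\gr(\shm),\gr(\shn)\in\Derb_\hol(\D)$. By the classical theory of holonomic $\D$-modules, $\rhom[\D](\gr(\shm),\gr(\shn))$ is an object of $\Derb_\Cc(\C_X)$. In particular its microsupport is $\C$-constructible (a complex-analytic Lagrangian), and by the equality just proved the same is true of $\SSi\bl\rhom[\Dh](\shm,\shn)\br$. To upgrade this to $\C$-constructibility over $\coro$, I would argue as follows: set $F = \rhom[\Dh](\shm,\shn)\in\Derb(\coro_X)$, which is cohomologically $\h$-complete. By \eqref{eq:grF} again, $\gr(F)\simeq\rhom[\D](\gr(\shm),\gr(\shn))\in\Derb_\Cc(\C_X)$. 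So $F$ is a cohomologically $\h$-complete complex of $\coro_X$-modules whose graded piece is $\C$-constructible; one then deduces that $F$ itself is $\C$-constructible over $\coro$. The mechanism is the $\coro$-analogue of Theorem~\ref{th:formalfini2}: working stalkwise, the cohomology $H^i(F)$ is determined by $\prolim[n] H^i(\coro/\h^{n+1}\coro \ltens F)$, and each $\coro/\h^{n+1}\coro$-module in the tower is $\C$-constructible by dévissage from the $\C$-constructible graded piece (using that constructibility is stable under extensions and cones), so the $\h$-adic limit is $\C$-constructible over $\coro$ as well.

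The main obstacle I anticipate is precisely this last step: making rigorous the passage from ``$\gr(F)$ is $\C$-constructible over $\C$ and $F$ is cohomologically $\h$-complete'' to ``$F$ is $\C$-constructible over $\coro$''. The subtlety is that $\coro$-constructibility is a condition on all the stalks and on the existence of a common stratification, and one must check that the stratification adapted to $\gr(F)$ can be used for $F$ — this is where cohomological $\h$-completeness is essential, via a version of Theorem~\ref{th:formalfini2} and Proposition~\ref{prop:supp_and_grad} (or rather its sheaf-theoretic analogue, with $\C_X$ replaced by $\coro_X$): on each stratum the cohomology sheaves of $F$ are locally constant of finite rank over $\coro$ because the same holds for $\gr(F)$ over $\C$ and one lifts finitely generated projective $\coro/\h^{n+1}$-modules through the tower. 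The remaining points — that the microsupport estimate indeed forces constructibility with respect to a complex-analytic stratification, and boundedness of $F$ — are handled by the already-cited Theorem~\ref{th:hddim} and by the standard microlocal characterization of $\C$-constructibility from \cite{KS90}, transported to the ring $\coro$ exactly as in the statements of Section~1 (Proposition~\ref{pro:RCcohco} and its corollaries).
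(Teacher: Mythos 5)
Your proof is correct and follows essentially the same route as the paper's. The paper's argument is: (i) $F=\rhom[\Dh](\shm,\shn)$ is cohomologically $\h$-complete by Theorem~\ref{th:formalfini1} and Proposition~\ref{pro:homcc}, hence $\SSi(F)=\SSi(\gr F)$ by Proposition~\ref{pro:sscohco}; (ii) in the holonomic case the finiteness of the stalks of $\gr F$ over $\C$ gives finiteness of $F_x$ over $\coro$ by Theorem~\ref{th:formalfini2} applied at a point with $\sha=\coro$, and one combines this with the microlocal characterization of weak $\C$-constructibility from~\cite{KS90}. Your step~(ii) re-derives by hand what Theorem~\ref{th:formalfini2} packages (the $\h$-adic tower argument for the stalk); it is not wrong, but you can simply cite that theorem at a point, as the paper does, and let the microsupport equality supply the complex-analytic stratification directly via~\cite[Theorem~8.4.2]{KS90}.
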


\begin{proof}
Set $F = \rhom[\Dh](\shm,\shn)$. Then $F$ is cohomologically 
$\h$-complete by Theorem~\ref{th:formalfini1} and
Proposition~\ref{pro:homcc}. 
Hence $\SSi(F)=\SSi(\gr(F))$
by Proposition~\ref{pro:sscohco}. Moreover, the finiteness of the
stalks $\gr(F_x)$ over $\C$ implies the finiteness of $F_x$ over
$\coro$ by Theorem~\ref{th:formalfini2} applied with $X=\rmptt$ and
$\sha=\coro$.
\end{proof}

Applying Theorem~\ref{thm:SSDh}, and \cite[Theorem~11.3.3]{KS90}, we
get:
\begin{corollary}\label{cor:SSsol}
Let $\shm\in\Derb_\coh(\Dh)$. Then 
\[
\SSi(\Solh(\shm)) = \SSi(\DRh(\shm)) = \chv_\h(\shm). 
\]
If moreover $\shm$ is holonomic, then $\Solh(\shm)$ and $\DRh(\shm)$
belong to $\Derb_\Cc(\coro_X)$.
\end{corollary}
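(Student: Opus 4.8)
The plan is to deduce Corollary~\ref{cor:SSsol} from Theorem~\ref{thm:SSDh} by specializing the second module to $\Oh$, exactly as the classical statement about $\Sol$ and $\DR$ follows from microlocal estimates on $\rhom[\D](\scbul,\scbul)$.

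First I would recall that $\Oh = \OO[X]^\h$ is a coherent $\Dh$-module (indeed regular holonomic, as noted just above), so $\rhom[\Dh](\shm,\Oh)$ and $\rhom[\Dh](\Oh,\shm)$ both make sense for $\shm\in\Derb_\coh(\Dh)$ and Theorem~\ref{thm:SSDh} applies with $\shn=\Oh$. By \eqref{eq:grhn} one has $\gr(\Oh)\simeq\OO[X]$, so Theorem~\ref{thm:SSDh} gives
\[
\SSi(\Solh(\shm)) = \SSi\bl\rhom[\D](\gr(\shm),\OO[X])\br = \SSi(\Sol(\gr(\shm))),
\]
and similarly $\SSi(\DRh(\shm)) = \SSi(\DR(\gr(\shm)))$, using $\rhom[\D](\OO[X],\gr(\shm))$ on the right. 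Now $\gr(\shm)\in\Derb_\coh(\D)$, so the classical estimate \cite[Theorem~11.3.3]{KS90}, which identifies the microsupport of the solution complex (and of the de Rham complex) of a coherent $\D$-module with its characteristic variety, yields
\[
\SSi(\Sol(\gr(\shm))) = \SSi(\DR(\gr(\shm))) = \chv(\gr(\shm)) = \chv_\h(\shm),
\]
the last equality being the definition of $\chv_\h$. Combining the displays gives the first assertion.

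For the second assertion, suppose $\shm$ is holonomic, i.e.\ $\gr(\shm)$ is holonomic as a $\D[X]$-module. Since $\Oh$ is also holonomic, Theorem~\ref{thm:SSDh} tells us directly that $\rhom[\Dh](\shm,\Oh) = \Solh(\shm)$ and $\rhom[\Dh](\Oh,\shm) = \DRh(\shm)$ are objects of $\Derb_\Cc(\coro_X)$. This completes the proof.

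I do not expect any real obstacle here: the corollary is a formal consequence of Theorem~\ref{thm:SSDh} together with the compatibility $\gr(\Oh)\simeq\OO[X]$ from \eqref{eq:grhn} and the classical theorem of \cite{KS90}. The only point requiring a line of care is to record that $\Oh$ is coherent and holonomic as a $\Dh$-module (so that Theorem~\ref{thm:SSDh} is legitimately applicable with $\shn=\Oh$), which has been established in the preceding discussion; everything else is substitution.
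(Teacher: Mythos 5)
Your argument is correct and follows exactly the route the paper intends: specialize Theorem~\ref{thm:SSDh} to $\shn=\Oh$ (and to $\shm=\Oh$ for the de Rham side), use $\gr(\Oh)\simeq\OO[X]$, and then invoke \cite[Theorem~11.3.3]{KS90} to identify $\SSi(\Sol(\gr\shm))$ and $\SSi(\DR(\gr\shm))$ with $\chv(\gr\shm)=\chv_\h(\shm)$. The paper does not spell this out (it simply says the corollary follows by applying Theorem~\ref{thm:SSDh} and \cite[Theorem~11.3.3]{KS90}); your write-up supplies precisely the expected details, including the needed observation that $\Oh$ is coherent and (regular) holonomic.
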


\begin{theorem}\label{thm:SSDh3}
Let $\shm\in\Derb_\hol(\Dh)$.  Then there is a natural isomorphism in
$\Derb_\Cc(\coro_X)$
\begin{equation}\label{eq:dualmorph}
\Solh(\shm)\simeq \RD'_\h(\DRh(\shm)).
\end{equation}
\end{theorem}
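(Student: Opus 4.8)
The plan is to reduce the statement to the classical (non-formal) duality isomorphism $\Sol(\shn)\simeq\RD'(\DR(\shn))$ for holonomic $\D$-modules, via the functor $\gr$ together with the conservativity results available for cohomologically $\h$-complete objects. First I would observe that both sides of \eqref{eq:dualmorph} lie in $\Derb_\Cc(\coro_X)$: the left-hand side by Corollary~\ref{cor:SSsol}, and the right-hand side because $\DRh(\shm)\in\Derb_\Cc(\coro_X)$ and $\RD'_\h(\scbul)=\rhom[\coro_X](\scbul,\coro_X)$ preserves $\C$-constructibility over $\coro$ (this is the $\coro$-variant of the classical duality for constructible sheaves; it can be checked as in \cite[\S8.4]{KS90}, or deduced via $\gr$ and Proposition~\ref{pro:RCcohco}/Corollary~\ref{cor:conservativeRc}). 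In particular both sides are cohomologically $\h$-complete by Proposition~\ref{pro:RCcohco}, so by Proposition~\ref{pro:conserv} it suffices to construct a morphism and check that it is an isomorphism after applying $\gr$.

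Next I would construct the morphism \eqref{eq:dualmorph}. The natural source is the evaluation/duality pairing: composition of a solution with the de Rham complex, i.e.\ the natural morphism
\[
\rhom[\Dh](\shm,\Oh)\tens[\coro_X]\rhom[\Dh](\Oh,\shm)\to\rhom[\Dh](\shm,\shm)\to\coro_X,
\]
the last arrow being induced by $\Oh\ltens[\Dh]\Oh$-type considerations, or more directly by the trace/integration morphism $\Oh\lltens[\Dh](\Dh\tens[\sho_X]\Omega_X^{\otimes-1})[d_X]\to\coro_X$ coming from the holomorphic Poincaré lemma with a formal parameter. By adjunction this yields $\Solh(\shm)\to\rhom[\coro_X](\DRh(\shm),\coro_X)=\RD'_\h(\DRh(\shm))$. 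Since $\Oh=\OO[X]^\rhb$ and $\coro_X=\C_X^\rhb$, and since all the sheaves involved are $(\scbul)^\h$-acyclic by Corollary~\ref{cor:hac}, this morphism is nothing but the $\rhb$-extension of the classical morphism $\Sol(\gr\shm)\to\RD'(\DR(\gr\shm))$; compatibility of $(\scbul)^\rhb$ with the relevant $\rhom$'s and tensor products is guaranteed by Lemma~\ref{lem:Rh_hom} and \eqref{eq:grFtens}, \eqref{eq:grF}.

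Then I would apply $\gr$ to the morphism just constructed. Using \eqref{eq:grF} one gets $\gr\,\Solh(\shm)\simeq\Sol(\gr\shm)$ and $\gr\,\DRh(\shm)\simeq\DR(\gr\shm)$, and using \eqref{eq:grF} again (over $\coro_X\to\C_X$) together with $\gr\,\coro_X\simeq\C_X$ one gets $\gr\,\RD'_\h(\DRh(\shm))\simeq\RD'(\DR(\gr\shm))$. Under these identifications $\gr$ of the morphism \eqref{eq:dualmorph} becomes the classical duality morphism $\Sol(\gr\shm)\to\RD'(\DR(\gr\shm))$ for the holonomic $\D$-module $\gr\shm$, which is an isomorphism by Kashiwara's theorem (see \cite[Chapter~5]{Ka03} and \cite[Theorem~11.3.3]{KS90}). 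Hence the cone of \eqref{eq:dualmorph} has vanishing $\gr$; being cohomologically $\h$-complete, it vanishes by Proposition~\ref{pro:conserv}, so \eqref{eq:dualmorph} is an isomorphism.

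The main obstacle I anticipate is not the abstract descent argument but the careful construction of the duality morphism and the verification that it is genuinely the $\rhb$-extension of the classical one — equivalently, producing the formal-parameter trace morphism $\Oh\lltens[\Dh](\Dh\tens[\sho_X]\Omega_X^{\otimes-1})[d_X]\to\coro_X$ and checking its compatibility with $\gr$. Once that naturality is in place, the conclusion is immediate from conservativity of $\gr$ on cohomologically $\h$-complete objects.
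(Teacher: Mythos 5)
Your argument is essentially the paper's: construct a morphism $\alpha\cl\Solh(\shm)\to\RD'_\h(\DRh(\shm))$ from the composition pairing, apply $\gr$ (using \eqref{eq:grF}) to reduce to the classical duality isomorphism for the holonomic $\D$-module $\gr\shm$, and conclude by conservativity of $\gr$ on constructible objects, i.e.\ Corollary~\ref{cor:conservativeRc}, which you unpack via Propositions~\ref{pro:RCcohco} and~\ref{pro:conserv}. One inaccuracy worth flagging: your middle paragraph's claim that $\alpha$ is ``the $\rhb$-extension of the classical morphism $\Sol(\gr\shm)\to\RD'(\DR(\gr\shm))$'' is false in general, since $\shm$ need not be of the form $\shn^\rhb$ (for the $\h$-torsion module $\shm=\D$ one has $\Solh(\shm)\simeq\OO[X]\,[-1]$ while $\Sol(\gr\shm)^\rhb\simeq\Oh\oplus\Oh[-1]$); but this tangent is harmless and unused, since applying $\gr$ directly to $\alpha$, as you do in the following paragraph, is exactly the paper's argument and suffices.
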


\begin{proof}
The natural $\coro$-linear morphism
\eqn
&&\hspace{-1cm}\rhom[{\Dh}](\Oh,\shm) \tens[\coro_X]
\rhom[{\Dh}](\shm,\Oh)\\
&&\hspace{6cm}\to \rhom[{\Dh}](\Oh,\Oh)\simeq \coro_X
\eneqn
induces the morphism in $\Derb_\Cc(\coro_X)$
\begin{equation}\label{eq:dualmor1}
\alpha\cl\rhom[{\Dh}](\shm,\Oh)\to\RD'_\h(\rhom[{\Dh}](\Oh,\shm)).
\end{equation}
(Note that, choosing $\shm=\Dh$, this morphism defines the morphism
$\Oh\to\RD'_\h(\Omegah[X][d_X])$.)
The morphism~\eqref{eq:dualmor1} induces an isomorphism
\eqn
&&\gr(\alpha)\cl\rhom[\D](\gr(\shm),\O) 
\to \RD'(\rhom[\D](\O,\gr(\shm))).
\eneqn
It is thus an isomorphism by Corollary~\ref{cor:conservativeRc}.
\end{proof}

\section{Formal extension of tempered functions}

Let us start by reviewing after \cite[Chapter~7]{KS01} the
construction of the sheaves of tempered distributions
and of $C^\infty$-functions with temperate growth on the subanalytic
site.

\medskip
Let $X$ be a real analytic manifold $X$. 
One says that a function $f\in \shc^{\infty}_X(U)$ has
{\it  polynomial growth} at $p\in X$
if, for a local coordinate system
$(x_1,\dots,x_n)$ around $p$, there exist
a sufficiently small compact neighborhood $K$ of $p$
and a positive integer $N$
such that
\eq
&\sup_{x\in K\cap U}\big(\dist(x,K\setminus U)\big)^N\vert f(x)\vert
<\infty\,.&
\eneq
One says that $f$ is {\it tempered} at $p$ if all its derivatives are
of polynomial growth at $p$.
One says that $f$ is tempered if it is tempered at any point of $X$.
One denotes by $\Ct_X(U)$ the $\C$-vector subspace of
$\shc^{\infty}(U)$ consisting of tempered functions.
It then follows from a theorem of Lojaciewicz that $U\mapsto\Ct_X(U)$
($U\in\Op_\Xsa$) is a sheaf on $\Xsa$. We denote it by $\Ct_\Xsa$ or
simply $\Ct_X$
if there is no risk of confusion.

\begin{lemma}\label{lem:Ctrhb}
One has
$H^j(U;\Ct_X)= 0$ for any $U\in\Op_\Xsa$ and any $j>0$.
\end{lemma}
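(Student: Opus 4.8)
The plan is to show that $\Ct_X$ is a soft sheaf on $X$ (in the classical sense) and then to deduce the vanishing of higher cohomology on relatively compact subanalytic open subsets $U\in\Op_\Xsa$ via the comparison between the subanalytic site and the usual topology. More precisely, recall from \cite[Chapter~7]{KS01} that there is an equivalence between sheaves on $\Xsa$ and the so-called $\Xsa$-sheaves, and that cohomology of an object $F\in\Derb(\C_\Xsa)$ on a relatively compact subanalytic open subset $U$ coincides with the cohomology of the corresponding complex on $X$ over $U$; this reduces the statement to a vanishing result on the topological space $X$.

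First I would recall that $\Ct_\Xsa$, viewed through $\opb\rho$ or rather through the functor relating $\Xsa$-sheaves to ordinary sheaves, corresponds to the sheaf of tempered $C^\infty$-functions on $X$ in the ordinary topology. Since $\shc^\infty_X$ is fine, hence soft, and the tempered condition is a local growth condition stable under multiplication by $C^\infty$-functions (indeed $\Ct_X$ is a module over $\shc^\infty_X$), the sheaf $\Ct_X$ is itself a module over a soft sheaf of rings and therefore soft. Softness gives the vanishing of $H^j(K;\Ct_X)=0$ for $j>0$ and $K$ compact. Then, for $U\in\Op_\Xsa$ relatively compact subanalytic, one writes $U$ as an increasing union, or more efficiently uses that $\overline U$ is compact subanalytic and that $\rsect(U;\Ct_X)$ can be computed using this soft resolution together with the finiteness properties of subanalytic open subsets.

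The cleanest route: take a soft resolution $0\to\Ct_X\to\shl^\scbul$ of $\Ct_X$ by $\shc^\infty_X$-modules (for instance tensoring $\Ct_X$ with a soft resolution of $\C_X$ by $\shc^\infty_X$-modules, using $\shc^\infty_X$-flatness arguments, or directly invoking that $\Ct_X$ has a natural Dolbeault-type soft resolution). Since soft sheaves are acyclic for $\rsect(U;\scbul)$ when $U$ is relatively compact (more generally when $\overline U$ is compact), we get $H^j(U;\Ct_X)=0$ for $j>0$ for every relatively compact open $U$, in particular for every $U\in\Op_\Xsa$.

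The main obstacle I anticipate is the careful handling of the interface between the subanalytic site and the usual topology: one must make sure that $H^j(U;\Ct_\Xsa)$ computed on the site $\Xsa$ really coincides with $H^j(U;\Ct_X)$ computed on the topological space $X$ for $U\in\Op_\Xsa$, and that the soft resolution one uses still provides $\sect(\scbul;U)$-acyclic objects on the site. This is precisely where one invokes the results of \cite[Chapter~7]{KS01} (or \cite{Pr05}): $\opb\rho$ sends quasi-injective sheaves on $\Xsa$ to soft sheaves on $X$, and conversely the site cohomology on relatively compact subanalytic opens is computed by such objects. Granting this dictionary, the vanishing is immediate from classical softness.
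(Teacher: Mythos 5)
Your approach does not work, and the failure is at a structural level rather than in the details. The sheaf $\Ct_X$ in the lemma is $\Ct_\Xsa$, an object of $\md[\C_\Xsa]$, and $H^j(U;\Ct_X)$ is cohomology computed on the \emph{subanalytic site}. There is no ordinary sheaf on the topological space $X$ whose sections over an open set $U$ are the tempered $C^\infty$-functions on $U$: temperedness is a growth condition relative to the boundary of $U$ and is not local in the usual topology, which is precisely the reason the subanalytic site was introduced. Pulling back by $\opb\rho$ destroys this information --- one checks that $\opb\rho\Ct_\Xsa$ is simply $\shc^\infty_X$, since every germ of a $C^\infty$ function is tempered on a sufficiently small subanalytic neighborhood. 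The softness of $\shc^\infty_X$ (or of any $\shc^\infty_X$-module) therefore says nothing about $H^j(U;\Ct_\Xsa)$.

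The dictionary you invoke between site cohomology and topological cohomology also goes the wrong way. For $F\in\Derb(\C_X)$ one does have $\rsect(U;\roim\rho F)\simeq\rsect(U;F)$, and quasi-injective objects of $\md[\C_\Xsa]$ are sent by $\opb\rho$ to soft sheaves; but these facts do not let you compute $\rsect(U;G)$ on $\Xsa$ for a general $G$ (such as $\Ct_\Xsa$) from the cohomology of $\opb\rho G$ on $X$, since $\roim\rho\opb\rho G$ is not $G$. The paper's argument is entirely internal to the site: one checks that $\Ct_\Xsa$ satisfies the Mayer--Vietoris exactness
\[
0\to \Ct(U\cup V)\to \Ct(U)\oplus\Ct(V)\to\Ct(U\cap V)\to 0
\qquad(U,V\in\Op_\Xsa),
\]
which is the content of \L{}ojasiewicz's theorem, and then shows that the full subcategory of sheaves on $\Xsa$ with this Mayer--Vietoris property is acyclic for $\sect(U;\scbul)$, using crucially that every covering in $\Op_\Xsa$ admits a finite subcover. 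Neither the \L{}ojasiewicz input nor the finite-cover induction appears in your proposal, and both are indispensable; softness on $X$ is a red herring here.
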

This result is well-known (see \cite[Chapter~1]{KS96}), but we recall
its proof for the
reader's convenience. 
\begin{proof}
Consider the full subcategory $\shj$ of $\md[\C_\Xsa]$ consisting
of sheaves $F$ such that  for any pair
$U,V\in\Op_\Xsa$, the Mayer-Vietoris sequence
\[
0\to F(U\cup V)\to F(U)\oplus F(V)\to F(U\cap V)\to 0
\]
is exact. Let us check that this category is injective with
respect to the functor $\sect(U;\scbul)$. The only non obvious
fact is that if $0\to F'\to F\to F''\to 0$ is an exact sequence and that
$F'$ belongs to $\shj$, then $F(U)\to F''(U)$ is surjective.  
Let $t\in F''(U)$. There exist a finite covering $U=\bigcup_{i\in I}
U_i$ and 
$s_i\in F(U_i)$ whose image in $F''(U_i)$ is $t\vert_{U_i}$. Then the
proof goes by induction on the cardinal of $I$ using the property of
$F'$ and standard arguments. To conclude, note that $\Ct_X$ belongs to
$\shj$ thanks to  Lojaciewicz's result (see \cite{Ma}).
\end{proof}

Let $\shd b_X$ be the sheaf of distributions on $X$.
For $U\in\Op_\Xsa$,
denote by $\Dbt_X(U)$ the space of tempered distributions on $U$, 
defined by the exact sequence
\[
0\to\sect_{X\setminus U}(X;\shd b_X)\to\sect(X;\shd b_X)\to\shd
b_X^t(U)\to 0.
\]
Again, it follows from a theorem of Lojaciewicz that $U\mapsto\Dbt(U)$
is a sheaf on $\Xsa$. We denote it by $\Dbt_\Xsa$ or simply $\Dbt_X$
if there is no risk of confusion.
The sheaf $\Dbt_X$ is quasi-injective,
that is, the functor $\hom[\C_\Xsa](\scbul,\Dbt_X)$ is exact in the
category $\mdrc[\C_X]$.
Moreover, for $U\in\Op_\Xsa$, 
$\hom[{\C_\Xsa}](\C_U,\Dbt_X)$ is also quasi-injective and 
$\rhom[\C_\Xsa](\C_U,\Dbt_X)$ is concentrated in degree $0$. Note that
the sheaf
\[
\sect_{[U]}\shd b_X\eqdot\opb{\rho}\hom[\C_\Xsa](\C_U,\Dbt_X)
\]
is a $\shc^\infty_X$-module, so that in particular $\rsect(V;\sect_{[U]}\shd b_X)$ 
is concentrated in degree $0$ for $V\subset X$ an open subset.

\subsubsection*{Formal extensions}

By Proposition~\ref{pro:crit-h-acyclic} the sheaves 
$\Cth_X$, $\Dbth_X$ and $\sect_{[U]}\shd b_X$
are acyclic for the functor $(\scbul)^\h$.
We set
\[
\Cth_X\eqdot(\Ct_X)^\h, \qquad
\Dbth_X\eqdot(\Dbt_X)^\h, \qquad
\sect_{[U]}\shd b_X^\h\eqdot (\sect_{[U]}\shd b_X)^\h.
\]
Note that, by Lemmas~\ref{lem:Rh_hom} and \ref{lem:Rhandsa},
\[
\sect_{[U]}\shd b_X^\h \simeq \opb{\rho}\hom[\C_\Xsa](\C_U,\Dbth_X).
\]
By Proposition~\ref{pro:Rhcomplete} we get:

\begin{proposition}\label{pro:ctdbtcc}
The sheaves $\Cth_X$, $\Dbth_X$ and $\sect_{[U]}\shd b_X^\h$ are cohomologically $\h$-complete.
\end{proposition}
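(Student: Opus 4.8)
The plan is to derive Proposition~\ref{pro:ctdbtcc} directly from the general criterion established earlier, namely Proposition~\ref{pro:Rhcomplete}: for any sheaf of rings $\shr_0$ and any $\shn\in\Derb(\shr_0)$, the formal extension $\shn^\rhb$ is cohomologically $\h$-complete. So the entire content of the proof is to recognize each of the three sheaves $\Cth_X$, $\Dbth_X$ and $\sect_{[U]}\shd b_X^\h$ as the formal extension $\shn^\rhb$ of some object $\shn$ of $\Derb(\C_X)$ (or $\Derb(\C_\Xsa)$).

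First I would recall that, by Proposition~\ref{pro:crit-h-acyclic}, the sheaves $\Ct_X$, $\Dbt_X$ and $\sect_{[U]}\shd b_X$ are acyclic for the functor $(\scbul)^\h$: indeed $\Ct_X$ is acyclic on the basis $\Op_\Xsa$ by Lemma~\ref{lem:Ctrhb}, the sheaf $\Dbt_X$ is quasi-injective (hence acyclic on $\Op_\Xsa$, since $\rhom[\C_\Xsa](\C_U,\Dbt_X)$ is concentrated in degree $0$), and $\sect_{[U]}\shd b_X = \opb\rho\hom[\C_\Xsa](\C_U,\Dbt_X)$ is soft, hence acyclic on the basis of relatively compact open subsets. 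Therefore, by the last assertion of Proposition~\ref{pro:crit-h-acyclic}, for each of these sheaves $\shn$ one has $\shn^\h\simeq\shn^\rhb$, that is,
\[
\Cth_X\simeq(\Ct_X)^\rhb,\qquad
\Dbth_X\simeq(\Dbt_X)^\rhb,\qquad
\sect_{[U]}\shd b_X^\h\simeq(\sect_{[U]}\shd b_X)^\rhb.
\]

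Once this identification is in place, the conclusion is immediate: Proposition~\ref{pro:Rhcomplete} applied to $\shn = \Ct_X$, $\shn = \Dbt_X$ and $\shn = \sect_{[U]}\shd b_X$ shows that $(\Ct_X)^\rhb$, $(\Dbt_X)^\rhb$ and $(\sect_{[U]}\shd b_X)^\rhb$ are all cohomologically $\h$-complete, and hence so are the three sheaves in the statement. (For $\Cth_X$ and $\Dbth_X$ one works on the subanalytic site $\Xsa$, while for $\sect_{[U]}\shd b_X^\h$ one works on $X$ itself; the statement of Proposition~\ref{pro:Rhcomplete} is valid on an arbitrary site, so this poses no difficulty. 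Alternatively, for $\sect_{[U]}\shd b_X^\h$ one can invoke the identity $\sect_{[U]}\shd b_X^\h \simeq \opb\rho\hom[\C_\Xsa](\C_U,\Dbth_X)$ recorded just before the statement, together with the fact that $\opb\rho$ of a cohomologically $\h$-complete object is cohomologically $\h$-complete, since being cohomologically $\h$-complete is a stalkwise condition by Proposition~\ref{pro:cohco1}.)

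There is essentially no obstacle here: the proof is a bookkeeping exercise assembling the acyclicity results of Section~2 with Proposition~\ref{pro:Rhcomplete}. The only point requiring a moment's attention is the verification that the three sheaves really are $(\scbul)^\h$-acyclic, i.e.\ that the hypotheses of Proposition~\ref{pro:crit-h-acyclic} are met with a suitable basis $\sht$ in each case — but this has already been done in the paragraph preceding the statement, where these acyclicities are asserted, so the proof of Proposition~\ref{pro:ctdbtcc} reduces to the single sentence ``This follows from Proposition~\ref{pro:Rhcomplete}.''
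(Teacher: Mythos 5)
Your argument is correct and is exactly the paper's: each of the three sheaves is $(\scbul)^\h$-acyclic, hence equals its own derived formal extension $(\scbul)^\rhb$, and Proposition~\ref{pro:Rhcomplete} then gives cohomological $\h$-completeness. (The parenthetical alternative for $\sect_{[U]}\shd b_X^\h$ via ``$\opb\rho$ of a cohomologically $\h$-complete object is cohomologically $\h$-complete because the condition is stalkwise'' is shakier than you suggest --- Proposition~\ref{pro:cohco1} involves $H^i(U;\shm)$ for all $i$, not just stalks --- but this is moot since your main line of argument already covers that case.)
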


Now assume $X$ is a complex manifold. Denote by $\overline{X}$ the
complex conjugate manifold and by $X^\R$ the underlying real analytic
manifold, identified with the diagonal of $X\times \overline{X}$.  One
defines the sheaf (in fact, an object of the derived category)
of tempered holomorphic functions by
\[
\Ot \eqdot 
\rhom[\eim{\rho}\shd_{\overline{X}}](\eim{\rho}\sho_{\overline{X}},\Ct_X)
\isoto\rhom[\eim{\rho}\shd_{\overline{X}}](\eim{\rho}\sho_{\overline{X}},\Dbt_X).
\]
Here and in the sequel, we write $\Ct_X$ and $\Dbt_X$ instead of
$\Ct_{X^\R}$ and $\Dbt_{X^\R}$, respectively.
We set
\[
\Oth \eqdot (\Ot)^\rhb,
\]
a cohomologically $\h$-complete object of $\Derb(\coro_{\Xsa})$.
By Lemma~\ref{lem:Rh_hom},
\eqn
\Oth&\simeq& 
\rhom[\eim{\rho}\shd_{\overline{X}}](\eim{\rho}\sho_{\overline{X}},\Cth_X)
\isoto
\rhom[\eim{\rho}\shd_{\overline{X}}](\eim{\rho}\sho_{\overline{X}},\Dbth_X).
\eneqn
Note that $\gr(\Oth)\simeq\Ot$ in $\Derb(\C_{\Xsa})$.

\section{Riemann-Hilbert correspondence}

Let $X$ be a complex analytic manifold.
Consider the functors
\begin{align*}
\TH(\scbul) &\cl \Derb_\Cc(\C_X) \to \Derb_\rh(\D)^\rop, &
F & \mapsto \opb{\rho}\rhom[\C_{\Xsa}](\oim{\rho}F,\Ot),\\
\THh(\scbul) &\cl \Derb_\Cc(\coro_X) \to \Derb(\Dh)^\rop, &
F & \mapsto \opb{\rho}\rhom[\coro_{\Xsa}](\oim{\rho}F,\Oth).
\end{align*}

The classical Riemann-Hilbert correspondence of Kashiwara~\cite{Ka84}
states
that the functors $\Sol$ and $\TH$
are equivalences of categories
between $\Derb_\Cc(\C_X)$ and $\Derb_\rh(\D)^{{\rm op}}$ quasi-inverse
to each other.
In order to obtain a similar statement for $\C_X$ and $\D$ 
replaced with $\coro_X$ and $\Dh$, respectively, we start by
establishing some lemmas.

\begin{lemma}\label{le:Dhff}
Let $\shm,\shn\in \Derb_\hol(\Dh)$. 
The natural morphism in $\Derb_\Cc(\coro_X)$
\[
\rhom[{\Dh}](\shm,\shn)\to \rhom[\coro_X](\Solh(\shn),\Solh(\shm))
\]
is an isomorphism.
\end{lemma}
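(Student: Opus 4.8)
The strategy is to reduce the statement to Kashiwara's classical full faithfulness result for regular holonomic $\D$-modules via the conservativity of the functor $\grh$. First I would observe that, by Corollary~\ref{cor:SSsol}, both source and target of the morphism lie in $\Derb_\Cc(\coro_X)$, so the statement makes sense, and that $\rhom[{\Dh}](\shm,\shn)$ is cohomologically $\h$-complete by Theorem~\ref{th:formalfini1} and Proposition~\ref{pro:homcc}, while $\rhom[\coro_X](\Solh(\shn),\Solh(\shm))$ is cohomologically $\h$-complete by Proposition~\ref{pro:RCcohco}. Hence, by Corollary~\ref{cor:conservativeRc} (conservativity of $\grh$ on $\Derb_\Rc(\coro_X)$), it suffices to check that the morphism becomes an isomorphism after applying $\gr$.

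Next I would compute $\gr$ of each side. On the left, formula~\eqref{eq:grF} gives
\[
\gr\bl\rhom[{\Dh}](\shm,\shn)\br \simeq \rhom[\D](\gr\shm,\gr\shn),
\]
and $\gr\shm,\gr\shn\in\Derb_\rh(\D)$ by the definition of holonomicity for $\Dh$-modules. On the right, since $\Solh(\shm)\in\Derb_\Cc(\coro_X)$ and by Corollary~\ref{cor:SSsol} together with the relation $\gr\circ\Solh \simeq \Sol\circ\gr$ (which follows from~\eqref{eq:grF} applied with target $\Oh$, using $\gr\Oh\simeq\O$), one gets $\gr(\Solh(\shm))\simeq\Sol(\gr\shm)$ and similarly for $\shn$; then~\eqref{eq:grF} applied over $\coro_X$ gives
\[
\gr\bl\rhom[\coro_X](\Solh(\shn),\Solh(\shm))\br \simeq \rhom[\C_X](\Sol(\gr\shn),\Sol(\gr\shm)).
\]
Under these identifications, $\gr$ of the morphism in question is precisely the natural comparison morphism
\[
\rhom[\D](\gr\shm,\gr\shn)\to\rhom[\C_X](\Sol(\gr\shn),\Sol(\gr\shm)),
\]
which is an isomorphism by the classical Riemann--Hilbert correspondence of Kashiwara~\cite{Ka84}, since $\gr\shm$ and $\gr\shn$ are regular holonomic. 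This concludes the proof.

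\textbf{Main obstacle.}
The delicate point is the bookkeeping of the natural transformations: one must verify that the morphism obtained by applying $\gr$ to the $\coro$-linear pairing defining the map in the statement genuinely coincides with the $\C$-linear comparison morphism underlying Kashiwara's theorem, rather than merely being an abstract isomorphism between the same objects. This requires checking compatibility of~\eqref{eq:grF} with the relevant composition/evaluation morphisms (the pairing $\rhom[\Dh](\shm,\shn)\tens\rhom[\coro_X](\Solh\shn,\scbul)$-type maps), in the spirit of the argument already used for Theorem~\ref{thm:SSDh3}. Once that functoriality is in place, conservativity of $\grh$ does the rest, and no further estimates are needed.
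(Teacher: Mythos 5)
Your proposal is correct and follows exactly the same route as the paper: apply $\gr$, identify the result with the classical Riemann--Hilbert comparison morphism, and conclude by conservativity of $\grh$ via Corollary~\ref{cor:conservativeRc}. The only slip is a citation: the constructibility of the source $\rhom[\Dh](\shm,\shn)$ over $\coro_X$ should be deduced from Theorem~\ref{thm:SSDh} (which the paper invokes explicitly for this), not from Corollary~\ref{cor:SSsol}, which covers $\Solh$ and $\DRh$ and hence only the target $\rhom[\coro_X](\Solh\shn,\Solh\shm)$; the cohomological $\h$-completeness discussion, while correct, is redundant once one is working inside $\Derb_\Rc(\coro_X)$.
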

\begin{proof}
Applying the functor $\gr$ to this morphism, we get an isomorphism by
the classical Riemann-Hilbert correspondence.
Then the result follows from Corollary~\ref{cor:conservativeRc} and
Theorem~\ref{thm:SSDh}.
\end{proof}

Note that there is an isomorphism in $\Derb(\D)$
\begin{equation}\label{eq:grRHh}
\gr(\THh(F))\simeq\TH(\gr(F)).
\end{equation}

\begin{lemma}\label{le:RHrhh}
The functor $\THh$ induces a functor
\begin{equation}\label{eq:RHrhh}
\THh\cl \Derb_\Cc(\coro_X)\to \Derb_\rh(\Dh)^\rop.
\end{equation}
\end{lemma}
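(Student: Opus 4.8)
We must show that for $F\in\Derb_\Cc(\coro_X)$, the object $\THh(F) = \opb{\rho}\rhom[\coro_{\Xsa}](\oim{\rho}F,\Oth)$ lies in $\Derb_\rh(\Dh)^\rop$; that is, it has coherent cohomology and its graded object is regular holonomic over $\D$. By the isomorphism \eqref{eq:grRHh}, $\gr(\THh(F))\simeq\TH(\gr(F))$, and since $\gr(F)\in\Derb_\Cc(\C_X)$ the classical Riemann--Hilbert correspondence of Kashiwara tells us that $\TH(\gr(F))\in\Derb_\rh(\D)$. So the graded part is automatically regular holonomic; the only real content is to promote this to a statement \emph{over} $\Dh$, namely that $\THh(F)\in\Derb_\coh(\Dh)$.

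\textbf{Coherence via cohomological $\h$-completeness.} The tool for this is Theorem~\ref{th:formalfini2}: an object of $\Der[+](\Dh)$ that is cohomologically $\h$-complete and whose graded object is in $\Der[+]_\coh(\D)$ is itself in $\Der[+]_\coh(\Dh)$. The second hypothesis we have just checked. For the first, I would argue that $\Oth$ is cohomologically $\h$-complete (this is stated in \S4, as $\Oth$ is by construction $(\Ot)^\rhb$, cohomologically $\h$-complete by Proposition~\ref{pro:Rhcomplete}), hence so is $\rhom[\coro_{\Xsa}](\oim{\rho}F,\Oth)$ by Proposition~\ref{pro:homcc}, and cohomological $\h$-completeness is preserved by $\opb\rho$ — one should check this, but it follows because $\opb\rho$ commutes with $(\scbul)^\rhb$ (Lemma~\ref{lem:Rhandsa}) and with the relevant $\rhom$'s, or more directly because $\opb\rho$ has a left adjoint $\eim\rho$ and thus commutes with the defining condition $\rhom[\coro_X](\coro_X^\loc,\scbul)=0$ up to adjunction. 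So $\THh(F)$ is cohomologically $\h$-complete, and Theorem~\ref{th:formalfini2} applies once we know it is bounded below; boundedness below is clear since $\oim\rho F$ is bounded, $\Oth$ is bounded, and $\opb\rho$ and $\rhom$ have bounded cohomological amplitude on the relevant categories. (Boundedness above, needed to land in $\Derb$ rather than $\Der[+]$, follows from the finiteness of $\gr\THh(F)\simeq\TH(\gr F)\in\Derb$ together with Proposition~\ref{prop:cohom_and_grad}, or simply by the $\h$-adic estimate in the conclusion of Theorem~\ref{th:formalfini2}.)

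\textbf{Regular holonomicity.} Once $\THh(F)\in\Derb_\coh(\Dh)$, the definition of holonomic and regular holonomic objects of $\Derb_\coh(\Dh)$ is precisely that $\gr$ of the object be regular holonomic over $\D$; and $\gr(\THh(F))\simeq\TH(\gr(F))$ is regular holonomic by Kashiwara's theorem. Therefore $\THh(F)\in\Derb_\rh(\Dh)$, and since $\THh$ takes values in the opposite category we obtain the functor \eqref{eq:RHrhh}.

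\textbf{Main obstacle.} The delicate point is the compatibility \eqref{eq:grRHh}, $\gr(\THh(F))\simeq\TH(\gr(F))$, which the excerpt asserts just before the lemma. Unwinding it requires $\gr(\Oth)\simeq\Ot$ (noted at the end of \S4) together with the commutation of $\gr$ with $\opb\rho$ and with $\rhom[\coro_{\Xsa}](\oim\rho F,\scbul)$; the latter uses \eqref{eq:grF} and the fact that $\oim\rho F$, though not coherent, is built from $\coro$-constructible data so that $\gr$ behaves well. If one grants \eqref{eq:grRHh} as already established, the remaining argument is the routine bootstrap above: cohomological $\h$-completeness plus a coherent graded object yields coherence, via Theorems~\ref{th:formalfini1}--\ref{th:formalfini2} and Proposition~\ref{pro:homcc}.
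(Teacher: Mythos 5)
Your overall strategy — reduce, via \eqref{eq:grRHh}, to showing $\THh(F)$ is coherent, establish cohomological $\h$-completeness of $\THh(F)$, and conclude by Theorem~\ref{th:formalfini2} — is the same skeleton the paper uses. The genuine divergence, and the place where your argument has a real gap, is the step where you deduce cohomological $\h$-completeness of $\THh(F)$. You want to argue: $\Oth$ is cohomologically $\h$-complete (fine, Proposition~\ref{pro:Rhcomplete}), hence $\rhom[\coro_{\Xsa}](\oim{\rho}F,\Oth)$ is cohomologically $\h$-complete (fine, Proposition~\ref{pro:homcc}), and then that $\opb\rho$ preserves cohomological $\h$-completeness. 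This last claim is precisely what you cannot take for granted, and you flag it yourself with ``one should check this.'' The sketch you offer is not sufficient: the defining condition $\rhom[\shr](\shr^\loc,\scbul)=0$ is a \emph{derived} internal $\rhom$, and while the existence of the left adjoint $\eim\rho$ does make $\opb\rho$ commute with the underived $\hom$ (hence with the underived $\sprolim$ hiding inside $\hom(\shr^\loc,\scbul)$), one still has to verify that $\opb\rho$ sends a suitable acyclic (e.g.\ injective) resolution on $\Xsa$ to a $\hom(\coro^\loc_X,\scbul)$-acyclic complex on $X$. That is a genuine point; in Lemma~\ref{lem:Rhandsa} the analogous commutation of $\opb\rho$ with $(\scbul)^\rhb$ is proved precisely by invoking Prelli's theorem that $\opb\rho$ of a quasi-injective is soft, and no analogous statement is proved in the paper for cohomological $\h$-completeness.

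The paper avoids this issue entirely by producing an explicit representative of $\THh(F)$: using the Dolbeault resolution of $\Oth$ with coefficients in $\Dbth_X$ and the resolution of $F$ from Proposition~\ref{pro:resol1}~(i), it exhibits $\THh(F)$ (locally) as a \emph{bounded} complex whose terms are locally finite sums of sheaves $\sect_{[U]}\Dbth$, each of which is cohomologically $\h$-complete by Proposition~\ref{pro:ctdbtcc}. Since cohomologically $\h$-complete objects form a triangulated subcategory and the property is local, the conclusion follows without ever asking whether $\opb\rho$ preserves cohomological $\h$-completeness. Note that this concrete representation also settles boundedness at a stroke, whereas your argument for boundedness (via ``bounded cohomological amplitude'' of $\rhom$ and $\opb\rho$) is also not justified, since $\oim\rho F$ is not coherent and $\coro_\Xsa$ is not obviously of finite global injective dimension on the site $\Xsa$. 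In short: the abstract bootstrap you propose is appealing but leaves the key commutation of $\opb\rho$ with the $\h$-completeness condition unverified; the paper's explicit-resolution route is the substance of the proof.
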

\begin{proof}
Let $F\in\Derb_\Cc(\coro_X)$. By \eqref{eq:grRHh}
and the classical Riemann-Hilbert correspondence we know that
$\gr(\THh(F))$ is regular holonomic, and in particular coherent.
It is thus left to prove that $\THh(F)$ is coherent.
Note that our problem is of local nature.

We use
the Dolbeault resolution of $\Oth$ with coefficients in $\Dbth_X$ and
we choose a resolution of $F$ as given in
Proposition~\ref{pro:resol1}~(i).  We find that $\THh(F)$ is isomorphic
to a
bounded complex $\shm^\scbul$, where the $\shm^i$ are locally finite
sums of sheaves of the type $\sect_{[U]}\Dbth$ with $U\in\Op_\Xsa$. It
follows from Proposition~\ref{pro:ctdbtcc} that $\THh(F)$ is cohomologically
$\h$-complete, and we conclude by Theorem~\ref{th:formalfini2} with $\sha=\Dh$.
\end{proof}

\begin{lemma}\label{le:compl_sol}
We have
$\rhom[{\eim{\rho}\Dh}](\eim{\rho}\Oh,\Oth)\simeq\coro_{\Xsa}$.
\end{lemma}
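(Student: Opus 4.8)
The plan is to reduce the claim $\rhom[{\eim{\rho}\Dh}](\eim{\rho}\Oh,\Oth)\simeq\coro_{\Xsa}$ to the classical statement $\rhom[{\eim{\rho}\D}](\eim{\rho}\OO,\Ot)\simeq\C_{\Xsa}$ via the conservativity of the functor $\gr$. First I would observe that the left-hand side is cohomologically $\h$-complete: indeed $\Oth$ is cohomologically $\h$-complete (it is $(\Ot)^\rhb$, by Proposition~\ref{pro:Rhcomplete}, or see the remark before Section~5), and by Proposition~\ref{pro:homcc} this property is inherited by $\rhom$ into it from any argument. Hence, by Proposition~\ref{pro:conserv}, it suffices to show that $\gr$ of the left-hand side is isomorphic to $\C_{\Xsa}$ (note $\gr(\coro_{\Xsa})\simeq\C_{\Xsa}$).

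Next I would compute $\gr$ of the left-hand side. Using the isomorphism of \eqref{eq:grF}, one has
\[
\gr\bl\rhom[{\eim{\rho}\Dh}](\eim{\rho}\Oh,\Oth)\br
\simeq \rhom[{\gr(\eim{\rho}\Dh)}]\bl\gr(\eim{\rho}\Oh),\gr(\Oth)\br.
\]
Now $\gr(\eim{\rho}\Dh)\simeq\eim{\rho}\D$ since $\eim{\rho}$ is exact and $\gr(\Dh)\simeq\D$ by \eqref{eq:grhn}; similarly $\gr(\eim{\rho}\Oh)\simeq\eim{\rho}\OO$ using $\gr(\Oh)\simeq\OO$; and $\gr(\Oth)\simeq\Ot$ as recorded at the end of Section~4. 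Therefore the right-hand side above is $\rhom[{\eim{\rho}\D}](\eim{\rho}\OO,\Ot)$, which is $\C_{\Xsa}$ by the classical result of~\cite{KS01} (this is the computation underlying the fact that $\DR$ of $\OO$ recovers the constant sheaf, transported to the subanalytic site). This identifies $\gr$ of the left-hand side with $\C_{\Xsa}$, as required.

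The one point deserving care — and the main potential obstacle — is the compatibility of $\gr$ with the functors $\eim\rho$ and with $\rhom$ over the sheaf of rings $\eim{\rho}\Dh$ on the subanalytic site: the formula \eqref{eq:grF} is stated for a general $\Z[\hbar]$-algebra $\shr$ with no $\h$-torsion, and one must check that $\eim{\rho}\Dh$ is of this form, namely that $\eim{\rho}\Dh \simeq (\eim{\rho}\D)\forl$ with no $\h$-torsion and that the relevant coherence/acyclicity hypotheses hold so that the $\gr$ of the $\rhom$ can be taken stalkwise or via a free resolution of $\eim{\rho}\Oh$. Since $\eim\rho$ commutes with the formal extension (it commutes with $\prod_\N$, being a left adjoint up to the usual subtleties, or one argues as in Lemma~\ref{lem:Rhandsa}) and $\eim{\rho}\D$ is syzygic, one may locally replace $\eim{\rho}\Oh$ by a bounded complex of free $\eim{\rho}\Dh$-modules of finite rank coming from a free resolution of $\eim{\rho}\OO$ over $\eim{\rho}\D$; then both $\gr\rhom$ and $\rhom\gr$ are computed by the same finite complex and the identification is immediate. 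Once this bookkeeping is in place, the conclusion follows from Proposition~\ref{pro:conserv}.
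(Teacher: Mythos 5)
Your strategy is genuinely different from the paper's. The paper proves the lemma by a direct chain of four isomorphisms: first extension of scalars from $\eim\rho\Dh$ to $\eim\rho\D$ (using $\eim\rho\Oh\simeq\eim\rho\Dh\ltens[\eim\rho\D]\eim\rho\O$), then Lemma~\ref{lem:Rh_hom} to pull $(\scbul)^\rhb$ out of the $\rhom$, then the adjunction between $\eim\rho$ and $\opb\rho$ to reduce to the classical $\rhom[\D](\O,\O)\simeq\C_X$, and finally $(\C_\Xsa)^\rhb\simeq\coro_\Xsa$. Your argument instead mimics the pattern used in Lemma~\ref{le:Dhff} and Theorem~\ref{thm:SSDh3}: establish cohomological $\h$-completeness via Proposition~\ref{pro:Rhcomplete} and Proposition~\ref{pro:homcc}, pass to $\gr$, and invoke conservativity. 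This is a sound strategy in principle, and it trades a specific computation for a more uniform reduction-to-$\h=0$ device, at the cost of some bookkeeping about how $\gr$ interacts with $\eim\rho$ and $\rhom$ over the subanalytic site, which you correctly flag.

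However, as written your argument has a gap at the key step. Proposition~\ref{pro:conserv} (and its corollaries) state that $\gr$ is \emph{conservative}: a morphism between cohomologically $\h$-complete objects whose image under $\gr$ is an isomorphism is itself an isomorphism, and equivalently an object with $\gr = 0$ is $0$. It does \emph{not} say that two cohomologically $\h$-complete objects with abstractly isomorphic $\gr$'s are isomorphic. So it is not enough to compute that $\gr$ of the left-hand side is isomorphic to $\C_\Xsa$; you must exhibit a morphism $\coro_\Xsa\to\rhom[{\eim\rho\Dh}](\eim\rho\Oh,\Oth)$ (for instance the one induced by the natural morphism $\eim\rho\Oh\to\Oth$, which itself needs to be constructed, say by applying $(\scbul)^\rhb$ to $\eim\rho\O\to\Ot$) and then verify that \emph{this morphism} becomes an isomorphism after applying $\gr$. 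Once such a morphism is in place, the rest of your reduction goes through, but without it the conservativity argument does not apply. This is precisely the extra care taken in the paper's proof of Lemma~\ref{le:Dhff}, which starts from an explicit natural morphism before applying $\gr$.
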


\begin{proof}
This isomorphism is given by the sequence
\eqn
\rhom[{\eim{\rho}\Dh}](\eim{\rho}\Oh,\Oth)
&\simeq&\rhom[{\eim{\rho}\D}](\eim{\rho}\O,\Oth)\\
&\simeq&\rhom[\eim{\rho}\D](\eim{\rho}\O,\Ot)^\rhb\\
&\simeq&(\oim{\rho}\rhom[\D](\O,\O))^\rhb\simeq (\C_\Xsa)^\rhb\simeq\coro_{\Xsa},
\eneqn
where the first isomorphism is an extension of scalars, the second one
is Lemma~\ref{lem:Rh_hom} and the third one is given by the adjunction
between $\eim{\rho}$ and $\opb{\rho}$.
\end{proof}

\begin{theorem}\label{th:DhRH}
The functors $\Solh$ and $\THh$ are equivalences of categories
between $\Derb_\Cc(\coro_X)$ and $\Derb_\rh(\Dh)^{\rm op}$ quasi-inverse
to each other.
\end{theorem}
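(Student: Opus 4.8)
The plan is to establish the equivalence by checking that $\Solh$ and $\THh$ are mutually quasi-inverse, using the classical Riemann-Hilbert correspondence together with the conservativity of $\grh$ on $\Derb_\Rc(\coro_X)$ (Corollary~\ref{cor:conservativeRc}) to reduce graded computations to Kashiwara's theorem. First I would note that by Lemma~\ref{le:RHrhh} the functor $\THh$ indeed lands in $\Derb_\rh(\Dh)^\rop$, so both functors are well defined between the two categories in question; moreover $\Solh$ sends $\Derb_\rh(\Dh)$ to $\Derb_\Cc(\coro_X)$ by Corollary~\ref{cor:SSsol}. It then suffices to construct natural adjunction morphisms $\id\to\Solh\circ\THh$ and $\id\to\THh\circ\Solh$ (or $\THh\circ\Solh\to\id$) and to prove they are isomorphisms.

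\medskip
For the composite $\Solh\circ\THh$ on $F\in\Derb_\Cc(\coro_X)$, the natural morphism $F\to\Solh(\THh(F))$ comes from the biduality pairing
\[
\opb\rho\oim\rho F \tens[\coro_\Xsa] \rhom[\coro_\Xsa](\oim\rho F,\Oth) \to \Oth
\]
adjoined through $\eim\rho\dashv\opb\rho$ and the $\Dh$-module structure, exactly as in the classical case. Applying $\grh$ to this morphism and using the compatibility isomorphism \eqref{eq:grRHh}, namely $\gr(\THh(F))\simeq\TH(\gr F)$, together with $\gr(\Oth)\simeq\Ot$ and the commutation of $\grh$ with the relevant functors, one obtains the classical morphism $\gr F\to\Sol(\TH(\gr F))$, which is an isomorphism by Kashiwara's theorem. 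Since $\Solh(\THh(F))\in\Derb_\Cc(\coro_X)\subset\Derb_\Rc(\coro_X)$ is cohomologically $\h$-complete (Proposition~\ref{pro:RCcohco}), conservativity of $\grh$ (Corollary~\ref{cor:conservativeRc}) forces $F\to\Solh(\THh(F))$ to be an isomorphism.

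\medskip
For the composite $\THh\circ\Solh$ on $\shm\in\Derb_\rh(\Dh)$, I would use Lemma~\ref{le:Dhff}: the natural morphism
\[
\shm \to \rhom[\coro_\Xsa](\oim\rho\Solh(\shm),\Oth)
\]
(i.e.\ $\shm\to\THh(\Solh(\shm))$), constructed from the evaluation pairing $\Solh(\shm)\tens\shm\to\Oh\to\Oth$ and Lemma~\ref{le:compl_sol}, fits into the framework where one applies $\rhom[\Dh](\scbul,\shn)$; alternatively, apply $\grh$ directly. Applying $\grh$ yields the classical morphism $\gr\shm\to\TH(\Sol(\gr\shm))$, an isomorphism by Kashiwara. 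Both source and target of $\shm\to\THh(\Solh(\shm))$ lie in $\Derb_\coh(\Dh)$, on which $\grh$ is conservative by Corollary~\ref{cor:conservative1}, so the morphism is an isomorphism.

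\medskip
The main obstacle is not any single hard estimate but rather the careful bookkeeping of the natural transformations: one must check that the $\h$-morphisms $\id\to\Solh\THh$ and $\id\to\THh\Solh$ are genuinely natural and that applying $\grh$ to them recovers \emph{the} classical Riemann-Hilbert adjunction morphisms (not merely some isomorphism), so that Kashiwara's theorem applies. This requires the compatibilities $\gr(\Oth)\simeq\Ot$, $\gr(\Oh)\simeq\O$, $\gr(\THh(F))\simeq\TH(\gr F)$, and the commutation of $\grh$ with $\opb\rho$, $\oim\rho$, $\rhom$, and $\tens$ in the relevant bounded situations — all available from Section~1 and Section~4 — to be assembled coherently. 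Once naturality and the graded identification are in place, conservativity of $\grh$ (Corollaries~\ref{cor:conservative1} and~\ref{cor:conservativeRc}) does the rest.
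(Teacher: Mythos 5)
Your proposal is correct in outline but takes a genuinely different route from the paper, and it leaves the hardest bookkeeping (which you honestly flag) undischarged. The paper avoids constructing and identifying \emph{two} natural biduality morphisms. Instead it first establishes full faithfulness of $\Solh$ (Lemma~\ref{le:Dhff}, itself via $\grh$-conservativity and Theorem~\ref{thm:SSDh}), and then computes $\Solh(\THh(F))\simeq F$ directly, not by applying $\grh$ but by the duality isomorphism $\Solh(\shm)\simeq\RD'_\h(\DRh(\shm))$ of Theorem~\ref{thm:SSDh3} combined with an adjunction chain
\[
\oim\rho\DRh(\THh(F)) \simeq \rhom[\coro_{\Xsa}]\bigl(\oim{\rho}F,\rhom[{\eim{\rho}\Dh}](\eim{\rho}\Oh,\Oth)\bigr)\simeq\oim\rho\RD'_\h F,
\]
whose key input is Lemma~\ref{le:compl_sol} (that $\rhom[{\eim{\rho}\Dh}](\eim{\rho}\Oh,\Oth)\simeq\coro_{\Xsa}$). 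Once one has a fully faithful functor $G$ with $G\circ H\simeq\id$, it follows formally that $G$ is an equivalence and $H$ a quasi-inverse; no second adjunction morphism $\shm\to\THh\Solh\shm$ needs to be exhibited or identified. Your approach instead requires producing both unit/counit morphisms explicitly and verifying that $\grh$ of each recovers \emph{the} classical Riemann--Hilbert adjunction; this is plausible but is precisely the ``main obstacle'' you mention, and the construction you sketch is imprecise (the pairing $\opb\rho\oim\rho F\tens[\coro_\Xsa]\rhom[\coro_\Xsa](\oim\rho F,\Oth)\to\Oth$ mixes sheaves on $X$ and on $\Xsa$, and $\opb\rho\Oth$ is not $\Oh$, so getting a map into $\Solh(\THh(F))=\rhom[\Dh](\THh(F),\Oh)$ requires more care). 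So what each approach buys: yours is conceptually symmetric and parallels the classical proof by biduality plus a conservativity argument, but it asks for a careful verification that the $\h$-adjunctions degenerate to the classical ones under $\grh$; the paper's route replaces that verification by the abstract full-faithfulness argument and a concrete adjunction computation that never leaves the $\h$-world.
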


\begin{proof}
In view of 
Lemma~\ref{le:Dhff}, we know that the functor $\Solh$ is fully faithful.
It is then enough to show that
$\Solh(\THh(F))\simeq F$ for $F\in\Derb_\Cc(\coro_X)$.
Since we already know by Lemma~\ref{le:RHrhh} that $\THh(F)$ is
holonomic, we may
use~\eqref{eq:dualmorph}.  We have the sequence of isomorphisms:
\allowdisplaybreaks
\[
\begin{split}
\oim\rho\rhom[{\Dh}]&(\Oh, \THh(F))
= 
\oim\rho\rhom[{\Dh}](\Oh,\opb{\rho}\rhom[\coro_{\Xsa}]
(\oim{\rho}F,\Oth) )\\
&\simeq
\rhom[{\rho_!\Dh}](\eim{\rho}\Oh, \rhom[\coro_{\Xsa}] (\oim{\rho}
F,\Oth))\\
&\simeq
\rhom[\coro_{\Xsa}](\oim{\rho}F,
\rhom[{\eim{\rho}\Dh}](\eim{\rho}\Oh,\Oth)) \\
&\simeq
\rhom[\coro_{\Xsa}](\oim{\rho}F, \coro_{\Xsa})
\simeq
\rhom[\coro_{\Xsa}](\oim{\rho}F, \oim{\rho}\coro_X) \\
&\simeq 
\oim\rho\RD'_\h F,
\end{split}
\]
where we have used the adjunction between $\eim{\rho}$ and
$\opb{\rho}$, the isomorphism of Lemma~\ref{le:compl_sol} and the
commutation of $\oim{\rho}$ with $\rhom$.  One concludes by recalling
the isomorphism of functors $\opb{\rho}\oim\rho\simeq\id$.
\end{proof}

\subsubsection*{$t$-structure}
Recall the definition of the middle perversity $t$-structure for
complex constructible sheaves.  
Let $\cora$ denote either the field $\C$ or the
ring $\coro$.
For $F\in \Derb_\Cc(\cora_X)$, we have 
$F \in \pDer[\leq 0]_\Cc(\cora_X)$ if and only if
\begin{equation}
\label{eq:perv_negatif}
\forall i \in \Z \qquad \dim \supp H^i(F) \leq d_X -i ,
\end{equation}
and $F \in \pDer[\geq 0]_\Cc(\cora_X)$ if and only if, for any locally
closed complex analytic subset $S\subset X$,
\begin{equation}\label{eq:perv_positif}
H^i_S(F) =0 \mbox{ for all } i < d_X-\dim(S).
\end{equation}
With the above convention, the de Rham functor
\[
\DR\cl\Derb_\hol(\D)\to\pDer_\Cc(\C_X)  
\]
is $t$-exact. 
\begin{theorem}\label{thm:pervh} 
The de Rham functor $\DRh\cl\Derb_\hol(\Dh)\to\pDer_\Cc(\coro_X)$ is
$t$-exact.
\end{theorem}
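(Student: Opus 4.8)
The plan is to reduce the statement to the classical $t$-exactness of $\DR$ via the conservativity of the graded functor. Concretely, I would argue as follows. Let $\shm\in\Derb_\hol(\Dh)$. I must show that $\DRh(\shm)\in\pDer[\leq 0]_\Cc(\coro_X)$ whenever $\shm\in\pDer[\leq 0]_\hol(\Dh)$ (with respect to the natural $t$-structure on $\Derb_\hol(\Dh)$, transported via $\gr$ from the one on $\Derb_\hol(\D)$), and the dual statement for $\pDer[\geq 0]$. The bridge is the isomorphism $\gr(\DRh(\shm))\simeq\DR(\gr(\shm))$, which follows from \eqref{eq:grF} together with the fact that $\gr(\Oh)\simeq\O$; since $\DR$ is $t$-exact (classical), $\gr(\DRh(\shm))$ lies in $\pDer[\leq 0]_\Cc(\C_X)$ (resp.\ $\pDer[\geq 0]_\Cc(\C_X)$) as soon as $\gr(\shm)$ does.

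Next I would translate the two perversity conditions across $\gr$. For the condition $\pDer[\leq 0]$: by \eqref{eq:perv_negatif} it is a statement about $\dim\supp H^i(-)$, and Proposition~\ref{prop:supp_and_grad} gives $\supp H^i(F)\subset\supp H^i(\gr F)$ for $F\in\Derb_\Cc(\coro_X)$; hence if $\gr(\DRh(\shm))\in\pDer[\leq 0]$ then so is $\DRh(\shm)$. This handles right $t$-exactness. For the condition $\pDer[\geq 0]$: one cannot directly compare $H^i_S$ of $F$ and of $\gr F$ by a support estimate, so instead I would use the characterization by orthogonality — $F\in\pDer[\geq 0]$ iff $\rhom[\coro_X](G,F)$ is concentrated in degrees $\geq 0$ for all $G\in\pDer[\leq 0]$ — or, more in the spirit of the paper, dualize. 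Since $\RDdh$ (resp.\ $\RD'_\h$) exchanges $\pDer[\leq 0]$ and $\pDer[\geq 0]$ on both sides, and since $\Solh(\shm)\simeq\RD'_\h(\DRh(\shm))$ by Theorem~\ref{thm:SSDh3} while $\Solh(\shm)\simeq\DRh(\RDdh\shm)$ (from $\Sol\simeq\DR\circ\RDd$ at the graded level, lifted by conservativity), left $t$-exactness of $\DRh$ reduces to right $t$-exactness, which was already established.

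Alternatively, and perhaps more cleanly, I would prove $t$-exactness in one stroke using cohomological $\h$-completeness. The object $\DRh(\shm)=\rhom[\Dh](\Oh,\shm)$ is cohomologically $\h$-complete by Theorem~\ref{th:formalfini1} and Proposition~\ref{pro:homcc}, and it lies in $\Derb_\Cc(\coro_X)$ by Corollary~\ref{cor:SSsol}. For such objects the perversity truncation functors are compatible with $\gr$: one checks that $\gr$ sends $\pDer[\leq 0]_\Cc(\coro_X)$ into $\pDer[\leq 0]_\Cc(\C_X)$ and $\pDer[\geq 0]_\Cc(\coro_X)$ into $\pDer[\geq 0]_\Cc(\C_X)$, and that a cohomologically $\h$-complete object $F$ with $\gr F\in\pDer[\geq 0]_\Cc(\C_X)$ satisfies $F\in\pDer[\geq 0]_\Cc(\coro_X)$ — the point being that $\pDer[\geq 0]$ is the right orthogonal to $\pDer[\leq -1]$, and Hom out of the $\h$-complete object $F$ can be computed via $\gr$ by Proposition~\ref{pro:conserv} applied to the relevant cone. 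Combined with the support estimate of Proposition~\ref{prop:supp_and_grad} for the $\pDer[\leq 0]$ half, this shows $\DRh(\shm)\in\pDer[\leq 0]$ (resp.\ $\pDer[\geq 0]$) iff $\gr(\DRh\shm)=\DR(\gr\shm)$ is, iff $\gr\shm$ is, iff $\shm$ is.

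I expect the main obstacle to be the $\pDer[\geq 0]$ side: the support-type estimate of Proposition~\ref{prop:supp_and_grad} goes the wrong way for the costalk/$H^i_S$ condition \eqref{eq:perv_positif}, so one genuinely needs either the duality argument (which requires checking that $\Solh\simeq\DRh\circ\RDdh$, i.e.\ lifting the classical identity $\Sol\simeq\DR\circ\RDd$ through $\gr$ by conservativity, Corollary~\ref{cor:conservative1}) or a careful orthogonality argument exploiting cohomological $\h$-completeness. The duality route seems most robust and best matches the techniques already assembled in the paper, so that is the one I would write up in detail.
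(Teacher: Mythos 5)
Your analysis of the $\pDer[\leq 0]$ half is correct and matches the paper. The gap is on the $\pDer[\geq 0]$ side, and it is exactly at the point you flagged as the ``main obstacle'': neither of your two proposed resolutions closes it.

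Both routes silently assume that $\shm\in\Der[\geq 0]_\hol(\Dh)$ implies $\gr\shm\in\Der[\geq 0]_\hol(\D)$, and this is false. By Proposition~\ref{prop:cohom_and_grad}, $H^{-1}(\gr\shm)\simeq{}_0(H^0\shm)$, so $\gr$ preserves $\Der[\geq 0]$ only when $H^0\shm$ has no $\h$-torsion; the correct statement is $\shm\in\tDer[\geq 0]\Leftrightarrow\gr\shm\in\Der[\geq 0]$, cf.~\eqref{eq:tstruc_pos}. A concrete counterexample: for a holonomic $\D$-module $\shn$ viewed as a $\Dh$-module (so $\shn\in\Der[\geq 0]_\hol(\Dh)$), one has $\gr\shn\simeq\shn\oplus\shn[1]$, hence $\gr(\DRh\shn)\simeq\DR\shn\oplus\DR\shn[1]\notin\pDer[\geq 0]_\Cc(\C_X)$ even though $\DRh\shn\simeq\DR\shn$ is in $\pDer[\geq 0]_\Cc(\coro_X)$. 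So your chain of ``iff''s fails twice: $\shm\in\Der[\geq 0]\not\Rightarrow\gr\shm\in\Der[\geq 0]$, and $F\in\pDer[\geq 0]_\Cc(\coro_X)\not\Rightarrow\gr F\in\pDer[\geq 0]_\Cc(\C_X)$. Only the implications $\gr\shm\in\Der[\geq 0]\Rightarrow\shm\in\Der[\geq 0]$ and $\gr F\in\pDer[\geq 0]\Rightarrow F\in\pDer[\geq 0]$ hold, which point in the wrong direction for you.

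The duality route has the same defect in different clothing: $\RDdh$ does not interchange $\Der[\leq 0]_\hol(\Dh)$ and $\Der[\geq 0]_\hol(\Dh)$, it interchanges $\Der[\leq 0]$ with $\tDer[\geq 0]$ and $\Der[\geq 0]$ with $\tDer[\leq 0]$ (Proposition~\ref{prop:duality_torsion} and Theorem~\ref{th:tDhdual}); likewise $\RD'_\h$ pairs $\pDer$ with $\piDer$, not with itself (Corollary~\ref{cor:pervh}). That is precisely the content of the later section on $\h$-torsion, and chasing your reduction through $\Solh(\shm)\simeq\RD'_\h(\DRh\shm)\simeq\DRh(\RDdh\shm)$ lands you in the modified $t$-structures, not back in the natural one. (Also, Corollary~\ref{cor:pervh} is itself a consequence of Theorem~\ref{thm:pervh}, so invoking it would be circular.) The missing ingredient is the decomposition used in the paper: split off $\shn=(H^0\shm)_\htor$ via the triangle $\DRh\shn\to\DRh\shm\to\DRh\shm'\xto{+1}$; treat $\shn$ as a plain $\D$-module, for which $\DRh\shn\simeq\DR\shn$ is perverse and condition~\eqref{eq:perv_positif} is coefficient-independent; and treat $\shm'$, whose $H^0$ is $\h$-torsion free so that $\gr\shm'\in\Der[\geq 0]$, by your $\gr$-argument together with Proposition~\ref{prop:supp_and_grad} applied to $\rsect_S$.
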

\begin{proof}
(i) Let $\shm \in \Der[\leq 0]_\hol(\Dh)$. Let us prove that
$\DRh\shm\in\pDer[\leq 0]_\Cc(\coro_X)$.  Since $\DRh\shm$ is
constructible, Proposition~\ref{prop:supp_and_grad} shows that it is
enough to check~\eqref{eq:perv_negatif} for 
$\gr(\DRh\shm)\simeq\DR(\gr\shm)$. In other words, it is enough  
to check that $\DR(\grh\shm)\in\pDer[\leq0]_\Cc(\C_X)$.
Since $\grh\shm \in\Der[\leq 0]_\hol(\D)$, this result follows from the
$t$-exactness of the functor $\DR$.

\medskip
\noindent
(ii) Let $\shm \in \Der[\geq 0]_\hol(\Dh)$. Let us prove that
$\DRh\shm\in\pDer[\geq 0]_\Cc(\coro_X)$.  We set $\shn =
(H^0\shm)_\htor$. We have a morphism $u\cl \shn \to \shm$ induced by
$H^0\shm \to \shm$ and we let $\shm'$ be the mapping cone of $u$. We
have a distinguished triangle
$$
\DRh \shn \to \DRh \shm \to \DRh\shm' \xto{+1}
$$
so that it is enough to show that $\DRh \shn$ and $\DRh\shm'$
belong to $\pDer[\geq 0]_\Cc(\coro_X)$.

\smallskip
\noindent
(a) By Proposition~\ref{prop:add_var_car} (ii) and
Proposition~\ref{pro:tormod_Dcoh}, $\shn$ is holonomic as a
$\D$-module. Hence $\DRh \shn \simeq \DR \shn$ is a perverse sheaf
(over $\C$) and satisfies~\eqref{eq:perv_positif}.
Since~\eqref{eq:perv_positif} does not depend on the coefficient ring,
$\DRh \shn \in \pDer[\geq 0]_\Cc(\coro_X)$.

\smallskip
\noindent
(b) We note that $H^0\shm' \simeq (H^0\shm)_\htf$. Hence by
Proposition~\ref{prop:cohom_and_grad}, 
$\gr\shm'\in\Der[\geq0]_\hol(\D)$ and 
$\DR(\gr\shm') \in\pDer[\geq0]_\Cc(\C_X)$, that is,
$\DR(\gr\shm')$ satisfies~\eqref{eq:perv_positif}.  Let $S\subset X$
be a locally closed complex subanalytic subset. We have
$$
\rsect_S(\DR(\gr\shm')) \simeq \gr (\rsect_S (\DRh\shm'))
$$
and it follows from Proposition~\ref{prop:supp_and_grad} that
$\DRh\shm'$ also satisfies~\eqref{eq:perv_positif} and thus belongs to
$\pDer[\geq 0]_\Cc(\coro_X)$.
\end{proof}

\section{Duality and $\h$-torsion}

The duality functors $\RDd$ on $\Der_\rh(\D)$ and $\RD'$ on
$\pDer_\Cc(\C_X)$ are $t$-exact. 
We will discuss here the finer $t$-structures needed in order to obtain
a similar result
when replacing $\C_X$ and $\D[X]$ by their formal extensions $\coro_X$
and $\Dh[X]$.

\medskip
Following~\cite[Chapter~I.2]{HRS96}, let us start by recalling some
facts related to torsion pairs and $t$-structures. We need in
particular Proposition~\ref{prop:ttts} below, which can also be found
in~\cite{J08}.

\begin{definition}\label{def:tt}
Let $\shc$ be an abelian category. A torsion pair on $\shc$ is a pair
$(\shc_{\text{tor}}, \shc_{\text{tf}})$ of full subcategories such that
\begin{enumerate}[(i)]
\item \label{eq:hom torsion theory}
for all objects $T$ in $\shc_{\text{tor}}$ and $F$ in $\shc_{\text{tf}}$, we have
$\Hom[\shc](T,F)= 0$,
\item \label{eq:ses torsion theory}
for any object $M$ in $\shc$, there are objects $M_{\text{tor}}$ in
$\shc_{\text{tor}}$ and  $M_{\text{tf}}$ in $\shc_{\text{tf}}$
and a short exact sequence
$0 \to M_{\text{tor}} \to M \to M_{\text{tf}} \to 0$.
\end{enumerate}
\end{definition}

\begin{proposition}\label{prop:ttts}
Let $\Der$ be a triangulated category endowed with a $t$-structure
$(\pDer[\leq 0], \pDer[\geq 0])$. Let us denote its
heart by $\shc$ and its cohomology functors by $\pH^i\cl\Der\to\shc$. 
Suppose that $\shc$ is endowed with a torsion pair
$(\shc_{\text{tor}},\shc_{\text{tf}})$.
Then we can define a new $t$-structure
$(\piDer[\leq 0], \piDer[\geq 0])$ on $\Der$ by setting:
\eqn
&&\piDer[\leq 0] = \{M \in \pDer[\leq 1]\cl\pH^1(M)\in\shc_{\text{tor}}\},\\
&&\piDer[\geq 0] = \{M \in \pDer[\geq 0]\cl\pH^0(M)\in\shc_{\text{tf}}\}.
\eneqn
\end{proposition}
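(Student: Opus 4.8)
The plan is to verify the two axioms of a $t$-structure for the pair $(\piDer[\leq 0],\piDer[\geq 0])$ directly from their definitions, using the given $t$-structure $(\pDer[\leq 0],\pDer[\geq 0])$ together with the torsion pair $(\shc_{\text{tor}},\shc_{\text{tf}})$ on the heart $\shc$. Recall the three things to check: first, $\piDer[\leq 0]\subset\piDer[\leq 1]$ (equivalently, $\piDer[\leq 0][1]\subset\piDer[\leq 0]$) and dually $\piDer[\geq 1]\subset\piDer[\geq 0]$; second, $\Hom[\Der](M,N)=0$ for $M\in\piDer[\leq 0]$ and $N\in\piDer[\geq 1]$; third, every object $M$ fits in a distinguished triangle $M'\to M\to M''\to[+1]$ with $M'\in\piDer[\leq 0]$ and $M''\in\piDer[\geq 1]$.

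First I would record notation: write $\piDer[\geq 1]=\{N\in\pDer[\geq 0]:\pH^0(N)\in\shc_{\text{tf}}\}$, which is the shift of $\piDer[\geq 0]$, and similarly $\piDer[\leq -1]$. The shift-stability axiom is immediate from the definitions once one notes that $\pH^i$ intertwines shifts. For orthogonality, take $M\in\piDer[\leq 0]$ and $N\in\piDer[\geq 1]\subset\pDer[\geq 1]$. Since $M\in\pDer[\leq 1]$, I would use the canonical truncation triangle $\ptau^{\leq 0}M\to M\to \pH^1(M)[-1]\to[+1]$. The term $\ptau^{\leq 0}M\in\pDer[\leq 0]$ is orthogonal to $N\in\pDer[\geq 1]$ by the original $t$-structure, so it remains to show $\Hom[\Der](\pH^1(M)[-1],N)=0$. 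Applying $\ptau^{\geq 1}$ and using $N\in\pDer[\geq 1]$, this group equals $\Hom[\shc](\pH^1(M),\pH^1(N))=\Hom[\shc](\pH^1(M),\pH^0(N[-1]))$; but $\pH^1(M)\in\shc_{\text{tor}}$ and $\pH^0(N[-1])=\pH^1(N)$—wait, here I must be careful with indexing, so I would instead observe directly that $\Hom[\Der](\pH^1(M)[-1],N)\simeq\Hom[\Der](\pH^1(M)[-1],\ptau^{\leq 1}N)$ and that $\ptau^{\leq 1}N$ has its only cohomology in degree $1$, namely $\pH^1(N)\in\shc_{\text{tf}}$ (since $N\in\piDer[\geq 1]$); hence the group is $\Hom[\shc](\pH^1(M),\pH^1(N))=0$ by axiom \eqref{eq:hom torsion theory} of the torsion pair.

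The main work is the decomposition axiom. Given $M$, I would first form $\ptau^{\leq 0}M\to M\to\ptau^{\geq 1}M\to[+1]$. Next, set $T=\pH^1(M)=\pH^0(\ptau^{\geq 1}M[1])$ and apply the torsion pair to get $0\to T_{\text{tor}}\to T\to T_{\text{tf}}\to 0$ in $\shc$. The idea is to ``refine'' the truncation at degree $1$: one wants an object $M'$ with $\pH^i(M')=\pH^i(M)$ for $i\leq 0$, $\pH^1(M')=T_{\text{tor}}$, and $\pH^i(M')=0$ for $i\geq 2$, together with $M''$ having $\pH^1(M'')=T_{\text{tf}}$ (placed in degree $1$) and $\pH^i(M'')=\pH^i(M)$ for $i\geq 2$. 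Concretely, I would build $M''$ as the cone of the composite $\ptau^{\leq 1}M\to\pH^1(M)[-1]\to T_{\text{tf}}[-1]$, or dually build $M'$ as the fiber of $M\to M''$; one then checks via the long exact cohomology sequence that $M'\in\piDer[\leq 0]$ (its top cohomology $\pH^1(M')\simeq T_{\text{tor}}\in\shc_{\text{tor}}$, lower ones unchanged, higher ones vanish) and $M''\in\piDer[\geq 1]$ (it lies in $\pDer[\geq 1]$ and $\pH^1(M'')\simeq T_{\text{tf}}\in\shc_{\text{tf}}$). The existence of the map $\ptau^{\leq 1}M\to T_{\text{tf}}[-1]$ lifting $T\to T_{\text{tf}}$ is where a small diagram chase is needed, and checking that the cohomology in degree $1$ of $M'$ is exactly $T_{\text{tor}}$ (rather than merely sitting in a short exact sequence) uses the snake/nine lemma in $\shc$. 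I expect this verification of the decomposition triangle—producing the lifted morphism and computing $\pH^1(M')$ and $\pH^1(M'')$—to be the main obstacle; everything else is formal. I would then remark that $\shc$ is indeed the heart of a $t$-structure by hypothesis so all the truncation functors and cohomology long exact sequences used are available, and that the construction is self-dual in the evident sense, which one could alternatively exploit to shorten the argument. Finally, I would point the reader to \cite[Chapter~I.2]{HRS96} or \cite{J08} for the details that are purely diagrammatic.
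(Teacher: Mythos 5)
The paper gives no proof of this proposition: it only cites \cite[Chapter~I.2]{HRS96} and \cite{J08}, the same references you point to at the end, so there is no in-paper argument to compare against. Your sketch is the standard HRS construction, and the shift and orthogonality steps are correct as written. In the decomposition step there is a mislabeling you should fix. Writing $\tau^{\leq n}$, $\tau^{\geq n}$ for the truncations of the given $t$-structure, the cone of the composite $\tau^{\leq 1}M\to\pH^1(M)[-1]\to T_{\text{tf}}[-1]$ is $M'[1]$, not $M''$; equivalently, $M'$ should be defined as the \emph{fiber} of that composite, and then one \emph{defines} $M''$ as the cone of the composite $M'\to\tau^{\leq 1}M\to M$ (your alternative ``$M'$ is the fiber of $M\to M''$'' is circular, since it presupposes $M''$). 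The long exact sequence attached to the triangle $M'\to\tau^{\leq 1}M\to T_{\text{tf}}[-1]\to[+1]$ gives $\pH^i(M')\simeq\pH^i(M)$ for $i\leq 0$, $\pH^1(M')\simeq T_{\text{tor}}$ and $\pH^i(M')=0$ for $i\geq 2$, so $M'\in\piDer[\leq 0]$; the octahedral axiom applied to $M'\to\tau^{\leq 1}M\to M$ produces a triangle $T_{\text{tf}}[-1]\to M''\to\tau^{\geq 2}M\to[+1]$, whence $M''\in\pDer[\geq 1]$ with $\pH^1(M'')\simeq T_{\text{tf}}$, i.e.\ $M''\in\piDer[\geq 1]$. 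Also note that producing the morphism $\tau^{\leq 1}M\to T_{\text{tf}}[-1]$ requires no diagram chase: it is the composition of the canonical truncation morphism $\tau^{\leq 1}M\to\tau^{\geq 1}\tau^{\leq 1}M\simeq\pH^1(M)[-1]$ with $T[-1]\to T_{\text{tf}}[-1]$. With these corrections your outline is complete, and it is exactly the argument the cited references give.
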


With the notations of Definition~\ref{def:htor}, there is a natural
torsion pair attached to $\md[\Dh]$ given by the full subcategories
\begin{align*}
\md[\Dh]_\htor &= \{\shm\cl \shm_\htor\isoto\shm\}, \\
\md[\Dh]_\htf &= \{ \shm\cl \shm \isoto\shm_\htf\}.
\end{align*}

\begin{definition}
\banum
\item
We call the torsion pair on $\md[\Dh]$ defined above, the 
$\h$-torsion pair.
\item
We denote by 
$\bigl(\Der[\leq 0](\Dh),\Der[\geq 0](\Dh)\bigr)$ the natural
$t$-structure on $\Der(\Dh)$.
\item
We denote by 
$\bigl(\tDer[\leq 0](\Dh),\tDer[\geq 0](\Dh)\bigr)$ the $t$-structure 
on $\Derb(\Dh)$ associated via Proposition~\ref{prop:ttts} with 
the $\h$-torsion pair on $\md[\Dh]$.
\eanum
\end{definition}

Proposition~\ref{prop:cohom_and_grad} implies the following
equivalences for $\shm \in \Derb_\coh(\Dh)$:
\begin{align}
\label{eq:tstruc_pos}
\shm \in \tDer[\geq 0](\Dh) &\Longleftrightarrow 
\grh\shm\in\Der[\geq 0](\D),  \\
\label{eq:tstruc_neg}
\shm \in \Der[\leq 0](\Dh) &\Longleftrightarrow 
\grh\shm\in\Der[\leq 0](\D) .
\end{align}

\begin{proposition}
\label{prop:duality_torsion}
Let $\shm$ be a holonomic $\D^\h$-module.
\bnum
\item
If $\shm$ has no $\h$-torsion, then $\RDdh \shm$ is concentrated in
degree $0$ and has no $\h$-torsion.
\item
If $\shm$ is an $\h$-torsion module, then $\RDdh \shm$ is
concentrated in degree $1$ and is an $\h$-torsion module.
\enum
\end{proposition}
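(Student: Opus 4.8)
The plan is to reduce everything to the graded pieces via the conservativity of $\grh$ on $\Derb_\coh(\Dh)$ (Corollary~\ref{cor:conservative1}) and the $t$-structure characterizations \eqref{eq:tstruc_pos}, \eqref{eq:tstruc_neg}, using throughout the compatibility $\gr(\RDdh\shm)\simeq\RDd(\gr\shm)$, which follows from \eqref{eq:grF} applied to the defining formula for $\RDdh$ together with $\gr(\Dh\tens[\sho_X]\Omega_X^{\otimes-1})\simeq\D\tens[\sho_X]\Omega_X^{\otimes-1}$. The key input on the $\D$-module side is that $\RDd$ is $t$-exact on $\Derb_\rh(\D)$, hence on holonomic modules: if $\shn$ is a holonomic $\D$-module then $\RDd\shn$ is concentrated in degree $0$.

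For part (i), suppose $\shm$ is holonomic with no $\h$-torsion. First I would show $\RDdh\shm$ is concentrated in degree $0$. Since $\shm$ has no $\h$-torsion, Proposition~\ref{prop:cohom_and_grad} (or directly \cite[Lemma~1.4.2]{KS08}) gives that $\gr\shm$ is concentrated in degree $0$, namely $\gr\shm\simeq\shm_0$, which is holonomic as a $\D$-module. Hence $\gr(\RDdh\shm)\simeq\RDd(\shm_0)$ is concentrated in degree $0$. Now $\RDdh\shm\in\Derb_\coh(\Dh)$, so by \eqref{eq:tstruc_pos} and \eqref{eq:tstruc_neg} the vanishing of $H^i(\gr(\RDdh\shm))$ for $i\neq 0$ forces, via Proposition~\ref{prop:cohom_and_grad}, that $H^i(\RDdh\shm)=0$ for $i\neq 0$ and $i\neq 1$, and that $H^1(\RDdh\shm)$ has no $\h$-torsion; but applying Proposition~\ref{prop:cohom_and_grad} again at $a=0$ shows $H^1(\RDdh\shm)\simeq {}_0(H^1(\RDdh\shm))$ embeds into $H^0(\gr(\RDdh\shm))$-data in a way forcing $H^1=0$. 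More cleanly: $H^a(\gr\shm')=0$ for all $a\neq 0$ means, by Proposition~\ref{prop:cohom_and_grad}, $H^a(\shm')=0$ for $a\neq 0,1$ and $H^{a+1}(\shm')$ has no $\h$-torsion for each such $a$; taking $a=0$ and $a=-1$ pins down $\shm'\simeq H^0(\shm')$ with $H^0(\gr\shm')\simeq (H^0\shm')_0$ and ${}_0(H^0\shm')=0$, i.e. $H^0(\shm')$ has no $\h$-torsion. This gives both assertions of (i) at once, with $\shm'=\RDdh\shm$.

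For part (ii), suppose $\shm$ is a holonomic $\h$-torsion module. By Proposition~\ref{pro:tormod_Dcoh}, $\shm$ is coherent as a $\D$-module and is holonomic as such. The idea is to compare $\RDdh$ with the $\D$-module dual $\RDd$ of $\shm$. I would use the natural isomorphism, for $\shm$ an $\h$-torsion $\Dh$-module that is $\D$-coherent,
\[
\RDdh\shm \simeq \RDd\shm\,[-1],
\]
which one sees from $\rhom[\Dh](\shm,\Dh\tens\Omega_X^{\otimes-1})\simeq\rhom[\D](\shm,\rhom[\Dh](\D,\Dh)\tens\Omega_X^{\otimes-1})$ together with $\rhom[\Dh](\D,\Dh)\simeq\D\,[-1]$ (as $\D\simeq\Dh/\h\Dh$ and $\h$ is a non-zero-divisor on $\Dh$, giving a length-one free resolution $0\to\Dh\xto{\h}\Dh\to\D\to0$). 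Granting this, $\RDd\shm$ is concentrated in degree $0$ since $\shm$ is a holonomic $\D$-module and $\RDd$ is $t$-exact, hence $\RDdh\shm$ is concentrated in degree $1$. Finally, $H^1(\RDdh\shm)\simeq\RDd\shm$ is itself a $\D$-module, so $\h$ acts on it through the composite $\Dh\to\D$; as $\h^N=0$ on $\shm$ for some $N$, one checks $\h^N$ annihilates $\rhom[\D](\shm,-)$, so $H^1(\RDdh\shm)$ is an $\h$-torsion module. Alternatively, and more in the spirit of the rest of the paper, $\gr(\RDdh\shm)\simeq\RDd(\gr\shm)$ and $\gr\shm\simeq\shm\oplus\shm\,[1]$ (since $\h=0$ on $\shm$ is false in general, but $\gr\shm$ fits in $0\to\shm_0\to\gr\shm\to {}_0\shm\,[1]\to0$ with $\shm_0={}_0\shm=\shm$ when $\h^1\shm=0$; for general $N$ one uses the additivity of the conclusion under the sequence \eqref{eq:ex_seq_tor} and induction on $N$, exactly as in Lemma~\ref{lem:tormod_chvH-1=chvH0}), so $\gr(\RDdh\shm)$ is concentrated in degree $1$ with $H^1$ an $\h$-torsion module, and conservativity of $\grh$ plus \eqref{eq:tstruc_pos}--\eqref{eq:tstruc_neg} conclude.

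The main obstacle I anticipate is part (ii): making precise the identification $\RDdh\shm\simeq\RDd\shm\,[-1]$ for an $\h$-torsion $\D$-coherent module, i.e. correctly tracking the degree shift coming from the free resolution $0\to\Dh\xto{\h}\Dh\to\D\to0$ and checking it is compatible with the $\Omega_X$-twist and the $[d_X]$ shift on both sides; the $\h$-torsion claim on $H^1$ then needs the induction on the order of torsion via \eqref{eq:ex_seq_tor}, since for a general $\h$-torsion module one does not have $\h\shm=0$. Part (i) is comparatively routine once Proposition~\ref{prop:cohom_and_grad} is invoked twice.
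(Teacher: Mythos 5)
Your proof of part (i) is correct and in fact somewhat cleaner than the paper's: you exploit from the start that $\gr\shm$ is concentrated in degree $0$ (since $\shm$ is $\h$-torsion-free), so $\gr(\RDdh\shm)\simeq\RDd(\shm_0)$ is concentrated in degree $0$, and Proposition~\ref{prop:cohom_and_grad} applied at $a=1$ and $a=-1$ yields the conclusion directly. (The sentence about ``applying Proposition~\ref{prop:cohom_and_grad} again at $a=0$'' is confused: that proposition requires the vanishing of $H^a(\gr\shm')$, which fails at $a=0$; but the argument at $a=\pm1$ already suffices.) The paper instead only derives, for general holonomic $\shm$, that $\RDdh\shm$ lies in degrees $\{0,1\}$ with $H^0$ torsion-free, and then kills $H^1(\RDdh\shm)$ by a separate argument: the long exact sequence attached to $0\to\shm\xto{\h}\shm\to\shm/\h\shm\to0$ gives that $\h$ is surjective on $H^1(\RDdh\shm)$, so Nakayama (using coherence) forces $H^1=0$. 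Both are valid; yours avoids the Nakayama step.

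For part (ii) there is a genuine gap. The isomorphism $\RDdh\shm\simeq\RDd\shm\,[-1]$, obtained from the free resolution $0\to\Dh\xto{\h}\Dh\to\D\to0$ and a change of rings, is only valid when $\h\shm=0$, i.e.\ when $\shm$ is honestly a $\Dh/\h\Dh$-module; a general $\h$-torsion module (e.g.\ one with $\h^2\shm=0$ but $\h\shm\neq0$) is \emph{not} a module over the quotient ring $\D$, so the formula does not apply as written. Your ``alternative'' route also contains an error: $\gr(\RDdh\shm)\simeq\RDd(\gr\shm)$ is concentrated in degrees $0$ \emph{and} $1$ (not just degree $1$), since $\gr\shm$ sits in degrees $\{-1,0\}$ with both cohomologies holonomic; the $t$-structure characterizations~\eqref{eq:tstruc_pos}--\eqref{eq:tstruc_neg} then only yield that $\RDdh\shm$ lies in degrees $\{0,1\}$ with $H^0$ torsion-free, which does not show $H^0=0$. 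Your induction on $N$ via~\eqref{eq:ex_seq_tor}, starting from the $N=1$ case, can be made to work, but you flag it without carrying it out. The paper's argument is much shorter and bypasses all of this: since locally $\h^N\shm=0$, functoriality implies $\h^N$ annihilates $\RDdh\shm$ and hence each cohomology $H^i(\RDdh\shm)$, so these are all $\h$-torsion modules. Combined with the fact that $H^0(\RDdh\shm)$ has no $\h$-torsion (from the degree-$\{0,1\}$ observation), one gets $H^0(\RDdh\shm)=0$ immediately.
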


\begin{proof}
By~\eqref{eq:grF} we have $\gr(\RDdh \shm) \simeq \RDd(\gr \shm)$.
Since $\gr \shm$ is concentrated in degrees $0$ and $-1$, with
holonomic cohomology, $\RDd(\gr \shm)$ is concentrated in degrees
$0$ and $1$.  By Proposition~\ref{prop:cohom_and_grad}, $\RDdh \shm$
itself is concentrated in degrees $0$ and $1$ and $H^0(\RDdh \shm)$
has no $\h$-torsion.

\medskip
\noindent
(i) The short exact sequence
\eqn
&&  0 \to \shm \to[\h] \shm \to \shm/\h\shm \to 0
\eneqn
induces the long exact sequence
$$
\cdots \to H^1(\RDdh(\shm/\h\shm)) \to H^1(\RDdh\shm)
\to[\h] H^1(\RDdh\shm) \to 0.
$$
By Nakayama's lemma $H^1(\RDdh\shm)= 0$ as required.

\medskip
\noindent
(ii) Since $\shm$ is locally annihilated by some power of $\h$, the
cohomology groups $H^i(\RDdh \shm)$ also are $\h$-torsion modules.  As
$H^0(\RDdh \shm)$ has no $\h$-torsion, we get $H^0(\RDdh \shm) = 0$.
\end{proof}

\begin{theorem}\label{th:tDhdual}
The duality functor $\RDdh\cl \Derb_\hol(\Dh)^\op\to\tDer_\hol(\Dh)$ is
$t$-exact.
In other words, $\RDdh$ interchanges $\Der[\leq0]_\hol(\Dh)$ with
$\tDer[\geq0]_\hol(\Dh)$
and $\Der[\geq0]_\hol(\Dh)$ with $\tDer[\leq0]_\hol(\Dh)$.
\end{theorem}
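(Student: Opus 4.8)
The plan is to reduce the statement to a compatibility between the duality functor and the $\gr$ functor, together with the characterization of the two $t$-structures in terms of $\gr$ given by \eqref{eq:tstruc_pos} and \eqref{eq:tstruc_neg}. First I would recall that by \eqref{eq:grF} one has $\gr(\RDdh\shm)\simeq\RDd(\gr\shm)$, and that the classical duality functor $\RDd$ is $t$-exact on $\Derb_\hol(\D)$ for the natural $t$-structure (equivalently, it sends $\Der[\leq0]_\hol(\D)$ to $\Der[\geq0]_\hol(\D)$ and conversely). The characterization \eqref{eq:tstruc_neg} says that $\shm\in\Der[\leq0](\Dh)$ if and only if $\grh\shm\in\Der[\leq0](\D)$, and \eqref{eq:tstruc_pos} says $\shm\in\tDer[\geq0](\Dh)$ if and only if $\grh\shm\in\Der[\geq0](\D)$.

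So for the first half: if $\shm\in\Der[\leq0]_\hol(\Dh)$, then $\gr\shm\in\Der[\leq0]_\hol(\D)$ by \eqref{eq:tstruc_neg}, hence $\RDd(\gr\shm)\in\Der[\geq0]_\hol(\D)$ by $t$-exactness of $\RDd$, hence $\gr(\RDdh\shm)\in\Der[\geq0]_\hol(\D)$, hence $\RDdh\shm\in\tDer[\geq0]_\hol(\Dh)$ by \eqref{eq:tstruc_pos}. This direction is essentially formal once the three inputs are in place.

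The second half is the delicate one. I need: if $\shm\in\Der[\geq0]_\hol(\Dh)$ then $\RDdh\shm\in\tDer[\leq0]_\hol(\Dh)$, i.e.\ $\RDdh\shm\in\Der[\leq1]$ with $H^1(\RDdh\shm)$ an $\h$-torsion module. Here I cannot simply invoke the $\gr$-characterization because $\tDer[\leq0]$ is defined via the torsion pair, not purely via $\gr$; the condition on $H^1$ being torsion is exactly what $\gr$ does not see directly. The natural route is to use the torsion decomposition $0\to\shm_\htor\to\shm\to\shm_\htf\to0$ (with $\shm_\htor\isoto(H^0\shm)_\htor$ when $\shm$ is in $\Der[\geq0]$, up to the usual truncation argument), apply $\RDdh$ to get a distinguished triangle, and invoke Proposition~\ref{prop:duality_torsion}: the torsion-free part has dual concentrated in degree $0$ with no $\h$-torsion, the torsion part has dual concentrated in degree $1$ and $\h$-torsion. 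Then a long exact sequence / truncation argument assembles these into the statement that $\RDdh\shm$ lies in $\Der[\leq1]$ with $H^1$ being $\h$-torsion. One must be careful to first reduce to $\shm$ itself being a single module in degree $0$ by the standard dévissage on the (finitely many) cohomology groups $H^i\shm$ for $i\geq0$, using additivity of the $t$-structures under distinguished triangles; the contributions of $H^i\shm$ for $i>0$ land in degrees $\leq -i<0$ after dualizing (by $t$-exactness of $\RDd$ on $\gr$ plus Proposition~\ref{prop:cohom_and_grad}), so they cause no trouble for the degree-$1$ analysis.

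The main obstacle I expect is precisely this bookkeeping in the second half: correctly handling the torsion/torsion-free dévissage so that the degree-$1$ cohomology of $\RDdh\shm$ is pinned down to be $\h$-torsion, and checking that the truncation functors for the two different $t$-structures interact as needed (e.g.\ that $(H^0\shm)_\htor\to\shm$ behaves correctly under $\RDdh$ in the derived category, not just abelian-categorically). Once Proposition~\ref{prop:duality_torsion} is invoked for the two pure pieces and their triangle is written down, the conclusion $H^1(\RDdh\shm)\in\md[\Dh]_\htor$ and $H^i(\RDdh\shm)=0$ for $i>1$ follows from the long exact cohomology sequence, and $t$-exactness in the remaining direction is then immediate.
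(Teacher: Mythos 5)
Your proof is correct, and the first half is essentially the paper's step (i). For the second half, however, you take a genuinely different route. The paper avoids any d\'evissage: having proved the equivalence $\shm\in\Der[\leq0]_\hol(\Dh)\Leftrightarrow\RDdh\shm\in\tDer[\geq0]_\hol(\Dh)$ (both directions follow at once from $\gr$ and \eqref{eq:tstruc_pos}--\eqref{eq:tstruc_neg}), it deduces the other half purely formally, using the orthogonality characterization of the aisles of a $t$-structure ($A\in\Der[\leq0]\Leftrightarrow\Hom(A,B)=0$ for all $B\in\Der[\geq1]$, and dually) together with the fact that $\RDdh$ is an involutive anti-equivalence. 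In contrast, you argue concretely: d\'evissage on the cohomology modules of $\shm$, then the $\h$-torsion decomposition of $H^0\shm$, then Proposition~\ref{prop:duality_torsion} for the two pure pieces, assembled via the long exact sequence. Both arguments are valid. The paper's is shorter and needs no bookkeeping; yours is more explicit and, as a by-product, identifies what the degree-$1$ cohomology of the dual actually is. The only small slip is the phrase ``degrees $\leq-i<0$'': since $\RDdh$ of a holonomic module is concentrated in degrees $[0,1]$, the shifted dual $\RDdh(H^i\shm)[i]$ lies in degrees $[-i,-i+1]$, so the top degree is $-i+1\leq0$ for $i\geq1$; the conclusion (no contribution in degree $1$) is unaffected.
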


\begin{proof}
(i) Let us first prove for $\shm \in \Derb_\hol(\Dh)$:
\eq\label{eq:tDhdual1}
&&\shm\in\Der[\leq0]_\hol(\Dh)\Longleftrightarrow
\RDdh(\shm)\in\tDer[\geq0]_\hol(\Dh).
\eneq
By~\eqref{eq:grF} we have $\gr(\RDdh \shm) \simeq \RDd(\gr \shm)$
and we know that the analog of~\eqref{eq:tDhdual1} holds true for
$\D$-modules:
\[
\shn \in \Der[\leq0]_\hol(\D)
\Longleftrightarrow \RDd(\shn) \in \Der[\geq0]_\hol(\D).
\]
Hence~\eqref{eq:tDhdual1} follows easily from~\eqref{eq:tstruc_pos}
and~\eqref{eq:tstruc_neg}.

\medskip
\noindent
(ii) We recall the general fact for a $t$-structure 
$(\Der,\Der[\leq 0], \Der[\geq 0])$ and $ A\in \Der$:
\begin{align*}
A\in\Der[\leq 0]&\Longleftrightarrow\forall B\in \Der[\geq 1]\;\Hom(A,B) =0,\\ 
A\in\Der[\geq 0]&\Longleftrightarrow\forall B\in \Der[\leq -1]\;\Hom(B,A) =0.
\end{align*}
Since $\RDdh$ is an involutive equivalence of categories we deduce
from~\eqref{eq:tDhdual1} the dual statement: 
\eqn
&&\shm\in\Der[\geq0]_\hol(\Dh)
\Longleftrightarrow\RDdh(\shm)\in\tDer[\leq0]_\hol(\Dh).
\eneqn
\end{proof}

\begin{remark}
The above result can be stated as follows in the language of
quasi-abelian categories of \cite{Sn99}. We will follow the same
notations as in \cite[Chapter~2]{Ka08}. The category
$\shc=\md[\Dh]_\htf$ is quasi-abelian. Hence its derived category has a
natural generalized $t$-structure
$(\Der[\leq s](\shc),\Der[>s-1](\shc))_{s\in\frac12\Z}$. Note that
$\Der[{[-1/2,0]}](\shc)$ is equivalent to $\md[\Dh]$, and that
$\Der[{[0,1/2]}](\shc)$ is equivalent to the heart of $\tDer(\Dh)$.
Then Theorem~\ref{th:tDhdual} states that the duality functor $\RDdh$ is
$t$-exact on $\Derb_\hol(\shc)$. 
\end{remark}

Consider the full subcategories of $\Perv(\coro_X)$
\begin{align*}
\Perv(\coro_X)_\htor &= 
\{ F\cl \text{locally }\h^N F=0\text{ for some }N\in\N
\}, \\
\Perv(\coro_X)_\htf &= 
\{ F\cl F \text{ has no non zero subobjects in }
\Perv(\coro_X)_\htor\}.
\end{align*}
\begin{lemma}
\bnum
\item
Let $F \in \Perv(\coro_X)$. Then the inductive system of sub-perverse
sheaves $\ker(\h^n\colon F\to F)$ is locally stationary. 
\item
The pair $\bigl(\Perv(\coro_X)_\htor,\Perv(\coro_X)_\htf\bigr)$ is a
torsion pair.
\enum
\end{lemma}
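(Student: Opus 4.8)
The plan is to reduce part (i) to the local stationarity of the $\h$-torsion filtration of a coherent $\Dh$-module, which is already recorded in the discussion following Definition~\ref{def:htor}, and then to deduce part (ii) from (i) by an elementary diagram chase.

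For (i), I would first upgrade the Riemann--Hilbert correspondence into a statement about the de Rham functor. Combining Theorem~\ref{th:DhRH} with the isomorphism $\Solh(\shm)\simeq\RD'_\h(\DRh(\shm))$ of Theorem~\ref{thm:SSDh3}, together with the fact that $\RD'_\h$ is an involutive anti-equivalence of $\Derb_\Cc(\coro_X)$ (which follows from classical biduality over $\C$, the isomorphism $\gr\circ\RD'_\h\simeq\RD'\circ\gr$ provided by~\eqref{eq:grF}, and the conservativity of $\gr$ on cohomologically $\h$-complete objects, see Propositions~\ref{pro:RCcohco}, \ref{pro:homcc} and~\ref{pro:conserv}), one obtains that $\DRh$ is an equivalence $\Derb_\rh(\Dh)\isoto\Derb_\Cc(\coro_X)$. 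By Theorem~\ref{thm:pervh} this equivalence is $t$-exact for the natural $t$-structure on the source and the perverse $t$-structure on the target, hence restricts to a $\coro$-linear exact equivalence between the hearts, that is, between the category of regular holonomic $\Dh$-modules and $\Perv(\coro_X)$. Given $F\in\Perv(\coro_X)$, write $F\simeq\DRh(\shm)$ with $\shm$ a regular holonomic $\Dh$-module. Being exact and $\coro$-linear, $\DRh$ sends multiplication by $\h^n$ on $\shm$ to multiplication by $\h^n$ on $F$ and commutes with kernels, so that $\ker(\h^n\colon F\to F)\simeq\DRh\bigl(\ker(\h^n\colon\shm\to\shm)\bigr)=\DRh({}_{n-1}\shm)$, with the notation ${}_k\shm$ for $\ker(\h^{k+1}\colon\shm\to\shm)$ of Section~\ref{section:Dh}. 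As $\shm$ is coherent, the increasing family $\{{}_k\shm\}_k$ is locally stationary; applying the equivalence $\DRh$, so is $\{\ker(\h^n\colon F\to F)\}_n$, which is (i).

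For (ii) I would verify the two defining properties of a torsion pair in the sense of Definition~\ref{def:tt}. The orthogonality property is immediate: if $T\in\Perv(\coro_X)_\htor$, $G\in\Perv(\coro_X)_\htf$ and $\phi\colon T\to G$, then the image of $\phi$ is a quotient of $T$, hence locally annihilated by a power of $\h$, so it lies in $\Perv(\coro_X)_\htor$; being a subobject of $G$ it must vanish, and therefore $\phi=0$. For the existence of the torsion sequence, given $M\in\Perv(\coro_X)$ I would use (i) to define $M_\htor$ as the colimit of the locally stationary inductive system $\ker(\h^n\colon M\to M)$, realized as the subobject of $M$ that locally coincides with $\ker(\h^N\colon M\to M)$ for $N\gg0$; it is a perverse sheaf, it lies in $\Perv(\coro_X)_\htor$, and by construction it contains every $\h$-torsion subobject of $M$. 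Setting $M_\htf=M/M_\htor$, if $T\subseteq M_\htf$ were a nonzero subobject in $\Perv(\coro_X)_\htor$, its preimage $T'$ in $M$ would be locally annihilated by a power of $\h$ (one power killing $T$ composed with one killing $M_\htor$), hence $T'\subseteq M_\htor$ by maximality, forcing $T=0$; thus $M_\htf\in\Perv(\coro_X)_\htf$ and $0\to M_\htor\to M\to M_\htf\to 0$ is the required exact sequence.

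The step I expect to be the main obstacle is the reduction in (i): one must be careful that the Riemann--Hilbert equivalence is not merely an equivalence of triangulated categories, but a $\coro$-linear \emph{and} $t$-exact one, so that it induces an exact equivalence on hearts under which kernels of multiplication by $\h^n$ are preserved; granting this, everything is reduced to the coherence of $\Dh$-modules. An alternative, more hands-on route would start from the $\C$-constructibility of $F$ over the discrete valuation ring $\coro$, using that finitely generated $\coro$-modules have eventually constant $\h^n$-torsion; but then one must relate the perverse kernel $\ker(\h^n\colon F\to F)$ to the ordinary cohomology sheaves of $F$, which is considerably less transparent, so I would favour the Riemann--Hilbert argument.
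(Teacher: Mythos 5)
Your argument is correct and follows essentially the same route as the paper: for (i), transfer to a regular holonomic $\Dh$-module via the Riemann--Hilbert equivalence and invoke local stationarity of the $\h$-torsion filtration on coherent $\Dh$-modules, and for (ii), build the torsion pair from the $F_\htor$ constructed in (i) and check orthogonality by an image argument. The only difference is presentational: the paper writes down the quasi-inverse explicitly as $\shm = \RDdh\THh(F)$ and leaves the $t$-exactness and $\coro$-linearity of the induced equivalence of hearts implicit, whereas you spell these out, which is a reasonable elaboration of the same idea.
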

\begin{proof}
(i) Set $\shm = \RDdh\THh(F)$. By the Riemann-Hilbert correspondence,
one has
$\ker(\h^n\colon F\to F) \simeq \DRh(\ker(\h^n\colon \shm\to \shm))$. 
Since $\shm$ is coherent, the inductive system $\ker(\h^n\cl\shm\to
\shm)$ 
is locally stationary. Hence so is the system $\ker(\h^n\colon F\to F)$.

\noindent
(ii) By (i) it makes to define for $F\in\Perv(\coro_X)$:
\[
F_\htor = \bigcup_n \ker(\h^n\colon F\to F), \quad F_\htf = F /
F_\htor.
\]
It is easy to check that $F_\htor\in\Perv(\coro_X)_\htor$ and
$F_\htf\in\Perv(\coro_X)_\htf$. Then property (ii) in
Definition~\ref{def:tt} is clear. For property (i) let $u \cl F\to G$
be a morphism in $\Perv(\coro_X)$ with $F \in \Perv(\coro_X)_\htor$
and $G \in \Perv(\coro_X)_\htf$. Then $\im u$ also is in
$\Perv(\coro_X)_\htor$ and so it is zero by definition of
$\Perv(\coro_X)_\htf$.
\end{proof}

Denote by
$\bigl(\piDer[\leq0]_\Cc(\coro_X),\piDer[\geq 0]_\Cc(\coro_X)\bigr)$ 
the $t$-structure on $\Der_\Cc(\coro_X)$
induced by the perversity $t$-structure and
this torsion pair as in Proposition~\ref{prop:ttts}.
We also set ${}^\pi\Perv(\coro_X) = \piDer[\leq0]_\Cc(\coro_X)\cap
\piDer[\geq 0]_\Cc(\coro_X)$.

\begin{corollary}\label{cor:pervh}
There is a quasi-commutative diagram of $t$-exact functors
\eqn
&&\xymatrix{ 
\Derb_\hol(\Dh)^{\rm op} \ar[r]^{\DRh} \ar[d]^{\RDdh} &
\pDer_\Cc(\coro_X)^{\rm op} \ar[d]^{\RD'_\h} \\ 
\tDer_\hol(\Dh) \ar[r]^{\DRh}  & \piDer_\Cc(\coro_X) 
}\eneqn
where the duality functors are equivalences of categories and the de
Rham functors become equivalences when restricted to the
subcategories of regular objects.
\end{corollary}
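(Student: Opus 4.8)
The plan is to assemble the diagram from pieces already available and verify the three compatibilities. First I would note that both de Rham functors are well-defined: $\DRh\cl\Derb_\hol(\Dh)\to\pDer_\Cc(\coro_X)$ is $t$-exact by Theorem~\ref{thm:pervh}, and we must check that it sends $\tDer_\hol(\Dh)$ into $\piDer_\Cc(\coro_X)$ $t$-exactly. This should follow from the description of the $\h$-torsion $t$-structure via Proposition~\ref{prop:ttts}: an object $\shm\in\tDer[\geq0]_\hol(\Dh)$ is characterized by $\grh\shm\in\Der[\geq0](\D)$ (by~\eqref{eq:tstruc_pos}), and the target condition for $\DRh\shm\in\piDer[\geq0]_\Cc(\coro_X)$ is $\pH^0$ being $\h$-torsion free while $\DRh\shm\in\pDer[\geq0]$. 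One translates between $\h$-torsion of $\D^\h$-modules and of perverse sheaves through the equivalence $\DRh\simeq\DR\circ\grh$ on torsion objects, using Proposition~\ref{pro:tormod_Dcoh} (a holonomic $\h$-torsion $\Dh$-module is holonomic over $\D$, so $\DRh$ agrees with $\DR$ on it) together with the compatibility $\ker(\h^n\cl F\to F)\simeq\DRh(\ker(\h^n\cl\shm\to\shm))$ established in the proof of the preceding lemma. The negative part is similar, using~\eqref{eq:tstruc_neg} and the fact that $\DRh\shm\in\pDer[\leq1]$ with $\pH^1(\DRh\shm)$ perverse $\h$-torsion iff $H^{-1}(\grh\shm)$ is (equivalently, via Proposition~\ref{prop:cohom_and_grad}).

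Next I would verify that the diagram quasi-commutes, i.e.\ $\DRh\circ\RDdh\simeq\RD'_\h\circ\DRh$. Over $\C$ this is the classical statement that the de Rham functor intertwines duality of $\D$-modules with Verdier–Poincaré duality of perverse sheaves; for $\shn\in\Derb_\hol(\D)$ one has $\DR(\RDd\shn)\simeq\RD'(\DR\shn)$. For the $\h$-version, I would construct the natural morphism $\DRh(\RDdh\shm)\to\RD'_\h(\DRh\shm)$ — arising, as in the proof of Theorem~\ref{thm:SSDh3}, from a pairing $\rhom[\Dh](\Oh,\RDdh\shm)\tens[\coro_X]\rhom[\Dh](\Oh,\shm)\to\coro_X$ built out of the canonical pairing between $\shm$ and $\RDdh\shm$ — and then apply $\grh$: by~\eqref{eq:grF} and~\eqref{eq:grFtens} together with $\gr(\RDdh\shm)\simeq\RDd(\gr\shm)$ and $\gr(\DRh\shm)\simeq\DR(\gr\shm)$, the graded morphism becomes the classical isomorphism $\DR(\RDd(\gr\shm))\isoto\RD'(\DR(\gr\shm))$. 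Since the source and target of our morphism are cohomologically $\h$-complete (using Theorem~\ref{thm:SSDh} and Proposition~\ref{pro:RCcohco} to place them in $\Derb_\Cc(\coro_X)$, which is cohomologically $\h$-complete), Corollary~\ref{cor:conservativeRc} upgrades the graded isomorphism to an isomorphism.

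Then I would record that $\RDdh$ is an equivalence: it is an involutive equivalence $\Derb_\hol(\Dh)^\op\to\Derb_\hol(\Dh)$ because $\gr(\RDdh\RDdh\shm)\simeq\RDd\RDd(\gr\shm)\simeq\gr\shm$ and $\grh$ is conservative (Corollary~\ref{cor:conservative1}); its $t$-exactness $\Derb_\hol(\Dh)^\op\to\tDer_\hol(\Dh)$ is exactly Theorem~\ref{th:tDhdual}. Dually, $\RD'_\h$ is an involutive self-equivalence of $\Derb_\Cc(\coro_X)$ (one checks the biduality morphism $F\to\RD'_\h\RD'_\h F$ is an isomorphism by applying $\grh$ and invoking the classical biduality for $\C$-constructible sheaves, then Corollary~\ref{cor:conservativeRc}), and its $t$-exactness for the $\pi$-perversity follows formally from the commutation of the diagram plus the already-established $t$-exactness of the other three functors, or alternatively directly from Proposition~\ref{prop:ttts} once one knows classical Verdier duality is $t$-exact for the middle perversity and swaps the $\h$-torsion pair (because $\RD'_\h$ of a torsion perverse sheaf is again torsion — it equals the classical dual of a $\C$-perverse sheaf — and biduality controls the torsion-free part). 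Finally, the assertion that $\DRh$ and $\DRh$ restrict to equivalences on regular objects is the content of Theorem~\ref{th:DhRH} combined with Theorem~\ref{thm:pervh}: $\Solh\simeq\RD'_\h\circ\DRh$ by Theorem~\ref{thm:SSDh3}, so $\DRh$ on $\Derb_\rh(\Dh)$ is the composite of the equivalence $\Solh$ with the equivalence $\RD'_\h$, hence an equivalence onto its (regular, perverse) image; the same for the bottom row after transporting through the duality equivalences. The main obstacle I anticipate is the first step — pinning down precisely that $\DRh$ is $t$-exact for the $\h$-torsion/$\pi$-perversity pair rather than merely the standard ones — since this requires carefully matching the two notions of $\h$-torsion (of $\Dh$-modules versus of perverse sheaves) through $\DRh$ at the level of hearts, and checking that the torsion subobject is sent to the torsion subobject; everything else reduces, via $\grh$ and the conservativity results, to classical statements.
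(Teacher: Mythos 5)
Your plan has the right ingredients, and the scaffolding (top $\DRh$ via Theorem~\ref{thm:pervh}, left $\RDdh$ via Theorem~\ref{th:tDhdual}, duality equivalences via $\grh$-conservativity, regular-object equivalences via Theorem~\ref{th:DhRH} and Theorem~\ref{thm:SSDh3}) matches what the corollary needs. But the route you chose for the two remaining $t$-exactness claims is harder than necessary and has a couple of soft spots. First, the quasi-commutativity does not need a fresh pairing construction: one simply has $\DRh\circ\RDdh\simeq\Solh$ by the standard tensor--hom interchange (using the Spencer-type isomorphism $\rhom[\Dh](\Oh,\Dh\tens[\sho_X]\Omega_X^{\otimes -1})[d_X]\simeq\Oh$), and then Theorem~\ref{thm:SSDh3} gives $\Solh\simeq\RD'_\h\circ\DRh$, so the square commutes for free. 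Second, once commutativity is in hand, $\RD'_\h$ can be handled by conjugation rather than directly: on $\Derb_\rh(\Dh)$, $\DRh$ is a $t$-exact equivalence onto $\Derb_\Cc(\coro_X)$, and since $\DRh$ is $\coro$-linear, conservative, and exact on hearts, it identifies the $\h$-torsion pair on $\Mod_\rh(\Dh)$ with the one on $\Perv(\coro_X)$; by the functoriality of Proposition~\ref{prop:ttts} this means $\DRh$ also intertwines the tilted $t$-structures, hence $\RD'_\h=\DRh\circ\RDdh\circ\DRh^{-1}$ is $t$-exact; finally the bottom $\DRh$ is $t$-exact on all of $\Derb_\hol$ because $\DRh_{\rm bottom}\circ\RDdh\simeq\RD'_\h\circ\DRh_{\rm top}$ and $\RDdh$ is a $t$-exact equivalence.

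The concrete weak points in your direct attack on the bottom $\DRh$: the phrase ``$\DRh\simeq\DR\circ\grh$ on torsion objects'' is not correct as written --- for an $\h$-torsion $\shm$, $\grh\shm\simeq\shm\oplus\shm[1]$, so $\DR(\grh\shm)$ is not $\DRh\shm$; what one has is that $\DRh\shm\simeq\DR\shm$ where $\shm$ is regarded as the underlying coherent $\D$-module (Proposition~\ref{pro:tormod_Dcoh}). More importantly, to pass from $\gr(\DRh\shm)\in\pDer[\geq0]_\Cc(\C_X)$ to $\DRh\shm\in\piDer[\geq0]_\Cc(\coro_X)$ you need a perverse analogue of Proposition~\ref{prop:cohom_and_grad} controlling both the perverse amplitude and the torsion of $\pH^0$; this is true but you never state or prove it, and it is precisely the step you flag as the ``main obstacle''. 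Likewise your two alternatives for $\RD'_\h$ are not clean: the ``formal from the other three functors'' option is circular unless bottom $\DRh$ is proved first, and the claim that $\RD'_\h$ of a torsion perverse sheaf ``equals the classical dual of a $\C$-perverse sheaf'' is not literally true since $\rhom[\coro_X](\scbul,\coro_X)$ is not $\rhom[\C_X](\scbul,\C_X)$ on a $\coro/\h^N$-module; one needs the change-of-rings computation $\rhom[\coro_X](\coro_X/\h^N,\coro_X)\simeq(\coro_X/\h^N)[-1]$. The conjugation route above avoids all of these.
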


\begin{example}
Let $X = \C$, $U = X \setminus \{0\}$ and denote by $j\colon
U\hookrightarrow X$ the embedding.
Let $L$ be the local system on $U$ with stalk $\coro$ and monodromy
$1+\h$. 
The sheaf $\roim j L \simeq \RD'_h (\eim j (\RD'_h L))$
is perverse for both
$t$-structures, as is the sheaf
$H^0(\roim j L) = \oim j L \simeq \eim j L$.
The sheaf $H^1(\roim j L) \simeq \C_{\{0\}}$ has $\h$-torsion.
From the distinguished triangle $\oim j L \to \roim j L \to \C_{\{0\}}[-1] \to[+1]$, one gets the short exact
sequences
\begin{align*}
0 \to \oim j L \to \roim j L \to \C_{\{0\}}[-1] \to 0 &\quad\text{in
}\Perv(\coro_X),\\
0 \to \C_{\{0\}}[-2] \to \oim j L \to \roim j L \to 0 &\quad\text{in
}{}^\pi\Perv(\coro_X).
\end{align*}
\end{example}

\section{$\D[]\Ls$-modules}\label{section:Dhl}

Denote by 
\[
\C^{\h, {\rm loc}} := \C\Ls = \C[\h^{-1},\h]\mspace{-1mu}]
\]
the field of Laurent series in
$\h$, that is the fraction field of $\coro$. 
Recall the exact functor
\begin{equation}\label{eq:subhbarloc}
(\scbul)^{\rm loc}\cl\md[\coro_X] \to \md[\C^{\h, {\rm loc}}_X],
\quad F\mapsto\C^{\h, {\rm loc}}\tens[\coro]F,
\end{equation}
and note that	by \cite[Proposition~5.4.14]{KS90} one has the estimate
\begin{equation}\label{eq:SSloc}
\SSi(F^\loc)\subset\SSi(F).
\end{equation}
For $G\in\Derb(\C_X)$, we write $G^{\h, {\rm loc}}$ instead of
$(G^\h)^{\rm loc}$.
We will consider in particular 
\[
\Ohl = \O\Ls, \qquad \Dhl = \D\Ls.
\]

\begin{lemma}\label{le:pseudoco}
Let $\shm$ be a coherent $\Dhl$-module. Then $\shm$ is pseudo-coherent
over $\Dh$. In other word, if $\shl\subset\shm$ is a finitely
generated $\Dh$-module, then $\shl$ is $\Dh$-coherent. 
\end{lemma}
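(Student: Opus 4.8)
The plan is to reduce to a local and purely algebraic statement about the rings $\Dh$ and $\Dhl = \Dh^\loc$. Since the assertion is local on $X$, fix a point and a small Stein open neighborhood; I may then replace the sheaves $\Dh$ and $\Dhl$ by their rings of sections, and work with a finitely generated submodule $\shl$ of a coherent $\Dhl$-module $\shm$. Because $\Dh$ is left Noetherian (this was noted right after the proposition that $\Dh$ satisfies Assumption~\ref{as:FDring}, via Theorem~\ref{th:formalfini2}), to prove that $\shl$ is $\Dh$-coherent it suffices to exhibit $\shl$ as a submodule of a finitely generated $\Dh$-module: over a left Noetherian ring, submodules of finitely generated modules are finitely generated, hence coherent. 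So the real content is: a finitely generated $\Dh$-submodule of a coherent $\Dhl$-module is contained in a finitely generated $\Dh$-submodule — equivalently, that $\shm$ itself, while not finitely generated over $\Dh$, is a filtered union of finitely generated $\Dh$-submodules, each of which is $\Dh$-coherent.

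First I would set up the basic dictionary: $\Dhl = \Z[\h,\h^{-1}]\tens[{\Z[\h]}]\Dh = \Dh[\h^{-1}]$, so every element of $\shm$ can be written as $\h^{-k} m$ with $m$ in the image of $\Dh\tens \shm \to \shm$, and $\shm$ is the increasing union $\bigcup_k \h^{-k}\shm'$ where $\shm'$ is the $\Dh$-submodule generated by any chosen finite generating set of $\shm$ over $\Dhl$. The key step is then to observe that multiplication by $\h^{-k}$ is a $\Dh$-linear isomorphism $\shm' \isoto \h^{-k}\shm'$ (here we use that $\shm$, being a module over $\Dhl$, has no $\h$-torsion and $\h$ acts invertibly), so each $\h^{-k}\shm'$ is again finitely generated over $\Dh$, hence $\Dh$-coherent by the Noetherian property. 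Now given $\shl \subset \shm$ finitely generated over $\Dh$, its finitely many generators all lie in some $\h^{-k}\shm'$ for $k$ large enough; therefore $\shl \subset \h^{-k}\shm'$, and as a finitely generated submodule of a coherent module over a Noetherian ring, $\shl$ is $\Dh$-coherent. This also shows $\shm$ is pseudo-coherent over $\Dh$ in the stated sense.

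I expect the main obstacle to be purely bookkeeping at the sheaf level rather than conceptual: one must be careful that "coherent $\Dhl$-module" genuinely yields, locally, a finitely generated module over the sections $\sect(U;\Dhl)$ and that the union $\bigcup_k \h^{-k}\shm'$ is exhausting as sheaves (not merely on sections over one open set), which is where choosing $U$ small — e.g.\ Stein, so that the relevant sheaves are acyclic and generation can be checked on global sections — does the work. A secondary point to verify cleanly is that $\Dh$ is left and right Noetherian so that the Noetherian argument applies on the side we need; this was already recorded in the text just after the proposition on Assumption~\ref{as:FDring}, so I would simply cite it. No delicate estimate or $\h$-completion argument seems necessary here, since $\shm$ lives over the localized ring and the statement is about coherence over $\Dh$, not about any completeness property.
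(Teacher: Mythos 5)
Your overall strategy---filter $\shm$ by the finitely generated $\Dh$-submodules $\h^{-k}\shm'$ and locate $\shl$ inside one of them---is the right shape, but the step you treat as routine is exactly where the content of the lemma sits, and the paper itself settles it by a bare citation to~\cite[Appendix~A.1]{Ka03}. The problematic sentence is: ``over a left Noetherian ring, submodules of finitely generated modules are finitely generated, hence coherent.'' That is true for an ordinary ring, but for a \emph{sheaf} of rings it is not what ``left Noetherian'' gives you. In the sense of~\cite{Ka03} (and~\cite{KS08}), left Noetherian means: $\Dh$ is left coherent, its stalks are left Noetherian, and a filtrant union of coherent submodules of a coherent module is coherent. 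None of this says that a finitely generated sheaf of $\Dh$-modules (i.e.\ a quotient of some $\Dh^{\oplus N}$ by an arbitrary sub-sheaf) is coherent, nor that an arbitrary sub-sheaf of such a finitely generated sheaf is locally finitely generated; coherence requires finite \emph{presentation}, and for a sheaf the kernel of $\Dh^{\oplus N}\twoheadrightarrow\shm'$ need not be locally of finite type. So the assertion that $\h^{-k}\shm'$ is coherent is unproved, and the passage to sections over a Stein open does not make it automatic---you would still need to show that generation and relations are computed on global sections for these specific modules.

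The argument can be repaired along the lines your plan already suggests, but the missing work is not bookkeeping. Locally choose a presentation $\Dhl^{\oplus q}\twoheadrightarrow\shm$ with coherent kernel $\shn$; lift the generators of $\shl$ and, after clearing denominators by a power of $\h$, regard them as elements of $\Dh^{\oplus q}$, generating a finitely generated $\Dh$-submodule $\shl''\subset\Dh^{\oplus q}$, which \emph{is} coherent because it is a locally finitely generated submodule of the coherent $\Dh$-module $\Dh^{\oplus q}$. Then $\shl\simeq\shl''/(\shl''\cap\shn)$, and you must show $\shl''\cap\shn$ is coherent over $\Dh$. Writing $\shn=\bigcup_k\h^{-k}\shn_0$ for a finitely generated $\Dh$-submodule $\shn_0\subset\Dh^{\oplus q}$, one checks that each $\shl''\cap(\h^{-k}\shn_0\cap\Dh^{\oplus q})$ is a coherent submodule of $\shl''$, and the filtrant-union axiom of Noetherianity then gives coherence of the union. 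This is the step your proof skips; it is where the Noetherian hypothesis on $\Dh$ is genuinely used, and it is what the citation to~\cite[Appendix~A.1]{Ka03} is standing in for.
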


\begin{proof}
The proof follows from \cite[Appendix.~A1]{Ka03}.
\end{proof}

\begin{definition}
A lattice $\shl$ of a coherent $\Dhl[X]$-module $\shm$ is a coherent 
$\Dh[X]$-submodule of $\shm$ which generates it.
\end{definition}

Since $\shm$ has no $\h$-torsion, any of its lattices has no
$\h$-torsion. In particular, one has $\shm \simeq \shl^\loc$
and $\gr\shl \simeq \shl_0=\shl/\h\shl$.

It follows from Lemma~\ref{le:pseudoco} that lattices locally exist: for
a  local
system of generators $(m_1,\dots,m_N)$ of $\shm$, define $\shl$ as the
$\Dh$-submodule with the same generators.

\begin{lemma}\label{lem:additivity}
Let $0 \to \shm' \to \shm \to \shm'' \to 0$ be an exact sequence of
coherent $\Dhl[X]$-modules.  Locally there exist lattices $\shl'$,
$\shl$, $\shl''$ of $\shm'$, $\shm$, $\shm''$, respectively, 
inducing an exact sequence of $\Dh[X]$-modules
\[
0 \to \shl' \to \shl \to \shl'' \to 0.
\]
\end{lemma}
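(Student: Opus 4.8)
The plan is to reduce the statement to a purely local question about lifting three lattices in a coherent commutative-type situation, exploiting that lattices always exist locally (as noted after Lemma~\ref{le:pseudoco}). First I would fix a point $x\in X$ and a small neighborhood. By Lemma~\ref{le:pseudoco} and the existence of local generators, choose \emph{any} lattices $\shl'_1$ of $\shm'$ and $\shl''_1$ of $\shm''$. The idea is to produce $\shl$ as the preimage of $\shl''_1$ under $\shm\to\shm''$ intersected (or summed) with a lattice of $\shm$ adapted to $\shl'_1$; concretely, pick a lattice $\shl_1$ of $\shm$ large enough that $\shl_1\cap\shm'$ contains $\shl'_1$ and $\shl_1$ surjects onto $\shl''_1$ (both achievable after shrinking, since finitely many generators of $\shl'_1$ and finitely many lifts of generators of $\shl''_1$ lie in \emph{some} lattice of $\shm$). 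Then set
\[
\shl \seteq \shl_1 \cap (\text{preimage of }\shl''_1), \qquad
\shl' \seteq \shl\cap\shm', \qquad
\shl'' \seteq \text{image of }\shl\text{ in }\shm''.
\]
By construction the sequence $0\to\shl'\to\shl\to\shl''\to 0$ is exact.

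The remaining points are to check that all three of $\shl',\shl,\shl''$ are genuine lattices, i.e.\ coherent $\Dh$-submodules that generate the respective $\Dhl$-modules. Generation is easy: $\shl''=\shl''_1$ by construction hence generates $\shm''$; $\shl\supset\shl_1\cap\shm'\supset\shl'_1$ maps onto $\shl''_1$, so a diagram chase shows $\shl$ generates $\shm$ after inverting $\h$; and then $\shl'=\ker(\shl\to\shl'')$ generates $\shm'=\ker(\shm\to\shm'')$ by exactness of $(\scbul)^\loc$. Coherence is where Lemma~\ref{le:pseudoco} does the real work: each of $\shl',\shl,\shl''$ is a finitely generated $\Dh$-submodule of a coherent $\Dhl$-module — finite generation of $\shl$ holds because it is squeezed between the finitely generated $\shl_1\cap\shm'+(\text{lifts})$ and $\shl_1$ over the Noetherian ring $\Dh$ — hence pseudo-coherence gives $\Dh$-coherence. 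One should also verify these are $\h$-torsion free, but that is automatic since they are submodules of the torsion-free $\shm$, as remarked in the text.

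I expect the main obstacle to be the bookkeeping in the first step: arranging a single lattice $\shl_1$ of $\shm$ that simultaneously dominates a chosen lattice of $\shm'$ (via $\shl_1\cap\shm'\supset\shl'_1$) and surjects onto a chosen lattice of $\shm''$. This requires choosing $\shl'_1$ and $\shl''_1$ \emph{first}, then taking $\shl_1$ generated by lifts of their generators together with a preliminary lattice of $\shm$, and checking — after possibly shrinking the neighborhood so that all finitely many sections involved are defined — that the intersections behave as claimed. Once this is set up correctly, coherence follows formally from Noetherianity of $\Dh$ (Theorem~\ref{th:formalfini2}) and Lemma~\ref{le:pseudoco}, and exactness is immediate from the definitions. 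An alternative, perhaps cleaner, route is to start from lattices $\shl$ of $\shm$ and $\shl''$ of $\shm''$ with $\shl\twoheadrightarrow\shl''$ (obtained by enlarging a lattice of $\shm$), then define $\shl'=\shl\cap\shm'=\ker(\shl\to\shl'')$ and invoke Lemma~\ref{le:pseudoco} to see $\shl'$ is coherent; I would present whichever of the two makes the generation claims most transparent.
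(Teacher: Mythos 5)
Your proposal is correct, but it works considerably harder than the paper does, and the ``alternative, perhaps cleaner, route'' you mention at the very end is essentially the paper's argument, simplified one step further. The paper starts from a \emph{single} lattice $\shl$ of $\shm$ (which exists locally) and then \emph{defines} $\shl''$ to be the image of $\shl$ in $\shm''$ and $\shl'\eqdot\shl\cap\shm'$. Exactness is then automatic. Since $\shl''$ is of finite type over $\Dh$, it is a lattice of $\shm''$ by pseudo-coherence; $\shl'$ is the kernel of a morphism $\shl\to\shl''$ between coherent $\Dh$-modules, hence coherent without invoking Lemma~\ref{le:pseudoco}; and generation of $\shm'$ by $\shl'$ is the one-line observation that for $m'\in\shm'$ with $m'=\h^{-N}m$, $m\in\shl$, one has $m=\h^N m'\in\shm'\cap\shl=\shl'$. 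There is no need to prescribe lattices $\shl'_1$, $\shl''_1$ of $\shm'$ and $\shm''$ in advance and then engineer a lattice of $\shm$ compatible with both; that compatibility costs you the bookkeeping you correctly anticipate as ``the main obstacle,'' and the Lemma does not claim it.

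Two small imprecisions worth noting. First, your ``squeeze'' argument for finite generation of $\shl=\shl_1\cap q^{-1}(\shl''_1)$ is not quite the right mechanism: the lower bound by $\shl_1\cap\shm'+(\text{lifts})$ plays no role. Over the Noetherian sheaf of rings $\Dh$, any $\Dh$-submodule of the coherent module $\shl_1$ is locally of finite type, hence coherent; alternatively, one can present $\shl$ as the kernel of $\shl_1\to q(\shl_1)/\shl''_1$, a morphism between coherent $\Dh$-modules. Second, Lemma~\ref{le:pseudoco} is about finitely generated $\Dh$-submodules of coherent $\Dhl$-modules, so it is the right tool for $\shl_1$ and $\shl''$ but is not needed for $\shl'$, which is already a kernel of coherent objects. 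None of this affects the validity of your construction, but it does obscure which facts are doing the work.
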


\begin{proof}
Let $\shl$ be a lattice of $\shm$ and let $\shl''$ be its
image in $\shm''$. We set $\shl'\eqdot\shl\cap\shm'$.  These
sub-$\Dh[X]$-modules give rise to an exact sequence.

Since $\shl''$ is of finite type over $\Dh[X]$, it is a lattice
of $\shm''$. Let us show that $\shl'$ is a lattice of $\shm'$.
Being the kernel of a morphism $\shl\to\shl''$ between coherent
$\Dh[X]$-modules, $\shl'$ is coherent. To show that $\shl'$
generates $\shm'$, note that any $m'\in\shm'\subset\shm$ may be
written as $m' = \h^{-N} m$ for some $N\geq 0$ and $m\in\shl$.
Hence $m = \h^N m' \in \shm'\cap \shl = \shl'$.
\end{proof}

For an abelian category $\shc$, we denote by $\K(\shc)$ its
Grothendieck group. 
For an object $M$ of $\shc$, we denote by $[M]$ its class in $\K(\shc)$.
We let $\shk(\shd_X)$ be the sheaf on $X$ associated to the presheaf
\[
U \mapsto \K(\mdc[\shd_X|_U]) .
\]
We define $\shk(\Dhl[X])$ in the same
way.

\begin{lemma}
\label{lem:multiple_class}
Let $\shl$ be a coherent $\Dh[X]$-module without $\h$-torsion.
Then, for any $i > 0$, the $\shd_X$-module $\shl/\h^i\shl$ is
coherent, and we have the equality $[\shl/\h^i\shl] = i\cdot
[\gr(\shl)]$ in $\K(\mdc[\shd_X])$.
\end{lemma}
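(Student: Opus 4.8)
The plan is to reduce everything to the $\h$-adic filtration of $\shl$ by its submodules $\h^j\shl/\h^i\shl$. First I would observe that, since $\shl$ is $\Dh[X]$-coherent and has no $\h$-torsion, each $\h^j\shl$ is again $\Dh[X]$-coherent (being isomorphic to $\shl$ via multiplication by $\h^j$), so the quotient $\shl/\h^i\shl$ is $\Dh[X]$-coherent. But more is true: $\shl/\h^i\shl$ is annihilated by $\h^i$, hence by Proposition~\ref{pro:tormod_Dcoh} it is $\shd_X$-coherent. So $[\shl/\h^i\shl]$ makes sense in $\K(\mdc[\shd_X])$ on any open subset where $\shl/\h^i\shl$ is defined, and the presheaf version glues to an element of $\shk(\shd_X)$.

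Next I would exploit the finite filtration
\[
0 = \h^i\shl/\h^i\shl \subset \h^{i-1}\shl/\h^i\shl \subset \cdots \subset \h\shl/\h^i\shl \subset \shl/\h^i\shl,
\]
whose successive quotients are $\h^j\shl/\h^{j+1}\shl$ for $j = 0,\dots,i-1$. By additivity of the class in the Grothendieck group (applied to the short exact sequences of $\shd_X$-modules $0 \to \h^{j+1}\shl/\h^i\shl \to \h^j\shl/\h^i\shl \to \h^j\shl/\h^{j+1}\shl \to 0$), we get
\[
[\shl/\h^i\shl] = \sum_{j=0}^{i-1} [\h^j\shl/\h^{j+1}\shl]
\]
in $\K(\mdc[\shd_X])$. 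It then remains to identify each term $[\h^j\shl/\h^{j+1}\shl]$ with $[\gr(\shl)]$, and to see that $[\gr(\shl)]$ itself equals $[\shl/\h\shl]$.

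For the first identification, multiplication by $\h^j$ gives an isomorphism of $\Dh[X]$-modules (hence of $\shd_X$-modules) $\shl/\h\shl \isoto \h^j\shl/\h^{j+1}\shl$; this uses crucially that $\shl$ has no $\h$-torsion, so that $\h^j\colon\shl\to\h^j\shl$ is injective and carries $\h\shl$ onto $\h^{j+1}\shl$. Thus $[\h^j\shl/\h^{j+1}\shl] = [\shl/\h\shl]$ for all $j$. For the second, since $\shl$ has no $\h$-torsion the complex $\gr(\shl) = \shro\lltens[\shr]\shl$ is computed by the two-term complex $\h\cl\shl\to\shl$, whose $H^{-1}$ vanishes (no $\h$-torsion) and whose $H^0$ is $\shl/\h\shl = \shl_0$; hence $\gr(\shl)\simeq\shl_0$ is concentrated in degree $0$ and $[\gr(\shl)] = [\shl_0] = [\shl/\h\shl]$ in $\K(\mdc[\shd_X])$ — as already noted in the text just after the definition of a lattice. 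Combining, $[\shl/\h^i\shl] = i\cdot[\shl/\h\shl] = i\cdot[\gr(\shl)]$.

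I do not anticipate a serious obstacle here; the only point that requires a little care is the bookkeeping of the filtration and making sure the additivity of $[\,\cdot\,]$ is applied in $\mdc[\shd_X]$ rather than $\mdc[\Dh[X]]$ — that is, checking at each stage that the modules in the short exact sequences are genuinely $\shd_X$-coherent, which is exactly what Proposition~\ref{pro:tormod_Dcoh} supplies since everything in sight is killed by a power of $\h$. Since the statement is local, it suffices to argue on a fixed open $U$ on which the relevant modules are defined, and then pass to the associated sheaves.
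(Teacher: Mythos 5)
Your argument is correct and matches the paper's essentially step for step: coherence of $\shl/\h^i\shl$ over $\shd_X$, the isomorphism $\h^j\colon\shl/\h\shl\isoto\h^j\shl/\h^{j+1}\shl$ coming from $\h$-torsion-freeness, and additivity of $[\,\cdot\,]$ along the $\h$-adic filtration. The paper phrases the last part as an induction on $i$ via the single exact sequence $0\to\h^i\shl/\h^{i+1}\shl\to\shl/\h^{i+1}\shl\to\shl/\h^i\shl\to0$, and gets coherence by applying the right exact functor $(\scdot)\tens[\coro]\coro/\h^i\coro$ to a local presentation of $\shl$ over $\Dh[X]$ rather than citing Proposition~\ref{pro:tormod_Dcoh}, but these are cosmetic variants of the same computation.
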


\begin{proof}
Since the functor $(\scdot) \tens[\coro] \coro/\h^i\coro$ is right
exact,
$\shl/\h^i\shl$ is a coherent $\shd_X$-module.  Since $\shl$
has no $\h$-torsion, multiplication by $\h^i$ induces an isomorphism
$\shl/\h\shl \isoto \h^i\shl/\h^{i+1}\shl$.  We conclude by
induction on $i$ with the exact sequence
\eqn
&&0 \to \h^i\shl/\h^{i+1}\shl \to \shl/\h^{i+1}\shl
\to \shl/\h^i\shl \to 0.
\eneqn
\end{proof}

\begin{lemma}\label{lem:K}
For $\shm\in \mdc[{\Dhl[X]}]$, $U\subset X$ an open set and
$\shl\subset\shm|_U$ a lattice of $\shm|_U$, the class
$[\grh(\shl)] \in \K(\mdc[\shd_X|_U])$ only depends on $\shm$.
This defines a morphism of abelian sheaves $\shk(\Dhl[X]) \to
\shk(\shd_X)$.
\end{lemma}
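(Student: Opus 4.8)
The plan is to show that the class $[\grh(\shl)]$ is independent of the choice of lattice $\shl$, and that the resulting assignment is local and compatible with restriction, so that it sheafifies to the desired morphism. The key tool is Lemma~\ref{lem:multiple_class}, which allows one to compare two lattices by passing to a common ``denominator''.

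First I would reduce to comparing two lattices of the same module on the same open set. So let $\shm\in\mdc[\Dhl[X]]$, let $U\subset X$ be open, and let $\shl_1,\shl_2\subset\shm|_U$ be two lattices. The problem being local on $U$, I may shrink $U$ and argue with actual submodules. Since $\shl_1$ and $\shl_2$ are coherent $\Dh$-modules both generating $\shm|_U$, locally there is an integer $N\geq 0$ with $\h^N\shl_1\subset\shl_2$ and $\h^N\shl_2\subset\shl_1$ (each generator of one lattice lies in the other after clearing an $\h$-denominator, and coherence makes the bound uniform locally). Replacing $\shl_1$ by $\h^N\shl_1$, which is a lattice isomorphic to $\shl_1$ via multiplication by $\h^N$ and hence has the same associated graded class, I am reduced to the case $\shl_1\subset\shl_2$ with $\h^i\shl_2\subset\shl_1$ for some $i>0$, i.e.\ a chain $\h^i\shl_2\subset\shl_1\subset\shl_2$.

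Next I would exploit this chain. The quotient $\shl_2/\shl_1$ is a subquotient of $\shl_2/\h^i\shl_2$, which by Lemma~\ref{lem:multiple_class} is a coherent $\shd_X$-module (with class $i\cdot[\grh\shl_2]$); hence $\shl_2/\shl_1$ is $\h$-torsion and $\shd_X$-coherent. The short exact sequences of coherent $\shd_X$-modules
\[
0 \to \shl_1 / \h^i\shl_2 \to \shl_2/\h^i\shl_2 \to \shl_2/\shl_1 \to 0,
\qquad
0 \to \h^i\shl_2/\h^i\shl_1 \to \shl_1/\h^i\shl_1 \to \shl_1/\shl_1 \cdots
\]
wait — more cleanly: apply Lemma~\ref{lem:multiple_class} to both $\shl_1$ and $\shl_2$ (both are $\h$-torsion free, being sub-$\Dh$-modules of the $\h$-torsion free module $\shm$) to get $[\shl_j/\h^i\shl_j] = i\cdot[\grh\shl_j]$ in $\K(\mdc[\shd_X])$ for $j=1,2$. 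Then I compare $\shl_1/\h^i\shl_2$ computed two ways: from $0\to\shl_1/\h^i\shl_1\to\shl_1/\h^i\shl_2$ is not quite right either, so the efficient route is the pair of exact sequences
\[
0 \to \h^i\shl_2/\h^i\shl_1 \to \shl_2/\h^i\shl_1 \to \shl_2/\h^i\shl_2 \to 0,
\qquad
0 \to \shl_1/\h^i\shl_1 \to \shl_2/\h^i\shl_1 \to \shl_2/\shl_1 \to 0.
\]
Since multiplication by $\h^i$ gives $\h^i\shl_2/\h^i\shl_1\simeq\shl_2/\shl_1$ (both lattices being $\h$-torsion free), additivity of the class in $\K(\mdc[\shd_X])$ yields $[\shl_1/\h^i\shl_1] = [\shl_2/\h^i\shl_2]$, hence $i\cdot[\grh\shl_1] = i\cdot[\grh\shl_2]$. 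Because $\K(\mdc[\shd_X])$ of sections over a connected open set need not be torsion-free, I cannot simply divide by $i$; instead I observe that the construction $\shl\mapsto[\grh\shl]$ is unchanged if I first show the \emph{presheaf} $U\mapsto[\grh(\shl_U)]$ (for any local choice) is well-defined up to the above ambiguity and then pass to the associated sheaf, where I can additionally use that the equality $i[\grh\shl_1]=i[\grh\shl_2]$ holds for \emph{all} sufficiently large $i$ simultaneously (take $i$ and $i+1$), whose difference gives $[\grh\shl_1]=[\grh\shl_2]$ outright.

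Finally, independence of the lattice having been established, the assignment $\shm\mapsto[\grh(\shl)]$ (for any lattice $\shl$ of $\shm$ over any open $U$) is by construction compatible with restriction to smaller opens, since a lattice restricts to a lattice; and it is additive on short exact sequences by Lemma~\ref{lem:additivity}, which produces, locally, compatible lattices $0\to\shl'\to\shl\to\shl''\to 0$ whose associated gradeds fit into the exact sequence $0\to\grh\shl'\to\grh\shl\to\grh\shl''\to 0$ (the left exactness coming from $\shl'\to\shl$ being injective with $\shl/\shl'\simeq\shl''$ torsion-free, so $\Tor_1^{\coro}(\coro/\h\coro,\shl'')=0$). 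This yields a morphism of presheaves $U\mapsto\K(\mdc[\Dhl[X]|_U])\to\K(\mdc[\shd_X|_U])$, hence the desired morphism of associated abelian sheaves $\shk(\Dhl[X])\to\shk(\shd_X)$. The main obstacle I anticipate is precisely the non-divisibility in the Grothendieck group: getting from $i[\grh\shl_1]=i[\grh\shl_2]$ to the unnormalized equality requires either the ``consecutive $i$'' trick above or an independent direct argument, e.g.\ filtering $\shl_2/\shl_1$ so as to read off $[\grh\shl_1]$ from $[\grh\shl_2]$ term by term; making that filtration argument clean is the only nontrivial point.
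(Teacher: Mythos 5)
Your proof is correct and takes essentially the same route as the paper's: you compare the two lattices via a chain of $\h$-power inclusions, invoke Lemma~\ref{lem:multiple_class} and additivity in the Grothendieck group, and dispose of the divisibility obstruction by the consecutive-integers subtraction (your $i$ and $i+1$), which is exactly what the paper does with the exponents $m+n+1$ and $m+n+2$. Your preliminary reduction to a nested pair $\shl_1\subset\shl_2$ is a minor streamlining, and the gluing/additivity step via Lemma~\ref{lem:additivity} is the same as the paper's.
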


\begin{proof}
(i) We first prove that $[\grh(\shl)]$ only depends on $\shm$.  We
consider another lattice $\shl'$ of $\shm|_U$.  Since $\shl$ is
a $\Dh[X]$-module of finite type, and $\shl'$ generates $\shm$,
there exists $n>1$ such that $\shl\subset\h^{-n}\shl'$.
Similarly, there exists $m>1$ with $\shl'\subset\h^{-m}\shl$, so
that we have the inclusions
\eqn
&&\h^{m+n+2}\shl \subset \h^{m+n+1}\shl
\subset \h^{m+1}\shl' \subset \h^{m}\shl'\subset \shl .
\eneqn
Any inclusion $A\subset B \subset C$ yields an identity
$[C/A]=[C/B]+[B/A]$ 
in the Grothendieck group, and we obtain in particular:
\begin{align*}
[\h^{m}\shl'/\h^{m+n+1}\shl]
&=[\h^{m}\shl'/\h^{m+1}\shl']+[\h^{m+1}\shl'/\h^{m+n+1}\
shl]\\
[\shl/\h^{m+n+1}\shl]&=[\shl/\h^{m+1}\shl']+[\h^{m+1}\shl'/\h^{m+n+1}\
shl]\\
[\shl/\h^{m+n+2}\shl]&=[\shl/\h^{m+1}\shl']+[\h^{m+1}\shl'/\h^{m+n+2}\
shl].
\end{align*}
Since our modules have no $\h$-torsion, we have isomorphisms of the
type $\h^k \shm_1 / \h^k \shm_2 \simeq \shm_1 / \shm_2$.  Then
Lemma~\ref{lem:multiple_class} and the above equalities give:
\begin{align*}
[\shl'/\h^{n+1}\shl]&= [\grh(\shl')]+[\shl'/\h^{n}\shl]\\
(m+n+1)[\grh(\shl)]&=[\shl/\h^{m+1}\shl']+[\shl'/\h^{n}\shl]\\
(m+n+2)[\grh(\shl)]&=[\shl/\h^{m+1}\shl']+[\shl'/\h^{n+1}\shl].
\end{align*}
A suitable combination of these lines gives
$[\grh(\shl)]=[\grh(\shl')]$, 
as desired.

\vspace{0.2cm}
\noindent
(ii) Now we consider an open subset $V\subset X$ and
$\shm\in\mdc[{\Dhl[X]}|_V]$. 
We choose an open covering $\{U_i\}_{i\in I}$ of $V$ such that for each
$i\in I$
$\shm\vert_{U_i}$ admits a lattice, say $\shl^i$.  We 
have seen that $[\grh(\shl^i)] \in \K(\mdc[\shd_X|_{U_i}])$ only
depends on $\shm$. This implies that 
\eqn
&&
[\grh(\shl^i)]|_{U_{i,j}}=[\grh(\shl^j)]|_{U_{i,j}}
\mbox{ in }\K(\mdc[\shd_X|_{U_{i,j}}]).
\eneqn
Hence the $[\grh(\shl^i)]$'s define a section, say $c(\shm)$, of
$\shk(\shd_X)$ over $V$.  By Lemma~\ref{lem:additivity}, $c(\shm)$
only depends on the class $[\shm]$ in $\K(\mdc[{\Dhl[X]}|_V])$, and
$\shm \mapsto c(\shm)$ induces the morphism
$\shk(\Dhl[X])\to\shk(\shd_X)$. 
\end{proof}

By Lemma~\ref{lem:K}, the following definition is well posed.

\begin{definition}
Let $\shm$ be a coherent $\Dhl[X]$-module. 
For $\shl\in\mdc[{\Dh[X]}]$ a (local) lattice, the
characteristic variety of $\shm$ is defined by 
\eqn 
&&\chv_{\h,\loc}(\shm) = \chv_\h(\shl).
\eneqn 
For
$\shm\in \Derb_\coh(\Dhl)$, one sets
$\chv_{\h,\loc}(\shm)=\bigcup_j\chv_{\h,\loc}(H^j(\shm))$.
\end{definition}

\begin{proposition}
The characteristic variety $\chv_{\h,\loc}(\scbul)$ is additive both on
$\mdcoh[\Dhl]$ and on $\Derb(\Dhl)$.
\end{proposition}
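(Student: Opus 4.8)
The plan is to reduce the assertion to the additivity of $\chv_\h$ on $\mdcoh[\Dh]$, which is Proposition~\ref{prop:add_var_car}~(ii). First I would dispose of the derived-category statement, using the fact recalled after Definition~\ref{def:additivity} that an additive function on an abelian category yields an additive function (in the sense of Definition~\ref{def:additivity}~(ii)) on the associated derived category, by breaking up the long exact cohomology sequence. Explicitly, for a distinguished triangle $\shm'\to\shm\to\shm''\to[+1]$ in $\Derb_\coh(\Dhl)$ and each $j\in\Z$, the object $H^j(\shm)$ sits in a short exact sequence $0\to Q\to H^j(\shm)\to S\to 0$ in which $Q$ is a quotient of $H^j(\shm')$ and $S$ is a subobject of $H^j(\shm'')$; assuming the additivity of $\chv_{\h,\loc}$ on $\mdcoh[\Dhl]$ (proved below), this gives $\chv_{\h,\loc}(H^j(\shm))\subset\chv_{\h,\loc}(H^j(\shm'))\cup\chv_{\h,\loc}(H^j(\shm''))$, and taking the union over $j$, together with the obvious shift-invariance, yields the claim on $\Derb_\coh(\Dhl)$.

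It then remains to treat a short exact sequence $0\to\shm'\to\shm\to\shm''\to 0$ in $\mdcoh[\Dhl]$ and to prove $\chv_{\h,\loc}(\shm)=\chv_{\h,\loc}(\shm')\cup\chv_{\h,\loc}(\shm'')$. Since this is an equality of closed conic subsets of $T^*X$, it may be checked locally on $X$, so I would shrink $X$ and apply Lemma~\ref{lem:additivity}: there exist lattices $\shl'$, $\shl$, $\shl''$ of $\shm'$, $\shm$, $\shm''$, respectively, fitting into an exact sequence of $\Dh$-modules $0\to\shl'\to\shl\to\shl''\to 0$. By Proposition~\ref{prop:add_var_car}~(ii) one has $\chv_\h(\shl)=\chv_\h(\shl')\cup\chv_\h(\shl'')$, and by Lemma~\ref{lem:K} the characteristic variety $\chv_{\h,\loc}$ of each of $\shm$, $\shm'$, $\shm''$ is computed by the corresponding lattice $\shl$, $\shl'$, $\shl''$, independently of the choice; in particular the triple of lattices furnished by Lemma~\ref{lem:additivity} may be used. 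Combining these two facts gives the desired equality.

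I do not expect a serious obstacle here: the two substantive ingredients---the local existence of a compatible triple of lattices (Lemma~\ref{lem:additivity}) and the lattice-independence of $\chv_{\h,\loc}$ (Lemma~\ref{lem:K})---are already available, and the additivity at the level of $\Dh$-modules is Proposition~\ref{prop:add_var_car}. The only point requiring a little care is the reduction of the $\Derb_\coh(\Dhl)$-statement to the abelian one, which is a routine piece of homological algebra but should be spelled out, since $\chv_{\h,\loc}$ was defined on $\Derb_\coh(\Dhl)$ directly through its cohomology objects rather than via a triangulated formalism.
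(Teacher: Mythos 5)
Your proposal is correct and follows exactly the paper's approach: the paper's proof is simply the one-line citation ``This follows from Proposition~\ref{prop:add_var_car}~(ii) and Lemma~\ref{lem:additivity},'' and you have spelled out precisely the argument that this citation compresses, including the (routine but worth noting) passage from abelian to derived additivity via the long exact cohomology sequence, and the use of Lemma~\ref{lem:K} to ensure $\chv_{\h,\loc}$ can be computed with the compatible triple of lattices produced by Lemma~\ref{lem:additivity}.
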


\begin{proof}
This follows from Proposition~\ref{prop:add_var_car}~(ii) and 
Lemma~\ref{lem:additivity}.
\end{proof}

Consider the functor
\eqn
&&\Sol_{\h,{\rm loc}}(\scbul)\cl\Derb(\Dhl)^\rop\to\Derb(\cor_X), 
\quad\shm\mapsto \rhom[{\Dhl}](\shm,\Ohl).      
\eneqn

\begin{proposition}\label{pr:SSloc}
Let $\shm\in\Derb_\coh(\Dhl)$. Then 
\[
\SSi\bl\Sol_{\h,{\rm loc}}(\shm)\br \subset \chv_{\h,\loc}(\shm).
\]
\end{proposition}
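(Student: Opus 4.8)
The plan is to reduce the statement about $\Dhl$-modules to the known estimate for $\Dh$-modules, namely the inclusion $\SSi(\Solh(\shl)) = \chv_\h(\shl)$ from Corollary~\ref{cor:SSsol}, by passing to a lattice and then localizing in $\h$. Since the microsupport, the characteristic variety $\chv_{\h,\loc}$, and the solution functor are all compatible with restriction to open subsets, and since lattices exist locally by Lemma~\ref{le:pseudoco}, the problem is local on $X$; thus I may assume $\shm$ admits a lattice $\shl\in\mdc[\Dh]$, so that $\shm\simeq\shl^\loc$ and, by definition, $\chv_{\h,\loc}(\shm)=\chv_\h(\shl)$. The first reduction is to the case $\shm\in\mdcoh[\Dhl]$: for $\shm\in\Derb_\coh(\Dhl)$, using additivity of $\SSi$ on $\Derb(\C^{\h,\loc}_X)$ together with the truncation triangles and the definition $\chv_{\h,\loc}(\shm)=\bigcup_j\chv_{\h,\loc}(H^j\shm)$, it suffices to treat each cohomology module separately.

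Next I would identify $\Sol_{\h,\loc}(\shm)$ with the localization of $\Solh(\shl)$. Concretely, since $\Ohl\simeq(\Oh)^\loc$ and localization is exact and commutes with $\rhom$ out of a coherent (hence, locally, finitely presented) module, one has
\[
\Sol_{\h,\loc}(\shl^\loc) \simeq \rhom[\Dh](\shl,\Oh)^\loc \simeq \Solh(\shl)^\loc
\]
in $\Derb(\C^{\h,\loc}_X)$. Then apply the estimate \eqref{eq:SSloc}, i.e.\ $\SSi(F^\loc)\subset\SSi(F)$, to $F=\Solh(\shl)$, together with Corollary~\ref{cor:SSsol} which gives $\SSi(\Solh(\shl))=\chv_\h(\shl)$. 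Combining these,
\[
\SSi\bl\Sol_{\h,\loc}(\shm)\br = \SSi\bl\Solh(\shl)^\loc\br \subset \SSi\bl\Solh(\shl)\br = \chv_\h(\shl) = \chv_{\h,\loc}(\shm),
\]
which is the desired inclusion.

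The step requiring the most care is the isomorphism $\rhom[\Dh](\shl,\Oh)^\loc\simeq\rhom[\Dhl](\shl^\loc,\Ohl)$, since one must justify commuting the localization functor past $\rhom$. The cleanest way is to note that, locally, $\shl$ admits a bounded resolution by free $\Dh$-modules of finite rank (by Theorem~\ref{th:hddim}, the ring $\Dh$ being syzygic), so both sides reduce to finite complexes built from $\Oh$ and $\Ohl=\Oh^\loc$ respectively, and the identification is then immediate because $(\scbul)^\loc = \C^{\h,\loc}\tens[\coro]\scbul$ is exact and $\C^{\h,\loc}$ is flat (indeed localization) over $\coro$. With this in hand, the only remaining routine point is to check that $\chv_{\h,\loc}$ and $\SSi$ behave additively under the truncation triangles in the bounded case, which follows from the additivity statements already recorded (additivity of $\SSi$, and additivity of $\chv_{\h,\loc}$ proved just above). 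I do not expect any genuine obstacle beyond this bookkeeping.
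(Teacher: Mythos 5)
Your proposal is correct and follows essentially the same route as the paper: dévissage to a single coherent module, local existence of a lattice $\shl$, identification $\Sol_{\h,\loc}(\shm)\simeq\Solh(\shl)^\loc$ via extension of scalars and a local finite free resolution, then the estimate $\SSi(F^\loc)\subset\SSi(F)$ combined with Corollary~\ref{cor:SSsol}. The only cosmetic difference is that the paper passes through $\rhom[\Dh](\shl,\Ohl)$ explicitly before commuting $(\scbul)^\loc$ past $\rhom$, whereas you do both steps at once.
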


\begin{proof}
By d{\'e}vissage, we can assume that $\shm\in \mdc[{\Dhl[X]}]$. Moreover,
since the problem is local, we may assume that $\shm$ admits a lattice $\shl$.

One has the isomorphism
$\Sol_{\h,{\rm loc}}(\shm) \simeq \rhom[{\Dh}](\shl,\Ohl)$ by extension of scalars.
Taking a local resolution of $\shl$ by
free $\Dh$-modules of finite type, we deduce that $\Sol_{\h,{\rm loc}}(\shm)
\simeq F^\loc$ for $F=\Solh(\shl)$. The statement follows by \eqref{eq:SSloc} and Corollary~\ref{cor:SSsol}.
\end{proof}

One says that $\shm$ is holonomic if its characteristic variety is isotropic.

\begin{proposition}
Let $\shm\in\Derb_\hol(\Dhl)$.
Then $\Sol_{\h,{\rm loc}}(\shm)\in\Derb_\Cc(\cor_X)$.
\end{proposition}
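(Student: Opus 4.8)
The plan is to reduce the statement to the already-established constructibility over $\coro$ of the solution complex of a holonomic $\Dh$-module, namely Corollary~\ref{cor:SSsol}, by passing through a lattice. First I would use a dévissage argument: since $\chv_{\h,\loc}(\scbul)$ and the formation of $\Sol_{\h,{\rm loc}}$ are both compatible with distinguished triangles (additivity of the characteristic variety was just proved, and $\Sol_{\h,{\rm loc}}$ is a cohomological functor), and since $\Derb_\Cc(\cor_X)$ is a triangulated subcategory, it suffices to treat the case $\shm\in\mdc[\Dhl[X]]$ with $\chv_{\h,\loc}(\shm)$ isotropic. The question being local on $X$, I may then assume that $\shm$ admits a lattice $\shl$, i.e.\ a coherent $\Dh[X]$-submodule generating $\shm$, which exists locally by Lemma~\ref{le:pseudoco}.

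The key step is the identification, already carried out in the proof of Proposition~\ref{pr:SSloc}, that $\Sol_{\h,{\rm loc}}(\shm)\simeq F^\loc$ where $F=\Solh(\shl)\in\Derb(\coro_X)$: by extension of scalars $\Sol_{\h,{\rm loc}}(\shm)\simeq\rhom[\Dh](\shl,\Ohl)$, and taking a local resolution of $\shl$ by free $\Dh$-modules of finite type (which exists since $\Dh$ is coherent and, by Theorem~\ref{th:hddim} together with the syzygy property inherited from $\D$, of finite homological dimension) one computes $\rhom[\Dh](\shl,\Ohl)\simeq(\Solh\shl)^\loc$. So the whole problem reduces to showing that $\shl$ is a holonomic $\Dh$-module, for then Corollary~\ref{cor:SSsol} gives $\Solh(\shl)\in\Derb_\Cc(\coro_X)$ and applying the exact functor $(\scbul)^\loc$ of \eqref{eq:subhbarloc} lands us in $\Derb_\Cc(\cor_X)$ (constructibility over $\coro$ being preserved under $\coro\to\cor$ since the latter is flat and the stratification is unchanged).

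Thus the heart of the matter is: if $\chv_{\h,\loc}(\shm)$ is isotropic, then a lattice $\shl$ of $\shm$ is holonomic over $\Dh$. But this is essentially the definition: $\chv_{\h,\loc}(\shm)=\chv_\h(\shl)$ by construction (Lemma~\ref{lem:K} shows the class of $\grh(\shl)$, hence its support $\chv_\h(\shl)=\chv(\grh\shl)$, is independent of the lattice), and $\shl$ is holonomic over $\Dh$ precisely when $\chv_\h(\shl)$ is isotropic. Hence $\shl\in\Derb_\hol(\Dh)$ and we are done.

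The step I expect to be the main obstacle is not any single deep input but rather the bookkeeping around locality: the lattice $\shl$ only exists on the members of an open cover, so $F^\loc\simeq\Sol_{\h,{\rm loc}}(\shm)$ is only an isomorphism locally, and one must check that the constructibility conclusion—being local in nature—glues. This is routine since $\Derb_\Cc(\cor_X)$ is defined by a local condition on cohomology sheaves, but it is the one point where care is needed. A secondary technical point is confirming that $(\scbul)^\loc$ sends $\Derb_\Cc(\coro_X)$ to $\Derb_\Cc(\cor_X)$; this follows because a complex of $\coro_X$-modules is $\C$-constructible over $\coro$ iff it is cohomologically constructible with respect to a complex analytic stratification, and tensoring the (finitely generated, by coherence of $\coro$) cohomology modules with the flat $\coro$-algebra $\cor$ preserves this, using \eqref{eq:SSloc} for the microsupport estimate if one prefers to argue via $\SSi$.
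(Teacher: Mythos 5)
Your proof is correct and follows essentially the same route as the paper's: dévissage to a single coherent module, local existence of a lattice $\shl$, identification $\Sol_{\h,\rm loc}(\shm)\simeq(\Solh\shl)^\loc$, observation that $\shl$ is holonomic over $\Dh$ because $\chv_\h(\shl)=\chv_{\h,\loc}(\shm)$ is isotropic, and then Corollary~\ref{cor:SSsol} together with exactness of $(\scbul)^\loc$. The paper condenses all of this into one sentence by reference to the proof of Proposition~\ref{pr:SSloc}; your version just spells out the dévissage and the stability of $\Derb_\Cc$ under $(\scbul)^\loc$, which the paper leaves implicit.
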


\begin{proof}
By the same arguments and with the same notations as in the proof of Proposition~\ref{pr:SSloc}, 
we reduce to the case 
$\Sol_{\h,{\rm loc}}(\shm) \simeq F^\loc$, for $F=\Solh(\shl)$ and $\shl$ a lattice of 
$\shm\in\Mod_\hol(\Dhl)$. Hence $\shl$ is a holonomic $\Dh$-module, and $F\in\Derb_\Cc(\coro_X)$.
\end{proof}

\begin{remark}
In general the functor
\[
\Sol_{\h,{\rm loc}}\cl\Derb_\hol(\Dhl)^\rop\to\Derb_\Cc(\cor_X)
\]
is not locally essentially surjective.
In fact, consider the quasi-commutative diagram of categories
\eqn
&&\xymatrix{
\Derb_\hol(\Dh)^\rop \ar[rr]^-{\Solh}
\ar[d]_-{(\scdot)^{\rm loc}} 
&&\Derb_\Cc(\coro_X)  \ar[d]_-{(\scdot)^{\rm loc}}\\
\Derb_\hol(\Dhl)^\rop \ar[rr]^-{\Sol_{\h,{\rm loc}}} 
&&\Derb_\Cc(\cor_X).
}
\eneqn
By the local existence of lattices the left vertical arrow is locally 
essentially surjective. If $\Sol_{\h,{\rm loc}}$ were also locally
essentially surjective, so
should be the right vertical arrow. The following example shows that it
is not the case.

\begin{example}
Let $X=\C$, $U = X \setminus \{0\}$ and denote by $j\colon
U\hookrightarrow X$ the embedding. Set
$F=\reim j L$, where $L$ is the local system on $U$ with
stalk $\cor$ and monodromy $\h$. 
There is no $F_0\in\Derb_\Cc(\coro_X)$ such that $F \simeq (F_0)^{\rm
loc}$.
\end{example}

One can interpret this phenomenon by remarking that $\Derb_\hol(\Dhl)$
is equivalent to the localization of the category $\Derb_\hol(\Dh)$
with respect to the morphism $\h$, contrarily to the category
$\Derb_\Cc(\cor_X)$.
\end{remark}

\section{Links with deformation quantization}

In this last section, we shall briefly explain how the study of
deformation quantization algebras on complex symplectic manifolds
is related to $\Dh$. We follow the terminology of \cite{KS08}.

\medskip
The cotangent bundle $\stx=T^*X$ to the complex manifold $X$ has a
structure of a complex symplectic manifold and is endowed with the
$\coro$-algebra $\HW[\stx]$, a non homogeneous version of the algebra of
microdifferential operators.
Its subalgebra $\HWo[\stx]$ of operators of order at most zero is a
deformation quantization algebra.
In a system $(x,u)$ of local symplectic coordinates, 
$\HWo[\stx]$ is identified with the star algebra $(\Oh[\stx],\star)$
in which  the star product is given by the Leibniz product:
\eq\label{eq:wstar}
f\star g
&=&\sum_{\alpha\in\N^n} \dfrac{\h^{\vert\alpha\vert}}{\alpha !} 
(\partial^{\alpha}_uf)(\partial^{\alpha}_xg), \quad
\text{for }f,g\in\OO[\stx].
\eneq
In this section we will set for short
$\shaw \seteq \HWo[\stx]$,
so that $\shaw^{\rm loc} \simeq \HW[\stx]$.
Note that $\shaw$ satisfies Assumption~\ref{as:DQring}.

Let us identify $X$ with the zero section of the cotangent bundle
$\stx$.
Recall that $X$ is a local model for any smooth Lagrangian submanifold
of $\stx$,
and that $\Oh$ is a local model of any simple $\shaw$-module along
$X$.
As $\Oh$ has both a $\Dh$-module and an $\shaw|_X$-module structure,
there are morphisms of $\coro$-algebras
\begin{equation}\label{eq:actions}
\Dh \to \shend_{\coro}(\Oh) \from \shaw|_X.
\end{equation}

\begin{lemma}\label{lem:W0D}
The morphisms in \eqref{eq:actions}
are injective and induce an embedding $\shaw|_X\hookrightarrow\Dh$.
\end{lemma}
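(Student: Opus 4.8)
The statement has two parts: injectivity of the two algebra morphisms in \eqref{eq:actions}, and the fact that the composite embeds $\shaw|_X$ inside $\Dh$. The plan is to work locally in a system $(x,u)$ of symplectic coordinates on $\stx=T^*X$, so that $\shaw=(\Oh[\stx],\star)$ with the Leibniz star product \eqref{eq:wstar}, and to compute explicitly the action of both $\Dh$ and $\shaw|_X$ on $\Oh=\Oh[X]$ identified with the restriction of $\Oh[\stx]$ to the zero section $X=\{u=0\}$.

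\textbf{Injectivity of the $\Dh$-action.} For the left-hand map $\Dh\to\shend_{\coro}(\Oh)$, I would argue that $\gr$ of this morphism is the classical action $\D[X]\to\shend_{\C}(\OO[X])$, which is injective because a nonzero differential operator acts nontrivially on holomorphic functions. Since $\Dh$ has no $\h$-torsion (it is the formal extension $\D[X]\forl$) and is cohomologically $\h$-complete by Theorem~\ref{th:formalfini1}, and the kernel of the action is a $\Dh$-submodule, conservativity of $\gr$ on cohomologically $\h$-complete modules (Proposition~\ref{pro:conserv}, or Corollary~\ref{cor:conservative1}) forces the kernel to vanish; alternatively one argues directly on formal series $P=\sum P_n\h^n$, noting that if $P$ acts by zero then so does its lowest-order term $P_{n_0}$ as a classical operator, a contradiction.

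\textbf{Injectivity of the $\shaw$-action and the embedding.} For the right-hand map $\shaw|_X\to\shend_{\coro}(\Oh)$, the key computation is that in the coordinates $(x,u)$, the star product \eqref{eq:wstar} makes $u_i$ act on $\Oh[\stx]$, and hence its restriction act on $\Oh[X]$, exactly like $\h\,\partial_{x_i}$: indeed $u_i\star g=u_ig+\h\,\partial_{x_i}g$, and restricting to $u=0$ kills the first term. Likewise $x_i\star g=x_i g$ acts by multiplication. Therefore the image of $\shaw|_X$ in $\shend_{\coro}(\Oh)$ is precisely the image of the subalgebra of $\Dh$ generated over $\Oh[X]$ by the operators $\h\,\partial_{x_i}$; this is where the operators of $\shaw$ land inside $\Dh$, giving the asserted embedding $\shaw|_X\hookrightarrow\Dh$ (note it is \emph{not} all of $\Dh$, only the operators whose order-$k$ part is divisible by $\h^k$). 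Injectivity of $\shaw|_X\to\shend_{\coro}(\Oh)$ then follows from the already-established injectivity of $\Dh\to\shend_{\coro}(\Oh)$, since the map $\shaw|_X\to\Dh$ just constructed is visibly injective (it sends a formal symbol with distinct monomials in $u$ to distinct monomials in $\h\partial_x$).

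\textbf{Main obstacle.} The routine part is the symbol computation; the one point requiring care is checking that the assignment $u_i\mapsto \h\partial_{x_i}$, $x_i\mapsto x_i\cdot$ really extends to a well-defined $\coro$-algebra homomorphism $\shaw|_X\to\Dh$ on \emph{all} of $\HWo[\stx]$ — i.e.\ that an arbitrary element of $\shaw$ (a formal series in $\h$ whose coefficients are holomorphic symbols, composed with the star product) has a well-defined image, compatibly with composition, and that restriction to $X$ is compatible with these structures. This is essentially the statement that $\Oh$ is a coherent $(\shaw|_X,\Dh)$-bimodule realizing the standard correspondence between microdifferential operators annihilating a Lagrangian and differential operators; I would either invoke the relevant normal-form / quantization statement from \cite{KS08} or verify directly that the star product of two symbols, after the substitution and restriction, reproduces the composition of the corresponding $\h$-differential operators — a formal but slightly delicate bookkeeping of the $\h$-powers in \eqref{eq:wstar} against the orders of the operators.
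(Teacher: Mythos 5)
Your argument is essentially the paper's: choose local symplectic coordinates with $X=\{u=0\}$, observe via the Leibniz product that $u_i$ acts on $\Oh$ by $\h\partial_{x_i}$, write down the resulting explicit formula for $\shaw|_X\to\Dh$ (recorded in the paper as \eqref{eq:AinDh}), and read off injectivity by comparing Taylor coefficients of the symbols along $X$. The ``main obstacle'' you flag is not actually an obstacle: the morphism $\shaw|_X\to\shend_{\coro}(\Oh)$ is part of the given data (the $\shaw|_X$-module structure on $\Oh$), so once you have checked that $\Dh\to\shend_{\coro}(\Oh)$ is injective and that the image of $\shaw|_X$ lands inside the image of $\Dh$ --- which is exactly what the symbol computation shows --- the factoring $\shaw|_X\to\Dh$ exists uniquely and is automatically a $\coro$-algebra morphism, with no further compatibility with composition to verify. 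One small imprecision: the image of $\shaw|_X$ in $\Dh$ is a proper subalgebra contained in $\Dho$, and only equals $\Dho$ after passing to formal restriction along $X$ (as the remark following the lemma explains), so your phrase ``precisely the subalgebra generated over $\Oh[X]$ by the $\h\partial_{x_i}$'' slightly overstates the image, though this has no bearing on the injectivity argument.
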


\begin{proof}
Since the problem is local, we may choose a 
local symplectic coordinate system $(x,u)$ on $\stx$ such that
$X=\{u=0\}$.
Then $\shaw|_X$ is identified with 
$\Oh[\stx]|_X$. As the action of $u_i$ on 
$\Oh$ is given by $\h \partial_{x_i}$, the morphism
$\shaw|_X\to \shend_{\coro}(\Oh)$ factors through $\Dh$,
and the induced morphism $\shaw|_X \to \Dh$ is described by
\eq\label{eq:AinDh}
\sum_{i\in\N} f_i(x,u) \h^i\mapsto 
\sum_{j\in\N}\left( \sum_{\alpha\in\N^n,\ |\alpha|\leq j}
\partial_u^\alpha f_{j-|\alpha|}(x,0)
\partial_x^\alpha \right) \h^j,
\eneq
which is clearly injective.
\end{proof}

Consider the following subsheaves of $\Dh$
\[
\D^{\h,m}  = \prod_{i\geq 0} \left( F_{i+m}\D\right) \h^i,\quad
\Dhf = \bigcup_{m\geq 0} \D^{\h,m}.
\]
Note that $\Dho$ and $\Dhf$ are subalgebras of $\Dh$, that $\Dho$ is
$\h$-complete while $\Dhf$ is not and that $\Dhol\simeq\Dhfl$.
By \eqref{eq:AinDh}, the image of $\sha|_X$ in $\Dh$ is contained in
$\Dho$.
(The ring $\Dho$ should be compared with the ring $\shr_{X\times\C}$ of
\cite{Sa05}.)

\begin{remark}
More precisely, denote by
$\Oh[\stx]{\hat|}_X\simeq(\OO[\stx]{\hat|}_X)^\h$ the formal 
restriction of $\Oh[\stx]$ along the submanifold $X$. 
Then the star product
in \eqref{eq:wstar} extends to this sheaf, and \eqref{eq:AinDh} induces
an isomorphism
$(\Oh[\stx]{\hat|}_X,\star)\simeq\Dho$.
\end{remark}

Summarizing, one has the compatible embeddings of algebras
\[
\xymatrix{
\shaw^{\rm loc}|_X \ar@{^(->}[r] & \Dhol \ar@{-}[r]^\sim & \Dhfl
\ar@{^(->}[r] & \Dhl \\
\shaw|_X \ar@{^(->}[u] \ar@{^(->}[r] & \Dho \ar@{^(->}[r] \ar@{^(->}[u]
& \Dhf \ar@{^(->}[r] \ar@{^(->}[u] & \Dh \ar@{^(->}[u]
}
\]
One has
\[
\gr \shaw|_X \simeq \OO[\stx]|_X, \quad
\gr     \Dho \simeq \OO[\stx]{\hat|}_X, \quad
\gr     \Dhf \simeq \gr \Dh \simeq \D.
\]

\begin{proposition}\label{pr:DhoAflat}
\bnum
\item The algebra $\Dho$ is faithfully flat over $\shaw|_X$.
\item The algebra $\Dhl$ is flat over $\shaw^{\rm loc}|_X$.
\enum
\end{proposition}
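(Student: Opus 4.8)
The plan is to reduce the flatness statements to the graded level via Theorem~\ref{th:flat2} and the already-established faithful flatness of the corresponding graded rings. First I would recall that the relevant rings satisfy the needed hypotheses: $\shaw|_X$ satisfies Assumption~\ref{as:DQring} (its opposite ring has Noetherian reduction $\gr\shaw|_X\simeq\OO[\stx]|_X$, which is Noetherian since $\stx$ is a complex manifold), $\Dho$ has no $\h$-torsion (being a subsheaf of $\Dh$, which has none), and $\Dho$ is $\h$-complete, hence cohomologically $\h$-complete by Corollary~\ref{cor:cohcosheaves} or by the fact that a coherent module over a ring satisfying the assumptions is cohomologically $\h$-complete (Theorem~\ref{th:formalfini1}). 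The flabby dimension of $X$, which controls that of $\stx$ near the zero section, is finite since $X$ is a manifold. Thus the hypotheses of Theorem~\ref{th:flat2} are in place, applied with $\sha=\shaw|_X$ and $\shm=\Dho$ viewed as an $\shaw|_X$-module through the embedding of Lemma~\ref{lem:W0D}.

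The crux is then step (i): one must show that $\gr\Dho \simeq \OO[\stx]{\hat|}_X$ is a faithfully flat $\gr\shaw|_X\simeq\OO[\stx]|_X$-module. This is the main obstacle and is the place where the deformation-quantization input really enters. The point is that $\OO[\stx]{\hat|}_X$ is the formal completion of $\OO[\stx]$ along the submanifold $X$, and the completion of a Noetherian local (or, here, coherent) ring along an ideal is faithfully flat; more precisely, for the structure sheaf of a complex manifold completed along a smooth closed submanifold, flatness follows from the Artin--Rees lemma and the fact that $\OO[\stx]$ is Noetherian, while faithful flatness follows because the completion map is a local homomorphism (it does not kill any maximal ideal, $X$ being contained in $\stx$). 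One should phrase this carefully using the identification $\gr\Dho \simeq \OO[\stx]{\hat|}_X$ recorded just before the proposition, perhaps reducing to stalks at points of $X$ where the statement is the classical faithful flatness of $\C\{x,u\} \to \C\{x\}[[u]]$ (convergent power series mapping to the $u$-adic completion).

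Once faithful flatness at the graded level is established, Theorem~\ref{th:flat2} immediately yields that $\Dho$ is a faithfully flat $\shaw|_X$-module, which is (i). For (ii), I would simply localize: tensoring the faithfully flat inclusion $\shaw|_X \hookrightarrow \Dho$ with $\C^{\h,\loc}$ over $\coro$ preserves flatness (localization is exact), giving that $\Dhol \simeq \Dho^\loc$ is flat over $\shaw^{\rm loc}|_X \simeq (\shaw|_X)^\loc$; then using the isomorphism $\Dhol \simeq \Dhfl \simeq \Dhl$ recorded in the text (note $\Dhl = \Dhol$, the order filtration becoming invisible after inverting $\h$) one concludes that $\Dhl$ is flat over $\shaw^{\rm loc}|_X$. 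Faithful flatness is generally lost on this last step — and indeed is not claimed in (ii) — because inverting $\h$ collapses the $\h$-adic information and the complement of the image need no longer detect all modules; this is consistent with the phenomena discussed in the previous section about $\Dhl$ being a localization. So the only genuine work is the graded faithful flatness computation in the proof of (i).
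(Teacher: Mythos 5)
The proposal matches the paper's approach: both rely on Theorem~\ref{th:flat2} for part (i), and you correctly identify that the substance is the faithful flatness of $\gr\Dho\simeq\OO[\stx]{\hat|}_X$ over $\gr\shaw|_X\simeq\OO[\stx]|_X$, which you justify by the classical faithful flatness of adic completion of a Noetherian ring along an ideal contained in the radical; the paper leaves this step implicit, and your filling it in is correct. One caveat: the parenthetical ``note $\Dhl = \Dhol$, the order filtration becoming invisible after inverting $\h$'' is not actually right. The paper's own diagram has a strict embedding $\Dhol\simeq\Dhfl\hookrightarrow\Dhl$ rather than an isomorphism, and indeed the inclusion is proper: an element like $\sum_{j\ge 0}\partial_x^{2j}\h^j$ belongs to $\Dhl$ but not to $\Dhol$, since for a series $\sum_j P_j\h^j$ in $(\Dho)^\loc$ the order of $P_j$ is bounded by $j+\mathrm{const}$, a constraint that does not disappear upon inverting $\h$. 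So ``$\Dho^\loc\simeq\Dhl$'' only gives flatness of $\Dhol$ over $\shaw^{\loc}|_X$, and an additional argument would be needed to conclude flatness of the larger ring $\Dhl$. That said, the paper's own one-line proof of (ii) asserts exactly the same isomorphism $(\Dho)^\loc\simeq\Dhl$, so you are faithfully reproducing the paper's argument, including its internal tension with the diagram; you have not introduced a new error, and you even correctly observe that faithfulness is dropped in (ii), consistent with the statement.
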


\begin{proof}
(i) follows from Theorem~\ref{th:flat2}. 

\noindent
(ii) follows from (i) and the isomorphism $(\Dho)^\loc\simeq\Dhl$.
\end{proof}

The next examples show that the scalar extension functor
\[
\mdc[\Dho]\to\mdc[\Dh]
\]
is neither exact nor full.

\begin{example}
Let $X=\C^2$ with coordinates $(x,y)$. Then $\h\partial_y$ is injective
as an endomorphism 
of $\Dho/\langle\h\partial_x\rangle$ but it is not injective as an
endomorphism 
of $\Dh/\langle\h\partial_x\rangle$, since $\partial_x$ belongs to its
kernel. This shows that $\Dh$ 
is not flat over $\Dho$.
\end{example}

\begin{example}\label{exa:WtoDnotff}
This example was communicated to us by Masaki Kashiwara.
Let $X=\C$ with coordinate $x$, and denote by $(x,u)$ the symplectic
coordinates on $\stx=T^*\C$. Consider the cyclic $\shaw$-modules
\[
\shm=\shaw/\langle x-u\rangle,\quad \shn=\shaw/\langle x\rangle,
\]
and their images in $\md[\Dh]$
\[
\shm'=\Dh/\langle x-\h\partial_x\rangle,
\quad \shn'=\Dh/\langle x\rangle.
\]
As their supports in $\stx$ differ, $\shm$ and $\shn$  are not
isomorphic as $\shaw$-modules.
On the other hand, in $\Dh$ one has the relation
\begin{equation}\label{eq:ed2}
x \cdot e^{\h\partial_x^2/2} = e^{\h\partial_x^2/2} \cdot
(x-\h\partial_x),
\end{equation}
and hence an isomorphism $\shm'\isoto \shn'$ given by $[P]\mapsto
[P\cdot e^{-\h\partial_x^2/2}]$.
In fact, one checks that
\[
\hom[{\shaw}](\shm,\shn)|_X = 0, \quad
\hom[\Dh](\shm',\shn') \simeq \coro_X.
\]
\end{example}

\appendix

\section{Complements on constructible sheaves}

Let us review some results, well-known from 
the specialists (see {\em e.g.,}\cite[Proposition~3.10]{SSc94}), but which are
usually
stated over a field, and we need to work here over the ring $\coro$. 

Let $\cora$ be a commutative 
unital Noetherian ring of finite global dimension. Assume that $\cora$
is syzygic, i.e.~that 
any finitely generated $\cora$-module admits a finite projective
resolution by finite free modules.
(For our purposes we will either have $\cora=\C$ or $\cora=\coro$).

Let $X$ be a real analytic manifold. Denote by $\mdrc[\cora_X]$
the abelian category of $\R$-constructible sheaves on $X$ and by 
$\Derb_\Rc(\cora_X)$ the bounded derived category of
sheaves of $\cora$-modules with $\R$-constructible cohomology.

For the next two lemmas we recall some notations and results
of~\cite{Ka84,KS90}. We consider a simplicial complex $\SC = (S,\Delta)$,
with set of vertices $S$ and set of simplices $\Delta$.  We let
$|\SC|$ be the realization of $\SC$. Thus $|\SC|$ is the disjoint
union of the realizations $|\sigma|$ of the simplices.  For a simplex
$\sigma\in \Delta$, the open set $U(\sigma)$ is defined in
\cite[(8.1.3)]{KS90}.  A sheaf $F$ of $\cora$-modules on $|\SC|$ is said
weakly
$\SC$-constructible if, $\forall \sigma \in \Delta$, $F|_{|\sigma|}$
is constant.  An object $F\in \Derb(\cora_{|\SC|})$ is said weakly
$\SC$-constructible if its cohomology sheaves are so.  If moreover, all
stalks $F_x$ are perfect complexes, $F$ is said $\SC$-constructible.
By~\cite[Proposition~8.1.4]{KS90} we have isomorphisms, for a weakly
$\SC$-constructible sheaf $F$ and for any $\sigma\in\Delta$ and $x\in
|\sigma|$:
\begin{gather}\label{eq:section_simplex}
\sect(U(\sigma);F) \isoto \sect(|\sigma|;F) \isoto F_x, \\
\label{eq:section_simplex2}
H^j(U(\sigma);F) = H^j(|\sigma|;F) = 0, \quad \text{for $j \not= 0$.}
\end{gather}
It follows that, for a weakly $\SC$-constructible $F\in
\Derb(\cora_{|\SC|})$, the natural morphisms of complexes of
$\cora$-modules
\begin{equation}
\label{eq:section_simplex_qis}
\sect(U(\sigma);F) \to \sect(|\sigma|;F) \to F_x
\end{equation}
are quasi-isomorphisms.

For $U\subset X$ an open subset, we denote by $\cora_U\eqdot(\cora_X)_U$
the
extension by $0$ of the constant sheaf on $U$. 

\begin{proposition}\label{pro:resol1}
Let $F\in \Derb_\Rc(\cora_X)$. Then
\bnum
\item $F$ is isomorphic to a complex
\eqn 
&&0\to\bigoplus_{i_a\in I_a} \cora_{U_{a,i_a}}
\to\cdots\to \bigoplus_{i_b\in I_b}\cora_{U_{b,i_b}}\to 0, 
\eneqn 
where the $\{U_{k,i_k}\}_{k,i_k}$'s are locally finite families of
relatively compact subanalytic open subsets of $X$.
\item $F$ is isomorphic to a complex
\eqn 
&&0\to\bigoplus_{i_a\in I_a} \sect_{V_{a,i_a}}\cora_X
\to\cdots\to \bigoplus_{i_b\in I_b}\sect_{V_{b,i_b}}\cora_X\to 0, 
\eneqn 
where the $\{V_{k,i_k}\}_{k,i_k}$'s are locally finite families of
relatively compact subanalytic open subsets of $X$.
\enum
\end{proposition}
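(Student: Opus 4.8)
The plan is to reduce to a simplicial model and then do the homological algebra over the associated poset, the only genuinely new input with respect to the classical case over a field (see \cite[Chapter~8]{KS90} and \cite[Proposition~3.10]{SSc94}) being that the local data are now finitely generated $\cora$-modules rather than finite dimensional vector spaces.

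\medskip\noindent
First I would triangulate. By the triangulation theorem for subanalytic sets \cite[\S8.2]{KS90} there exist a locally finite subanalytic simplicial complex $\SC=(S,\Delta)$ and a subanalytic homeomorphism $|\SC|\isoto X$ such that $F$ is $\SC$-constructible in the sense recalled in the excerpt: weak $\SC$-constructibility is geometric, while perfectness of the stalks follows since $F\in\Derb_\Rc(\cora_X)$ has finitely generated stalk cohomology and $\cora$ is syzygic. Recall that each $U(\sigma)$ is an open, subanalytic and relatively compact subset, that $\{U(\sigma)\}_{\sigma\in\Delta}$ is locally finite, that \eqref{eq:section_simplex}--\eqref{eq:section_simplex_qis} hold, and that each $\cora_{U(\sigma)}$ is itself $\SC$-constructible.

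\medskip\noindent
Next I would pass to representations of the poset $\Delta$. The category of weakly $\SC$-constructible sheaves on $|\SC|$ is equivalent to the category of $\cora$-linear representations of $\Delta$, and under this equivalence $\SC$-constructible complexes correspond to perfect complexes, while each $\cora_{U(\sigma)}$ corresponds to a representable (indecomposable projective) representation $P_\sigma$. Since $\cora$ has finite global dimension and $\Delta$ is locally finite of finite length, representations of $\Delta$ have locally finite homological dimension; and since $\cora$ (being $\C$ or $\coro$) is local one has $\operatorname{End}(P_\sigma)\simeq\cora$, so that finitely generated projective representations are locally finite direct sums of the $P_\sigma$'s. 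Hence $F$, being a perfect complex, is quasi-isomorphic to a bounded complex whose terms are locally finite direct sums of $P_\sigma$'s, and translating back to sheaves yields (i). Concretely this amounts to taking the simplicial complex $C^\scbul(F)$ of \cite[\S8.1]{KS90}, whose term in degree $n$ is $\bigoplus_{\dim\sigma=n}\cora_{U(\sigma)}\tens[\cora]F_{x_\sigma}$, and then replacing each perfect complex of $\cora$-modules $F_{x_\sigma}$ by a finite free resolution compatibly with all the restriction maps of the simplicial structure --- the bookkeeping needed to do so coherently being exactly what the passage through $\Delta$ encodes.

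\medskip\noindent
For (ii) I would dualize. As $\cora$ is Noetherian, syzygic and of finite global dimension, the functor $\RD'_\cora\eqdot\rhom[\cora_X](\scbul,\cora_X)$ is well defined on $\Derb_\Rc(\cora_X)$ and biduality $\RD'_\cora\RD'_\cora\simeq\id$ holds there; moreover, for $U\subset X$ open one has $\rhom[\cora_X](\cora_U,\cora_X)\simeq\roim{j_U}\opb{j_U}\cora_X\simeq\sect_U\cora_X$, using $j_U^!=\opb{j_U}$. Applying $\RD'_\cora$ to the complex furnished by (i) for the object $\RD'_\cora F$ and invoking biduality produces the complex of (ii), again with a locally finite family of relatively compact subanalytic open sets. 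The main obstacle is the middle step: over a field each stalk $F_{x_\sigma}$ splits as a sum of shifts of the ground field and $C^\scbul(F)$ is already of the required shape, whereas over $\cora$ one must resolve the (generally non-free) finitely generated modules $F_{x_\sigma}$ by finite free modules coherently over the whole simplicial structure; establishing that the category of $\cora$-representations of $\Delta$ has locally finite homological dimension and that its finitely generated projective objects are finite sums of the $P_\sigma\leftrightarrow\cora_{U(\sigma)}$ --- which is where the syzygic hypothesis and the locality of $\cora$ are used --- is what makes this go through, everything else being formal or borrowed from \cite[Chapter~8]{KS90}.
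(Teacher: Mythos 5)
Your strategy is to reduce the problem to a structure theorem for $\cora$-linear representations of the poset $\Delta$, whereas the paper constructs the resolution directly by induction on the dimension of the skeleton $|\SC_i|$: it extends a morphism $u_i\colon G_i\to F$ skeleton by skeleton, resolving at each new simplex the complex $\sect(U(\sigma);H_i)$ (which is a perfect complex of $\cora$-modules because $H_i$ vanishes on $|\SC_i|$) by a finite free complex, and assembling via the octahedral axiom. Both routes pass through a triangulation and the open stars $U(\sigma)$, so they are closely related; yours is a repackaging in the language of poset representations.

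The repackaging, however, hides the content. The assertion that ``$\SC$-constructible complexes correspond to perfect complexes'' under the equivalence with representations of $\Delta$ is precisely the substance of the proposition: the definition of $\SC$-constructibility is that the stalks $F_\sigma$ are perfect over $\cora$, and one must \emph{prove} that this implies $F$ is quasi-isomorphic to a bounded complex of finitely generated projective representations. Likewise, the claims that the representation category has locally finite homological dimension and that finitely generated projective representations decompose (locally finitely) as sums of the $P_\sigma\simeq\cora_{U(\sigma)}$ are flagged by you as ``where the syzygic hypothesis and locality of $\cora$ are used,'' but they are not established. When one sets out to prove them one resolves representations by the $P_\sigma$'s one layer of the poset at a time and keeps track of supports by means of the skeleta, which is exactly the paper's induction. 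So the passage through $\Delta$ does not provide a shortcut, and your write-up ultimately defers the genuine argument to ``bookkeeping.''

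Two smaller remarks. You invoke locality of $\cora$ to decompose projectives; the proposition as stated assumes only that $\cora$ is commutative, Noetherian, of finite global dimension and syzygic, and the paper's proof needs no more, since it resolves by \emph{free} modules rather than classifying projectives. This narrowing is harmless for the paper's applications ($\cora=\C$ or $\coro$) but is an extra hypothesis. Your treatment of part (ii) --- applying the resolution of (i) to $\RD'_\cora F$ and dualizing, using that the $\cora_{U(\sigma)}$ are $\RD'_\cora$-acyclic and that biduality holds on $\Derb_\Rc(\cora_X)$ --- coincides with the paper's.
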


\begin{proof}
(i) By the triangulation theorem for subanalytic sets (see for
example~\cite[Proposition~8.2.5]{KS90}) we may assume that $F$ is an
$\SC$-constructible object in $\Derb(\cora_{|\SC|})$ for some
simplicial complex $\SC = (S,\Delta)$.  For $i$ an integer, let
$\Delta_i\subset \Delta$ be the subset of simplices of dimension
$\leq i$ and set $\SC_i = (S,\Delta_i)$.  We denote by $\Kb(\cora)$
(resp.  $\Kb(\cora_{|\SC|})$) the category of bounded complexes of
$\cora$-modules (resp.  sheaves of $\cora$-modules on $|\SC|$) with
morphisms up to homotopy.  We shall prove by induction on $i$ that
there exists a morphism $u_i\cl G_i\to F$ in $\Kb(\cora_{|\SC|})$
such that:
\begin{itemize}
\item [(a)] the $G_i^k$ are finite direct sums of
$\cora_{U(\sigma_\alpha)}$'s for some $\sigma_\alpha\in\Delta_i$,
\item [(b)] $u_i\vert_{|\SC_i|}\cl G_i\vert_{|\SC_i|}\to
F\vert_{|\SC_i|}$ is a quasi-isomorphism.
\end{itemize}
The desired result is obtained for $i$ equal to the dimension of $X$.

\vspace{0.2cm}
\noindent
(i)-(1) For $i=0$ we consider $F|_{|\SC_0|}\simeq
\bigoplus_{\sigma\in\Delta_0} F_\sigma$.  
The complexes
$\sect(U(\sigma);F)$, $\sigma\in \Delta_0$, have finite bounded
cohomology by the quasi-isomorphisms \eqref{eq:section_simplex_qis}. 
Hence we may choose bounded complexes of finite free
$\cora$-modules, $R_{0,\sigma}$, and morphisms $u_{0,\sigma} \cl
R_{0,\sigma} \to \sect(U(\sigma);F)$ which are quasi-isomorphisms.

We have the natural isomorphism $\sect(U(\sigma);F) \simeq a_*
\hom[\Kb(\cora_{|\SC|})](\cora_{U(\sigma)}, F)$ in $\Kb(\cora)$, where
$a\cl |\SC| \to \rmpt$ is the projection and $\hom$ is the internal
$\Hom$ functor.  We deduce the adjunction formula, for $R\in
\Kb(\cora)$, $F\in \Kb(\cora_{|\SC|})$:
\begin{equation}
\label{eq:adj_Kb}
\Hom[\Kb(\cora)](R, \sect(U(\sigma);F) )
\simeq
\Hom[\Kb(\cora_{|\SC|})](R_{U(\sigma)}, F).
\end{equation}
Hence the $u_{0,\sigma}$ induce $u_0 \cl G_0 \eqdot
\bigoplus_{\sigma \in \Delta_0} (R_{0,\sigma})_{U(\sigma)} \to F$.
By~\eqref{eq:section_simplex_qis} $(u_0)_x$ is a quasi-isomorphism for
all $x\in |\SC_0|$, so that $u_0\vert_{|\SC_0|}$ also is a
quasi-isomorphism, as required.

\vspace{0.2cm}
\noindent
(i)-(2) We assume that $u_i$ is built and let $H_i = M(u_i)[-1]$ be the
mapping cone of $u_i$, shifted by $-1$. By the distinguished triangle
in $\Kb(\cora_{|\SC|})$
\begin{equation}
\label{eq:dist_tr_1}
H_i \xto{v_i} G_i \xto{u_i} F \xto{+1}
\end{equation}
$H_i|_{|\SC_i|}$ is quasi-isomorphic to $0$. Hence $\bigoplus_{\sigma
\in \Delta_{i+1} \setminus \Delta_i} (H_i)_{|\sigma|} \to
H_i|_{|\SC_{i+1}|}$ is a quasi-isomorphism.  As above we choose
quasi-isomorphisms $u_{i,\sigma} \cl R_{i+1,\sigma} \to
\sect(U(\sigma);H_i)$, $\sigma\in \Delta_{i+1} \setminus \Delta_i$,
where the $R_{i+1,\sigma}$ are bounded complexes of finite free
$\cora$-modules.  By~\eqref{eq:adj_Kb} again the $u_{i,\sigma}$ induce
a morphism in $\Kb(\cora_{|\SC|})$
$$
u'_{i+1}\cl G'_{i+1} \eqdot 
\bigoplus_{\sigma \in \Delta_{i+1}\setminus \Delta_i}
(R_{i+1,\sigma})_{U(\sigma)}   \to H_i .
$$
For $x\in |\SC_{i+1}| \setminus |\SC_i|$, $(u'_{i+1})_x$ is a
quasi-isomorphism by~\eqref{eq:section_simplex_qis}, and, for $x\in
|\SC_i|$, this is trivially true. Hence $u'_{i+1} |_{|\SC_{i+1}|}$ is
a quasi-isomorphism.

Now we let $H_{i+1}$ and $G_{i+1}$ be the mapping cones of $u'_{i+1}$
and $v_i \circ u'_{i+1}$, respectively. We have distinguished
triangles in $\Kb(\cora_{|\SC|})$
\begin{equation}
\label{eq:dist_tr_2}
G'_{i+1} \xto{u'_{i+1}} H_i \to H_{i+1} \xto{+1},
\qquad
G'_{i+1} \xto{v_i \circ u'_{i+1}} G_i \to G_{i+1} \xto{+1}.
\end{equation}
By the contruction of the mapping cone, the definition of $G'_{i+1}$
and the induction hypothesis, $G_{i+1}$ satifies property (a) above.
The octahedral axiom applied to triangles~\eqref{eq:dist_tr_1}
and~\eqref{eq:dist_tr_2} gives a morphism $u_{i+1} \cl G_{i+1} \to F$
and a distinguished triangle $H_{i+1} \to G_{i+1} \xto{u_{i+1}} F
\xto{+1}$. By construction $H_{i+1}|_{|\SC_{i+1}|}$ is
quasi-isomorphic to $0$ so that $u_{i+1}$ satifies property (b) above.

(ii) Consider the duality functor $\RD'_\cora(\scdot) =
\rhom[\cora_X](\scdot,\cora_X)$.
Set $G=\RD'_\cora(F)$, and represent it by a bounded complex as in (i).
Since $U_{k,i_k}$ corresponds to an open subset of the form $U(\sigma)$
in $|\SC|$, the sheaves $\cora_{U_{k,i_k}}$ are acyclic for the functor
$\RD'_\cora$. Hence $F\simeq\RD'_\cora(G)$ can be represented as
claimed.
\end{proof}

\begin{lemma}\label{le:surj_sections}
Let $F\to G \to
0$ be an exact sequence in $\mdrc[\cora_X]$.  Then for any relatively
compact subanalytic open subset $U\subset X$, there exists a finite
covering $U= \bigcup_{i\in I} U_i$ by subanalytic open subsets such
that, for each $i\in I$, the morphism $F(U_i) \to G(U_i)$ is
surjective.
\end{lemma}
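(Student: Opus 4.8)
The plan is to reduce to a local statement at each point of (the closure of) $U$ and then use the structure of $\R$-constructible sheaves near a point. First I would recall that an $\R$-constructible sheaf has, at every point $x\in X$, a fundamental system of open neighborhoods $W$ on which the restriction morphism $\sect(W;H)\to H_x$ is an isomorphism for $H$ any of the sheaves involved; this follows from the local conic structure of subanalytic sets (see \cite[\S8.4]{KS90}), exactly as invoked in the proof of Corollary~\ref{cor:hac}~(i). Since $F\to G\to 0$ is exact in $\mdrc[\cora_X]$, taking stalks at $x$ gives a surjection $F_x\to G_x$ (stalks are exact). Using the isomorphisms $\sect(W;F)\isoto F_x$ and $\sect(W;G)\isoto G_x$ for $W$ small enough and subanalytic, we conclude that $\sect(W;F)\to\sect(W;G)$ is surjective for such $W$.

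Next I would globalize over the relatively compact subanalytic open set $U$. The closure $\overline U$ is compact, so its is covered by finitely many of the neighborhoods $W$ produced above. However one must be a little careful: the $W$'s are neighborhoods of points of $\overline U$ in $X$, not subanalytic open subsets of $U$, and a point $x$ may lie on the boundary of $U$. The fix is to intersect with $U$: for each such $W$ set $U_W = W\cap U$, which is a relatively compact subanalytic open subset of $X$ contained in $U$. The family $\{U_W\}$ covers $U$. It remains to check that $\sect(U_W;F)\to\sect(U_W;G)$ is still surjective. This is where I would be most careful, and it is the main obstacle: one needs that for $x\in\overline U$, an even smaller subanalytic neighborhood basis can be chosen so that $W\cap U$ is again a good neighborhood, i.e.\ $\sect(W\cap U;H)\isoto H_x$ for $x\in U$, while for $x\in\partial U$ one simply uses that $W\cap U$ ranges through a cofinal family of subanalytic opens for which $\sect(W\cap U;F)\to\sect(W\cap U;G)$ is surjective — here I would invoke the fact that $F\to G$ is an epimorphism of sheaves, so surjectivity of sections holds on a cofinal family of opens around each point, and refine the covering accordingly.

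Concretely, the cleanest route is: for each $x\in\overline U$, since $F\to G$ is an epimorphism of sheaves there is an open $W_x\ni x$ and a cofinal system of subanalytic opens $V\subset W_x\cap U$ on which $\sect(V;F)\to\sect(V;G)$ is onto (when $x\in U$ one may even take $V$ itself good for both sheaves by $\R$-constructibility; when $x\in\partial U$ one uses local finiteness of the stratification to get subanalytic $V$'s). Extract a finite subcover $\overline U\subset\bigcup_{j=1}^N W_{x_j}$, pick for each $j$ a subanalytic open $V_j\subset W_{x_j}\cap U$ from the corresponding cofinal system large enough that $\{V_j\}$ still covers $U$ — possible since $U$ is relatively compact and the $V$'s exhaust $W_{x_j}\cap U$ — and take $\{U_i\}_{i\in I}=\{V_j\}_{j=1}^N$. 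By construction this is a finite subanalytic open cover of $U$ with $\sect(V_j;F)\to\sect(V_j;G)$ surjective for every $j$, which is the assertion. The only genuinely delicate point is ensuring the $V_j$'s are subanalytic and relatively compact while still covering $U$; this is handled by the triangulation / local conic structure theorems for subanalytic sets, and the finiteness is forced by compactness of $\overline U$.
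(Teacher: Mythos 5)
The paper's proof is short and clean: it invokes the triangulation theorem for subanalytic sets (as in Proposition~\ref{pro:resol1}) to reduce to the case where $F$, $G$ are constructible sheaves on the realization $|\SC|$ of a \emph{finite} simplicial complex, and then uses~\eqref{eq:section_simplex}, $\sect(U(\sigma);H)\isoto H_x$, applied to the finitely many open stars $U(\sigma)$; these stars cover $|\SC|$ and on each of them the section map is identified with the surjective stalk map $F_x\to G_x$. Your proposal pursues the same underlying idea (fundamental systems of ``good'' neighborhoods on which sections compute the stalk, plus compactness of $\overline U$), but it attempts to run this pointwise rather than after a global compatible triangulation, and that is where a genuine gap appears.

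The gap is in the treatment of boundary points $x\in\partial U$. You assert that ``since $F\to G$ is an epimorphism of sheaves, surjectivity of sections holds on a cofinal family of opens around each point.'' This is \emph{not} a general property of epimorphisms of sheaves: being an epimorphism only means that each section of $G$ lifts locally (with the cover allowed to depend on the section), not that there is a cofinal family of opens $V$ for which the single map $\sect(V;F)\to\sect(V;G)$ is onto. Moreover, for $x\in\partial U$ you cannot invoke the stalk isomorphism $\sect(W\cap U;H)\isoto H_x$ at all, since $x\notin W\cap U$; the relevant group is a ``boundary section'' which is no longer a stalk, and constructibility of $F$ and $G$ alone (without choosing the stratification/triangulation compatibly with $U$) does not immediately control it. To make your argument work you would have to redo the local conic structure analysis for the pair $(F,G,U)$ near a boundary point --- which is exactly what the triangulation theorem packages for you: by refining the triangulation so that it is simultaneously compatible with $F$, $G$ \emph{and} $U$, the finitely many open stars $U(\sigma)$ already have the desired property, and the boundary case disappears because $U$ becomes a union of open simplices. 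So the fix is precisely the paper's step ``we may assume $F$, $G$ are $\SC$-constructible on a finite $|\SC|$,'' which your proposal bypasses; without it the boundary argument does not go through.
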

\begin{proof}
As in the proof of Proposition~\ref{pro:resol1} we may assume that $F$
and
$G$ are constructible sheaves on the realization of some finite
simplicial complex $(S,\Delta)$.  For $\sigma\in \Delta$ the
morphism $\sect(U(\sigma);F) \to\sect(U(\sigma);G)$ is surjective,
by~\eqref{eq:section_simplex}.  Since $|\SC |$ is the finite union
of the $U(\sigma)$ this proves the lemma.
\end{proof}

\section{Complements on subanalytic sheaves}\label{ap:suban}

We review here some well-known results (see \cite[Chapter~7]{KS01} and
\cite{Pr05}) 
but which are usually stated over a field, and we need to work here over
the ring $\coro$. 

Let $\cora$ be a commutative 
unital Noetherian ring of finite global dimension
(for our purposes we will either have $\cora=\C$ or $\cora=\coro$).
Let $X$ be a real analytic manifold, and consider the natural morphism
$\rho \cl X\to\Xsa$ to the associated subanalytic site.

\begin{lemma} \label{lem:rhoexact}
The functor $\oim{\rho}\cl \mdrc[\cora_X]\to
\md[\cora_\Xsa]$ is exact and $\opb{\rho}\oim{\rho}$ is isomorphic to
the
canonical functor $\mdrc[\cora_X]\to\md[\cora_X]$.
\end{lemma}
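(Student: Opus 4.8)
The plan is to treat the two assertions of the lemma separately, the substantial one being exactness of $\oim\rho$.

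Since $\oim\rho$ is a right adjoint (to $\opb\rho$), it is automatically left exact, so for the exactness statement it suffices to prove right exactness. I would take a short exact sequence $0\to F'\to F\to F''\to 0$ in $\mdrc[\cora_X]$ and show that $\oim\rho F\to\oim\rho F''$ is an epimorphism in $\md[\cora_\Xsa]$. Recall that a morphism of sheaves on $\Xsa$ is an epimorphism as soon as, for every $U\in\Op_\Xsa$, any section of the target over $U$ lifts to the source after refining $U$ by a covering in $\Xsa$, that is, by a finite covering of $U$ by relatively compact subanalytic open subsets. As $\oim\rho F(U)=F(U)$ and $\oim\rho F''(U)=F''(U)$ for $U\in\Op_\Xsa$, this is exactly the content of Lemma~\ref{le:surj_sections} applied to the epimorphism $F\to F''$: any $U\in\Op_\Xsa$ admits a finite covering $U=\bigcup_{i\in I}U_i$ by subanalytic open subsets, which are relatively compact (being contained in $U$) and hence belong to $\Op_\Xsa$, and on each of which $F(U_i)\to F''(U_i)$ is surjective. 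Together with left exactness, this yields exactness of $\oim\rho$ on $\mdrc[\cora_X]$.

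For the identification of $\opb\rho\oim\rho$ with the canonical inclusion $\mdrc[\cora_X]\to\md[\cora_X]$, I would check that the counit $\opb\rho\oim\rho F\to F$ is an isomorphism by computing stalks. For $x\in X$, since $\opb\rho$ commutes with the inductive limits involved, one gets $(\opb\rho\oim\rho F)_x\simeq\indlim (\oim\rho F)(V)=\indlim F(V)$, where $V$ ranges over the relatively compact subanalytic open neighbourhoods of $x$; as these form a fundamental system of open neighbourhoods of $x$, this inductive limit is $F_x$ and the counit induces precisely this identification. Hence the counit is an isomorphism (and $\R$-constructibility of $F$ is not needed here; one may equivalently invoke the full faithfulness of $\oim\rho$).

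The only genuine obstacle is the surjectivity at the level of sections required for right exactness of $\oim\rho$, and this has already been settled: it is Lemma~\ref{le:surj_sections}, whose proof rests on the triangulation theorem for subanalytic sets and the local description of constructible sheaves on a simplicial complex. Beyond invoking it, no further work is required.
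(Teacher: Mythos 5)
Your proposal is correct and follows essentially the same route as the paper: left exactness from $\oim\rho$ being a direct image functor (right adjoint), right exactness via Lemma~\ref{le:surj_sections}, and the identification $\opb\rho\oim\rho\simeq\id$ from the fact that $\Op_\Xsa$ forms a basis of the topology of $X$. You merely spell out the paper's terse argument in more detail, including the correct (and paper-agreeing) observation that the second part does not actually use $\R$-constructibility.
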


\begin{proof}
Being a direct image functor, $\oim{\rho}$ is left exact. It is right
exact thanks to Lem\-ma~\ref{le:surj_sections}. The composition
$\opb{\rho}\oim{\rho}$ is isomorphic to the identity on $\md[\cora_X]$
since the open sets of the site $\Xsa$ give a basis of the
topology of $X$.
\end{proof}

In the sequel, we denote by $\mdrc[\cora_\Xsa]$ the image by the
functor $\oim{\rho}$ of $\mdrc[\cora_X]$ in $\md[\cora_\Xsa]$.  Hence
$\oim{\rho}$ induces an equivalence of categories
$\mdrc[\cora_X]\simeq\mdrc[\cora_\Xsa]$.  We also denote by
$\Derb_\Rc(\cora_{\Xsa})$ the full triangulated subcategory of
$\Derb(\cora_{\Xsa})$ consisting of objects with cohomology in
$\mdrc[\cora_\Xsa]$.

\begin{corollary}
\label{cor:thick}
The subcategory $\mdrc[\cora_\Xsa]$ of $\md[\cora_\Xsa]$ is thick.
\end{corollary}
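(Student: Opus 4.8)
The plan is to use that, by Lemma~\ref{lem:rhoexact} and the definition of $\mdrc[\cora_\Xsa]$, the direct image $\oim\rho$ restricts to an exact, fully faithful embedding of $\mdrc[\cora_X]$ into $\md[\cora_\Xsa]$ with essential image $\mdrc[\cora_\Xsa]$, that $\opb\rho$ is exact (it has both adjoints), and that $\opb\rho\oim\rho\simeq\id$ on $\mdrc[\cora_X]$; recall also (e.g.\ from \cite{KS90}) that $\mdrc[\cora_X]$ is thick in $\md[\cora_X]$. Given $G\in\md[\cora_\Xsa]$ that is a subobject, a quotient, or an extension of objects of $\mdrc[\cora_\Xsa]$, I would first apply the exact functor $\opb\rho$: since $\opb\rho\oim\rho\simeq\id$, this exhibits $\opb\rho G$ as, respectively, a subobject, a quotient, or an extension of objects of $\mdrc[\cora_X]$, whence $\opb\rho G\in\mdrc[\cora_X]$ by thickness downstairs. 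It then suffices to show that the adjunction morphism $\eta_G\colon G\to\oim\rho\opb\rho G$ is an isomorphism, for then $G\simeq\oim\rho(\opb\rho G)\in\mdrc[\cora_\Xsa]$.

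For the extension case this is formal. Writing a short exact sequence $0\to F'\to G\to F''\to 0$ with $F',F''\in\mdrc[\cora_\Xsa]$, apply $\opb\rho$ to obtain a short exact sequence of $\R$-constructible sheaves on $X$, then apply $\oim\rho$, which is exact on $\mdrc[\cora_X]$; this realizes $\oim\rho\opb\rho G$ as an extension of the same pair $(F',F'')$, compatibly with $\eta_G$, and since $\eta_{F'}$ and $\eta_{F''}$ are isomorphisms (both $F'$ and $F''$ lie in the essential image of $\oim\rho$), the five lemma shows $\eta_G$ is an isomorphism. The quotient case reduces to the subobject case: if $F\twoheadrightarrow Q$ with $F\in\mdrc[\cora_\Xsa]$ and $K\hookrightarrow F$ is its kernel, then, granting that subobjects of objects of $\mdrc[\cora_\Xsa]$ lie in $\mdrc[\cora_\Xsa]$, the exactness of $\oim\rho$ on $\mdrc[\cora_X]$ applied to $0\to\opb\rho K\to\opb\rho F\to\opb\rho F/\opb\rho K\to 0$ identifies $Q\simeq F/K$ with $\oim\rho$ of the $\R$-constructible sheaf $\opb\rho F/\opb\rho K$.

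The subobject case is the heart of the argument, and the step I expect to be the main obstacle. So let $G\hookrightarrow\oim\rho F_0$ with $F_0\in\mdrc[\cora_X]$. Here $\eta_G$ is automatically a monomorphism, since composing it with the natural monomorphism $\oim\rho\opb\rho G\hookrightarrow\oim\rho F_0$ gives back the inclusion $G\hookrightarrow\oim\rho F_0$; the only point to check is that $G(V)\to\sect(V;\opb\rho G)$ is surjective for every $V\in\Op_\Xsa$. Given a section $s$ on the right, its germ at each $x\in V$ lies in $(\opb\rho G)_x$, so $s$ restricts to a section of $G$ over some relatively compact subanalytic neighborhood $V_x\subseteq V$ of $x$; these lifts are compatible on overlaps, so they glue to the required preimage of $s$ as soon as the cover $\{V_x\}_{x\in V}$ of $V$ is replaced by a refinement admitting a finite subcover. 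Producing such a finite subanalytic refinement is exactly where one must use the subanalytic structure --- via a subanalytic triangulation of $V$ and the finiteness of the sections of an $\R$-constructible sheaf over a simplex --- and this is done as in \cite[Chapter~7]{KS01} and \cite{Pr05}, analogously to Lemma~\ref{le:surj_sections} and to the refinement argument appearing in the proof of Proposition~\ref{pro:crit-h-acyclic}. Once this surjectivity is established, $\eta_G$ is an isomorphism, which completes the subobject case and hence the proof.
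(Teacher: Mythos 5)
Your proposal reads ``thick'' as the Serre condition (stability under \emph{arbitrary} subobjects, quotients and extensions in $\md[\cora_\Xsa]$), but the notion in play here (as in \cite[\S8.3]{KS06} and \cite{KS90}) is weaker: $\mdrc[\cora_\Xsa]$ should be a full abelian subcategory closed under extensions, i.e.\ stable under kernels and cokernels of morphisms \emph{between objects of the subcategory}, and under extensions. The Serre version you aim for is in fact false, and the step ``$\opb\rho G\in\mdrc[\cora_X]$ by thickness downstairs'' rests on the equally false premise that $\mdrc[\cora_X]$ is a Serre subcategory of $\md[\cora_X]$. For a concrete witness, take $X=\R$, $Z=\{1/n:n\geq1\}\cup\{0\}$, and let $G\subset\oim\rho\cora_X=\cora_\Xsa$ be the subsheaf $U\mapsto\{s\in\cora_X(U):s|_{U\cap Z}=0\}$ on $\Op_\Xsa$ (the sheaf axiom only involves finite covers, so this is a sheaf on $\Xsa$). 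Then $\opb\rho G$ has stalk $0$ at each point of $Z$ and stalk $\cora$ elsewhere, which is not $\R$-constructible since $Z$ is not subanalytic; hence $G$ is a subobject of an object of $\mdrc[\cora_\Xsa]$ that does not belong to $\mdrc[\cora_\Xsa]$. So the ``heart of the argument'' is aimed at a statement that cannot be proved, and the quotient case, which you reduce to it, inherits the gap.

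Your treatment of the extension case is correct and is a legitimate variant: the paper instead establishes the $\Ext^1$-comparison by passing to derived categories and using the adjunction $(\opb\rho,\roim\rho)$ together with $\opb\rho\roim\rho\simeq\id$, while you argue directly with the unit $\eta$, the exactness of $\opb\rho$ and of $\oim\rho$ on $\mdrc[\cora_X]$ (Lemma~\ref{lem:rhoexact}), and the five lemma. What is missing is the (much easier) kernel/cokernel half: given $f\colon\oim\rho F\to\oim\rho F'$ with $F,F'\in\mdrc[\cora_X]$, full faithfulness of $\oim\rho$ on $\mdrc[\cora_X]$ gives $f=\oim\rho g$, and its exactness gives $\ker f=\oim\rho(\ker g)$ and $\coker f=\oim\rho(\coker g)$, both in $\mdrc[\cora_\Xsa]$ since $\mdrc[\cora_X]$ is abelian. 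Replacing the subobject/quotient branches by this observation, and keeping your extension argument, yields a correct proof.
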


\begin{proof}
Since $\oim{\rho}$ is fully faithful and exact, $\mdrc[\cora_\Xsa]$
is stable by kernel and cokernel.  It remains to see that, for $F,G
\in \mdrc[\cora_X]$
$$
\Ext[{\mdrc[\cora_X]}]{1}(F,G)
\simeq  \Ext[{\md[\cora_\Xsa]}]{1}(\oim{\rho}F,\oim{\rho}G).
$$
By~\cite{Ka84} we know that the first $\Ext{1}$ may as well be
computed in $\md[\cora_X]$. We see that the functors $\rho^{-1}$ and
$\roim{\rho}$ between $\Derb(\cora_X)$ and $\Derb(\cora_{\Xsa})$ are
adjoint, and moreover $\rho^{-1}\roim{\rho} \simeq \id$.  Thus, for
$F',G' \in \Derb(\cora_X)$ we have
$$
\Hom[{\Derb(\cora_{\Xsa})}](\roim{\rho}F',\roim{\rho}G')
\simeq \Hom[{\Derb(\cora_X)}](F',G'),
$$
and this gives the result.
\end{proof}

This corollary gives the equivalence $\Derb_\Rc(\cora_X) \simeq
\Derb_\Rc(\cora_{\Xsa})$, both categories being equivalent to
$\Derb(\mdrc[\cora_X])$.

\providecommand{\bysame}{\leavevmode\hbox to3em{\hrulefill}\thinspace}

{\small
\noindent
Andrea D'Agnolo:
Universit{\`a} degli Studi di Padova,
Dipartimento di Ma\-te\-ma\-ti\-ca Pura ed Applicata,
via Trieste 63,
35121 Padova, Italy\\
E-mail address: {\tt dagnolo@math.unipd.it}\\
Web page: {\tt www.math.unipd.it/\textasciitilde dagnolo}

\medskip\noindent
St{\'e}phane Guillermou:
Institut Fourier,
Universit{\'e} de Grenoble I,
BP 74, 38402
Saint-Martin d'H{\`e}res, France\\
E-mail address: {\tt Stephane.Guillermou@ujf-grenoble.fr}\\
Web page: {\tt www-fourier.ujf-grenoble.fr/\textasciitilde guillerm}

\medskip\noindent
Pierre Schapira:
Institut de Math{\'e}matiques,
Universit{\'e} Pierre et Marie Curie,
175 rue du Chevaleret,
75013 Paris, France\\
E-mail address: {\tt schapira@math.jussieu.fr}\\
Web page: {\tt www.math.jussieu.fr/\textasciitilde schapira}
}
\end{document}